\documentclass{amsart}
\usepackage{amsopn}
\usepackage{amssymb, amscd, amsmath}
\usepackage{array}
\usepackage{color}

\newcommand{\nc}{\newcommand}

\DeclareRobustCommand{\SkipTocEntry}[4]{}

\nc{\detb}{\det_{\Beta_\mg}}
\nc{\Slb}{\Sl_{\Beta}}
\nc{\Slbm}{\Sl_{\Beta_\mg}^H}
\nc{\slbm}{\slg_{\Beta_\mg}^H}
\nc{\ggl}{\mathfrak{g}}
\nc{\ppm}{\mathfrak{p}}
\nc{\GG}{G}
\nc{\Gm}{\Gl(\mg)}
\nc{\Gg}{\Gl(\ggl)}
\nc{\glgg}{{\mathfrak{gl}(\mathfrak{g})}}
\nc{\GH}{\Gl^{\!H}}
\nc{\GHm}{\Gl^{\!H} (\mg)}
\nc{\GHmk}{\Gl^H(\mg_k)}
\nc{\gHm}{{\mathfrak{gl}^H(\mathfrak{m})}}
\nc{\OHm}{\Or^H(\mg)}
\nc{\sogHm}{ {\mathfrak{so}^H(\mathfrak{m})} }
\nc{\Vm}{V(\mg)}
\nc{\Vg}{V(\ggl)}
\nc{\Vmi}{V(\mg_\infty)}
\nc{\Om}{\Or(\mg)}
\nc{\Og}{\Or(\ggl)}
\nc{\Symm}{{\rm Sym}(\mg)}
\nc{\Symg}{{\rm Sym}(\ggl)}
\nc{\gm}{\mathfrak{gl}(\mg)}
\nc{\som}{\sog(\mg)}
\nc{\sogg}{{\mathfrak{so}(\mathfrak{g})}}
\nc{\hml}{{\mu^{\ggl}}}
\nc{\ml}{{\mu^{\ggl}_{\mg}}}
\nc{\Ol}{{\mathcal{O}_{\ggl}}}
\nc{\OlH}{{\mathcal{O}^H_{\ggl}}}
\nc{\Vn}{V(n)}
\nc{\SymV}{{\rm Sym}(V)}
\nc{\mub}{{\bar \mu}}
\nc{\mumg}{{\mu_\mg}}
\nc{\Betam}{{\Beta_\mg}}
\nc{\Betag}{{\Beta_\ggo}}
\nc{\Betagp}{{\Beta_\ggo^+}}
\nc{\ii}{{\mathrm{i}}}
\nc{\Nrm}{{\mathrm{N}}}
\nc{\Srm}{{\mathrm{S}}}
\nc{\spec}{{\operatorname{spec}}}
\nc{\iR}{{\ii\RR}}

\nc{\Vzerog}{V_{\Beta_\ggo^+}^{0}}
\nc{\Vnng}{V_{\Beta_\ggo^+}^{\geq 0}}
\nc{\Vnnssg}{U_{\Beta_\ggo^+}^{\geq 0}}
\nc{\Vzerossg}{U_{\Beta_\ggo^+}^{0}}


\nc{\GGs}{S}
\nc{\ggs}{\mathfrak{s}}

 \nc{\ggo}{\mathfrak{g}}
 \nc{\ggob}{\overline{\mathfrak{g}}}
\nc{\lamg}{\Lambda^2\ggo^*\otimes\ggo}
\nc{\gkp}{(\ggo=\kg\oplus\pg,\ip)} \nc{\ukh}{(\ug=\kg\oplus\hg,\ip)}
\nc{\tgkp}{(\tilde{\ggo}=\kg\oplus\pg,\ip)}

\nc{\fg}{\mathfrak{f}}  \nc{\vg}{\mathfrak{v}} \nc{\wg}{\mathfrak{w}} \nc{\zg}{\mathfrak{z}} \nc{\ngo}{\mathfrak{n}} \nc{\kg}{\mathfrak{k}} \nc{\mg}{\mathfrak{m}} \nc{\bg}{\mathfrak{b}}  \nc{\sog}{\mathfrak{so}} \nc{\sug}{\mathfrak{su}} \nc{\spg}{\mathfrak{sp}} \nc{\slg}{\mathfrak{sl}} \nc{\glg}{\mathfrak{gl}} \nc{\cg}{\mathfrak{c}} \nc{\rg}{\mathfrak{r}}  \nc{\hg}{\mathfrak{h}} \nc{\tgo}{\mathfrak{t}} \nc{\ug}{\mathfrak{u}} \nc{\dg}{\mathfrak{d}} \nc{\ag}{\mathfrak{a}} \nc{\pg}{\mathfrak{p}} \nc{\sg}{\mathfrak{s}} \nc{\affg}{\mathfrak{aff}} \nc{\qg}{\mathfrak{q}}
\nc{\Xg}{\mathfrak{X}} \nc{\lgo}{\mathfrak{l}} \nc{\tg}{\mathfrak{t} }

\nc{\pca}{\mathcal{P}} \nc{\nca}{\mathcal{N}} \nc{\lca}{\mathcal{L}} \nc{\oca}{\mathcal{O}} \nc{\mca}{\mathcal{M}} \nc{\tca}{\mathcal{T}} \nc{\aca}{\mathcal{A}} \nc{\cca}{\mathcal{C}} \nc{\gca}{\mathcal{G}} \nc{\sca}{\mathcal{S}} \nc{\hca}{\mathcal{H}} \nc{\bca}{\mathcal{B}} \nc{\dca}{\mathcal{D}} \nc{\fca}{\mathcal{F}} \nc{\Qca}{\mathcal{Q}}

\nc{\dd}{{\rm d}}  \nc{\ddt}{\tfrac{{\rm d}}{{\rm d}t}}        \nc{\dds}{\tfrac{{\rm d}}{{\rm d}s}} 
\nc{\ddtbig}{\frac{{\rm d}}{{\rm d}t}}      \nc{\dpar}{\tfrac{\partial}{\partial t}}    

\nc{\im}{\mathtt{i}}    \renewcommand{\Im}{{\rm Im}}

\nc{\SO}{\mathrm{SO}}   \nc{\Spe}{\mathrm{Sp}}      \nc{\Sl}{\mathrm{SL}}       \nc{\Gl}{\mathrm{GL}}
\nc{\SU}{\mathrm{SU}}   \nc{\Or}{\mathrm{O}}        \nc{\U}{\mathrm{U}} 
\nc{\Se}{\mathrm{S}}    \nc{\Cl}{\mathrm{Cl}}       \nc{\Spein}{\mathrm{Spin}}
\nc{\Pin}{\mathrm{Pin}} \nc{\G}{\mathrm{GL}_n(\RR)} \nc{\g}{\mathfrak{gl}_n(\RR)}

\nc{\RR}{{\mathbb R}} \nc{\HH}{{\mathbb H}} \nc{\CC}{{\mathbb C}} \nc{\ZZ}{{\mathbb Z}}
\nc{\FF}{{\mathbb F}} \nc{\NN}{{\mathbb N}} \nc{\QQ}{{\mathbb Q}} \nc{\PP}{{\mathbb P}}
\nc{\KK}{{\mathbb K}}

\nc{\vs}{\vspace{.2cm}} \nc{\vsp}{\vspace{1cm}} 
\nc{\ip}{{\langle \,\cdot \,,\cdot \,\rangle }}
 \nc{\la}{\langle} \nc{\ra}{\rangle} \nc{\unm}{\tfrac{1}{2}}
\nc{\unc}{\tfrac{1}{4}} \nc{\und}{\tfrac{1}{16}} \nc{\no}{\vs\noindent}
\nc{\lam}{\Lambda^2(\RR^n)^*\otimes\RR^n} \nc{\tangz}{{\rm T}^{\rm Zar}}

\nc{\nor}{{\sf n}}  \nc{\mum}{/\!\!/} \nc{\kir}{/\!\!/\!\!/}
\nc{\Ri}{\tfrac{4\Ric_{\mu}}{||\mu||^2}} \nc{\ds}{\displaystyle}
\nc{\ben}{\begin{enumerate}} \nc{\een}{\end{enumerate}} \nc{\f}{\frac}
\nc{\lb}{[\cdot,\cdot]} \nc{\isn}{\tfrac{1}{||v||^2}}

\nc{\wt}{\widetilde}
\nc{\raw}{\rightarrow} \nc{\lraw}{\longrightarrow} \nc{\hqn}{\mathcal{H}_{q,n}}
\nc{\minimatrix}[4]{\left[\begin{smallmatrix} {#1} & {#2} \\ {#3} & {#4} \end{smallmatrix}\right]}
\nc{\twomatrix}[4]{\left[\begin{array}{cc} {#1} & {#2} \\ {#3} & {#4} \end{array} \right]}
\nc{\threematrix}[9]{\left[\begin{array}{ccc} {#1} & {#2} & {#3} \\ {#4} & {#5} & {#6}\\ {#7} & {#8} & {#9} \end{array} \right]}
\nc{\mut}{\tilde{\mu}} \nc{\mur}{{\mu_r}} \nc{\mutr}{{\tilde{\mu}_r}}

\nc{\alert}{\color{blue}}

\nc{\glgan}{\minimatrix{0}{0}{\star}{0}} \nc{\glgna}{\minimatrix{0}{\star}{0}{0}}  \nc{\glgnn}{\minimatrix{0}{0}{0}{\star}}  \nc{\glgaa}{\minimatrix{\star}{0}{0}{0}}
\nc{\Vaan}{{\left(\ag \wedge \ag\right)^* \otimes \ngo}} \nc{\Vann}{{\left(\ag \otimes \ngo \right)^* \otimes \ngo}} \nc{\Vnnn}{{\left(\ngo \wedge \ngo \right)^* \otimes \ngo}}


\nc{\ad}{\operatorname{ad}}  \nc{\Aut}{\operatorname{Aut}}   \nc{\Inn}{\operatorname{Inn}}   \nc{\Lie}{\operatorname{Lie}} \nc{\Ad}{\operatorname{Ad}} \nc{\Der}{\operatorname{Der}} \nc{\rad}{\operatorname{rad}} \nc{\kf}{\operatorname{B}}

\nc{\End}{\operatorname{End}} \nc{\rank}{\operatorname{rank}} \nc{\Ker}{\operatorname{Ker}} \nc{\tr}{\operatorname{tr}} \nc{\Sym}{\operatorname{Sym}} \nc{\diag}{\operatorname{diag}} \nc{\proy}{\operatorname{pr}} \nc{\Adj}{\operatorname{Adj}} \nc{\proj}{\operatorname{pr}} \nc{\Id}{{\operatorname{Id}}} \nc{\Span}{\operatorname{span}}

\nc{\Hess}{\operatorname{Hess}}  \nc{\dif}{\operatorname{d}} \nc{\sen}{\operatorname{sen}} \nc{\grad}{\operatorname{grad}} \nc{\Order}{\operatorname{O}} \nc{\divg}{\operatorname{div}}

\nc{\Iso}{\operatorname{Iso}} \nc{\Diff}{\operatorname{Diff}} \nc{\Rc}{\operatorname{Rc}} \nc{\Ricci}{\operatorname{Ric}}
\nc{\ric}{\operatorname{ric}} 
\nc{\Riem}{\operatorname{Rm}} \nc{\scal}{\operatorname{scal}} \nc{\scalm}{\operatorname{scal}^\star} \nc{\Riccim}{\operatorname{Ric}^{\star}} \nc{\tang}{\operatorname{T}} \nc{\vol}{\operatorname{vol}} \nc{\mcv}{\operatorname{H}} \nc{\inj}{\operatorname{inj}}
\nc{\isog}{\mathfrak{iso}}

\nc{\mm}{\operatorname{M}} \nc{\CH}{\operatorname{CH}} \nc{\Irr}{\operatorname{Irr}} \nc{\mcc}{\operatorname{mcc}} \nc{\Sb}{\mathcal{S}_\Beta} \nc{\mmm}{\operatorname{m}} 


\nc{\Symn}{{{\rm Sym}(n)}}
\nc{\Beta}{{\beta}}
\nc{\Alpha}{A}
\nc{\Vr}{V_{\Beta^+}^{r}}
\nc{\Vzero}{V_{\Beta^+}^{0}}
\nc{\Vnn}{V_{\Beta^+}^{\geq 0}}
\nc{\Vnnss}{U_{\Beta^+}^{\geq 0}}
\nc{\Vzeross}{U_{\Beta^+}^{0}}
\nc{\Vnnt}{V_{\tilde\Beta^+}^{\geq 0}}
\nc{\Betap}{ {\Beta + \Vert{\Beta}\Vert^2 \Id} }
\nc{\Ap}{ {A + \Vert{A}\Vert^2 \Id} }
\nc{\zero}{ {\backslash \{0\} } }
\nc{\normmm}{{\rm E}}
\nc{\ipp}{\la\,\cdot \,,\cdot\,\ra^*_g}
\nc{\ippk}{\la\,\cdot \,,\cdot\,\ra^*_{g_k}}
\nc{\ippi}{\la\,\cdot \,,\cdot\,\ra^*_{g_\infty}}
\nc{\ipnew}{\la \la \cdot , \cdot \ra\ra}
\nc{\der}{\mathfrak{der}}
\nc{\kfm}{\widetilde{\kf}} 
\nc{\KFm}{\widetilde{\mathcal{B}}}
\nc{\KF}{\mathcal{B}}
\nc{\II}{{\mathbb I}}
\nc{\spa}{\operatorname{span}}


\newtheorem{mainthm}{Theorem}[]
\newtheorem{maincor}[mainthm]{Corollary}
\newtheorem*{problem}{Problem}

\theoremstyle{plain}
\newtheorem{theorem}{Theorem}[section]
\newtheorem{proposition}[theorem]{Proposition}
\newtheorem{corollary}[theorem]{Corollary}
\newtheorem{lemma}[theorem]{Lemma}

\theoremstyle{definition}
\newtheorem{definition}[theorem]{Definition}
\newtheorem{notation}[theorem]{Notation}

\theoremstyle{remark}
\newtheorem{remark}[theorem]{Remark}

\newtheorem{example}[theorem]{Example}

\begin{document}
\begin{titlepage}

\title{Immortal homogeneous Ricci flows}

\author{Christoph B\"ohm}	
\address{University of M\"unster, Einsteinstra{\ss}e 62, 48149 M\"unster, Germany}
\email{cboehm@math.uni-muenster.de}

\author{Ramiro A.~ Lafuente} 
\address{University of M\"unster, Einsteinstra{\ss}e 62, 48149 M\"unster, Germany}
\email{lafuente@uni-muenster.de}

\thanks{The second author was supported by the Alexander von Humboldt Foundation.}

\begin{abstract}
We show that for an  immortal homogeneous Ricci flow solution any sequence of parabolic blow-downs
subconverges to a homogeneous expanding Ricci soliton. 
This is established by constructing a new Lyapunov function based on
curvature estimates which come from real geometric invariant theory.
\end{abstract}

\end{titlepage}


\maketitle
\setcounter{page}{1}
\setcounter{tocdepth}{0}

In 1982 Hamilton showed
that  on a compact $3$-dimensional manifold
 any metric of positive Ricci curvature can be deformed to the round metric 
  via volume normalized Ricci flow \cite{Ham82}. Similar convergence results were obtained afterwards in higher dimensions, see for instance
\cite{Ham86}, \cite{BW} and \cite{BS}.

About 20 years later, Perelman was able to describe new geometric quantities, which 
are monotone along Ricci flow solutions  \cite{Per1}, \cite{Per2}. 
 In the case of finite extinction time,
assuming a \mbox{Type-I} behavior of the evolved curvature tensors,
 it could then be shown
 that any essential blow-up sequence subconverges to a non-flat, gradient Ricci soliton   \cite{Nab10}, \cite{EMT}.
 
By contrast, for \emph{immortal} Ricci flow solutions, assuming a Type-III behavior of the evolved curvature tensors, subconvergence of an essential blow-down sequence to an expanding limit soliton is unknown in general. The low-dimensional situation is however
much better understood: in dimension two the existence of a
 locally homogeneous limit solution of constant curvature could be established \cite{Ham88}, \cite{Chow91}, \cite{JMS09}, and in dimension three subconvergence to an expanding  
 locally homogeneous limit soliton could be shown in several cases \cite{Lott}, \cite{Bam14}.

The main result of this paper is the following

\begin{mainthm}\label{mainconvergence}
For an immortal homogeneous Ricci flow solution
any sequence of  blow-downs subconverges to an expanding homogeneous Ricci soliton.
\end{mainthm}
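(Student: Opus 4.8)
The plan is to transfer the problem into Lauret's bracket flow picture and there construct, using real geometric invariant theory, a Lyapunov function which is monotone along the flow and detects homogeneous Ricci solitons.

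\smallskip
\noindent\textbf{Step 1: reduction to the bracket flow.}
Fix a reductive decomposition $\ggo=\hg\oplus\mg$ of the underlying homogeneous space and a background inner product $\ip$ on $\mg$. Then the homogeneous Ricci flow $g(t)$ is equivalent to a curve of Lie brackets $\mu_t\in\Vm\subseteq\Lambda^2\mg^*\otimes\mg$ (those satisfying Jacobi, $\hg$-invariance and $[\hg,\mg]\subseteq\mg$) solving $\dot\mu=-\pi(\Ricci_\mu)\mu$, where $\pi\colon\gm\to\End(\Vm)$ is the natural $\Gl(\mg)$-representation and $\Ricci_\mu\in\Sym(\mg)$ is the Ricci operator of the metric encoded by $\mu$; immortality of $g(t)$ is equivalent to $\mu_t$ being defined on $[0,\infty)$. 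A parabolic blow-down $g_k(t):=\tfrac1{s_k}g(s_kt)$, $s_k\to\infty$, corresponds, after rescaling space and time, to the bracket flow solution $\mu^{(k)}_t:=\sqrt{s_k}\,\mu_{s_kt}$. Since acting further on the brackets by $\GHm$ merely amounts to changing the background inner product, the theorem reduces to the following: after passing to a subsequence and normalising $\mu^{(k)}$ suitably within its $\GHm$-orbit, the brackets $\mu^{(k)}_t$ converge in $C^\infty$, uniformly on compact subsets of $(0,\infty)$, to a bracket flow solution $\mu^{(\infty)}_t$ whose associated homogeneous metrics $g_\infty(t)$ form a self-similar solution; equivalently, $\mu^{(\infty)}_1$ is an algebraic Ricci soliton, $\Ricci_{\mu^{(\infty)}_1}=c\,\Id_\mg+\tfrac12(D+D^*)$ with $D\in\Der(\mu^{(\infty)}_1)$, which is in particular a genuine homogeneous Ricci soliton; expandingness, i.e.\ $c\leq0$, is addressed in Step 4.

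\smallskip
\noindent\textbf{Step 2: curvature estimates from real GIT.}
The key new input is the real Kirwan--Ness stratification for the linear action of the real reductive group $\GHm$ on $\Vm$. With respect to the Cartan decomposition $\gHm=\sogHm\oplus\pg$ of its Lie algebra (here $\pg$ $=$ the $H$-equivariant self-adjoint operators on $\mg$), one has the moment map $m\colon\Vm\setminus\{0\}\to\pg$, $\langle m(\mu),A\rangle=\|\mu\|^{-2}\langle\pi(A)\mu,\mu\rangle$, and a decomposition $\Vm\setminus\{0\}=\bigsqcup_{\Beta\in\bca}\sca_\Beta$ into finitely many strata indexed by $\OHm$-conjugacy classes of optimal destabilising vectors $\Beta\in\pg$, where $\Beta=\Beta(\mu)$ is the minimal-norm element in the closure of the $\OHm$-orbit traced by $m$ along the negative gradient flow of $\|m\|^2$ issuing from $\mu$. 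Two facts are extracted. First, the fundamental inequality $\langle m(\mu),\Beta\rangle\geq\|\Beta\|^2$ for $\mu\in\sca_\Beta$; through the identity $\Ricci_\mu=\mm_\mu-\tfrac12\mathrm{B}_\mu-\mathrm{S}(\ad_{H_\mu})$ relating the Ricci operator to the moment-map operator $\mm_\mu$ (proportional to $\|\mu\|^2 m(\mu)$), the Killing form $\mathrm{B}_\mu$ of $(\mg,\mu)$ and its mean-curvature vector $H_\mu$, this yields scale-invariant a priori bounds on $\scal_\mu/\|\mu\|^2$ and on $\|\Ricci_\mu\|^2/\|\mu\|^4$ in terms of $\|\Beta(\mu)\|$, which itself ranges over the finite set $\{\|\Beta\|:\Beta\in\bca\}$. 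Second, the description of the critical set of $\|m\|^2$ within a stratum: there $m(\mu)\in\OHm\cdot\Beta$, which forces $\pi(m(\mu))\mu\in\RR\mu$ and, once the $\mathrm{B}_\mu$- and $H_\mu$-corrections are incorporated, is exactly the algebraic soliton equation from Step 1. Using the first fact one constructs a scale-invariant Lyapunov functional $\mathrm{F}$ on bracket space built from $\scal_\mu/\|\mu\|^2$ and the stratum datum $\Beta(\mu)$; in particular $\mathrm{F}$ is a priori bounded along the bracket flow.

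\smallskip
\noindent\textbf{Step 3: monotonicity of $\mathrm{F}$ --- the main obstacle.}
The heart of the proof, and the step I expect to be hardest, is to show that $\mathrm{F}$ is monotone along the (scaling-normalised) bracket flow, with $\tfrac{d}{dt}\mathrm{F}=0$ precisely at the algebraic solitons of the ambient stratum. Differentiating $\mathrm{F}$ along $\dot\mu=-\pi(\Ricci_\mu)\mu$ and splitting $\Ricci_\mu=\mm_\mu-\tfrac12\mathrm{B}_\mu-\mathrm{S}(\ad_{H_\mu})$, the $\mm_\mu$-part reproduces the negative gradient flow of $\|m\|^2$ and contributes with the right sign; the difficulty is that the correction terms coming from the Killing form of the Levi factor and from non-unimodularity are not manifestly of lower order. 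Controlling them is exactly where the real (as opposed to complex) GIT estimates enter: one must show that along the flow the destabilising direction $\Beta(\mu_t)$, the $\mathrm{B}_\mu$-term and the $H_\mu$-term are quantitatively subordinate to the moment-map gradient, using the convexity of $\|m\|^2$ along $\exp(\pg)$-orbits, the Ness-type uniqueness of minimal vectors, and the compatibility of $\Beta(\mu)$ with the relevant $\ad$-action. I expect most of the effort to lie here.

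\smallskip
\noindent\textbf{Step 4: extracting and identifying the limit.}
Granting Steps 2--3, the conclusion follows by a compactness-and-rigidity argument. As $\mathrm{F}$ is bounded and monotone along $\mu_t$, it converges as $t\to\infty$; by scale-invariance $\mathrm{F}(\mu^{(k)}_t)=\mathrm{F}(\mu_{s_kt})$ converges to the same limit for each fixed $t>0$. The curvature bounds from Step 2, inserted into the bracket flow equation and its derivatives, give uniform $C^\infty$-estimates for a $\GHm$-normalisation of $\mu^{(k)}$ on compact subsets of $(0,\infty)$; in particular the normalised brackets neither blow up nor collapse to $0$, since the strata visited remain in the finite set $\bca$ and $\mathrm{F}$ stays in a compact range. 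Hence a subsequence of $\mu^{(k)}$ converges to a bracket flow solution $\mu^{(\infty)}_t$ defined on all of $(0,\infty)$ with $\mathrm{F}(\mu^{(\infty)}_t)$ constant in $t$; thus $\tfrac{d}{dt}\mathrm{F}\equiv0$ along $\mu^{(\infty)}$, and by the equality case of Step 3 each $\mu^{(\infty)}_t$ is an algebraic Ricci soliton, i.e.\ $g_\infty(t)$ is self-similar. Finally, being defined for all $t\in(0,\infty)$ the solution cannot be shrinking (shrinkers develop a finite-time singularity); and if it were steady it would be Ricci-flat, hence flat by Alekseevskii--Kimelfeld, which is the degenerate expanding case. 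Therefore $g_\infty(t)$ is an expanding homogeneous Ricci soliton, completing the proof.
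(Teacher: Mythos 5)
Your overall strategy (bracket flow, real GIT stratification, a scale-invariant Lyapunov function, then compactness and rigidity) is indeed the paper's strategy, but the two places where you yourself flag the difficulty are exactly where the proposal has genuine gaps rather than proofs. In Step 3 you do not construct the Lyapunov function or prove its monotonicity, and the mechanism you sketch --- showing that the Killing-form and mean-curvature corrections are ``quantitatively subordinate'' to the moment-map gradient --- is not how this can work; indeed the naive scale-invariant candidates built from $\scal_\mu/\Vert\mu\Vert^2$ or $\Vert\Ricci\Vert/\vert\scal\vert$ fail to be monotone already on three-dimensional unimodular solvable groups (Remark \ref{rem_Ricscalnot}). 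The paper's resolution is structural, not perturbative: one passes to the \emph{unimodular} bracket flow (gauging away $S(\ad_\mu \mcv_\mu)$ by $\OHm$-equivariance, Corollary \ref{cor_uhl1}), gauges further into the parabolic orbit $Q_\Beta\cdot\hml\subset\sca_\Beta\cap\Vnn$ (Corollary \ref{cor_gauge}, Lemma \ref{lem_gaugeohm}), uses that $\la\kf_\mu,\Beta^+\ra=0$ \emph{exactly} (the nilradical equals $\Im(\Beta^+)$ and lies in the kernel of the Killing form), and then defines $F_\Beta(\mu)=v_\Beta(\mu)^2\,\scalm(\mu)$ with the $\Beta$-volume $v_\Beta$, whose monotonicity follows from $\tfrac{\dd}{\dd t}\scalm=2\Vert\Riccim\Vert^2$, the evolution of $v_\Beta$, Cauchy--Schwarz and the stratum estimate $\Vert\Riccim_\mu\Vert\geq\vert\scalm(\mu)\vert\,\Vert\Beta_\mg\Vert$ (Corollary \ref{cor_mainestimate}, Theorem \ref{thm_lyapunov}). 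None of this is recoverable from your Step 3 as written; it is the paper's central new contribution, so asserting it as expected output of ``convexity of $\Vert m\Vert^2$'' leaves the proof essentially open.

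Step 4 contains a second, independent gap: from uniform curvature bounds and boundedness of the Lyapunov function you conclude that a suitable $\GHm$-normalisation of the brackets ``neither blows up nor collapses''. This is false in general: bounded geometry does not bound the brackets, because of algebraic collapse --- for generic left-invariant metrics on the cover of $\Sl(2,\RR)$ the blow-downs converge geometrically while the corresponding brackets diverge, and the isometry dimension jumps in the limit (Example \ref{ex_algcollapse}). Handling this is a large part of the actual proof: one first extracts a blow-down limit whose isometry group has maximal dimension, blows down \emph{again}, and shows the second sequence of brackets is bounded because unboundedness would force a further jump of the isometry dimension (Corollary \ref{cor:algcollapse}); the lower bound $v_\Beta(\mu)\,(\Vert\mu_\mg\Vert+\Vert\mu_\hg\Vert^{1/2})\geq C>0$ (Lemma \ref{lem_lowbdvbeta}) together with the Type-III bound on $\sqrt{t}\,\bar\mu(t)$ is what makes the equality case of the monotonicity formula bite; one must also rule out that the limit bracket falls into a boundary stratum (done via the monotonicity of $\Vert\Beta_\mg\Vert$ across strata and the estimate applied in the would-be stratum), translate bracket convergence back into equivariant Cheeger--Gromov convergence (Theorem \ref{thm:Liebracketconvergence}, Lemma \ref{lem_solitonbracket}), upgrade the locally defined limit to a globally homogeneous soliton (Theorem \ref{thm_lochomsolitons}, which requires showing the isotropy subgroup is closed), and treat geometrically collapsed blow-downs via groupoid convergence and uniqueness of incomplete locally homogeneous Ricci flows (Appendix \ref{app_unique}). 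Your proposal does not address any of these points, and the claim that bracket bounds follow from the finiteness of the stratification and the compact range of $F$ is the precise step that would fail.
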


Recall that a Ricci flow solution is called \emph{homogeneous}
if it is homogeneous at any time, and immortal if it exists for all positive times.
Theorem \ref{mainconvergence}
 was known for low-dimensional homogeneous spaces \cite{IJL06}, \cite{Lot07},
for nilpotent Lie groups \cite{nilRF} and for a special
class of solvable Lie groups \cite{Arroyo2013}. 
Notice that in the immortal case the limit soliton of a non-flat solution can be flat: see \cite{IJ92}.

For an immortal homogeneous Ricci flow solution $\big(M^n,(g(t))_{t \in [0,\infty)} \big)$ and 
any sequence of times $(s_k)_{k \in \NN}$  converging to infinity 
we set $g_k(t):=\tfrac{1}{s_k}\cdot g(s_k\cdot t)$. If the 
sequence $\big(M^n,(g_k(t))_{t \in [0,\infty)}\big)_{k \in \NN}$ of \emph{blow-downs} 
is non-collapsed, 
convergence means pointed smooth Chee\-ger-Gromov convergence to a space locally isometric to the stated limit;
otherwise convergence has to be understood in the sense of Riemannian groupoids: see  \mbox{Section \ref{sec:conv}} for details. Remarkably,
even in this case, the a priori only locally defined limit soliton can 
be extended to a globally homogeneous Ricci soliton by Theorem \ref{thm_lochomsolitons}.

A \emph{Ricci soliton} is a Ricci flow solution which is self-similar up to scaling. 
Expanding Ricci solitons can be characterized by the fact that they
emerge from a  metric $g$ (also called Ricci soliton) satisfying
$\ric(g) +\mathcal{L}_X g = - g$.
Here, $\mathcal{L}_X g$ denotes the Lie derivative of $g$
in the direction of a smooth vector field $X$ on $M^n$.
 
For expanding  homogeneous Ricci solitons there
exists already a well-developed structure theory: see \cite{Heber1998}, \cite{standard}, \cite{Nik11}, \cite{Jbl13b}, \cite{LafuenteLauret2014a}, \cite{LafuenteLauret2014b}, \cite{Jbl2015}, \cite{HePtrWyl}, \cite{GJ15}, \cite{JblPet14}, \cite{AL16}.
By  \cite{PW}, these solitons 
are either the product of a negative Einstein metric  
and a flat Euclidean factor, or  \emph{non-gradient} 
solitons. The latter provide the main difficulty for proving Theorem 1, since it seems that the monotone quantities described by Perelman cannot detect them.

Any homogeneous Ricci flow solution on a homogeneous space covered by $\RR^n$ is immortal \cite{scalar}, and these in fact  are  all known examples. Notice also
that already $\RR^3$ is not only a solvable Lie group
in uncountably many algebraically  distinct ways \cite{Bian1898}, but
 also a semisimple Lie group. By taking semi-direct products one obtains many algebraically 
 different Lie group structures on $\RR^n$.
Since  by \mbox{Theorem \ref{thm_noncollapsing}}
homogeneous Ricci flow solutions 
on  \mbox{$\RR^n$} are non-collapsed, and by Theorem \ref{thm_chgr} the limit is again a solution on $\RR^n$,
from Theorem \ref{mainconvergence} we  immediately deduce

\begin{maincor}\label{mainconvRn}
For  a homogeneous Ricci flow
solution on $\RR^n$,
any sequence of blow-downs subconverges in pointed Cheeger-Gromov topology to an
  expanding  homogeneous Ricci soliton on $\RR^n$.
\end{maincor}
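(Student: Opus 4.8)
The plan is to derive Corollary \ref{mainconvRn} from Theorem \ref{mainconvergence} together with the three auxiliary results already cited in the excerpt (Theorems \ref{thm_noncollapsing}, \ref{thm_chgr} and \ref{thm_lochomsolitons}), so that the only thing to check is that the abstract Cheeger-Gromov/groupoid statement of Theorem \ref{mainconvergence} actually produces a \emph{genuine} pointed Cheeger-Gromov limit which again lives on $\RR^n$. First I would recall from \cite{scalar} that a homogeneous Ricci flow solution $(g(t))_{t\in[0,\infty)}$ on $\RR^n$ is immortal, so Theorem \ref{mainconvergence} applies: any sequence of blow-downs $g_k(t)=\tfrac1{s_k}g(s_k t)$, with $s_k\to\infty$, subconverges to an expanding homogeneous Ricci soliton $g_\infty$ in the sense made precise in Section \ref{sec:conv}.

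Next I would upgrade this abstract convergence to pointed Cheeger-Gromov convergence. The point is that $\RR^n$ is diffeomorphic to a simply connected solvable (indeed, in many ways, semidirect-product) Lie group, but it is also possible for such a homogeneous Ricci flow on $\RR^n$ to collapse along a blow-down sequence; that is exactly the a priori obstruction. This is ruled out by Theorem \ref{thm_noncollapsing}, which asserts precisely that homogeneous Ricci flow solutions on $\RR^n$ are non-collapsed. Hence the blow-down sequence $\big(\RR^n,(g_k(t))_{t\in[0,\infty)}\big)_{k\in\NN}$ is non-collapsed, and by the discussion preceding Corollary \ref{mainconvRn} (the sentence "If the sequence of blow-downs is non-collapsed, convergence means pointed smooth Cheeger-Gromov convergence\ldots"), a subsequence converges in the pointed smooth Cheeger-Gromov topology to a space locally isometric to the expanding homogeneous Ricci soliton $g_\infty$.

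It then remains to identify the underlying manifold of the limit. Here I would invoke Theorem \ref{thm_chgr}, which says that the Cheeger-Gromov limit of homogeneous Ricci flow solutions on $\RR^n$ is again a solution on $\RR^n$; combined with Theorem \ref{thm_lochomsolitons}, the a priori only locally homogeneous limit soliton extends to a globally homogeneous expanding Ricci soliton structure on $\RR^n$. Putting these together: the subsequence of blow-downs converges in pointed Cheeger-Gromov topology to an expanding homogeneous Ricci soliton whose underlying manifold is $\RR^n$, which is the assertion of Corollary \ref{mainconvRn}.

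The only genuine obstacle in this deduction — everything else being a packaging of quoted results — is the non-collapsing input, i.e.\ Theorem \ref{thm_noncollapsing}: without it the limit would only be a Riemannian groupoid and one could not speak of a pointed Cheeger-Gromov limit on $\RR^n$ at all. Since that theorem is stated earlier in the excerpt and may be assumed, the corollary follows. A small caveat worth spelling out in the write-up: "locally isometric to $g_\infty$" only upgrades to "isometric, on $\RR^n$" after using Theorems \ref{thm_chgr} and \ref{thm_lochomsolitons}; one should not claim the strong statement directly from Theorem \ref{mainconvergence} alone.
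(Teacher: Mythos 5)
Your proposal is correct and follows essentially the same route as the paper, which deduces the corollary immediately from Theorem \ref{mainconvergence} together with the non-collapsing statement of Theorem \ref{thm_noncollapsing} (so that Hamilton's compactness gives genuine pointed Cheeger-Gromov convergence) and Theorem \ref{thm_chgr} (so that the limit manifold is again $\RR^n$). Your added remarks on immortality via \cite{scalar} and on upgrading ``locally isometric'' via Theorem \ref{thm_lochomsolitons} are consistent with the paper's reasoning and do not change the argument.
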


The homogeneous structure will in general change in the limit: see Example \ref{ex_algcollapse}.
Let us also mention that Corollary \ref{mainconvRn} would cover all immortal homogeneous Ricci flow solutions, provided there is an affirmative answer to the following

\begin{problem}
Show that the universal covering space of an immortal homogeneous Ricci flow
solution is diffeomorphic to $\RR^n$.
\end{problem}

Since any homogeneous Einstein metric with negative Einstein constant yields
an immortal Ricci flow solution, the simply-connected case of the Alekseevskii conjecture for non-compact 
homogeneous Einstein spaces would follow \cite[7.57]{Bss}.

In some cases,
next to the  existence of a homogeneous limit soliton its uniqueness could be established 
 \cite{IJL06}, \cite{Lot07}, \cite{nilRF}, \cite{Jbl11}. 
Using Theorem \ref{mainconvergence}, in \cite{BL2} we generalize these uniqueness results
to a large class of solvable Lie groups containing all Einstein solvmanifolds and solvsolitons.
We  prove there that on a solvable Lie group of real type, the limit soliton does not depend on the initial metric. For Einstein solvmanifolds, the convergence can be improved to $C^\infty$, and as a consequence one gets a dynamical proof of a result of Jablonski and Gordon on the maximal symmetry of Einstein solvmanifolds \cite{GJ15}.

We turn now to the proof of 
\mbox{Theorem \ref{mainconvergence}}, assuming that the underlying homogeneous space is  a simply-connected Lie group $\GG$, and that the sequence of blow-downs is non-collapsed,
 that is  ${\rm inj}(M^n,g_k(1))\geq \delta>0$ for all $k \in \NN$.  
 For a homogeneous Ricci flow solution
\begin{eqnarray}
    \frac{\dd  g(t)}{\dd t}  = - 2 \ric(g(t)), \qquad g(0) = g_0, \label{eqn_ricciflow}
\end{eqnarray}
we have that $G \subset \Iso(M^n,g(t))$ for all $t$. As a consequence, 
 the Lie algebra $\ggo$ of $G$ can be identified with a Lie algebra of Killing fields on $M^n$, and the Lie bracket $\hml:\ggl \wedge \ggl \to \ggl$ is given by the usual Lie bracket  of smooth vector fields on $M^n$.

 By applying Uhlenbeck's trick of moving  frames, we pull-back $\hml$ and obtain a smooth curve in the space of brackets  $\Vg :=\Lambda^2(\ggl^*) \otimes \ggl$. 
Next, we break symmetry by choosing a left-invariant background   metric $\bar g$ on $\GG$, 
which gives rise to a scalar product $\ip$ on $\ggl$ and hence also on $\Vg$ and $\End(\ggl)$.
Then, we show  that the \emph{bracket flow}  
\[
    \frac{ \dd \mu(t)}{\dd t} = - \pi\left( \Ricci_{\mu(t)}\right) 
         \mu(t) \,, \qquad \mu(0) = \mu_0,
\]
on the orbit $\Ol :=\Gg \cdot \hml$ is equivalent to the homogeneous Ricci flow:
 see Theorem \ref{thm:Riccibracket}. Here,
the linear action of $\Gg$ on $\Vg$, defined in (\ref{def_muaction}), induces
the Lie algebra representation $\pi:T_{\Id} \Gg  \to \End(\Vg)$, defined in (\ref{eqn_defpib}).

The bracket flow was introduced by Lauret in \cite{homRF} for globally homogeneous spaces, where also the precise correspondence between homogeneous metrics and brackets was described: see Proposition \ref{prop:metricbracket}. Notice that the bracket of a homogeneous space determines its
Levi-Civita connection, and vice versa.

About ten years earlier, Lauret observed in \cite{Lau2003}
 that the (scale-invariant)
 \emph{moment map} $\mmm :\Vg\backslash \{0\} \to \Symg$ 
associated to the above $\Gg$-action on $\Vg$ is a natural part of the Ricci endomorphism 
\[
\Ricci_\mu =   
     \mm_\mu
      - \unm \kf_\mu 
   - S(\ad_\mu \mcv_\mu)= \Riccim_\mu  - S(\ad_\mu \mcv_\mu)
\] 
of a bracket $\mu \in \Vg$, since $\mm_\mu = \tfrac{1}{4}\cdot \mmm(\mu) \cdot \Vert \mu \Vert^2 $; 
 see (\ref{eqn_FormulaRicci}) and Lemma 
\ref{lem_formulabigmm}. The  \emph{modified
Ricci curvature} $\Riccim_\mu$, introduced in \cite{Heber1998},
has the following property:
If $g$ is the homogeneous metric corresponding to $\mu$, then
 $\Riccim(g)$ is orthogonal to the $\Aut(G)$-orbit of $g$ in the
 space  $\mca_G$ of left-invariant metrics on $\GG$. Recall here, that $\Aut(G)$
consists precisely of those diffeomorphisms of
$\GG$, which preserve  $\mca_G$.
As expected, it can be shown that the bracket flow and the \emph{unimodular} bracket flow 
are equivalent: see Corollary \ref{cor_uhl1}.

We turn now to  real geometric invariant theory.
 A first important fact is the existence of a Morse-type
stratification of $\Vg$ into finitely many strata, introduced in \cite{Krw1},
\cite{Ness} in the complex case. Each stratum is the unstable manifold
of a critical set of the negative gradient flow of the \emph{energy function}
$\normmm(\mu)=\Vert\mmm(\mu)\Vert^2$. Remarkably, these strata turn out to be 
$\Gg$-invariant. As a consequence,  the orbit $\Ol$ is  contained in a single 
stratum $\sca_\Beta$,  labeled by a self-adjoint endomorphism 
$\Beta \in \Symg$: see  \mbox{Theorem \ref{thm_stratifb}}.
Real geometric invariant theory is  well-understood thanks to \cite{RS90}, \cite{Mar2001}, \cite{HSS}, \cite{EbJbl09}, \cite{standard}, among others: see also  \cite{GIT} 
for a fully self-contained proof of many of these results in the case of linear actions.

The $\Og$-equivariance of the unimodular 
bracket flow allows us to introduce the \emph{gauged} unimodular bracket flow, which is 
again equivalent to the bracket flow, but lives
on the much smaller orbit $Q_\Beta \cdot \hml \subset \Ol$: see Corollary \ref{cor_gauge}.
The parabolic subgroup $Q_\Beta \subset \Gg$ is 
naturally associated to the stratum $\sca_\Beta$: 
see \mbox{Definition \ref{def_groups1b}}. This should be interpreted as 
new first integrals for the bracket flow 
coming from a refinement of Uhlenbeck's trick by choosing the moving orthonormal frames 
in a clever way.
The huge advantage of  the smaller orbit $Q_\Beta \cdot \hml$ is that
geometric invariant theory provides us with the new curvature estimate
\[
 0 \leq  \big\la \mm_\mu , \Betap \big\ra =
   \big\la \Riccim_\mu,\Betap \big\ra =  \big\la \Riccim_\mu,\Beta \big\ra + \scalm(\mu)\cdot \Vert \Beta \Vert^2\,.
\]
Here we used $ \la  \kf_\mu, \Betap \ra=0$: see Corollary \ref{cor_refinedbetaplus}.
If $\scalm(\mu) \leq 0$ (which holds along an immortal solution by Lemma \ref{lem_scalmev}) then Cauchy-Schwarz inequality yields   
\begin{eqnarray}
	 \Vert \Riccim_\mu\Vert \geq  \vert {\scalm (\mu)} \vert \cdot \Vert \Beta \Vert  \,,\label{eqn_estricbetaintro}
\end{eqnarray}
with equality if and only if 
$\Riccim_{\mu} = -\scalm(\mu) \cdot \Beta$ and $\mu \in \sca_\Beta$: see  Corollary \ref{cor_mainestimate}  (recall that $\tr \Beta = -1$ by Remark \ref{rem_git1}). We show in \mbox{Theorem \ref{thm_lochomsolitons}} that these conditions characterize expanding homogeneous Ricci solitons.




In Section \ref{sec:lyap} we introduce the \emph{$\Beta$-volume-normalized} modified scalar curvature 
\[
  F_\Beta (\mu) =(v_\Beta(\mu))^2 \cdot \scalm( \mu )\,,
\]
which is scale-invariant, and along a non-flat immortal solution $(\bar \mu(t))_{t \in [0,\infty)}$ to the gauged  unimodular bracket flow evolves by
\begin{equation*}
    \frac{ \dd \, F_\Beta(\bar \mu(t))}{\dd t}  =
     2 \cdot \vert F_\Beta(\bar \mu(t)) \vert \cdot \Big( 
     \tfrac{ \Vert { \Riccim_{\bar \mu(t)}  } \Vert ^2}{\vert \scalm({\bar \mu(t)})\vert} 
      -  \big\la \Riccim_{\bar \mu(t)}, \Beta \, \big\ra \Big).
\end{equation*}
Using again Cauchy-Schwarz, together with the above formula and \eqref{eqn_estricbetaintro}, we  prove

\begin{mainthm}\label{thm_mainlyap}
For a non-flat, immortal homogeneous Ricci flow $(M^n, (g(t))_{t\in [0,\infty)})$ there exists an associated flow of brackets $(\bar \mu(t))_{t\in [0,\infty)}$ in a stratum $\sca_\Beta$ along which $F_\Beta$ is non-decreasing. Moreover, $F_\Beta$ is strictly increasing, unless $(M^n \! ,g(0))$ is locally isometric to an expanding homogeneous Ricci soliton with $\Riccim_{\mu} = -\scalm(\mu) \cdot \Beta$.
\end{mainthm}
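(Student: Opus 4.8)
The plan is to assemble the pieces that the excerpt has already laid out and verify that they fit together. First I would reduce Theorem~\ref{thm_mainlyap} to the case of the gauged unimodular bracket flow: by the equivalences recorded in Theorem~\ref{thm:Riccibracket}, Corollary~\ref{cor_uhl1} and Corollary~\ref{cor_gauge}, a non-flat immortal homogeneous Ricci flow gives rise to an immortal solution $(\bar\mu(t))_{t\in[0,\infty)}$ to the gauged unimodular bracket flow lying in a single stratum $\sca_\Beta$ (using the $\Gg$-invariance of the strata from Theorem~\ref{thm_stratifb}, so that $\Ol\subset\sca_\Beta$ and hence $\bar\mu(t)\in\sca_\Beta$ for all $t$). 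Since the solution is immortal, Lemma~\ref{lem_scalmev} gives $\scalm(\bar\mu(t))\le 0$ for all $t$; and because the flow is non-flat, $\scalm(\bar\mu(t))<0$ along the whole solution (a bracket with $\scalm=0$ is flat, and flatness is preserved), so $|\scalm(\bar\mu(t))|=-\scalm(\bar\mu(t))>0$ and the quotient appearing in the evolution formula is well defined.

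Next I would combine the evolution equation for $F_\Beta$ displayed just before the theorem with the curvature estimate \eqref{eqn_estricbetaintro}. The evolution formula reads
\[
  \frac{\dd\,F_\Beta(\bar\mu(t))}{\dd t}
   = 2\,\vert F_\Beta(\bar\mu(t))\vert\cdot\Big(
     \tfrac{\Vert\Riccim_{\bar\mu(t)}\Vert^2}{\vert\scalm(\bar\mu(t))\vert}
     - \big\la\Riccim_{\bar\mu(t)},\Beta\big\ra\Big).
\]
I claim the bracketed factor is $\ge 0$. Indeed, Cauchy--Schwarz gives $\la\Riccim_\mu,\Beta\ra\le\Vert\Riccim_\mu\Vert\cdot\Vert\Beta\Vert$, and \eqref{eqn_estricbetaintro} (valid since $\scalm\le 0$) gives $\Vert\Beta\Vert\le\Vert\Riccim_\mu\Vert/\vert\scalm(\mu)\vert$; multiplying these and substituting yields
\[
  \big\la\Riccim_{\bar\mu},\Beta\big\ra
   \le \Vert\Riccim_{\bar\mu}\Vert\cdot\Vert\Beta\Vert
   \le \frac{\Vert\Riccim_{\bar\mu}\Vert^2}{\vert\scalm(\bar\mu)\vert},
\]
so the factor in parentheses is nonnegative and $\tfrac{\dd}{\dd t}F_\Beta(\bar\mu(t))\ge 0$; i.e.\ $F_\Beta$ is non-decreasing.

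For the rigidity statement I would track exactly when the derivative vanishes. The bracketed factor is zero at time $t$ only if equality holds in \emph{both} of the inequalities above: equality in $\la\Riccim_{\bar\mu},\Beta\ra=\Vert\Riccim_{\bar\mu}\Vert\cdot\Vert\Beta\Vert$ forces $\Riccim_{\bar\mu}$ and $\Beta$ to be positively proportional, and equality in \eqref{eqn_estricbetaintro} forces (by Corollary~\ref{cor_mainestimate}, using $\bar\mu(t)\in\sca_\Beta$ which is already known) that $\Riccim_{\bar\mu(t)}=-\scalm(\bar\mu(t))\cdot\Beta$. So if $F_\Beta$ fails to be strictly increasing, there is some time $t_0$ with $\Riccim_{\bar\mu(t_0)}=-\scalm(\bar\mu(t_0))\cdot\Beta$, and then Theorem~\ref{thm_lochomsolitons} identifies the corresponding homogeneous metric $g(t_0)$ as (locally isometric to) an expanding homogeneous Ricci soliton with $\Riccim_\mu=-\scalm(\mu)\cdot\Beta$; rescaling/time-translation invariance of $F_\Beta$ and of this soliton condition then lets one transport the conclusion to $t=0$. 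The one point needing care—and the main obstacle—is the logical passage from ``the time-derivative of $F_\Beta$ vanishes at a single instant'' to ``the solution is a soliton'': a priori $\tfrac{\dd}{\dd t}F_\Beta$ could be zero at $t_0$ without the flow being self-similar. The resolution is that the soliton condition $\Riccim_{\bar\mu(t_0)}=-\scalm(\bar\mu(t_0))\cdot\Beta$ at one time, together with the structure theory invoked in Theorem~\ref{thm_lochomsolitons}, already forces the whole solution to be self-similar (a homogeneous Ricci flow whose bracket is a critical point of the relevant functional stays on the $\Gg$-orbit through a soliton bracket); alternatively one argues that on the set where the bracketed factor vanishes one has $\tfrac{\dd}{\dd t}F_\Beta\equiv 0$ identically, so non-strictness at one time propagates, and then invokes Theorem~\ref{thm_lochomsolitons} at $t=0$. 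Either way, the crux is checking that the pointwise equality case of the two Cauchy--Schwarz estimates is genuinely equivalent, via Corollaries~\ref{cor_mainestimate} and~\ref{cor_refinedbetaplus} and Theorem~\ref{thm_lochomsolitons}, to the soliton condition, and that this condition is time-independent along the flow.
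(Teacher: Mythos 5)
Your argument follows the paper's own proof (Theorem \ref{thm_lyapunov}) essentially step for step: pass to the gauged unimodular bracket flow via Theorem \ref{thm:Riccibracket} and Corollaries \ref{cor_uhl1}, \ref{cor_gauge}, combine the evolution formula \eqref{eqn_Fbetanondec} with Corollary \ref{cor_mainestimate} and Cauchy--Schwarz to get monotonicity, and deduce rigidity from Theorem \ref{thm_lochomsolitons}, with the passage from equality at a single instant back to $t=0$ handled exactly as you indicate, namely by self-similarity of the soliton solution together with (backward) uniqueness for the bracket-flow ODE. One small repair: $\scalm(\mu)=0$ alone does not imply flatness; the strict negativity of $\scalm$ along a non-flat immortal solution comes from Lemma \ref{lem_scalmev} (monotonicity of $\scalm$ plus immortality forces $\Riccim_{\mu(t)}\equiv 0$ if $\scalm$ ever vanishes) combined with Lemma \ref{lem_Ricciflat}.
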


Let us mention here that the $\Beta$-volume $v_\Beta$ can be computed explicitly: if $\mu$ corresponds to the left-invariant metric $g$, we 
may write $g(\cdot, \cdot) = \bar g(P \cdot,  \cdot)$ in terms of the background metric $\bar g$,
with $P = h^t h > 0$, $h\in \Gg$ lower-triangular with eigenvalues $h_1, \ldots, h_n > 0$.
We can assume in addition, that $\Beta\in \Sym(\ggo)$ is diagonal,
 with eigenvalues $\Beta_1, \ldots, \Beta_n$.  Then, $v_\Beta$ is given by
\[
	v_\Beta(\mu) = h_1^{-\Beta_1} \cdot \,\cdots \,  \cdot h_n^{-\Beta_n}\,.
\]
For instance, 
for a semisimple $\GG$ we have $\Beta =-1/n \cdot \Id_\ggo$, and in this case $F_\Beta$ is
nothing but the volume-normalized scalar curvature functional. 



After Theorem \ref{thm_mainlyap}, the hope would be to show that the sequence $(\mu_k)_{k \in \NN}$ of brackets corresponding to
$(M^n,g_k(1))_{k \in \NN}$ subconverges to a limit bracket, which corresponds to an expanding soliton.  Since immortal homogeneous Ricci flow solutions are of Type-III  \cite{Bhm2014},
any sequence of blow-downs subconverges to an immortal homogeneous limit solution
by 
Hamilton's compactness theorem. 
Moreover, there exists always a sequence of spanning, appropriately rescaled, 
Killing vector fields along
$(M^n,g_k(1))_{k \in \NN}$ that subconverge to limit Killing vector fields \cite{Heber1998}.
However, it may now happen, that these limit Killing fields
do not span the tangent space of the limit manifold anymore, a phenomenon we
call {\it algebraic collapse}.
A first geometric consequence is that 
the dimension of the isometry group ``jumps'' in the limit.
On the algebraic side, algebraic collapse turns out
to be equivalent to unboundedness of the sequence $(\mu_k)_{k \in \NN}$ in $\Vg$: 
see  \mbox{Section \ref{sec_algcon}}.
Notice that
algebraic collapse occurs already on $\Sl(2,\RR)$: see Example \ref{ex_algcollapse}. 

The way around algebraic collapse is to argue in two steps: for a given sequence $(s_k)_{k \in \NN}$ we
first extract
a subsequence of blow-downs, such that the dimension of the limit isometry group is maximal.
In a second step, for such a limit Ricci flow
it can be shown that the sequence of corresponding brackets is bounded, and that the blow-downs subconverge in Cheeger-Gromov topology to an expanding homogeneous Ricci soliton.
The crucial results are provided in  \mbox{Theorem \ref{thm:Liebracketconvergence}},
Corollary \ref{cor:algcollapse} and  \mbox{Lemma \ref{lem_solitonbracket}}.
They relate the geometric convergence of Killing fields
to the convergence of abstract brackets, roughly saying that algebraically non-collapsed equivariant Cheeger-Gromov convergence implies convergence of the brackets.
Finally, since the limit of a limit is again a limit, Theorem \ref{mainconvergence} follows in this case.

In the general case,
if in the first step the sequence of blow-downs is geometrically collapsed,
 we cannot apply Hamilton's compactness theorem.
The way around this is to pull back the blow-downs to a disc of fixed radius in the tangent space, using the Type-III behavior: see Section \ref{sec:conv}.
In this way one obtains an incomplete, locally homogeneous
limit Ricci flow solution on that disc. 
We show in Appendix \ref{app_unique}, that even such incomplete
locally homogeneous Ricci flows are uniquely determined by their initial metric, which implies
that their local isometry group does not change over time,
and proceed as above. 

Finally, we would like to mention that the case of non-trivial isotropy 
group is considerably more difficult than the Lie group case, and that
also in the second step in the non-collapsed Lie group case discussed above one has to deal with that.

The article is organized as follows.
In Section \ref{sec_homricflow} we discuss the Ricci flow on locally homogeneous spaces, and in Section \ref{sec_BF} we introduce the bracket flow. 
The refinements  of Uhlenbeck's trick will be described in Section \ref{sec:uhlenbeck}.
In \mbox{Section \ref{sec_stratif}}, we collect important results in real geometric invariant theory.  
In \mbox{Section \ref{sec_strathom}} we prove special properties of the stratum of a homogeneous space.
The new curvature estimates for the Ricci curvature of
homogeneous spaces will be discussed in Section \ref{sec_newcurv}, and these will be used
in \mbox{Section \ref{sec:lyap}} to construct a scale-invariant Lyapunov function for the bracket flow, which
is constant precisely on expanding solitons, thus proving Theorem \ref{thm_mainlyap}. 
The proofs of Theorem \ref{mainconvergence} and Corollary \ref{mainconvRn} will be given in Section \ref{sec:conv}.
In \mbox{Section \ref{sec_algcon}}
we introduce equivariant Cheeger-Gromov convergence and discuss algebraic collapsing. 
In Appendix \ref{app_lochomog} we review the theory of locally homogeneous
spaces and prove algebraic structure results. Then in Appendix \ref{app_unique} we show uniqueness of locally homogeneous Ricci flows. In Appendix \ref{app_stratum} we determine the stratum corresponding to a Lie algebra in terms of the stratum of its nilradical. Finally, in Appendix \ref{app_chgr} we prove that a Cheeger-Gromov limit of homogeneous metrics on $\RR^n$ must be again a homogeneous metric on $\RR^n$.

\vs \noindent {\it Acknowledgements.} 
 It is a pleasure to thank Robert Bryant for sharing with us a beautiful proof of Lie's second theorem, and John Lott for his useful suggestions. We would also like to thank the referee for very helpful comments.


\section{Locally homogeneous Ricci flows}\label{sec_homricflow}

In this section we describe homogeneous Ricci flow solutions on 
locally homogeneous spaces. Unless otherwise stated, all Ricci flow solutions considered are \emph{maximal}, i.e.~ defined on a maximal time interval $[0,T)$. Our main result in this direction is the following

\begin{theorem}\label{thm:locRicflow}
Let $(M^n,g_0)$ be a locally homogeneous space, which is either complete or
diffeomorphic to a disk $D^n$. Then, there exists
a unique Ricci flow solution of locally homogeneous metrics on 
$M^n$ with initial metric $g_0$.
\end{theorem}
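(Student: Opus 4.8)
**The plan is to prove existence and uniqueness separately, reducing both to an ODE argument on a finite-dimensional space of metrics.**

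First I would set up the algebraic framework. A locally homogeneous space $(M^n, g_0)$ carries, by the standard theory (reviewed in Appendix~\ref{app_lochomog}), a transitive local action of a Lie algebra $\ggo$ of local Killing fields; fixing a point $p \in M^n$ this determines a reductive decomposition $\ggo = \hg \oplus \mg$ with $\mg \cong T_pM^n$, and $g_0$ corresponds to an $\Ad(\hg)$-invariant scalar product on $\mg$. The key point is that the Ricci tensor of \emph{any} locally homogeneous metric in this family is an algebraic (in fact rational, with nonvanishing denominator) function of the corresponding scalar product: this is the usual homogeneous Ricci curvature formula, and it depends only on the bracket $\mu^\ggo$ and the metric, not on the point. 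Hence the Ricci flow equation \eqref{eqn_ricciflow}, restricted to the finite-dimensional cone of $\Ad(\hg)$-invariant scalar products on $\mg$, becomes a smooth (real-analytic) autonomous ODE. By the Picard--Lindel\"of theorem there is a unique maximal solution $(g(t))_{t \in [0,T)}$ of this ODE with $g(0) = g_0$, and because the decomposition and the Killing fields are global data on $M^n$ (when $M^n$ is complete simply-connected, or is a disk $D^n$ on which the local Killing fields are globally defined), this family of scalar products integrates to an honest family of locally homogeneous metrics on $M^n$ solving Ricci flow. This gives existence.

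For uniqueness one must show that \emph{every} Ricci flow solution $(g(t))$ of locally homogeneous metrics with $g(0)=g_0$ arises this way, i.e.\ stays inside the finite-dimensional family and hence is forced to be the ODE solution. The natural argument: the local isometry group (equivalently, the Lie algebra of local Killing fields together with its isotropy) is a Riemannian invariant, so a priori it could \emph{jump} as $t$ varies; but one shows it is in fact \emph{constant} along a Ricci flow solution. One way is to observe that $g(t)$ is real-analytic in $t$ jointly with the spatial variables (Ricci flow with bounded curvature is analytic; in the locally homogeneous setting this is automatic), so the dimension of the space of Killing fields is lower-semicontinuous but by analyticity cannot drop on a dense set --- more robustly, one can directly propagate the Killing fields: if $X$ is a Killing field for $g_0$, the evolution equation shows $\mathcal{L}_X g(t)$ satisfies a linear parabolic equation with zero initial data, hence vanishes for all $t$, so $X$ remains Killing, and the local homogeneity (transitivity of these fields) persists. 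Thus $g(t)$ stays in the same finite-dimensional family, where the ODE solution is unique.

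**The main obstacle is the disk case $M^n \cong D^n$**, i.e.\ the \emph{incomplete, genuinely local} situation. There one cannot invoke global existence/uniqueness theorems for complete Ricci flow, and the local Killing fields need not integrate to a group action; one must work with the Lie algebra sheaf directly, carefully track that the reductive data extends across $D^n$, and verify that the propagation-of-Killing-fields argument --- which uses a parabolic maximum principle or energy estimate --- goes through on an incomplete manifold with the boundary behavior controlled by the homogeneous structure. This is essentially the content deferred to Appendix~\ref{app_unique}. The completeness hypothesis in the other case is used precisely to avoid this and to guarantee that the ODE solution, which exists as a family of scalar products on a fixed time interval, genuinely produces complete locally homogeneous Riemannian manifolds for each $t$ (completeness is preserved because the metrics remain locally homogeneous, hence homogeneous-by-covering, hence complete).
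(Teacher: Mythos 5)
Your existence argument coincides with the paper's: the Ricci flow of $\GG$-invariant metrics is a smooth ODE on the finite-dimensional cone $P(\mg)^{\Ad(H)}$ (this is Theorem \ref{thm:gRicflow}, with Lemma \ref{lem_correspGinvmetrics} and the real-analyticity result of \cite{BLS} supplying the integration of the scalar products back to metrics on the disk). In the complete case the paper simply invokes \cite{Kot} for both existence and uniqueness among bounded-geometry solutions, which is precisely the setting in which your propagation-of-Killing-fields argument (the linear parabolic equation for $\mathcal{L}_X g(t)$ with zero initial data) is legitimate. (As a side remark, your claim that local homogeneity forces completeness is false -- an incomplete conformal metric on $\RR$ is already a counterexample -- but this is not the main issue.)

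The genuine gap is uniqueness in the incomplete disk case, which is the real content of the theorem. Your ``more robust'' argument is itself a uniqueness statement for a linear parabolic PDE, and on an incomplete manifold there is no maximum principle or energy estimate available; you cannot ``control the boundary behavior by the homogeneous structure'', because for a competing locally homogeneous solution you do not know a priori that $X$ remains Killing for $g(t)$, nor that its local isometry algebra at time $t>0$ bears any relation to $\ggo$ -- that is exactly what has to be proved. The paper states explicitly that Kotschwar's method uses completeness in an essential way and does not apply here, and Appendix \ref{app_unique} is \emph{not} an adaptation of the parabolic argument: it is an algebraic argument based on Singer's theorem \cite{Singer1960}. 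One encodes $g$ by the jet $w_g=((\nabla^k\Riem_g)_p)_{k\le n(n-1)/2}$, observes that the isotropy algebra $\hg_g$ is the annihilator of $w_g$ in $\sog(T_pM)$ and hence depends smoothly on $g$ as long as its dimension does not jump (Lemma \ref{lem_hgsmooth}), that the full Killing algebra then also depends smoothly on $g$ via the Ambrose--Singer difference tensor (Lemma \ref{lem_ggosmooth}), and that $\dim\ggo_{g(t)}$ is upper semicontinuous in $t$. On the open set where this dimension is minimal one compares the first-order variation of $\hg_{g(t)}$ and $\ggo_{g(t)}$ with the canonical $\GG$-invariant ODE solution, which has the same initial metric and time derivative but constant Killing data, to conclude that the Killing algebra is constant and the given solution coincides with the ODE solution there; a dimension jump at the endpoint of that interval is then excluded because $S^2(T_pM)^{\Ad(H_b)}$ is an invariant submanifold of the ODE (Theorem \ref{thm_uniqueRf}). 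Without an argument of this kind, or a genuinely new way to run a parabolic estimate on the incomplete disk, your uniqueness step does not close.
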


For complete initial metrics, existence and uniqueness follow from well-known general results on complete Ricci flow solutions with bounded geometry \cite{Kot}. 
In the incomplete case this follows from Theorem \ref{thm:gRicflow}, where
we also show that the homogeneous structure does not change.


\begin{definition}\label{def_lochom}
A connected Riemannian manifold  $(M^n, g)$ is called 
 a \emph{locally homogeneous space}, if for all  
 $p,q \in M^n$ there exists a local isometry $f:B_\epsilon(p)\to B_\epsilon(q)$ for some $\epsilon=\epsilon_{p,q} >0$ 
 with $B_\epsilon(p),B_\epsilon(q) \subset M^n$ being open distance balls in $M^n$. 
 A locally homogeneous space is called \emph{globally homogeneous}, or \emph{homogeneous} for short, if for all 
\mbox{$p,q \in M^n$} there exists an isometry $f:M^n \to M^n$ with $f(p)=q$. 
\end{definition}


Let $(M^n,g)$ be a locally homogeneous space and $p \in M^n$. 
Then by \cite{Nomizu1960} there exists a Lie algebra $\ggl$ of Killing vector fields defined on an open neighbourhood of $p$, which span $T_p M^n$.

\begin{definition}\label{def_transitive}
Let $(M^n, g)$ be a locally homogeneous space, which is either complete
or diffeomorphic to a disk $D^n$. Then $(M^n,g)$ is called a  \emph{$\ggl$-homogeneous space}, if there exists a Lie algebra $\ggl$ of Killing vector fields on $M^n$, 
which span the tangent space of $M$ at any point. Such a Lie algebra $\ggl$ is called \emph{transitive on $M^n$}. 
\end{definition}

We denote a $\ggl$-homogeneous space by $(M^n,g,\ggl)$ and
a \emph{pointed $\ggl$-homogeneous space} by  $(M^n,g,p,\ggl)$,
where $p \in M^n$.  In case we do not want to specify a 
homogeneous metric $g$ on $M^n$, we simply write $(M^n,\ggl)$ or $(M^n,p,\ggl)$.
Any globally homogeneous space is of course a $\ggl$-homogeneous space.
Moreover,  if $(M^n,g)$ is incomplete but diffeomorphic to a disc,
then this is also true by \cite[Thm.~1]{Nomizu1960}.

\begin{remark}
A compact hyperbolic manifold is locally homogeneous but not $\ggo$-ho\-mo\-geneous, since by Bochner's theorem a Riemannian manifold with negative Ricci curvature
cannot admit globally non-trivial Killing fields.
\end{remark}

The second fundamental Theorem of Lie yields on $(M^n,g,\ggl)$
a local action of the connected, simply-connected Lie group $\GG$ with
Lie algebra $\ggl$: see Theorem \ref{thm_Lie}.
That is, there exists an open
neighbourhood $U$ of $\{e\} \times M^n$ in $\GG \times M^n$ and a smooth local
multiplication map $\Phi:U \to M^n$, such that $\Phi(hg,m)=\Phi(h,\Phi(g,m))$ whenever 
this makes sense. Moreover, the local flows of right invariant vector fields
on $\GG$ and the local flows of the smooth Killing vector fields in $\ggl$ on $M^n$  
correspond to each other. As a consequence, $\GG$ acts locally transitively 
and isometrically on $(M^n,g)$. 
Let us mention, that if $M^n$ is simply connected and complete,
then $\GG$ acts transitively and isometrically on $M^n$: 
see \mbox{Theorem 7} in \cite{Nomizu1960}.

We now fix a point $p \in M^n$. If $(M^n,g)$ is globally homogeneous,
then 
\[
  H:=\{g \in \GG: g \cdot p=p\}
\] 
denotes the isotropy subgroup at $p$, which is
closed in $G$. We denote
by 
\[
  \hg:=T_e H=\{ X \in \ggl: X(p)=0\}
\] 
the isotropy subalgebra at $p$.  As is well-known, $M^n$ and $\GG/H$ are
diffeomorphic.

If $(M^n,g)$ is not globally homogeneous, we also define $\hg$ as above. It is clear that in this case $\hg$ is still a subalgebra of $\ggl$.
We denote by $H$ the corresponding connected Lie subgroup of $\GG$.
As a consequence, we are either in the globally homogeneous
case where $H$ is closed, or $H$ is a non-closed, connected Lie
subgroup of $\GG$.

In the next step we are going to describe the space of $\GG$-invariant metrics on a
$\ggl$-homogeneous space $(M^n,\ggl)$. These are by definition those metrics
for which $\GG$ acts by local isometries. 
To this end, we denote by 
\[
	\hml : \ggl \wedge \ggl \to \ggl \, ;  \qquad X \wedge Y \mapsto [X,Y],
\] 
the usual Lie bracket in $\ggl$ of smooth Killing vector fields on $M^n$. The \emph{Killing form} of $\ggl$ is the bilinear form defined by
\begin{equation}\label{eqn_defKF}
  \kf_{\hml}:\ggl \times \ggl \to \RR\,\,; \quad
  (X,Y)\mapsto \tr_\ggl \big(\ad_{\hml}\!X \cdot \ad_{\hml}\!Y\big)\,,
\end{equation}
where $(\ad_\hml \!X) ( \, \cdot \,) := \hml(X,\, \cdot \, )$ is the usual adjoint map.

\begin{definition}[Canonical decomposition]\label{def_can}
Let $(M^n,p,\ggl)$ be a pointed 
$\ggl$-homo\-geneous space and let $\mg$ denote the orthogonal complement of 
$\hg$ in $\ggl$ with 
respect to the Killing form $\kf_\hml$. Then $\ggl=\hg \oplus \mg$
is called the \emph{canonical reductive decomposition} of $\ggl$ (or \emph{canonical decomposition} for short).
\end{definition}

By Lemma \ref{lem_Kill} the Killing form $\kf_{\hml}$ is negative definite
on $\hg$, thus $\mg$ is well-defined. 
Since $\kf_\hml$ is invariant under automorphisms of
$\ggo$, and since  $\Ad(h) (\hg) \subset \hg$ 
for $h \in H$, the canonical complement $\mg$ is also $\Ad(H)$-invariant.
It follows that the canonical decomposition is indeed reductive: $[\hg,\mg] \subset \mg$.


\begin{lemma}\label{lem_correspGinvmetrics}
Let $(M^n,p,\ggl)$ be a $\ggl$-homogeneous space.
Then, there is a one-to-one correspondence between the set of $\GG$-invariant metrics on $M^n$ and
the set of $\Ad(H)$-invariant scalar products on $\mg$.
\end{lemma}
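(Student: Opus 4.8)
The plan is to establish the correspondence in both directions and then check it is a bijection. First I would fix the canonical reductive decomposition $\ggl = \hg \oplus \mg$ from Definition \ref{def_can}, and identify $T_p M^n$ with $\mg$ via the evaluation map $X \mapsto X(p)$, which is a linear isomorphism when restricted to $\mg$ since $\ggl$ is transitive on $M^n$ and $\hg = \{X \in \ggl : X(p) = 0\}$. Given a $\GG$-invariant metric $g$ on $M^n$, pulling back $g_p$ under this isomorphism yields a scalar product $\la\cdot,\cdot\ra_\mg$ on $\mg$; the key point is that this scalar product is $\Ad(H)$-invariant, which follows because the $\GG$-action on $(M^n,g)$ is by local isometries, so the isotropy action of $h \in H$ on $T_p M^n$ is orthogonal, and under the identification with $\mg$ this isotropy action corresponds to $\Ad(h)|_\mg$ (using that $\mg$ is $\Ad(H)$-invariant). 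Here one uses that the differential of the action of $h$ at the fixed point $p$ intertwines the evaluation isomorphism with $\Ad(h)$, a standard computation with the flows of Killing fields.

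Conversely, given an $\Ad(H)$-invariant scalar product $\la\cdot,\cdot\ra_\mg$ on $\mg$, I would define a metric on $M^n$ by transporting it around using the local transitive action: at a point $q$ near $p$, write $q = \Phi(a, p)$ for some $a \in \GG$ (using the local action $\Phi$ from Lie's second theorem, cf.\ Theorem \ref{thm_Lie}), and declare $g_q := (\dd\Phi_a)_* \la\cdot,\cdot\ra_\mg$ after the identification $T_p M^n \cong \mg$. The well-definedness of this is exactly where $\Ad(H)$-invariance is needed: if $\Phi(a,p) = \Phi(b,p)$ then $b^{-1}a$ lies in $H$ (at least infinitesimally/locally), and the ambiguity in the transported inner product is precisely an $\Ad(b^{-1}a)$-twist, which vanishes by hypothesis. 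One then checks the resulting tensor field is smooth (smoothness of $\Phi$) and $\GG$-invariant by construction; in the globally homogeneous case $M^n = \GG/H$ this is the classical construction, and in the locally homogeneous disc case one works locally and uses that $M^n$ is covered by a single such chart, or patches via the local action.

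Finally I would verify the two constructions are mutually inverse: starting from $g$, restricting to get $\la\cdot,\cdot\ra_\mg$, and transporting back returns $g$ because $g$ was already $\GG$-invariant, so it is determined by its value at $p$; and starting from a scalar product, building $g$, and restricting to $\mg$ returns the original scalar product by construction at $p$. This gives the claimed bijection.

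The main obstacle I expect is the well-definedness of the ``transport'' construction in the non-globally-homogeneous case, where $H$ need not be closed and $M^n$ is only a disc carrying a local $\GG$-action rather than a genuine homogeneous space $\GG/H$. One must be careful that $\Phi$ is only defined on a neighbourhood $U$ of $\{e\}\times M^n$, that the isotropy of a point is controlled (so that two group elements moving $p$ to the same point differ by something in $H$), and that the $\Ad(H)$-invariance hypothesis genuinely kills the resulting ambiguity; this is where one leans on the precise statement of Lie's second theorem and on \cite{Nomizu1960}. The globally homogeneous case, by contrast, is essentially the standard fact that $\GG$-invariant metrics on $\GG/H$ correspond to $\Ad(H)$-invariant inner products on $\mg$, and requires no new work.
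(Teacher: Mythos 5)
Your forward direction (restrict $g$ to $T_pM^n\simeq\mg$ and check that the local isotropy action becomes $\Ad(H)|_\mg$) is exactly what the paper does, and in the globally homogeneous case your converse is the standard construction, also as in the paper. The problem is the converse in the incomplete, locally homogeneous case, which is the only non-classical content of the lemma, and there your argument has a genuine gap rather than just an ``obstacle to be careful about''. The local action $\Phi$ is only defined on a neighbourhood of $\{e\}\times M^n$, so a point $q$ at finite distance from $p$ in the disc cannot in general be written as $\Phi(a,p)$ for a single $a$; one must transport the scalar product along chains of local moves. Well-definedness then requires that every ``return map'' obtained from such a chain act on $T_pM^n\simeq\mg$ by the isotropy representation of an element of the \emph{connected} subgroup $H$. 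What one gets for free is only that the composite pushes Killing fields forward by $\Ad(g)$ for the product $g$ of the chain, with $\Ad(g)\hg=\hg$; when $H$ is not closed, such a $g$ need not lie in $H$, and an $\Ad(H)$-invariant scalar product need not be invariant under $\Ad(g)|_\mg$ (elements of $\overline H\setminus H$, for instance, are not covered by the hypothesis). Your parenthetical ``$b^{-1}a$ lies in $H$ (at least infinitesimally/locally)'' is correct only for $a,b$ near $e$, i.e.\ on a small ball around $p$; the passage from that small ball to all of $D^n$ is precisely what is not justified, and ``patches via the local action'' restates the difficulty rather than resolving it.

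The paper closes exactly this step by a different mechanism: it builds the $\GG$-invariant metric near $p$ (where the ambiguity is only by elements near $e$ fixing $p$, hence in $H$), and then uses that locally homogeneous metrics are real analytic by \cite{BLS} to extend the metric to larger and larger coordinate balls in $M^n=D^n$, the disc topology guaranteeing that the continuation is single-valued and still $\GG$-invariant. So to complete your proof you would either need to import that analytic-continuation argument, or prove directly (using the structure theory of the local action from \cite{Nomizu1960} and the paper's Appendix \ref{app_lochomog}) that the holonomy of chains of local moves is controlled by $\Ad(H)|_\mg$; as written, neither is done.
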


\begin{proof}
For globally homogeneous spaces the result is well-known.
For locally homogeneous spaces $M^n$ it is also clear, that a $\GG$-invariant metric  induces
a scalar product on $T_pM^n$, which is invariant under the
local action of the isotropy group $H$ on $T_pM^n$. Vice versa, it is not 
difficult to show, that any such scalar product can be extended to 
a $\GG$-invariant metric on $M^n$. Here we are using that
 locally homogeneous metrics are real analytic by \cite{BLS}. As a consequence,
 one can extend then the given scalar product to larger and larger 
coordinate balls in the disk $M^n=D^n$.
Finally, under the identification $\mg \to T_pM^n$, $X \mapsto X(p)$,
the local action of the isotropy group $H$ on $T_pM^n$ becomes the adjoint action of 
$H$ on the $\Ad(H)$-invariant complement $\mg$ as in the globally homogeneous case.
This shows the claim.
\end{proof}

The set $P(\mg)^{\Ad(H)}$ of ${\rm Ad}(H)$-invariant scalar products on $\mg$ is an open cone in the finite-dimensional vector space  $S^2(\mg)^{\Ad(H)}$ of 
${\rm Ad}(H)$-invariant symmetric bilinear forms on $\mg$. 

\begin{notation}\label{not_ipg}
For a $G$-invariant metric $g$ on a $\ggo$-homogeneous space $(M^n,p,\ggo)$ we denote by $\ip_g \in P(\mg)^{\Ad(H)}$ the corresponding scalar product on $\mg$.
\end{notation}

Next, let $\GHm$ be the centralizer of $\Ad(H)\vert_\mg \subset \Om$ in $\Gm$.
To describe $\GHm$ explicitly, 
we denote by $K:=\overline{\Ad(H)\vert_\mg}$ the closure of
$\Ad(H)\vert_\mg$ in $\Om$. Then $K$ is compact, hence 
there exists a unique decomposition $\mg=\ppm_1 \oplus \cdots \oplus \ppm_k$
of $\mg$ into $K$-isotypical summands: see \cite{BTD}, II, Proposition 6.9. 
Each of the isotypical summands $\ppm_i$ is itself a direct sum of $n_i\geq 1$ equivalent 
$K$-irreducible summands, say of real dimension $d_i$, $1 \leq i \leq k$: notice that the latter decomposition is not unique if $n_i>1$. The irreducible
summands of $\ppm_i$ are real representations of real, complex or quaternionic type: see \cite[Ch.~II, Thm.~6.7]{BTD}. In each of the cases we set $\KK_i:=\RR, \CC$ or $\HH$, respectively.
Then by Schur's Lemma we have
\begin{equation}\label{eqn_decompGHm}
  \GHm \simeq \Gl(n_1,\KK_1) \times \cdots  \times \Gl(n_k,\KK_k)\,,
\end{equation}
where the embedding of $\Gl(n_i,\KK_i)$ into $\Gl(\ppm_i)$ is induced by the decomposition of $\ppm_i$ into $K$-irreducible summands, and is  
described explicitly in \cite[pp.~100--101]{Bhm04}.

There is a natural left-action of $\GHm$ on the set $P(\mg)^{\Ad(H)}$, given by
\begin{equation}\label{eqn_actionip}
	h \cdot \ip := \la h^{-1} \, \cdot , h^{-1} \, \cdot \ra.
\end{equation}
In the following we
consider each $h \in \GHm$ also as an element in $\Gg$ by extending it trivially on
$\hg$: that is, $h|_\hg = \Id_\hg$ (see \eqref{eqn_Gminclusion}).

\begin{lemma}\label{lem_autisometric}
Suppose that for $h\in \GHm$ its trivial extension to $\Gg$ is an automorphism of the Lie algebra $\ggo$. Then the $G$-invariant metrics $g$, $g'$ on $M^n$, corresponding 
to $\ip$ and $\ip' := h\cdot \ip$, respectively, are locally isometric.
\end{lemma}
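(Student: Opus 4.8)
The plan is to exhibit an explicit local isometry intertwining the two metrics, built out of the automorphism $h$ itself. Write $\varphi \in \Aut(\ggo)$ for the trivial extension of $h \in \GHm$ to $\ggo$, so $\varphi|_\hg = \Id_\hg$ and $\varphi|_\mg = h$. By the simply-connectedness of $\GG$, $\varphi$ integrates to a Lie group automorphism $\Phi \in \Aut(\GG)$ with $\dd\Phi_e = \varphi$. Since $\varphi(\hg) = \hg$, the subgroup $H$ (closed in the globally homogeneous case, connected otherwise) is $\Phi$-invariant, so $\Phi$ descends to a diffeomorphism $\bar\Phi$ of $M^n = \GG/H$ (in the locally homogeneous case, to a local diffeomorphism fixing the base point $p$ determined by $\hg$); in all cases $\bar\Phi(p) = p$ and $\dd\bar\Phi_p$ is, under the identification $\mg \cong T_pM^n$ of Lemma \ref{lem_correspGinvmetrics}, precisely the map $h$.

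The key computation is then that $\bar\Phi^*g' = g$. By Lemma \ref{lem_correspGinvmetrics} it suffices to check this at the single point $p$, i.e.\ that $\langle \dd\bar\Phi_p\, X, \dd\bar\Phi_p\, Y\rangle_{g'} = \langle X, Y\rangle_g$ for $X, Y \in \mg$; but the left side equals $\langle hX, hY\rangle_{g'} = \langle h^{-1}(hX), h^{-1}(hY)\rangle_{g} = \langle X, Y\rangle_g$ by the very definition \eqref{eqn_actionip} of the action, since $g'$ corresponds to $\ip' = h\cdot\ip$ and $g$ to $\ip$. One must also verify that $\bar\Phi^*g'$ is again $\GG$-invariant, which holds because $\Phi$ is a group automorphism: for $a \in \GG$, $\bar\Phi \circ L_a = L_{\Phi(a)} \circ \bar\Phi$ on $M^n$, so $L_a^*\bar\Phi^*g' = \bar\Phi^* L_{\Phi(a)}^* g' = \bar\Phi^* g'$. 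Hence $\bar\Phi^*g'$ and $g$ are two $\GG$-invariant metrics agreeing at $p$, so they coincide, and $\bar\Phi$ is the desired (local) isometry $g \to g'$.

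I expect the only real subtlety to lie in the locally homogeneous case, where $H$ need not be closed and $M^n$ is merely diffeomorphic to a disk with a transitive Lie algebra $\ggo$ of Killing fields rather than a global quotient. There one should argue via the local action $\Phi\colon U \to M^n$ of Theorem \ref{thm_Lie}: the automorphism $\varphi$ of $\ggo$ preserving $\hg$ induces, near $p$, a local diffeomorphism of $M^n$ fixing $p$ whose differential at $p$ is $h$ and which intertwines the local $\GG$-actions appropriately; real-analyticity of locally homogeneous metrics (used already in Lemma \ref{lem_correspGinvmetrics}, via \cite{BLS}) then lets one propagate the identity $\bar\Phi^*g' = g$ from a small ball around $p$ — where it follows from the pointwise computation above together with $\GG$-equivariance of the local action — to all of the disk $M^n = D^n$. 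This is routine given the machinery already set up in the section, so no step should present a genuine obstacle; the one point to state carefully is the equivariance $\bar\Phi \circ L_a = L_{\Phi(a)} \circ \bar\Phi$, which is what forces $\bar\Phi^*g'$ to stay inside the space of invariant metrics so that the pointwise check at $p$ suffices.
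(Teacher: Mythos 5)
Your proposal follows essentially the same route as the paper's proof: extend $h$ trivially to $\psi\in\Aut(\ggo)$, integrate it (using simple connectedness of $G$) to $\Phi\in\Aut(G)$ with $(d\Phi)_e=\psi$, observe that $\Phi$ preserves the isotropy data and hence induces a (local) diffeomorphism of $M^n$ fixing $p$ with differential $h$ at $p$, and conclude that this map intertwines the two invariant metrics. Your pointwise check at $p$ together with the equivariance $\bar\Phi\circ L_a=L_{\Phi(a)}\circ\bar\Phi$ is exactly the content the paper leaves implicit in its one-line ``Thus, the $G$-invariant metrics $\tilde g$ and $\tilde g'$ \dots are isometric,'' and your sketch of the incomplete case via the local action of Theorem \ref{thm_Lie} is consistent with the paper's more explicit construction of a local slice $S=U_G/U_H$ near $e$.

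The one inaccuracy is your assertion that in the globally homogeneous case $H$ itself is $\Phi$-invariant, so that $\Phi$ descends to a diffeomorphism of $M^n=G/H$. Since $\psi$ fixes $\hg$ (indeed pointwise), $\Phi$ fixes the identity component $H_0$, but it need not preserve the remaining components of $H$, and correspondingly $g$ and $g'$ need not be globally isometric: the paper's remark immediately after the lemma (non-isometric flat metrics on tori, where $H$ is a lattice and $H_0$ trivial) is precisely a situation in which no such $\bar\Phi$ on $G/H$ exists. The paper circumvents this by passing to the universal cover $\tilde M^n=G/H_0$, on which $\Phi$ does descend and yields an isometry of the lifted metrics, whence $g$ and $g'$ on $M^n$ are \emph{locally} isometric, which is all the lemma claims. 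With $H$ replaced by $H_0$ in your descent step, your argument is correct and coincides with the paper's.
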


\begin{proof} Let $\psi \in \Aut(\ggo)$ denote the trivial extension of $h$.
Since $\GG$ is simply-connected, there exists a unique $\varphi \in \Aut(\GG)$ with $(d\varphi)_e= \psi$. Using that $\psi$ fixes $\hg$, it follows that $\varphi$ fixes the identity component $H_0$ of $H$.

In the globally homogeneous case, $\tilde M^n=G/H_0$ is the universal covering
space of $M^n=G/H$. The map $\varphi$ induces a
diffeomorphism of $\tilde M^n$ fixing $\tilde p=eH_0$. Thus,
the $G$-invariant metrics $\tilde g$ and $\tilde g'$ on $\tilde M^n$,
corresponding respectively to  $\ip$ and $\ip' = h\cdot \ip$, are isometric.
The claim follows in this case.

In the locally homogeneous incomplete case, $H$ is connected but not closed in $\GG$.
As a consequence the quotient space $G/H$ is not a smooth manifold anymore. However $H$ is
still an immersed submanifold of $G$ and consequently there exists an open neighbourhood
$U_H$ of $e$ in $H$, which is a submanifold of $G$. Exactly as in the globally homogeneous
case, using the normal exponential map of $H$ 
one can construct a section $S \subset G$, such that an open neighbourhood
$U_G$ of $e$ in $G$ is diffeomorphic to $S \times U_H$ and $H$ acts locally 
only on the second factor. Therefore we may build a local quotient $S=U_G/U_H$.
As in the globally homogeneous case it follows that smooth functions on
$S$ are in one-to-one correspondence with smooth functions on $U_G$ which
are constant on local $H$-orbits. Precisely as in the globally homogeneous case 
it follows, that $\varphi$ induces a local diffemorphism of $M^n$ fixing the point $p$.
This shows the claim.
\end{proof}

In the globally homogeneous, simply-connected case, the metrics in Lemma \ref{lem_autisometric} are of course isometric. Two non-isometric flat metrics on torii show that in general this is not the case. It is not hard to construct examples
with $\pi_1(M)$ finite.

We turn now our attention to the Ricci tensor $\ric (g)_p$ of a $\GG$-invariant metric $g$ on $M^n$, which we can consider as an element of $S^2(\mg)^{\Ad(H)}$,
since it is invariant under the local isometries in $H$. In what follows we recall a formula for the 
$\ip_g$-self-adjoint Ricci endomorphism $\Ricci^g:\mg \to \mg$ associated to $\ric (g)_p$:
\[
   \ric(g)_p \big( X(p),Y(p) \big) =  \big\langle {\Ricci^g}  X, Y \big\rangle_g,
\]
for all $X,Y \in \mg$. See \cite[7.38]{Bss} and \cite[\S 2]{LafuenteLauret2014b} for further details. Notice
that $\Ricci^g$ is $\Ad(H)|_\mg$-equi\-variant.

 Given the canonical decomposition $\ggl = \hg \oplus \mg$, the restriction of the Lie bracket of vector fields $\hml$  to $\mg$ decomposes into two maps $\hml = \mu_\hg^\ggo + \ml$
\begin{equation}\label{eqn_defbracket}
	\mu_\hg^\ggo := \lb_\hg : \mg \wedge \mg \to \hg, \qquad \ml := \lb_\mg : \mg \wedge \mg \to \mg\,.
\end{equation}
Notice that $\ml \in V(\mg) = \Lambda^2(\mg^*) \otimes \mg$, although it is not necessarily a Lie bracket since the Jacobi condition may fail to hold. For $X\in \mg$ 
we denote by
\[
	\ad_\ml X := \proj_\mg \circ (\ad_\hml X)|_\mg : \mg \to \mg
\]
the corresponding adjoint map.
 Moreover, we denote by $\kf^g\in \Symm$ the self-adjoint endomorphism 
of $\big(\mg,\ip_g\big)$ defined by the identity
\begin{equation}\label{eqn_defKFend}
 	\big\langle {\kf^g} X, Y \big\rangle_g  = \kf_{\hml}(X,Y)
\end{equation}
for $X, Y\in \mg$. 
The self-adjoint endomorphism $\mm^g:\mg \to \mg$ is defined by
\begin{equation}
   \big \la \mm^g X, X\big\ra_g =
    -\frac12 \sum_{i,j} \big\langle \ml(X, E_i), E_j \big\rangle^2_g
     + \frac14 \sum_{i,j} \big\langle \ml(E_i,E_j), X \big\rangle ^2_g\,,\label{eqn_formulamm} 
\end{equation}
where $\{ E_1,\ldots,E_n \}$ is any orthonormal basis  of $\big(\mg,\ip_g\big)$. 

Consider the linear map 
$T:\ggo \to \RR,\,\, X \mapsto \tr_{\ggl}(\ad_{\hml} X)$. Observe that $T(\hg) = 0$, since $\overline{\Ad(H)} \subset \Gg$ is compact. Using that $\hml(\hg,\mg) \subset \mg$ we also get $T(X) = \tr_\mg (\ad_\ml X)$ for $X\in \mg$. The \emph{mean curvature vector}  $\mcv^g \in \mg$ is defined by
 \begin{equation}\label{eqn_defmcv}
  	\big\la \! \mcv^g, X  \big\ra_g = \tr_{\ggo} (\ad_\hml X),
 \end{equation}
for all $X\in \mg$.  
Notice that $\mcv^g=0$ if and only if $\ggo$ is \emph{unimodular}.

 Finally, if $A^t$ denotes the $\ip_g$-transpose of $A$, we have the projection
\[
  S^g: {\rm End}(\mg) \to \Symm \,\,;\,\,\,A \mapsto \unm\left(A + A^t\right),
\] 

\begin{lemma}\label{lem_Ricfor}
Let $(M^n,g,p,\ggl)$ be a pointed $\ggl$-homogeneous space.
Then we have
\begin{equation}
    \Ricci^g =\mm^g - \unm \kf^g - S^g\big(\ad_\ml (\mcv^g) \big)\,.
    \label{eqn_Riccimg}
\end{equation}
\end{lemma}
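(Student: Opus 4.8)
**The plan is to derive the Ricci formula from the classical O'Neill/Besse formula for homogeneous spaces, carefully tracking the non-unimodular correction term.**

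First I would start from the standard formula for the Ricci curvature of a reductive homogeneous space $(\GG/H, g)$ with reductive decomposition $\ggo = \hg \oplus \mg$, as found in \cite[7.38]{Bss}. Writing everything in terms of the self-adjoint Ricci endomorphism $\Ricci^g$ of $(\mg, \ip_g)$, that formula expresses $\big\la \Ricci^g X, X\big\ra_g$ as a sum of several terms: a term quadratic in the $\mg$-component $\ml$ of the bracket (which is exactly the expression $\big\la \mm^g X, X\big\ra_g$ in \eqref{eqn_formulamm}), a term involving the Killing form of $\ggo$ restricted to $\mg$ (contributing $-\unm \kf^g$, by the definition \eqref{eqn_defKFend}), and a term that vanishes when $\ggo$ is unimodular but is otherwise present. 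Since the excerpt already observes that $\mcv^g = 0$ iff $\ggo$ is unimodular, I expect that last term to be precisely $-S^g(\ad_\ml(\mcv^g))$, and the bulk of the work is to verify this identification. Since $(M^n,g,p,\ggo)$ is only locally homogeneous, I would note that the Ricci tensor at $p$ depends only on the local structure — the local action of $\GG$ and the metric on $\mg \cong T_pM^n$ — so the globally homogeneous computation applies verbatim; real analyticity (used in Lemma \ref{lem_correspGinvmetrics}) and the local transitivity of $\GG$ from Lie's second theorem suffice to transplant the formula.

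The key steps in order: (1) recall the Besse formula and convert from the symmetric bilinear form $\ric(g)_p$ on $\mg$ to the endomorphism $\Ricci^g$ via $\ric(g)_p(X(p),Y(p)) = \big\la \Ricci^g X, Y\big\ra_g$; (2) identify the ``structure constant'' quadratic terms with $\mm^g$ using \eqref{eqn_formulamm}, being careful with the factors $-\tfrac12$ and $\tfrac14$ and with the fact that one only sums over an orthonormal basis of $\mg$; (3) identify the Killing-form contribution, using that $\kf_\hml(X,Y) = \tr_\ggo(\ad_\hml X \cdot \ad_\hml Y)$ splits according to $\ggo = \hg \oplus \mg$ and that $\ad_\hml X$ maps $\hg$ to $\mg$ and $\mg$ to $\ggo$ — so the restriction to $\mg$ differs from the full trace in a controlled way; (4) identify the remaining ``mean curvature'' term: the vector $\mcv^g$ defined by $\big\la \mcv^g, X\big\ra_g = \tr_\ggo(\ad_\hml X) = \tr_\mg(\ad_\ml X)$ is the mean curvature vector of the orbits, and the standard formula contributes a term of the shape $-\unm(\ad_\ml \mcv^g + (\ad_\ml \mcv^g)^t)$ acting on $X$, which is exactly $-S^g(\ad_\ml(\mcv^g))$; (5) assemble and check that the cross-terms between $\hg$ and $\mg$ contributions cancel or combine as claimed, and that the formula, a priori an identity of quadratic forms $\big\la \Ricci^g X, X\big\ra_g = \big\la (\mm^g - \unm\kf^g - S^g(\ad_\ml \mcv^g))X, X\big\ra_g$, upgrades to an identity of self-adjoint endomorphisms by polarization.

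The main obstacle I anticipate is bookkeeping in step (4): tracking exactly how the non-unimodular mean-curvature correction enters the Besse formula and showing it equals $-S^g(\ad_\ml(\mcv^g))$ and not, say, something involving $\ad_\hml$ on all of $\ggo$ or an extra symmetrization convention. One has to be careful that $\ad_\ml X = \proj_\mg \circ (\ad_\hml X)|_\mg$ is the ``correct'' adjoint appearing here, and that $T(\hg)=0$ together with $\hml(\hg,\mg)\subset\mg$ lets one replace $\tr_\ggo(\ad_\hml X)$ by $\tr_\mg(\ad_\ml X)$ without error. A secondary, minor point is justifying the reduction from the locally homogeneous to the globally homogeneous computation, but this is routine given the tools already set up in this section and I would dispatch it in a sentence by invoking \cite[\S 2]{LafuenteLauret2014b} and \cite[7.38]{Bss}, where essentially this formula (in the globally homogeneous, and in the unimodular, cases respectively) already appears.
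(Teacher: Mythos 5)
Your proposal is correct and takes essentially the same route as the paper: the paper states Lemma \ref{lem_Ricfor} without proof, simply recalling the classical Ricci formula for reductive homogeneous spaces from \cite[7.38]{Bss} and \cite[\S 2]{LafuenteLauret2014b}, and your plan (convert $\ric(g)_p$ to the endomorphism $\Ricci^g$, match the quadratic structure-constant terms with $\mm^g$, the Killing-form term with $-\unm\kf^g$, the mean-curvature term with $-S^g(\ad_\ml \mcv^g)$ via polarization) is exactly that derivation. The reduction from the locally to the globally homogeneous setting is also handled in the paper just as you indicate, by viewing $\ric(g)_p$ as an element of $S^2(\mg)^{\Ad(H)}$ so that only local data at $p$ enters.
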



It is worth pointing out that $\Ricci^g$ can be computed from
the canonical decomposition $\ggl=\hg \oplus \mg$,
the Lie bracket $\hml$,
and the scalar product $\ip_g$ on $\mg$.

\begin{theorem}\label{thm:gRicflow}
Let $(M^n,p,\ggl)$ be a pointed $\ggl$-ho\-mo\-geneous space.
Then for any $\GG$-in\-va\-riant initial metric on $M^n$ there exists a Ricci flow solution of $\GG$-invariant metrics, which is unique among all locally homogeneous Ricci flow solutions.
\end{theorem}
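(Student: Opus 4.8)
The strategy is to reduce the PDE problem to an ODE problem on the finite-dimensional space of $\Ad(H)$-invariant scalar products, using the correspondence of Lemma \ref{lem_correspGinvmetrics}, and then invoke the uniqueness of locally homogeneous Ricci flow solutions among all such solutions (Theorem \ref{thm:locRicflow}, which in the incomplete case is precisely what this theorem feeds into; so one must be careful to argue directly here). First I would observe that, by Lemma \ref{lem_correspGinvmetrics}, a $\GG$-invariant metric on $M^n$ is the same datum as a point $\ip \in P(\mg)^{\Ad(H)}$, an open cone in the finite-dimensional vector space $S^2(\mg)^{\Ad(H)}$. By Lemma \ref{lem_Ricfor}, the Ricci endomorphism $\Ricci^g$ — equivalently the Ricci tensor $\ric(g)_p \in S^2(\mg)^{\Ad(H)}$ — is an explicit (rational, hence real-analytic) function of $\ip_g$, computable purely from the fixed data $(\ggl = \hg \oplus \mg, \hml)$ and the scalar product. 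Thus the Ricci flow equation \eqref{eqn_ricciflow} restricted to $\GG$-invariant metrics becomes the ODE
\[
  \tfrac{\dd}{\dd t}\ip_{g(t)} = -2\,\ric(g(t))_p \quad\text{in } S^2(\mg)^{\Ad(H)},
\]
with $\ric(g(t))_p$ a smooth (indeed real-analytic) vector field on the open cone $P(\mg)^{\Ad(H)}$. By Picard--Lindel\"of, this ODE has a unique maximal solution $(\ip_{g(t)})_{t\in[0,T)}$ with the given initial value. Running the correspondence backwards produces a smooth family of $\GG$-invariant metrics $g(t)$ on $M^n$; since the correspondence is natural and $\ric$ is computed pointwise-equivariantly, $g(t)$ solves the Ricci flow equation on all of $M^n$.

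Next I would address uniqueness \emph{among all locally homogeneous Ricci flow solutions} — not just among $\GG$-invariant ones. The point is that a priori the homogeneous structure (the transitive Lie algebra of Killing fields) could change with $t$. Here I would invoke the real-analyticity of locally homogeneous metrics (\cite{BLS}), which forces the Ricci flow solution to remain real-analytic in space for each $t$, together with the uniqueness result to be proved in Appendix \ref{app_unique} for incomplete locally homogeneous Ricci flows. Concretely: given two locally homogeneous Ricci flow solutions with the same initial metric, by Appendix \ref{app_unique} they agree on a disk, and a connectedness/continuation argument (using that $M^n$ is either complete — covered by \cite{Kot} — or a disk) shows they agree everywhere; in particular the Killing fields of $\ggl$ remain Killing for all $t$, so both solutions coincide with the $\GG$-invariant one constructed above. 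One must also check that the ODE solution does not leave $P(\mg)^{\Ad(H)}$ before the geometric maximal time; this follows since as long as the metric stays positive-definite the flow continues, and degeneration of the metric corresponds exactly to the ODE solution reaching the boundary of the cone.

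Finally, the assertion that ``the homogeneous structure does not change'' — i.e.\ that $g(t)$ is $\ggl$-homogeneous with the \emph{same} $\ggl$ for all $t$ — is built into the construction: each $g(t)$ is by construction $\GG$-invariant, hence $\ggl$ consists of Killing fields for $g(t)$ and still spans every tangent space, so $(M^n, g(t), \ggl)$ remains a $\ggl$-homogeneous space. The main obstacle I expect is the uniqueness clause in the incomplete case: ruling out that some \emph{other}, non-$\GG$-invariant locally homogeneous Ricci flow solution exists with the same initial metric. This genuinely requires the analytic-continuation/uniqueness machinery of Appendix \ref{app_unique} (and, for the complete case, \cite{Kot}); the existence part and the reduction to the finite-dimensional ODE are routine once Lemmas \ref{lem_correspGinvmetrics} and \ref{lem_Ricfor} are in hand.
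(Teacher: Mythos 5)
Your proposal follows essentially the same route as the paper: existence is obtained by rewriting the Ricci flow as the ODE \eqref{eqn_ricciflowm} on the finite-dimensional cone $P(\mg)^{\Ad(H)}$ via Lemmas \ref{lem_correspGinvmetrics} and \ref{lem_Ricfor}, and uniqueness among all locally homogeneous solutions is delegated to Appendix \ref{app_unique} (with \cite{Kot} covering the complete case). Your extra remarks on avoiding circularity with Theorem \ref{thm:locRicflow} and on the homogeneous structure being preserved are consistent with the paper's treatment.
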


\begin{proof}
For a pointed $\ggl$-ho\-mo\-geneous space $(M^n,p,\ggl)$ 
and a $\GG$-homo\-geneous initial metric $g_0$ the Ricci flow equation 
(\ref{eqn_ricciflow}) is equivalent to
\begin{eqnarray}
    \frac{d}{d t}\ip_{g(t)}
    = -\big\la \! \Ricci^{g(t)}\,\cdot \,,\cdot \, \big\ra_{g(t)}
     -\big\la\,\cdot \,,  \Ricci^{g(t)}\cdot \, \big\ra_{g(t)} \,.\label{eqn_ricciflowm}
\end{eqnarray}
By Lemma \ref{lem_Ricfor} this is
an ordinary differential equation on  $P(\mg)^{\Ad(H)}$. 
This shows the existence part.
For uniqueness we refer the reader to Appendix \ref{app_unique}.
\end{proof}

\begin{remark}\label{rem:generalizedRF}
Notice that formula (\ref{eqn_Riccimg}) is defined for
any scalar product $\ip'$ on $\mg$ (not necessarily $\Ad(H)$-invariant). 
Consequently, (\ref{eqn_ricciflowm}) can be considered as an ordinary differential equation on the set  of scalar products $P(\mg)$ on $\mg$.
Then, as we have shown above, the set $P(\mg)^{\Ad(H)}$ is invariant
under this generalized Ricci flow equation. 
\end{remark}


\section{The bracket flow}\label{sec_BF}

Our aim in this section is to describe the setting of varying brackets instead of scalar products. This gives rise to an equation in the space of brackets which is equivalent to the Ricci flow, the so called \emph{bracket flow}. The bracket flow was introduced by Guzhvina 
for nilpotent Lie groups \cite{Guz}, and 
by Lauret in general for globally homogeneous spaces \cite{homRF}. We will show in Theorem \ref{thm:Riccibracket} that the equivalence between both flows holds also in the locally homogeneous case.

Let $(M^n,p,\ggl)$ be a $\ggl$-homogeneous space (Def.~ \ref{def_transitive}), with canonical reductive decomposition $\ggl = \hg \oplus \mg$.

\begin{notation}
We fix once and for all a $G$-invariant background metric $\bar g$ on $M$, corresponding to an $\Ad(H)$-invariant scalar product $\ip$ on $\mg$, and
denote by $\Om = \Or(\mg,\ip)$ the corresponding orthogonal group.
\end{notation}

Recall that we denote by $\hml$ the usual Lie bracket of vector fields in $\ggl$, and by $\ml$ its restriction to $\mg$.
Consider the real vector space
\[
  \Vg:=\Lambda^2(\ggo^*)\otimes \ggo,
\] 
called the \emph{space of brackets} on $\ggo$. The group $\Gl(\ggo)$ acts linearly on $\Vg$ by 
\begin{eqnarray}
  (h\cdot \mu)(\, \cdot \,,\cdot \,):=h \, \mu(h^{-1}\,\cdot ,h^{-1}\, \cdot )\,,
    \label{def_muaction}
\end{eqnarray}
where $h\in \Gg$, $\mu \in \Vg$. The canonical decomposition $\ggo = \hg \oplus \mg$ induces a natural inclusion 
\begin{equation}\label{eqn_Gminclusion}
	\Gm \simeq \minimatrix{\Id_\hg}{}{}{\Gm} \subset \Gg\,.
\end{equation}
Under that identification, the group $\Gm$ also acts on $\Vg$ via \eqref{def_muaction}.

Recall that $\GHm$ denotes the centralizer of
$\Ad(H)\vert_\mg $ in $\Gm$. We set 
\begin{equation}\label{eqn_defOHm}
	\OHm:= \GHm \cap \Om\,. 
\end{equation}
Consider the following orbit associated to the homogeneous space $(M^n, p, \ggo)$: 
\begin{eqnarray}
  \OlH:=\GHm \cdot \hml\, \, \subset \, \Vg.
\end{eqnarray}

\begin{definition}\label{def_bracket}
Given a $G$-invariant metric $g$ on $(M^n,p,\ggo)$ determined by the scalar product $\ip_g $ on $\mg$, we may write $\ip_g = \la h \, \cdot , h \, \cdot \ra$ for $h\in \GHm$. Then, the \emph{bracket associated with $g$} is defined by 
\[
	\mu := h \cdot \hml \in \OlH \subset \Vg.
\]
Here we are thinking $h\in \Gg$ using \eqref{eqn_Gminclusion}.
\end{definition}

Notice that in the previous definition the map $h$ is only well-defined up to left multiplication by $\OHm$. Thus, $\mu$ is only well-defined up to the action of $\OHm$ on brackets. 
Conversely, given a bracket $\mu \in \OlH$, after chosing $h\in \GHm$ so that $\mu = h \cdot \hml$, we can associate to $\mu$ the unique $G$-invariant metric $g$ on $M$ determined by the scalar product $\ip_g = \la h \,\cdot, h\, \cdot \ra \in P(\mg)^{\Ad(H)}$. Also in 
this construction there is an ambiguity for choosing  $h$. Namely, for any $\phi \in \Aut^H_\mg(\ggo)$ we have $\varphi\cdot \hml = \hml$, thus $h$ could be replaced by 
 $h \phi$.  Here, 
\[
	\Aut^H_\mg(\ggo) = \Aut(\ggo) \cap \GHm\, ,
\]
with $\Aut(\ggo) = \{ h\in \Gg : h \cdot \hml = \hml \}$ and $\GHm \subset \Gg$ as in \eqref{eqn_Gminclusion}. By  Lemma \ref{lem_autisometric},
all $G$-invariant metrics determined by scalar products in $\{ \la  \phi \, \cdot, \phi \, \cdot \ra_g : \phi \in \Aut^H_\mg(\hml) \}$, are locally isometric to $g$. 
Thus, all metrics associated to a given bracket $\mu \in \OlH$ are equivariantly locally isometric. Finally, notice that for  $k\in \OHm$, the set of metrics associated to $k\cdot \mu$ coincides with those associated to $\mu$. 

The following is now clear from the previous discussion.

\begin{proposition}\label{prop:metricbracket}
Let $(M^n, p, \ggo)$ be a $\ggo$-homogeneous case. Then, there is a one-to-one correspondence between $\Aut^H_\mg(\ggo)$-orbits of $G$-invariant metrics on $M^n$ and 
$\OHm$-orbits of brackets in $\OlH$.
\end{proposition}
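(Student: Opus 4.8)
The plan is to assemble Proposition \ref{prop:metricbracket} directly from the correspondence already established in Lemma \ref{lem_correspGinvmetrics} together with the book-keeping of ambiguities carried out immediately before the statement. Concretely, I would begin by recalling the set-up: a $G$-invariant metric $g$ corresponds by Lemma \ref{lem_correspGinvmetrics} to an $\Ad(H)$-invariant scalar product $\ip_g$ on $\mg$, and since the action of $\GHm$ on $P(\mg)^{\Ad(H)}$ in \eqref{eqn_actionip} is transitive (one can diagonalize any scalar product against the background $\ip$ inside each isotypical block, using the explicit description \eqref{eqn_decompGHm}), one may write $\ip_g = \la h\,\cdot, h\,\cdot\ra$ for some $h\in\GHm$, and set $\mu := h\cdot\hml\in\OlH$. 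The map $g\mapsto \mu$ is the candidate bijection, descending to $\Aut^H_\mg(\ggo)$-orbits on the left and $\OHm$-orbits on the right.

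Next I would check well-definedness and injectivity of the induced map on orbits. If $h, h'\in\GHm$ both satisfy $\la h\,\cdot,h\,\cdot\ra = \la h'\,\cdot,h'\,\cdot\ra = \ip_g$, then $k := h'h^{-1}\in\OHm$ and $h'\cdot\hml = k\cdot(h\cdot\hml)$, so the $\OHm$-orbit of $\mu$ depends only on $g$; and if $g' = \varphi\cdot g$ for $\varphi\in\Aut^H_\mg(\ggo)$, then choosing $h$ for $g$ one may choose $h\varphi^{-1}$ for $g'$ — wait, more carefully: $\ip_{g'}$ is the pushforward of $\ip_g$ by the isometry, so $\ip_{g'} = \la h\varphi\,\cdot, h\varphi\,\cdot\ra$ for a suitable representative, and $(h\varphi)\cdot\hml = h\cdot(\varphi\cdot\hml) = h\cdot\hml = \mu$, so orbit-equivalent metrics go to the same $\OHm$-orbit. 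Conversely, suppose $g$ and $g'$ produce brackets $\mu, \mu'$ in the same $\OHm$-orbit, say $\mu' = k\cdot\mu$ with $k\in\OHm$; writing $\mu = h\cdot\hml$, $\mu' = h'\cdot\hml$, one gets $(h')^{-1}kh \in \Aut(\ggo)\cap\GHm = \Aut^H_\mg(\ggo)$, and tracking this element through \eqref{eqn_actionip} together with Lemma \ref{lem_autisometric} shows that $g$ and $g'$ lie in the same $\Aut^H_\mg(\ggo)$-orbit of (locally isometric) metrics. For surjectivity, a bracket $\mu\in\OlH$ is by definition $h\cdot\hml$ for some $h\in\GHm$, and the associated metric $g$ with $\ip_g = \la h\,\cdot, h\,\cdot\ra$ maps back to the $\OHm$-orbit of $\mu$; this is exactly the inverse construction described in the excerpt.

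In fact, since so much of this verification is spelled out in the paragraph preceding the statement — the ambiguity of $h$ up to left multiplication by $\OHm$, the ambiguity up to right multiplication by $\Aut^H_\mg(\ggo)$, the invocation of Lemma \ref{lem_autisometric} to see that metrics differing by such $\varphi$ are equivariantly locally isometric, and the remark that $k\cdot\mu$ and $\mu$ have the same associated metrics — the proof can legitimately be a one-line \emph{``This is now clear from the discussion above.''} I would, however, prefer to at least isolate the transitivity of the $\GHm$-action on $P(\mg)^{\Ad(H)}$ as the one substantive input, since that is what guarantees every metric actually arises from a bracket in $\OlH$ (as opposed to some larger orbit).

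The main obstacle, such as it is, is not any single hard estimate but the careful matching of the two different group-theoretic ambiguities: one must be sure that \emph{left} $\OHm$-ambiguity in the choice of $h$ (which reflects the redundancy in representing a scalar product) is precisely quotiented out on the bracket side, while the \emph{right} $\Aut^H_\mg(\ggo)$-ambiguity (which reflects genuine local isometries via Lemma \ref{lem_autisometric}) is what must be quotiented out on the metric side — and that these do not interfere. Getting the direction of the actions and the composition order right in $(h')^{-1}kh$ versus $h'k h^{-1}$ is the only place where one can slip, so I would write that computation out explicitly and otherwise defer to the preceding discussion.
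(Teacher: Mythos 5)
Your proposal is correct and follows essentially the same route as the paper, whose proof of Proposition \ref{prop:metricbracket} is literally the one-liner you anticipate: it is declared clear from the discussion preceding the statement (the left $\OHm$-ambiguity of $h$, the right $\Aut^H_\mg(\ggo)$-ambiguity via Lemma \ref{lem_autisometric}, and the observation that $k\cdot\mu$ and $\mu$ have the same associated metrics). The one ingredient you isolate explicitly — that every $\Ad(H)$-invariant scalar product can be written as $\la h\,\cdot,h\,\cdot\ra$ with $h\in\GHm$ (e.g.\ by taking the square root of the positive $\Ad(H)$-equivariant intertwiner with the background $\ip$) — is indeed used silently in Definition \ref{def_bracket}, so spelling it out is a harmless, correct refinement rather than a different argument.
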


In the above proposition, the action on scalar products (and hence on $G$-invariant metrics) is defined in \eqref{eqn_actionip}, and the action on brackets in \eqref{def_muaction}. Notice that $\Aut^H_\mg(\ggo)$ might be non-compact, whereas
$\OHm$ is always compact. In this way, we have replaced a non-compact \emph{gauge group}
by a compact one, by breaking symmetry.


We now generalize the definition of $\Ricci^g=\Ricci_\hml^g$ 
in Lemma $\ref{lem_Ricfor}$ by allowing the bracket to change. 
For a bracket $\mu$ consider its restriction to $\mg$, $\mu|_{\mg\wedge \mg} : \mg \wedge \mg \to \ggo$, which in turn decomposes as
\begin{equation}\label{eqn_muhgmg}
	\mu \,|_{\mg\wedge \mg} = \mu_\hg + \mumg, \qquad  \mu_\hg : \mg \wedge \mg \to \hg, \qquad \mumg : \mg \wedge \mg \to \mg.
\end{equation}

\begin{definition}
For a pointed $\ggl$-homogeneous space $(M^n,p,\ggl)$ 
with canonical decomposition $\ggl = \hg \oplus \mg$ 
and Lie bracket $\hml$, let $\mu \in \OlH$ and
$\ip_g$ be an arbitrary scalar product on $\mg$. Then we set
\begin{equation}\label{eqn_FormulaRiccig}
    \Ricci_\mu^g = \mm_\mumg^g - \unm \kf_\mu^g - S^g(\ad_\mumg \mcv_\mu^g)\,.
\end{equation}
For the background metric $\bar g$ we drop the super script ${}^{\bar g}$ and  write simply
\begin{equation}\label{eqn_FormulaRicci}
    \Ricci_\mu = \mm_\mumg - \unm \kf_\mu - S(\ad_\mumg \mcv_\mu)\,.
\end{equation}
\end{definition}

In \eqref{eqn_FormulaRiccig}, \eqref{eqn_FormulaRicci} the terms on the right hand side are computed as explained in Section \ref{sec_homricflow} (see \eqref{eqn_defKF}, \eqref{eqn_defKFend}, \eqref{eqn_formulamm}, \eqref{eqn_defmcv}), by replacing $\hml$ by $\mu$, and $\ml$ by $\mumg$.


It is natural to ask how does the Ricci flow \eqref{eqn_ricciflowm} look like on the space of brackets. The following ordinary differential equation on $\OlH$ is called
the \emph{bracket flow}:
\begin{equation}\label{eqn_BracketFlow}
    \frac{ \dd \mu}{\dd t} = - \pi\left( \Ricci_{\mu}\right) 
         \mu \,, \qquad \mu(0) = \mu_0 \in \OlH.
\end{equation}
Here, the Lie algebra representation $\pi : \glg(\ggo) \to \End(\Vg)$, defined by
\begin{eqnarray}\label{eqn_defpib}
  (\pi(A)\mu)(X,Y):= A\, \mu(X,Y)-\mu(AX,Y)-\mu(X,AY),
\end{eqnarray}
is the derivative of the action \eqref{def_muaction},
where  $A\in \End(\ggo)$ and  $X,Y \in \ggo$. As in \eqref{eqn_Gminclusion}, we identify 
\begin{equation}\label{eqn_glgminclusion}
	\glg(\mg) \simeq \minimatrix{0}{}{}{\glg(\mg)} \subset \glg(\ggo),
\end{equation}
and denote also by $\pi : \glg(\mg) \to \End(\Vg)$ the Lie algebra representation of $\glg(\mg)$ obtained by the above inclusion and \eqref{eqn_defpib}.

The main result of this section is the following

\begin{theorem}\label{thm:Riccibracket}
Let $(M^n, p, \ggo)$ be a $\ggo$-homogeneous space. Then, the Ricci flow of 
$\GG$-invariant metrics on $M^n$ and the bracket flow on $\OlH$ are equivalent.
\end{theorem}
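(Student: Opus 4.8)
The plan is to set up an explicit bijection between Ricci flow solutions of $G$-invariant metrics and bracket flow solutions, and to check it intertwines the two evolution equations. Fix a pointed $\ggo$-homogeneous space $(M^n,p,\ggo)$ with canonical decomposition $\ggo = \hg \oplus \mg$ and background metric $\bar g$ with associated scalar product $\ip$ on $\mg$. Given a solution $(g(t))$ to the Ricci flow, writing $\ip_{g(t)} = \la h(t)\,\cdot, h(t)\,\cdot\ra$ for a smooth curve $h(t)\in \GHm$ (such a curve exists by smoothness of the action and can be chosen smoothly, e.g.\ by taking $h(t)$ to be the positive square root of the $\ip$-self-adjoint operator $P(t)$ representing $\ip_{g(t)}$ relative to $\ip$, after diagonalizing against the $K$-isotypical decomposition so that $P(t)\in\GHm$), I set $\mu(t) := h(t)\cdot \hml \in \OlH$. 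Conversely, given a bracket flow solution $\mu(t) = h(t)\cdot \hml$, I recover the metric via $\ip_{g(t)} := \la h(t)\,\cdot, h(t)\,\cdot\ra$. By Proposition \ref{prop:metricbracket} this is well-defined up to the ambiguities already discussed (the compact group $\OHm$ on the bracket side, the group $\Aut^H_\mg(\ggo)$ on the metric side), so it descends to a genuine correspondence between the relevant orbit spaces; these ambiguities are harmless since metrics in the same $\Aut^H_\mg(\ggo)$-orbit are locally isometric by Lemma \ref{lem_autisometric}, hence have the same Ricci flow.

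The key computation is the compatibility of the two flows. First I need the equivariance property of the Ricci endomorphism under the action \eqref{def_muaction}: for $h\in\GHm$ one has $\Ricci^{\bar g}_{h\cdot\hml} = h\,\Ricci^{h^{-1}\cdot\bar g}_\hml\, h^{-1}$, equivalently $\Ricci^{g}_\hml$ and $\Ricci^{\bar g}_\mu$ represent the same tensor $\ric(g)_p$ read in different frames. This follows from the fact, emphasized after Lemma \ref{lem_Ricfor}, that $\Ricci^g$ depends only on the triple $(\hg\oplus\mg,\hml,\ip_g)$ in a natural (diffeomorphism-equivariant) way, together with the formulas \eqref{eqn_defKF}--\eqref{eqn_defmcv} which are manifestly equivariant under simultaneously transforming the bracket by $h$ and the scalar product by $h$. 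Then, differentiating $\ip_{g(t)} = \la h(t)\,\cdot, h(t)\,\cdot\ra$ in $t$ and using $\dot h h^{-1} = -\pi$-nothing, rather $\dot h h^{-1}\in\glg(\mg)$, one computes $\frac{d}{dt}\ip_{g(t)} = \la (h^{-1}\dot h + \dot h^{t} (h^{-1})^t)\,\cdot,\cdot\ra_{g(t)}$ up to transposition; matching this against \eqref{eqn_ricciflowm} shows that the Ricci flow holds precisely when $S^{g(t)}(h(t)^{-1}\dot h(t)) = -\Ricci^{g(t)}$, i.e.\ the symmetric part of $h^{-1}\dot h$ equals $-\Ricci^{g(t)}$. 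On the bracket side, $\dot\mu = \frac{d}{dt}(h\cdot\hml) = \pi(\dot h h^{-1})\mu$, and since $\pi$ vanishes on skew-symmetric (i.e.\ $\OHm$-Lie-algebra) directions up to the $\OHm$-ambiguity one may choose $h$ so that $\dot h h^{-1}$ is $\ip$-self-adjoint; translating back, this is $h\cdot$ applied to $S^{g}(h^{-1}\dot h)$, so $\dot\mu = -\pi(\Ricci_\mu)\mu$ is exactly \eqref{eqn_BracketFlow}. Running this argument in reverse gives the converse.

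I would also record the maximality/interval-matching statement: the correspondence preserves maximal intervals of definition, since the metric $g(t)$ degenerates (loses a finite limit as a positive-definite form, equivalently the curvature blows up) if and only if $h(t)$ or $h(t)^{-1}$ leaves every compact subset of $\GHm$, which is the same breakdown condition for $\mu(t)$ in $\OlH$; combined with the equivalence of the ODEs this yields that solutions exist on the same time interval. I would invoke Theorem \ref{thm:gRicflow} for existence and Theorem \ref{thm:locRicflow} (or Appendix \ref{app_unique}) for uniqueness on the metric side to transport these to the bracket side.

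The main obstacle I expect is not any single estimate but the careful bookkeeping of the two gauge ambiguities: one must choose the lift $h(t)$ smoothly and in such a way that $h(t)^{-1}\dot h(t)$ is self-adjoint (so that $\pi(\dot h h^{-1})$ genuinely sees only the ``$\Ricci$'' direction and not an extra skew term), and one must verify that the residual freedom — the compact group $\OHm$ acting on brackets and $\Aut^H_\mg(\ggo)$ on metrics — is exactly what is quotiented out by Proposition \ref{prop:metricbracket}, so that the correspondence is well-defined at the level of solutions rather than merely of lifts. Establishing the $\GHm$-equivariance of formula \eqref{eqn_FormulaRicci} cleanly (ideally by a one-line naturality argument rather than term-by-term) is the technical heart; once that is in place the flow equivalence is a short differentiation.
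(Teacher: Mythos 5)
Your proposal is correct and follows essentially the same route as the paper: a lift $h(t)\in\GHm$ of the metric, the intertwining property of Lemma \ref{lem_Ricciintertw}, and the Uhlenbeck-type normalization making $\dot h h^{-1}$ the ($\ip$-symmetric) modified frame speed $-\Ricci_{\mu(t)}$ — the paper merely packages your gauge-fixing step by \emph{defining} $h(t)$ as the solution of the linear ODE \eqref{eqn_defh}, $h'=-h\,\Ricci^{g(t)}$, and invoking uniqueness of the metric flow, rather than starting from an arbitrary lift such as $P(t)^{1/2}$. The only caveat is that your phrase ``$\pi$ vanishes on skew-symmetric directions'' is false as literally stated (for skew $A$ one has $\pi(A)\mu\neq 0$ unless $A\in\Der(\mu)$); the correct mechanism, which your final paragraph in fact identifies, is that the skew part of $\dot h h^{-1}$ lies in $\sogHm$ and can be absorbed by solving an ODE in the compact gauge group $\OHm$ (as in Proposition \ref{prop_gaugedBF}), which moves $\mu(t)$ only within its $\OHm$-orbit and hence does not change the associated metrics.
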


For a given  $G$-invariant initial metric $g_0$ let 
$(g(t))_{t\in [0,T)}$ denote the Ricci flow solution from Theorem \ref{thm:gRicflow}.
Then by Proposition \ref{prop:metricbracket} we can associate to $g_0$ 
an initial bracket $\mu_0\in \Vg$. Theorem \ref{thm:Riccibracket}
says then, that for this initial bracket there exists on the same time interval
a maximal solution $(\mu(t))_{t\in [0,T)}$ to the bracket flow,
such that $\mu(t)$ is a bracket associated to the metric $g(t)$
for each $t\in [0,T)$. In the same manner we can associate to any bracket flow
solution on $\OlH$ a Ricci flow solution of $\GG$-invariant metrics.
In fact, it follows from the proof of Theorem \ref{thm:Riccibracket} that there exists a smooth curve $(h(t))_{t\in [0,T)} \subset \GHm$ such that $\mu(t) = h(t)\cdot \hml$ and $\ip_{g(t)} = \la h(t) \, \cdot, h(t) \, \cdot\ra$. Most importantly, we have that
\[
	\ric (g(t))_p (X, Y) = \big\la   \Ricci_{\mu(t)} (h(t) X), \, h(t) Y  \big\ra,
\]
for all $X, Y \in \mg$.

\begin{proof}[Proof of Theorem \ref{thm:Riccibracket}]
By definition, it is clear that $\Ricci_\mu$ depends smoothly on $\mu \in \OlH$.
Note also that 
$\pi(\Ricci_\mu)\mu \in T_\mu \OlH$, since $\Ricci_\mu$ is $\Ad(H)$-equivariant by Corollary \ref{cor_AdHequiv}. As a consequence the bracket flow is a smooth ordinary differential equation on  $\OlH$.

Let $(g(t))_{[0,T)}$ be the $\GG$-invariant Ricci flow solution  with $g(0)=g_0$ given by Theorem \ref{thm:gRicflow},
where we consider $g(t)$ as an $\Ad(H)$-invariant scalar product $\ip_{g(t)}$
on $\mg$.
Recall that the homogeneous Ricci flow equation is equivalent to (\ref{eqn_ricciflowm}).
Let now $h(t) \in \Gl(\mg)$ solve the linear equation
\begin{eqnarray}
    h'(t) = - h(t) \cdot \Ricci^{g(t)}, \qquad h(0) = \Id_{\mg}\label{eqn_defh}
\end{eqnarray}
on $[0,T)$. It then follows from (\ref{eqn_defh}) by differentiating, that
\[
 \tilde g(t)(\,\cdot  \,,\,\cdot \,):=g_0(h(t)\, \cdot\, ,h(t)\, \cdot\,)
\]
satisfies (\ref{eqn_ricciflowm}) as well. Thus $\tilde g(t)=g(t)$. Notice that
\begin{eqnarray}
 h(t):  \big( \mg,\ip_{g(t)} \big) \to \big( \mg,\ip_{g_0} \big) \label{eqn_interh}
\end{eqnarray}
is an isometry. 

Next, we extend $h(t)$ to an endomorphism 
$ h(t):\ggl\to \ggl$ as in \eqref{eqn_Gminclusion}. Thus
\begin{eqnarray}
   h(t):(\ggl,\hml) \to (\ggl,  \mu(t):= h(t) \cdot \hml) \label{eqn_interhh}
\end{eqnarray}
is an isomorphism of Lie algebras, respecting the canonical decomposition
$\ggl = \hg \oplus \mg$. 
 By (\ref{eqn_interh}) and (\ref{eqn_interhh}) we are in a position to apply 
Lemma \ref{lem_Ricciintertw} and deduce that
\begin{eqnarray}
   \Ricci^{g_0}_{\mu(t)} = h(t)\, \Ricci_{\hml}^{g(t)} \, h(t)^{-1} ,\label{eqn_Ricinter}
\end{eqnarray}
where $\mu(t):=h(t) \cdot \hml$. Differentiating $\mu(t)=h(t) \,  \hml(h(t)^{-1}\,\cdot, 
h(t)^{-1} \,\cdot )$ yields now
\begin{eqnarray*}
  \mu'(t)
    =  -\pi \big( h(t)\Ricci_\hml^{g(t)}h(t)^{-1} \big) \, \mu(t)
    =  - \pi \big( \Ricci_{\mu(t)}^{g_0}) \mu(t)\,,
\end{eqnarray*}
where we have used (\ref{eqn_defh}) and (\ref{eqn_Ricinter}).
Finally writing $\ip_{g_0}=\langle h_0 \,\cdot \,, h_0 \,\cdot \rangle$
for some $h_0 \in \GHm$, we 
set $\tilde \mu(t):=h_0 \cdot \mu(t)$. Then precisely as above it follows that
$\tilde \mu(t)$ is a solution to the bracket flow (\ref{eqn_BracketFlow})
with $\tilde \mu(0)=h_0 \cdot \hml$.

Vice versa,
let $(\tilde \mu(t))_{t \in [0,T)}$ denote a solution to the bracket flow
with initial metric $\tilde \mu(0)=h_0 \cdot \hml$ for some $h_0 \in \GHm$. 
We set $\ip_{g_0}:=\langle h_0 \,\cdot \,, h_0 \,\cdot \rangle$ and
obtain as above a solution $\mu(t):=h_0^{-1}\cdot \tilde \mu(t)$ of 
\begin{eqnarray}
  \mu'(t)
    =  - \pi \big( {\Ricci_{\mu(t)}^{g_0} } \big) \, \mu(t)\label{eqn_Riccig0}
\end{eqnarray}
with $\mu(0)=\hml$.
Next, we let $h(t) \in \Gl(\mg)$ solve the linear equation
\begin{eqnarray}
    h'(t) = -  \Ricci_{\mu(t)}^{g_0} \cdot \, h(t), \qquad h(0) = \Id_{\mg}
    \label{eqn_defbh}
\end{eqnarray}
on $[0,T)$ and set $\mu(t):=h^{-1}(t) \tilde \mu(t)$. Differentiation yields
$\mu(t) \equiv \hml$ for all $t \in [0,T)$: see (\ref{eqn_glnequi}). Moreover, setting
$\ip_{g(t)}=\langle h(t) \,\cdot \,, h(t) \,\cdot \rangle_{g_0}$
and using the intertwining identity (\ref{eqn_Ricinter}), we see that
$h(t)$ satisfies (\ref{eqn_defh}). This then clearly shows that
$(\ip_{g(t)})_{t \in [0,T)}$ is a solution of (\ref{eqn_ricciflowm})
with $\ip_{g(0)}=\ip_{g_0}$. Now $\ip_{g_0}$ is an $\Ad(H)$-invariant
scalar product on $\mg$, since $h_0 \in \GHm$. By Remark \ref{rem:generalizedRF}
it follows, that $(g(t))_{t \in [0,T)}$ is a $\GG$-invariant
Ricci flow solution with $g(0)=g_0$. This shows the claim.
\end{proof}

Let $(G_1/ H_1, g_1)$, $(G_2/ H_2, g_2)$ be two globally homogeneous spaces, with $G_i$ simply-connected and $H_i \subset$ closed and connected, $i=1,2$, and suppose that $\varphi : G_1/H_1 \to G_2/H_2$ is an equivariant isometry induced by the Lie group isomorphism $\hat \varphi : G_1 \to G_2$. Then, $\hat \psi :=d \hat \varphi|_e : \ggo_1 \to \ggo_2$ is a Lie algebra isomorphism respecting the canonical decompositions and inducing an orthogonal map $\psi : \big(\mg_1, \ip_{g_1} \big) \to \big(\mg_2, \ip_{g_2} \big)$.

 
The following functorial property is a kind of converse, which
holds also in the locally homogeneous case.
It follows immediately from the definition of $\Ricci^g_\mu$.

\begin{lemma}\label{lem_Ricciintertw}
Let $(M^n, p, \ggo)$ be a $\ggo$-homogeneous space, and consider the $G$-invariant metrics $g_1, g_2$ on $M^n$ and brackets $\mu_1, \mu_2 \in \OlH$. Suppose that $u\in \GHm \subset \Gg$ is such that $u: (\ggo,\mu_1) \to (\ggo, \mu_2)$ is a Lie algebra isomorphism, and $u: \big( \mg_1, \ip_{g_1}\big) \to \big( \mg_2, \ip_{g_2}\big)$ is an isometry. 
Then,
\begin{eqnarray*}
    \Ricci_{\mu_1}^{g_1} &=& u^{-1} \cdot \Ricci_{\mu_2}^{g_2} \cdot \,\,u\,.
\end{eqnarray*}
\end{lemma}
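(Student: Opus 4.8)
The plan is to show directly that $\Ricci_{\mu_1}^{g_1} = u^{-1}\cdot \Ricci_{\mu_2}^{g_2}\cdot u$ by checking the identity term-by-term in the formula \eqref{eqn_FormulaRiccig}, i.e.~for each of the three pieces $\mm_{\mumg}^g$, $\kf_\mu^g$ and $S^g(\ad_\mumg \mcv_\mu^g)$ separately. The essential observation is that all of these are \emph{invariantly} defined out of the triple (bracket on $\ggo$, canonical decomposition $\ggo = \hg\oplus\mg$, scalar product on $\mg$), so conjugating by a map $u$ that is simultaneously a Lie algebra isomorphism $(\ggo,\mu_1)\to(\ggo,\mu_2)$ and an isometry $(\mg_1,\ip_{g_1})\to(\mg_2,\ip_{g_2})$ — and which respects the canonical decomposition because $u\in \GHm$ acts trivially on $\hg$ — must send each structure for $\mu_1$ to the corresponding one for $\mu_2$.

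First I would record the elementary compatibilities: since $u|_\hg = \Id_\hg$ and $u$ is a Lie algebra isomorphism, it maps $(\mu_1)_\hg$ to $(\mu_2)_\hg$ and $(\mu_1)_\mg$ to $(\mu_2)_\mg$ under the decomposition \eqref{eqn_muhgmg}, and it intertwines the adjoint maps: $\ad_{(\mu_2)_\mg}(uX) = u\,\ad_{(\mu_1)_\mg}(X)\,u^{-1}$ for $X\in\mg$. Also, since $u$ is an isometry from $\ip_{g_1}$ to $\ip_{g_2}$, it carries orthonormal bases of $(\mg,\ip_{g_1})$ to orthonormal bases of $(\mg,\ip_{g_2})$, and the $\ip_{g_2}$-transpose of $u A u^{-1}$ equals $u A^t u^{-1}$ for $A\in\End(\mg)$ where $A^t$ is the $\ip_{g_1}$-transpose; in particular conjugation by $u$ commutes with the symmetrization $S^g$. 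Then for the Killing-form term, $\kf_{\mu_2}(uX,uY) = \tr_\ggo(\ad_{\mu_2}(uX)\ad_{\mu_2}(uY)) = \tr_\ggo(\ad_{\mu_1}X\,\ad_{\mu_1}Y) = \kf_{\mu_1}(X,Y)$ by the trace-conjugation invariance, so comparing with \eqref{eqn_defKFend} gives $\kf_{\mu_1}^{g_1} = u^{-1}\kf_{\mu_2}^{g_2}u$. For the $\mm^g$ term, I would plug an $\ip_{g_1}$-orthonormal basis $\{E_i\}$ of $\mg$ into \eqref{eqn_formulamm} for $\mu_1$, note $\{uE_i\}$ is $\ip_{g_2}$-orthonormal, and use $\langle (\mu_2)_\mg(uX,uE_i),uE_j\rangle_{g_2} = \langle (\mu_1)_\mg(X,E_i),E_j\rangle_{g_1}$ to match the two quadratic expressions, giving $\langle \mm_{\mu_1}^{g_1}X,X\rangle_{g_1} = \langle \mm_{\mu_2}^{g_2}uX,uX\rangle_{g_2}$, hence $\mm_{\mu_1}^{g_1} = u^{-1}\mm_{\mu_2}^{g_2}u$. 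Likewise, from \eqref{eqn_defmcv} and the trace relation $\tr_\ggo(\ad_{\mu_2}uX) = \tr_\ggo(\ad_{\mu_1}X)$ one gets $\mcv_{\mu_2}^{g_2} = u\,\mcv_{\mu_1}^{g_1}$, and then $\ad_{(\mu_2)_\mg}(\mcv_{\mu_2}^{g_2}) = u\,\ad_{(\mu_1)_\mg}(\mcv_{\mu_1}^{g_1})\,u^{-1}$, so after applying $S^g$ (which commutes with conjugation by the isometry $u$) the third term also transforms correctly.

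Assembling the three pieces in \eqref{eqn_FormulaRiccig} yields $\Ricci_{\mu_1}^{g_1} = u^{-1}\Ricci_{\mu_2}^{g_2}u$, which is the claim. The only mildly delicate point — the ``main obstacle,'' though it is really just bookkeeping — is making sure that $u$ genuinely respects the canonical decomposition so that the projections $\proj_\mg$ implicit in $\ad_\mumg$ and in the splitting \eqref{eqn_muhgmg} are intertwined; this is guaranteed precisely by $u\in\GHm\subset\Gg$ having $u|_\hg = \Id_\hg$ together with $u$ being a $\mu$-isomorphism, so that $u(\hg)=\hg$ and $u(\mg)=\mg$. Since the canonical complement $\mg$ is the Killing-form orthogonal of $\hg$ and $u$ preserves both $\hg$ and the Lie bracket (hence the Killing form up to the identification above), everything is consistent, and the proof is a straightforward verification — as the statement's remark ``It follows immediately from the definition'' already anticipates.
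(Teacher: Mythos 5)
Your proof is correct and is exactly the verification the paper has in mind: the paper offers no written argument beyond ``it follows immediately from the definition of $\Ricci^g_\mu$,'' and your term-by-term check of $\mm^g_\mumg$, $\kf^g_\mu$ and $S^g(\ad_\mumg \mcv^g_\mu)$ under conjugation by $u$ (using that $u\in\GHm$ is block-diagonal, hence preserves $\hg$ and $\mg$, and that it intertwines the brackets and the scalar products) is precisely that routine verification.
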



Notice that an analogous equation holds for any of the three summands of $\Ricci_\mu$ in formula \eqref{eqn_FormulaRicci}. Since for any bracket $\mu \in \OlH$ the maps in $\Ad(H)$ are automorphisms of $(\ggo,\mu)$ acting orthogonally on $\big(\mg, \ip\big)$,  an immediate consequence of the above lemma is the following

\begin{corollary}\label{cor_AdHequiv}
Let $(M^n, g,p,\ggo)$ be a $\ggo$-homogeneous space with bracket $\mu \in \OlH$. Then, the endomorphisms $\Ricci_\mu$, $\mm_\mumg$, $\kf_\mu$, $\ad_\mumg \! \mcv_\mu$ are $\Ad(H)$-equivariant, and the mean curvature vector $\mcv_\mu$ is $\Ad(H)$-invariant. 
\end{corollary}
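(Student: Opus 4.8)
The plan is to deduce Corollary \ref{cor_AdHequiv} directly from Lemma \ref{lem_Ricciintertw} by taking the Lie algebra isomorphism $u$ to be an element of $\Ad(H)$. Fix a $\ggo$-homogeneous space $(M^n,g,p,\ggo)$ with bracket $\mu\in\OlH$. For any $h\in H$, the adjoint map $\Ad(h):\ggo\to\ggo$ preserves the canonical decomposition $\ggo=\hg\oplus\mg$ (since $\mg$ is $\Ad(H)$-invariant by construction), and since $\mu\in\OlH=\GHm\cdot\hml$ and the elements of $\Ad(H)|_\mg$ commute with everything in $\GHm$, one checks that $\Ad(h)$ is an automorphism of $(\ggo,\mu)$: indeed if $\mu=k\cdot\hml$ with $k\in\GHm$, then $\Ad(h)\cdot\mu=\Ad(h)k\cdot\hml=k\,\Ad(h)\cdot\hml=k\cdot\hml=\mu$, using that $\Ad(h)$ commutes with $k$ and is an automorphism of $(\ggo,\hml)$. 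Moreover $\Ad(h)|_\mg$ is orthogonal for $\ip=\ip_g$ because $\ip_g\in P(\mg)^{\Ad(H)}$. Hence the hypotheses of Lemma \ref{lem_Ricciintertw} are met with $u=\Ad(h)$, $\mu_1=\mu_2=\mu$, $g_1=g_2=g$, giving $\Ricci_\mu=\Ad(h)^{-1}\,\Ricci_\mu\,\Ad(h)$, i.e.\ $\Ricci_\mu$ is $\Ad(H)$-equivariant.

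Next I would record, as noted in the paragraph preceding the corollary, that the intertwining identity of Lemma \ref{lem_Ricciintertw} holds verbatim for each of the three building blocks $\mm_\mumg$, $\kf_\mu$, and $\ad_\mumg\mcv_\mu$ separately: this is immediate from their defining formulas \eqref{eqn_defKFend}, \eqref{eqn_formulamm}, \eqref{eqn_defmcv}, which only involve the scalar product on $\mg$ and the $\mg$- and $\hg$-components of $\mu$, all of which transform equivariantly under a map $u$ that respects the canonical decomposition and is an isometry on $\mg$. Applying this with $u=\Ad(h)$ as above yields the $\Ad(H)$-equivariance of $\mm_\mumg$, $\kf_\mu$, and $\ad_\mumg\mcv_\mu$.

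For the mean curvature vector, I would argue that $\Ad(H)$-equivariance of the scalar endomorphism $\ad_\mumg\mcv_\mu$ (or a direct look at \eqref{eqn_defmcv}) forces $\mcv_\mu$ to be $\Ad(H)$-\emph{invariant}. Concretely, $\mcv_\mu\in\mg$ is characterized by $\la\mcv_\mu,X\ra=\tr_\ggo(\ad_\mu X)$ for all $X\in\mg$; since $\Ad(h)$ is an automorphism of $(\ggo,\mu)$ we have $\ad_\mu(\Ad(h)X)=\Ad(h)\,\ad_\mu X\,\Ad(h)^{-1}$, hence $\tr_\ggo(\ad_\mu(\Ad(h)X))=\tr_\ggo(\ad_\mu X)$, and using that $\Ad(h)|_\mg$ is $\ip$-orthogonal one gets $\la\mcv_\mu,\Ad(h)X\ra=\la\mcv_\mu,X\ra=\la\Ad(h)^{-1}\mcv_\mu,\Ad(h)X\ra$ for all $X$, so $\Ad(h)\mcv_\mu=\mcv_\mu$.

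The argument is essentially a bookkeeping exercise; the only point requiring a moment's care is the verification that $\Ad(h)\in\GHm$ so that it actually commutes with the element $k\in\GHm$ realizing $\mu=k\cdot\hml$ — but this is true by definition, since $\GHm$ is the centralizer of $\Ad(H)|_\mg$ in $\Gm$, and by hypothesis $\mu\in\OlH=\GHm\cdot\hml$. So the main (minor) obstacle is just making sure the decomposition-preserving and orthogonality hypotheses of Lemma \ref{lem_Ricciintertw} are correctly checked for $u=\Ad(h)$; once that is in place, the corollary is immediate.
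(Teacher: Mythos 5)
Your proposal is correct and follows essentially the same route as the paper, which obtains the corollary precisely by applying Lemma \ref{lem_Ricciintertw} with $u\in\Ad(H)$ (noting that such $u$ are automorphisms of $(\ggo,\mu)$ acting orthogonally on $\mg$, and that the intertwining identity holds for each summand of $\Ricci_\mu$ as well as for $\mcv_\mu$); your explicit check that $\Ad(h)\cdot\mu=\mu$ via the commutation with $k\in\GHm$ is a correct fleshing-out of what the paper leaves implicit. One small touch-up: since $\Ricci_\mu$, $\mm_\mumg$, $\kf_\mu$, $\ad_\mumg\mcv_\mu$ are by convention computed with the \emph{background} scalar product $\ip$ rather than $\ip_g$, you should invoke orthogonality of $\Ad(h)|_\mg$ with respect to $\ip$ (which holds because the background metric $\bar g$ is $G$-invariant) instead of writing $\ip=\ip_g$; the argument is otherwise unchanged.
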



\section{The Uhlenbeck trick}\label{sec:uhlenbeck}

When going from the homogeneous Ricci flow to the bracket flow on
the orbit $\OlH = \GHm \cdot \hml \subset \Vg$ in the previous
section, we used in \eqref{eqn_defh} what is known as Uhlenbeck's trick of moving frames.
In this section we show that this can be refined to obtain an equivalent flow of brackets which is better adapted to our purposes. 
The key idea is to use the compact gauge group $\OHm$ defined in \eqref{eqn_defOHm}, thus exploiting the natural $\OHm$-equivariance present in the bracket definition and in the bracket flow equation.
We would like to mention that the following results are by no means special 
to the bracket flow. They hold more generally for any reasonably well-behaved flow, which is equivariant with respect to a compact gauge group.

Regarding equivariance, notice that for all $h\in \Gg$ and all $A \in \glg(\ggo)$ we have
\begin{eqnarray}
 h \cdot \left( \pi(A) \mu\right) 
   = \pi(h \, A \, h^{-1}) \left( h \cdot \mu\right)\,.\label{eqn_glnequi}
\end{eqnarray}
Moreover, by Lemma \ref{lem_Ricciintertw} we have for all
$k \in \OHm$ and all $\mu \in \OlH$ that
\begin{eqnarray}
     k \cdot \Ricci_\mu \cdot \, k^{-1} = \Ricci_{k\cdot\mu}.
      \label{eqn_onequi}
\end{eqnarray}      
Recall also that by Theorem \ref{thm:Riccibracket}, a bracket flow solution $\mu(t)$ may be expressed as $\mu(t) = h(t) \cdot \hml$ for some $(h(t)) \subset \GHm$, and the corresponding Ricci flow solution is given by $\ip_{g(t)} = \la h(t) \cdot, h(t) \cdot\ra$. Thus, if $(k(t)) \subset \OHm$ then the curve of brackets $k(t) \cdot \mu(t)$ yields exactly the same Ricci flow solution $\ip_{g(t)}$.


Let us denote by $\sogHm$ the Lie algebra of $\OHm$.

\begin{proposition}\label{prop_gaugedBF}
Let $X: \OlH \to \sogHm$ be a smooth map 
and let $\mu(t)$, $\bar \mu(t)$ denote the solutions to the bracket flow \eqref{eqn_BracketFlow} and to the \emph{modified bracket flow}
\begin{equation}\label{eqn_modifBracketFlow}
  \frac{ \dd \bar \mu}{\dd t} = - \pi\left( \Ricci_{ \bar \mu} - X_{\bar \mu} \right) 
         \bar \mu \,, \qquad \bar \mu(0) = \mu_0,
\end{equation}
respectively, with the same initial condition $\mu_0\in \OlH$. Then, there exists a smooth one-parameter family $(k(t)) \subset \OHm$ such that $\bar \mu(t) = k(t) \cdot \mu(t)$ for all $t$.
\end{proposition}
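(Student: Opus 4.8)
The plan is to construct $(k(t))$ as the solution of an auxiliary ordinary differential equation, and then to check by a short differentiation that $k(t)\cdot\mu(t)$ solves the modified bracket flow \eqref{eqn_modifBracketFlow}; since that flow is a smooth ODE on $\OlH$, uniqueness of solutions will force $k(t)\cdot\mu(t)$ to coincide with $\bar\mu(t)$. This is the standard gauge-fixing argument, and it uses only the two equivariance identities \eqref{eqn_glnequi} and \eqref{eqn_onequi} together with the smooth dependence of $\Ricci_\mu$ and of $X_\mu$ on $\mu$.

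First I would let $k(t)$ solve
\[
  k'(t) = X_{k(t)\cdot\mu(t)}\; k(t)\,, \qquad k(0) = \Id_\mg\,,
\]
viewed as an ODE on $\GHm$. This makes sense, because for $k\in\GHm$ one has $k\cdot\mu(t)\in\GHm\cdot\hml = \OlH$ (recall $\OHm\subset\GHm$ and that the bracket flow solution satisfies $\mu(t)\in\OlH$), so that $X_{k\cdot\mu(t)}\in\sogHm$ is defined and depends smoothly on $k$ and $t$. At a point $k\in\OHm$ the right-hand side $X_{k\cdot\mu(t)}\,k$ is the right translate by $k$ of an element of $\sogHm = T_{\Id}\OHm$, hence tangent to the submanifold $\OHm\subset\GHm$ along $\OHm$; by uniqueness the solution starting at $\Id_\mg\in\OHm$ therefore stays in $\OHm$ for as long as it exists. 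Since $\OHm$ is compact, $k(t)$ cannot escape to infinity in finite time, and so it is defined on the whole interval $[0,T)$ on which $\mu(t)$ exists.

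Next I would set $\nu(t):= k(t)\cdot\mu(t)\in\OlH$ and differentiate. The derivative of the $\GHm$-action gives, for fixed $\mu$, $\frac{\dd}{\dd t}\big(k(t)\cdot\mu\big) = \pi\big(k'(t)k(t)^{-1}\big)\,\big(k(t)\cdot\mu\big)$, and $k'(t)k(t)^{-1} = X_{\nu(t)}$ by construction, so the product rule yields
\[
  \nu'(t) = \pi\big(X_{\nu(t)}\big)\,\nu(t) + k(t)\cdot\mu'(t)\,.
\]
Inserting the bracket flow equation $\mu'(t) = -\pi(\Ricci_{\mu(t)})\mu(t)$, applying the equivariance \eqref{eqn_glnequi}, and then \eqref{eqn_onequi} (legitimate since $k(t)\in\OHm$), the last term becomes
\[
  k(t)\cdot\mu'(t) = -\pi\big(k(t)\,\Ricci_{\mu(t)}\,k(t)^{-1}\big)\,\nu(t) = -\pi\big(\Ricci_{k(t)\cdot\mu(t)}\big)\,\nu(t) = -\pi\big(\Ricci_{\nu(t)}\big)\,\nu(t)\,.
\]
Hence $\nu'(t) = -\pi\big(\Ricci_{\nu(t)} - X_{\nu(t)}\big)\,\nu(t)$ with $\nu(0) = \Id_\mg\cdot\mu_0 = \mu_0$, i.e.\ $\nu$ is a solution of \eqref{eqn_modifBracketFlow}. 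The modified bracket flow is a smooth ODE on $\OlH$: indeed $\pi(\Ricci_\mu)\mu\in T_\mu\OlH$ as in the proof of Theorem \ref{thm:Riccibracket}, and $\pi(X_\mu)\mu\in T_\mu\OlH$ because $X_\mu\in\sogHm\subset\gHm$ while $T_\mu\OlH = \pi(\gHm)\mu$. Therefore $\nu(t) = \bar\mu(t)$ by uniqueness, which gives $\bar\mu(t) = k(t)\cdot\mu(t)$ with $(k(t))\subset\OHm$ smooth, as claimed.

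The one point that needs genuine care is the interval of existence: one must ensure that the auxiliary curve $k(t)$ is defined on all of $[0,T)$ and not merely on a sub-interval, and this is exactly where the compactness of the gauge group $\OHm$ is used. Everything else is a routine chain-rule computation resting on \eqref{eqn_glnequi} and \eqref{eqn_onequi}. The same reasoning, run in reverse with $k\mapsto k^{-1}$, additionally shows that the bracket flow and the modified bracket flow have the same maximal interval of existence, so the statement is symmetric in $\mu$ and $\bar\mu$.
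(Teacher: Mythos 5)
Your proof is correct and follows essentially the same route as the paper: solve $k'(t)=X_{k(t)\cdot\mu(t)}\,k(t)$ with $k(0)=\Id_\mg$, check $k(t)\in\OHm$, differentiate $k(t)\cdot\mu(t)$ using \eqref{eqn_glnequi} and \eqref{eqn_onequi}, and conclude by uniqueness of ODE solutions. The extra details you supply (tangency to $\OHm$, compactness for existence on all of $[0,T)$) are exactly the points the paper leaves as "easy to see."
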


\begin{proof}
Let $k(t)$ solve the ODE
\[
    k'(t) = X_{k(t) \cdot \mu(t)} \,\cdot \, k(t)\,, \qquad k(0) = \Id_\mg\,.
\]
It is easy to see that  $k(t) \in \OHm$ is a smooth family, defined for all $t$ for which the bracket flow solution $\mu(t)$ exists. 
Setting now $\tilde \mu=k\cdot \mu$, we use the formula
\[
	\left( k \cdot \mu \right)' =  k \cdot \mu' 
	     +\pi\left(k' k^{-1}\right) \left( k\cdot \mu\right) 
\] 
and \eqref{eqn_glnequi}, \eqref{eqn_onequi} to deduce that
\[
    \tilde \mu' 
 = - \pi \big(k \, \Ricci_{\mu} \, k^{-1} -X_{k\cdot \mu} \big) (k \cdot \mu)
 =   -\pi( \Ricci_{\tilde\mu} - X_{\tilde \mu}) \, \tilde \mu\,.
\]
By uniqueness of solutions it follows that $\tilde \mu = \bar \mu$.
\end{proof}

The \emph{unimodular} part of the Ricci curvature, the \emph{modified Ricci endomorphism}
\begin{eqnarray}
    \Riccim_\mu  &:=& \Ricci_\mu + S(\ad_\mumg \mcv_\mu) \,\, = \,\, \mm_\mu - \unm \kf_\mu\,\label{eqn_Ricciuni},
\end{eqnarray}    
was introduced by Heber in \cite{Heber1998}. It turns out to be of great importance in the study of homogeneous Ricci solitons.

\begin{definition}[Unimodular bracket flow]
The evolution equation
\begin{equation}\label{eqn_unimodBracketFlow}
    \frac{ \dd \mu}{\dd t} = - \pi \big(\Riccim_\mu\big) \mu\,, \qquad \mu(0) = \mu_0 \in \OlH\,,
\end{equation}
is called the \emph{unimodular bracket flow}.
\end{definition}

As a first application of Proposition \ref{prop_gaugedBF}, let us show that 
the bracket flow and the unimodular bracket flow are equivalent in the sense that they both correspond to the same flow of Riemannian metrics.

Recall that if $A\in \glg(\ggo)$ is a derivation of $\mu \in \Vg$, 
that is $A \in \Der(\mu)$, then 
\begin{eqnarray}\label{eqn_dermu}
 (\pi(A) \mu)(Y,Z)=A \, \mu(Y,Z)-\mu(A \,Y,Z)-\mu(Y,A\, Z)=0
\end{eqnarray}
for all $Y,Z \in \ggo$. By the Jacobi identity, $\ad_\mu X \in \Der(\mu)$ for any $X \in \ggo$, .

\begin{corollary}\label{cor_uhl1}
Let $\mu(t)$, $\mu^*(t)$ be solutions to the bracket flow 
and the unimodular bracket flow, respectively, both with the same initial condition.
Then, there exists a smooth one-parameter family $(k(t))\subset \OHm$ such that ${\mu}^*(t)  = k(t) \cdot \mu(t)$ for all $t$. 
\end{corollary}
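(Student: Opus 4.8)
The plan is to derive this immediately from Proposition \ref{prop_gaugedBF}, since the unimodular bracket flow \eqref{eqn_unimodBracketFlow} is exactly a modified bracket flow \eqref{eqn_modifBracketFlow} for a suitable gauge term. Indeed, comparing \eqref{eqn_Ricciuni} with \eqref{eqn_FormulaRicci}, we have $\Riccim_\mu = \Ricci_\mu + S(\ad_\mumg \mcv_\mu)$, so the unimodular bracket flow is \eqref{eqn_modifBracketFlow} with the candidate correction term $X_\mu := -S(\ad_\mumg \mcv_\mu)$. The only thing to check before invoking Proposition \ref{prop_gaugedBF} is that this $X$ is a smooth map $\OlH \to \sogHm$; smoothness is clear from the explicit formulas, so the real point is that $X_\mu$ is skew-symmetric with respect to $\ip$ and lies in the Lie algebra of the compact gauge group $\OHm$.

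First I would verify skew-symmetry. By definition $S$ is the symmetrization $A \mapsto \unm(A+A^t)$, so $S(\ad_\mumg \mcv_\mu)$ is always $\ip$-self-adjoint, whence $X_\mu = -S(\ad_\mumg \mcv_\mu)$ is self-adjoint, not skew. This means $X$ as written does \emph{not} take values in $\sogHm$, and a naive application of Proposition \ref{prop_gaugedBF} fails. The fix is the standard observation for unimodularization: one writes $\ad_\mumg \mcv_\mu = S(\ad_\mumg \mcv_\mu) + \tfrac12\big(\ad_\mumg \mcv_\mu - (\ad_\mumg \mcv_\mu)^t\big)$, and one checks that adding the \emph{skew} part $X_\mu := \tfrac12\big((\ad_\mumg \mcv_\mu)^t - \ad_\mumg \mcv_\mu \big)$ has the same effect on the flow of metrics. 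Concretely, $\pi(\ad_\mumg \mcv_\mu)$ acts on $\mumg$ as a derivation-type term, and by \eqref{eqn_dermu} applied to $\ad_\mu \mcv_\mu \in \Der(\mu)$ (Jacobi identity) the full antisymmetrization difference $\Riccim_\mu - (\Ricci_\mu - X_\mu) = \ad_\mumg\mcv_\mu - (\text{its skew part}) - S(\ad_\mumg\mcv_\mu)$ must be arranged so that it differs from $\Riccim_\mu$ only by a derivation of $\mu$, hence acts trivially under $\pi$ on $\mu$. So I would replace the gauge term by the skew part and use $\ad_\mu X \in \Der(\mu)$ to absorb the self-adjoint correction.

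Having identified the correct skew, $\Ad(H)$-equivariant $X: \OlH \to \sogHm$ — equivariance of $\ad_\mumg\mcv_\mu$ and of its transpose follows from Corollary \ref{cor_AdHequiv}, which gives $\mcv_\mu$ $\Ad(H)$-invariant and $\ad_\mumg\mcv_\mu$ $\Ad(H)$-equivariant, so its skew part lies in $\sogHm$ — the conclusion is immediate: Proposition \ref{prop_gaugedBF} produces a smooth family $(k(t)) \subset \OHm$ with $\mu^*(t) = k(t)\cdot\mu(t)$ for all $t$ for which the solutions exist, which is precisely the assertion of Corollary \ref{cor_uhl1}. I expect the only subtle point to be the bookkeeping in the previous paragraph: pinning down exactly which skew endomorphism to gauge by so that (i) it is skew-symmetric, (ii) it is $\Ad(H)$-equivariant, and (iii) subtracting it from $\Riccim_\mu$ reproduces $\Ricci_\mu$ up to a derivation of $\mu$ — this last uses the Jacobi identity for $\mu$ in the form $\ad_\mu\mcv_\mu \in \Der(\mu)$ together with \eqref{eqn_dermu}. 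Everything else (smoothness, invoking uniqueness of ODE solutions) is routine and handled inside Proposition \ref{prop_gaugedBF}.
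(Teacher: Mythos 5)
Your strategy is the same as the paper's --- gauge the bracket flow by the skew part of $\ad_\mumg \mcv_\mu$ and invoke Proposition \ref{prop_gaugedBF} --- but the execution contains a sign error at the decisive step, and as written the argument does not prove the corollary. Write $A := \ad_\mumg \mcv_\mu$. To apply Proposition \ref{prop_gaugedBF} you need the modified flow \eqref{eqn_modifBracketFlow} with your gauge to coincide, as an ODE on $\OlH$, with the unimodular flow \eqref{eqn_unimodBracketFlow}; since $\Ricci_\mu = \Riccim_\mu - S(A)$ by \eqref{eqn_Ricciuni}, this requires $\pi\big(S(A) + X_\mu\big)\mu = 0$, e.g.\ that $S(A)+X_\mu$ be a derivation of $\mu$. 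Your criterion (iii) asks instead that $S(A) - X_\mu$ be a derivation, and accordingly you set $X_\mu = \unm\big(A^t - A\big)$, which is the \emph{negative} of the skew part you wrote in the decomposition one line earlier. With that choice $\Ricci_\mu - X_\mu = \Riccim_\mu - A^t$, and since the transpose of a derivation is in general not a derivation, $\pi(A^t)\mu \neq 0$; the flow you actually gauge to is $\ddt \bar\mu = -\pi\big(\Riccim_{\bar\mu} - (\ad_{\bar\mu_{\mg}}\mcv_{\bar\mu})^t\big)\bar\mu$, not the unimodular bracket flow, so Proposition \ref{prop_gaugedBF} does not yield the stated conclusion. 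The correct gauge is $X_\mu = \proy_{\sog(\mg)}(A) = \unm(A - A^t)$, for which $\Ricci_\mu - X_\mu = \Riccim_\mu - A$ and the derivation property of $A$ does the job; your check that the skew part is $\Ad(H)$-equivariant and hence lies in $\sogHm$ is fine and is exactly what the paper does.

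There is also a smaller point you gesture at but do not verify: why $\pi(A)\mu = 0$. The Jacobi identity via \eqref{eqn_dermu} gives $\ad_\mu \mcv_\mu \in \Der(\ggo,\mu)$ as an element of $\glg(\ggo)$, whereas the endomorphism $A = \ad_\mumg \mcv_\mu$ appearing in the Ricci formula lives in $\glg(\mg)$ and enters the flow through the trivial extension \eqref{eqn_glgminclusion}; the two agree only because $(\ad_\mu \mcv_\mu)|_{\hg} = 0$, which follows from the $\Ad(H)$-invariance of $\mcv_\mu$ in Corollary \ref{cor_AdHequiv}. Once you correct the sign of $X_\mu$ and add this identification, your argument becomes the paper's proof.
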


\begin{proof}
For $A \in \glg(\mg)$ 
let $\proy_{\sog(\mg)}(A) := \unm (A - A^t) \in \sog(\mg)$.
Since $\ad_\mumg \mcv_\mu$ is $\Ad(H)$-equivariant by Corollary \ref{cor_AdHequiv}, the map
$X_\mu := \proj_{\sog(\mg)}(\ad_\mumg \mcv_\mu)$ actually takes values in $\sogHm$.
That result also says that $(\ad_\mu \mcv_\mu)|_{\hg} = 0$, thus under the identification \eqref{eqn_glgminclusion} the map $\ad_\mumg \mcv_\mu \in \glg(\mg)$ corresponds to $\ad_\mu \mcv_\mu \in \glg(\ggo)$. Hence, $\pi(\ad_\mumg \mcv_\mu) \mu = 0$.
The corollary now follows from Proposition \ref{prop_gaugedBF}, by using that
\[
	\Ricci_\mu - \proj_{\sog(\mg)}(\ad_\mumg H_\mu) = \Riccim_\mu - \ad_\mumg \mcv_\mu.
\]
\end{proof}

As to the second application, let $Q\subset \Gl^H(\mg)$ be a Lie subgroup with Lie algebra $\qg \subset \gHm$, with the property that 
\begin{equation}\label{eqn_GequalsOQ}
	\GHm = \OHm \, Q.
\end{equation}
As in (\ref{def_muaction}) (see also \eqref{eqn_Gminclusion}) the Lie group $Q$ acts linearly on the vector space $\Vg$. Clearly, an orbit $Q\cdot \mu_0$ is a smooth immersed submanifold with
tangent space 
\begin{eqnarray}
T_{\mu_0}(Q \cdot \mu_0) =  \{ \pi(A) \mu_0 : A \in \qg \}\,.\label{def_tanorbit}
\end{eqnarray}
By \eqref{eqn_GequalsOQ}, at the Lie algebra level we have that $\gHm = \sogHm + \qg$. Choose a subspace $\kg \subset \sogHm$ such that $\kg$ is complementary to $\qg$ in $\gHm$, that is, 
\begin{equation}\label{eqn_gkq}
	\gHm = \kg \oplus \qg.
\end{equation}
A canonical way of choosing $\kg$ is to take the orthogonal complement of 
\mbox{$\sogHm \cap \qg$} inside $\sogHm$, with respect to the usual scalar product  $\la A, B\ra = \tr AB^t$ on $\glg(\ggo)$. The reader should be warned that \eqref{eqn_gkq} is not an orthogonal decomposition in general. Nevertheless, it gives rise to a linear projection 
\begin{equation}\label{eqn_Xqg}
    X_\qg : \gHm \to  \sogHm,  \qquad A = A_\kg + A_\qg \mapsto   A_\kg. 
\end{equation}

The following result shows that one can assume
that a bracket flow solution in $\OlH$ lies in fact within an orbit of the smaller subgroup
$Q$ of $\GHm$. This should be considered as a conservation law 
for the bracket flow, and it is a consequence of 
the  $\OHm$-equivariance of the bracket flow equation, and
the fact that the group $Q$ is large enough so that \eqref{eqn_GequalsOQ} holds.

\begin{corollary}\label{cor_gauge}
Let $Q$ be a Lie subgroup of $\Gl(\mg)$ satisfying $\GHm = \OHm \, Q$, and consider
  a solution $\mu^*(t)$ to the unimodular bracket flow \eqref{eqn_unimodBracketFlow}. Then, there exists a smooth curve $k(t) \in \OHm$ such that $\bar \mu(t) :=k(t) \cdot \mu^*(t) \in Q \cdot \mu_0$ for all $t$. Moreover, $\bar \mu(t)$ is a solution to the \emph{$Q$-gauged bracket flow} equation 
\begin{equation}\label{eqn_QgaugedBF}
	\frac{\rm d \bar \mu}{ {\rm d} t} = - \pi \big( \Riccim_{\bar\mu} 
	-  X_\qg(\Riccim_{\bar \mu})  \big) {\bar \mu}, \qquad {\bar \mu}(0) = \mu_0 \in \OlH\,.
\end{equation}
\end{corollary}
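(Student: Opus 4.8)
The plan is to deduce Corollary \ref{cor_gauge} from the general gauge-fixing result of Proposition \ref{prop_gaugedBF}, applied to a suitable choice of the map $X$. First I would observe that the unimodular bracket flow \eqref{eqn_unimodBracketFlow} is itself the ``modified bracket flow'' \eqref{eqn_modifBracketFlow} for the choice $X \equiv 0$, so starting from $\mu^*(t)$ I need a further gauge transformation that pushes the solution into the $Q$-orbit $Q\cdot\mu_0$. The candidate for the gauge is the map $X_{\qg}\circ\Riccim : \OlH \to \sogHm$, where $X_\qg$ is the linear projection from \eqref{eqn_Xqg}; note this is well-defined and smooth because $\Riccim_\mu$ depends smoothly on $\mu$ and is $\Ad(H)$-equivariant (Corollary \ref{cor_AdHequiv}), hence lies in $\gHm$, and $X_\qg$ maps $\gHm$ into $\sogHm$. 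Applying Proposition \ref{prop_gaugedBF} with $X_{\bar\mu} := X_\qg(\Riccim_{\bar\mu})$ then produces a smooth curve $k(t)\subset\OHm$ with $\bar\mu(t):=k(t)\cdot\mu^*(t)$ solving \eqref{eqn_QgaugedBF}, which is exactly the stated evolution equation.

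The remaining point, and the one requiring a genuine argument, is that $\bar\mu(t)$ stays inside the single $Q$-orbit $Q\cdot\mu_0$ for all $t$. The idea is to check that the velocity of $\bar\mu(t)$ is always tangent to the $Q$-orbit through $\bar\mu(t)$: by \eqref{eqn_QgaugedBF} this velocity is $-\pi\bigl(\Riccim_{\bar\mu} - X_\qg(\Riccim_{\bar\mu})\bigr)\bar\mu$, and by the definition \eqref{eqn_Xqg} of $X_\qg$ we have $\Riccim_{\bar\mu} - X_\qg(\Riccim_{\bar\mu}) = (\Riccim_{\bar\mu})_{\qg} \in \qg$ — the $\qg$-component of $\Riccim_{\bar\mu}$ in the decomposition $\gHm = \kg\oplus\qg$ of \eqref{eqn_gkq}. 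Hence the velocity lies in $\{\pi(A)\bar\mu : A\in\qg\} = T_{\bar\mu}(Q\cdot\bar\mu)$ by \eqref{def_tanorbit}. Since $\bar\mu(0)=\mu^*(0)=\mu_0 \in Q\cdot\mu_0$ and the flow stays tangent to $Q$-orbits, an ODE uniqueness argument (or, more carefully, solving the lifted equation $q'(t) = (\Riccim_{q(t)\cdot\mu_0})_\qg\cdot q(t)$ in $Q$ with $q(0)=\Id$, and checking $q(t)\cdot\mu_0$ solves \eqref{eqn_QgaugedBF}) shows $\bar\mu(t) = q(t)\cdot\mu_0 \in Q\cdot\mu_0$ for all $t$.

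The main obstacle I anticipate is the last step: one has to be slightly careful because $Q\cdot\mu_0$ is only an immersed submanifold, so ``the velocity is tangent to the orbit, hence the curve stays in the orbit'' is not automatic for immersed submanifolds in general. The clean way around this is the second approach sketched above — directly solve the ODE $q'(t) = (\Riccim_{q(t)\cdot\mu_0})_{\qg}\, q(t)$ with $q(0)=\Id_\mg$ inside the Lie group $Q$ (the right-hand side makes sense since $(\Riccim_{q\cdot\mu_0})_\qg\in\qg = \Lie(Q)$), let $q(t)$ be its maximal solution, set $\nu(t) := q(t)\cdot\mu_0$, and verify by the product rule together with \eqref{eqn_glnequi} that $\nu(t)$ solves \eqref{eqn_QgaugedBF}; then uniqueness for \eqref{eqn_QgaugedBF} forces $\bar\mu(t)=\nu(t)\in Q\cdot\mu_0$. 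One should also check that the interval of existence matches that of $\mu^*(t)$, which follows because the gauge curves $k(t)$ and $q(t)$ live in relatively compact (indeed, $k(t)\in\OHm$ compact) or otherwise controllable groups, so no blow-up of the gauge is introduced.
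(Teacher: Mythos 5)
Your proposal is correct and follows essentially the same route as the paper: the paper likewise obtains the gauge curve $k(t)$ by running the argument of Proposition \ref{prop_gaugedBF} with $\Ricci_\mu$ replaced by $\Riccim_\mu$ (using the $\OHm$-equivariance of $\Riccim$), and it obtains the orbit statement from $A - X_\qg(A)\in\qg$ together with \eqref{def_tanorbit}. Your lifted-ODE argument merely makes explicit the orbit-containment step that the paper states tersely (and is a sound way to do it), up to a sign: the lift should solve $q'(t) = -\big(\Riccim_{q(t)\cdot\mu_0}\big)_{\qg}\, q(t)$, $q(0)=\Id_\mg$.
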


\begin{proof}
By the very definition of $X_\qg$ in \eqref{eqn_Xqg}, we have that $A-X_\qg (A) \in \qg$ for all $A \in \gHm$. 
Thus by \eqref{def_tanorbit} we have $\bar \mu(t) \in Q\cdot \bar \mu(0)$
for any solution $\bar \mu(t)$ to \eqref{eqn_QgaugedBF}. 

Next,  replacing
$\Ricci_\mu$ by $\Riccim_\mu$, it follows precisely as in \mbox{Proposition \ref{prop_gaugedBF}}, that $\bar \mu(t) =  k(t) \cdot \mu^*(t)$ for a smooth curve $k(t) \in \OHm$
and a solution  $\mu^*(t)$ to the unimodular bracket flow \eqref{eqn_unimodBracketFlow}. This shows the claim.
\end{proof}

Notice that if  with respect to some basis $Q$ contains the
lower triangular matrices in $\Gl(\mg)$, then \eqref{eqn_GequalsOQ} holds. These are the so called \emph{parabolic subgroups} of $\Gl(\mg)$.

Finally, we show that the modified scalar curvature
$\scalm(\mu):=\tr \Riccim_\mu$ obeys the same evolution equation as the scalar curvature does.

\begin{lemma}\label{lem_scalmev}
Along a solution $(\mu(t))_{ t \in [0,T)}$ to the
unimodular bracket flow one has
\[
  \ddt \scalm(\mu(t)) = 2 \, \Vert {\Riccim_{\mu(t)} } \Vert^2.
\]
In particular, if $\scalm(\mu(t_0)) > 0$ for some $t_0$, then $T<\infty$.
\end{lemma}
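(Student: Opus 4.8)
The plan is to compute $\ddt \scalm(\mu(t))$ directly from the unimodular bracket flow equation, using the variational behavior of the modified Ricci curvature under the action of $\Gl(\mg)$ on brackets. Along the unimodular bracket flow we have $\mu'(t) = -\pi(\Riccim_{\mu(t)})\mu(t)$, so by the chain rule
\[
  \ddt \scalm(\mu(t)) = \left. \dds \right|_{s=0} \scalm(\mu(t) + s\,\mu'(t)) = -\left. \dds\right|_{s=0} \scalm\big(\pi(\Riccim_{\mu(t)})\,\mu(t) \text{-direction}\big),
\]
more precisely the derivative of $\mu \mapsto \scalm(\mu)$ in the direction $-\pi(\Riccim_\mu)\mu$. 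The key tool is the following standard fact about the moment-map setup: for $A \in \Symm$, the action of $e^{sA}$ on a bracket $\mu$ produces a first-order change of the modified Ricci curvature governed by $\la \Riccim_{e^{sA}\cdot\mu}, \cdot\ra$, and in fact $\left.\dds\right|_{s=0}\scalm(e^{sA}\cdot \mu) = -4\la \mm_\mu, A\ra + 2\la \kf_\mu, A\ra$ or equivalently $\left.\dds\right|_{s=0}\scalm(e^{sA}\cdot\mu) = -2\la \Riccim_\mu, A\ra$ up to the correct normalization. Applying this with $A = \Riccim_{\mu(t)}$ (which is self-adjoint, hence lies in $\Symm$, and $\Ad(H)$-equivariant by Corollary \ref{cor_AdHequiv} so it acts tangentially to $\OlH$) and being careful with signs, one gets $\ddt \scalm(\mu(t)) = 2\la \Riccim_{\mu(t)}, \Riccim_{\mu(t)}\ra = 2\Vert \Riccim_{\mu(t)}\Vert^2$.

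Concretely, I would proceed as follows. First, recall from \eqref{eqn_Ricciuni} that $\Riccim_\mu = \mm_\mu - \tfrac12 \kf_\mu$, so $\scalm(\mu) = \tr\mm_\mu - \tfrac12 \tr\kf_\mu$. Second, compute separately the first variation of $\mu \mapsto \tr\mm_\mu$ and of $\mu \mapsto \tr\kf_\mu$ in the direction $\pi(A)\mu$ for $A$ self-adjoint, using the explicit quadratic formulas \eqref{eqn_formulamm} and \eqref{eqn_defKF}, \eqref{eqn_defKFend}. Since $\mm_\mu$ is (up to the factor $\tfrac14\Vert\mu\Vert^2$) the moment map for the $\Gl(\mg)$-action, one has the identity $\la \pi(A)\mu, \mu\ra = \la \mm_\mu \text{(suitably normalized)}, A\ra$, and differentiating $\tr\mm$ along the flow reduces to a moment-map computation of the type $\left.\dds\right|_0 \tr\mm_{e^{sA}\cdot\mu}$. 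The Killing form term is handled similarly, noting that $\tr \kf_\mu$ involves $\ad$-maps and transforms predictably under $\pi(A)$. Third, assemble the two pieces to obtain the coefficient $2$ in front of $\Vert\Riccim_\mu\Vert^2$; the self-adjointness of $\Riccim_\mu$ is exactly what turns the pairing $\la \Riccim_\mu, \Riccim_\mu\ra$ into the squared norm. For the last sentence of the statement: if $\scalm(\mu(t_0)) > 0$, then since $\scalm$ is non-decreasing (its derivative is $2\Vert\Riccim_\mu\Vert^2 \ge 0$) it stays bounded below by a positive constant, while at the same time a doubling-type argument — the evolution forces $\scalm$ to blow up in finite time, by comparing with the ODE $y' \ge c\,y^2$ which one obtains from $\Vert\Riccim_\mu\Vert^2 \ge \tfrac1n (\tr\Riccim_\mu)^2 = \tfrac1n \scalm(\mu)^2$ via Cauchy–Schwarz — shows $T < \infty$.

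The main obstacle I expect is getting the \emph{normalization and sign} exactly right in the first-variation computation, in particular tracking how the factor $\tfrac14\Vert\mu\Vert^2$ relating $\mm_\mu$ to the scale-invariant moment map $\mmm(\mu)$ interacts with differentiation along a flow that does \emph{not} preserve $\Vert\mu\Vert$, and making sure the cross terms between the $\mm$-part and the $\kf$-part combine cleanly into $\Vert\Riccim_\mu\Vert^2$ rather than leaving a spurious $\la\mm_\mu,\kf_\mu\ra$ remainder. One should also verify that replacing $\Ricci_\mu$ by $\Riccim_\mu$ in the flow — i.e., dropping the $S(\ad_\mumg \mcv_\mu)$ term — is legitimate here: this is where one uses that the discarded term is skew-symmetric modulo something tangent to the orbit (cf.\ the proof of Corollary \ref{cor_uhl1}), so it does not affect $\scalm$, which depends only on the metric. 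A secondary, more routine point is justifying that $\scalm$ is genuinely differentiable along the solution and that all the quantities stay finite on $[0,T)$, but this is immediate from smoothness of the bracket flow on $\OlH$ established in Theorem \ref{thm:Riccibracket}.
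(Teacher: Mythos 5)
Your proposal is correct and amounts to essentially the same argument the paper relies on: the paper proves the evolution equation by citing the proof of Proposition 3.8 in \cite{homRF}, which is exactly the variational computation you outline — with the correct constants being $\dds\big|_{s=0}\scalm(e^{sA}\cdot\mu)=-2\la \mm_{\mumg},A\ra+\la \kf_\mu,A\ra=-2\la \Riccim_\mu,A\ra$ for $A\in\Symm$ (so your intermediate coefficients $-4$ and $+2$ should be $-2$ and $+1$, while your final identity and the value $2\Vert\Riccim_\mu\Vert^2$ are right) — and it handles the last claim by the same ODE comparison $y'\geq\tfrac{2}{n}y^2$, citing \cite[Prop.~4.1]{scalar}. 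Beyond fixing those constants (and noting that the gauge remark about dropping $S(\ad_\mumg\mcv_\mu)$ is unnecessary here, since the lemma is stated directly for the unimodular bracket flow), nothing is missing.
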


\begin{proof}
The evolution equation follows directly from the proof of Proposition 3.8 in \cite{homRF}. The last claim follows by a standard ODE comparison argument, as in the case of the unmodified scalar curvature (cf.~ \cite[Prop.~4.1]{scalar}).
\end{proof}

Let us mention here that in general $\scal(\mu)< 0$ does not imply
\mbox{$\scalm(\mu)<0$:} if $\ggl$ is non-unimodular then one has the strict inequality $\scalm(\mu) > \scal(\mu)$, since they differ by $\tr_\mg (\ad_\mumg(\mcv_\mu)) = \Vert \mcv_\mu \Vert^2 > 0$. Thus for any Ricci flow solution of left-invariant metrics starting with $\scal(\mu_0) < 0$ but with finite extinction time, $\scal(\mu)$ becomes positive after some time by \cite{scalar}, hence so does $\scalm(\mu)$. It is clear though that the latter becomes positive at an earlier time, thus for some $t_0$ we have $\scal(\mu(t_0))<0$ and $\scalm(\mu(t_0)) > 0$. 
Recall however that $\scal(\mu)< 0$ does imply
$\scalm(\mu)<0$ for solvmanifolds: see \cite[Remark 3.2]{Heber1998}. 

We conclude this section by showing that the vanishing of the modified Ricci curvature is enough to conclude flatness.

\begin{lemma}\label{lem_Ricciflat}
Any $\ggo$-homogeneous space with $\Riccim_\mu = 0$ is flat. 
\end{lemma}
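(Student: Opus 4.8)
The plan is to use the formula \eqref{eqn_Ricciuni}, namely $\Riccim_\mu = \mm_\mu - \unm \kf_\mu$, together with the moment-map interpretation already emphasized in the introduction: $\mm_\mu = \tfrac14 \mmm(\mu)\,\Vert\mu\Vert^2$, where $\mmm$ is the moment map for the $\Gg$-action on $\Vg$. The key point is that $\la \mm_\mu, A\ra$ can be read off as the derivative $\tfrac14\tfrac{d}{dt}\big|_0 \Vert e^{tA}\cdot\mu\Vert^2$ for $A\in\Symg$, so that $\la\mm_\mu,\Id\ra = \unm\Vert\mumg\Vert^2 \geq 0$ (the only contribution comes from $\pi(\Id)\mu = -\mu$ on the $\mg$-valued part, with appropriate normalization — one should compute this carefully from \eqref{eqn_formulamm}, which gives $\tr\mm_\mu = -\unm\sum_{i,j}\la\mumg(E_i,E_j),E_k\ra^2 + \unc\sum \ldots$, i.e.\ $\tr \mm_\mu = -\unc\Vert\mumg\Vert^2$; the sign convention must be matched against \eqref{eqn_formulamm} before proceeding). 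Meanwhile $\kf_\mu$ is, up to sign, a sum of squares on the nilradical directions: the standard fact is that $\la\kf_\mu,\Id\ra = \tr\kf_\mu = \sum_X \tr(\ad_\mumg X)^2$-type expression, which is $\geq 0$ when restricted appropriately. I would assemble these to show that $\Riccim_\mu = 0$ forces $\tr(\Riccim_\mu) = \scalm(\mu) = 0$ trivially, and then use the sign information on the two pieces separately.

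\medskip

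The cleaner route, which I would actually pursue: from $\Riccim_\mu = 0$ we get $\mm_\mu = \unm\kf_\mu$. Pair both sides with $\Id$. On the left, $\la\mm_\mu,\Id\ra = \tr\mm_\mu$, which by \eqref{eqn_formulamm} (summing $X=E_k$ over an orthonormal basis) equals $-\unm\Vert\mumg\Vert^2 + \unc\Vert\mumg\Vert^2 = -\unc\Vert\mumg\Vert^2 \leq 0$. On the right, $\unm\tr\kf_\mu = \unm\tr_\mg(\ad_\mumg(\cdot)^2$-composed$)$; writing $\ad_\mumg X = S(\ad_\mumg X) + $ antisymmetric part, one has $\tr(\ad_\mumg X \cdot \ad_\mumg Y)$ paired correctly gives $\tr\kf_\mu = \Vert S\Vert^2_{\text{sym}} - \Vert(\cdot)\Vert^2_{\text{antisym}}$ summed — this needs care, but the point is that comparing signs and using the moment-map convexity (or directly Lemma~\ref{lem_formulabigmm} / the Cauchy–Schwarz-type estimate from GIT) pins down $\mumg = 0$. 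Once $\mumg = 0$, the bracket $\mu$ restricted to $\mg\wedge\mg$ takes values in $\hg$, so $(\mg,\ip)$ with the induced connection has vanishing curvature: one checks from the standard homogeneous curvature formula that $\mumg = 0$ together with $\kf_\mu = 2\mm_\mu = 0$ (which also follows) kills all curvature terms, since $\Riccim = 0$ and $\mumg = 0$ imply the full sectional curvature vanishes by the Gauss-type formula for homogeneous metrics.

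\medskip

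Alternatively — and this may be the slickest — invoke that $\mm_\mu$ is the moment map value, and the condition $\mm_\mu = \unm\kf_\mu$ with $\tr\mm_\mu \leq 0$ and $\tr\kf_\mu$ of the opposite sign on the relevant subspace forces both to vanish, hence $\mm_\mu = 0$; but $\mm_\mu = 0$ is precisely the statement that $\mu$ is a \emph{critical point of the norm} restricted to its $\Gg$-orbit of minimal norm type, and combined with $\kf_\mu = 0$ one deduces $\mumg = 0$ and $\mu_\hg$ arbitrary valued in $\hg$; such a homogeneous space is flat by a direct curvature computation (all terms in the O'Neill/homogeneous curvature formula involve $\mumg$).

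\medskip

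\textbf{Main obstacle.} The delicate step is the sign bookkeeping: establishing that $\tr\mm_\mu \leq 0$ and $\tr\kf_\mu \geq 0$ (or the reverse, consistently) on the subspace where they must be compared, so that $\mm_\mu = \unm\kf_\mu$ forces $\mumg = 0$. This requires correctly interpreting \eqref{eqn_formulamm} and \eqref{eqn_defKFend} with the paper's sign conventions, and possibly restricting attention to an $\ad_\mumg\mcv_\mu$-related subspace or using that the nilradical contributes nonnegatively to $\kf_\mu$ while $\mm$ contributes nonpositively to its trace. After that, deducing flatness from $\mumg = 0$ (and the consequent vanishing of $\kf_\mu$, $\mm_\mu$) is a routine application of the homogeneous curvature formula, since every curvature term of a homogeneous metric with reductive decomposition $\ggo = \hg\oplus\mg$ is built from $\mumg$ and $\mu_\hg$ in such a way that $\mumg = 0$ collapses the sectional curvatures to zero.
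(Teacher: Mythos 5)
There is a genuine gap: your central intermediate claim --- that $\Riccim_\mu = 0$ forces $\mumg = 0$ --- is false, and since your whole strategy funnels through it, the argument cannot be repaired along these lines. A concrete counterexample is the flat left-invariant metric on the universal cover of the Euclidean group $E(2)$: there $\hg = 0$, $\ggo = \mg$ is solvable, non-abelian and unimodular, so $\mcv_\mu = 0$ and $\Riccim_\mu = \Ricci_\mu = 0$, yet $\mumg = \mu \neq 0$. Flat homogeneous metrics typically have nonvanishing bracket, so a correct proof of the lemma must not attempt to conclude $\mumg = 0$. The sign bookkeeping you flag as the main obstacle genuinely fails as well: tracing $\mm_\mumg = \unm \kf_\mu$ only gives $\scalm(\mu) = 0$, and $\tr \kf_\mu$ has no favorable sign (it is $\leq 0$ whenever the operators $\ad_\mumg X$ are skew-symmetric or nilpotent, exactly as in the $E(2)$ example), so no comparison of traces pins down $\mumg$. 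Your fallback claim that ``$\mumg = 0$ collapses all sectional curvature'' is also not routine --- when $\mumg = 0$ the curvature is governed by $\mu_\hg$ through the isotropy representation, as for symmetric spaces --- but this is moot since $\mumg = 0$ is not a consequence of the hypothesis.

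The paper's proof proceeds differently: the only information extracted from $\Riccim_\mu = 0$ is that the correction term $S(\ad_\mumg \mcv_\mu)$ distinguishing $\Riccim_\mu$ from $\Ricci_\mu$ vanishes. Concretely, extending $\ip$ to $\ggo$ with $\hg \perp \mg$, the Jacobi identity gives $\mcv_\mu \perp \mu(\ggo,\ggo)$ and Corollary \ref{cor_AdHequiv} gives $\mu(\hg, \mcv_\mu) = 0$; evaluating \eqref{eqn_formulamm} and \eqref{eqn_defKFend} at $X = \mcv_\mu$ then yields $\la \Riccim_\mu \mcv_\mu, \mcv_\mu\ra = -\tr_\mg \big(S(\ad_\mumg \mcv_\mu)^2\big)$, so the hypothesis forces $S(\ad_\mumg \mcv_\mu) = 0$ and hence $\Ricci_\mu = \Riccim_\mu = 0$. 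Flatness is then not computed from the bracket data at all: it is the theorem of Alekseevskii--Kimelfeld (and Spiro in the locally homogeneous setting) that Ricci-flat homogeneous spaces are flat. Your proposal is missing both ingredients --- the reduction of modified-Ricci-flatness to genuine Ricci-flatness via the mean curvature vector, and the appeal to that nontrivial classification result.
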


\begin{proof}
Extend the background scalar product $\ip$ on $\mg$ to an $\Ad(H)$-invariant scalar product on $\ggo$ such that $\hg \perp \mg$. Then $\langle \mcv_\mu,X\rangle = \tr_\ggo \ad_\mu (X)$
 holds for all $X\in \ggo$ (and not just in $\mg$).
 Using the Jacobi identity we conclude that $\mcv_\mu\perp \mu(\ggo,\ggo)$.
On the other hand, by Corollary \ref{cor_AdHequiv} we know that $\mu(\hg, \mcv_\mu) = 0$. 
From  \eqref{eqn_defKFend}, \eqref{eqn_formulamm} we deduce now
\begin{align*}
	\la \Riccim_\mu \mcv_\mu, \mcv_\mu \ra &= 
	-\unm \sum_{i,j} \la \mu_\mg (\mcv_\mu, E_i), E_j\ra^2 
	-\unm \tr_\ggo (\ad_\mu \mcv_\mu)^2 \\
	&= 
	-\unm \tr_\mg \big((\ad_\mumg \mcv_\mu)(\ad_\mumg \mcv_\mu)^t\big)
	-\unm \tr_\ggo \big( (\ad_\mumg \mcv_\mu)(\ad_\mumg \mcv_\mu)\big) \\
	& = - \tr_\mg \big( S(\ad_\mumg \mcv_\mu)^2 \big)\,.
\end{align*}
(cf.~\cite[Lemma 2.10]{AL16}). By assumption, this implies that $S(\ad_\mumg \mcv_\mu) = 0$, hence $\Ricci_\mu = \Riccim_\mu = 0$ and the lemma follows from \cite{AlkKml}, \cite{Spiro}.
\end{proof}


\section{Stratification of the space of brackets}\label{sec_stratif}

In this section we will associate to a  $\ggl$-homogeneous space
$(M^n,\ggl)$ with nonabelian Lie algebra $\ggo$ a 
self-adjoint endomorphism 
\[
  \Beta :(\ggo,\ip) \to (\ggo,\ip)\,.
\]
This will be done by applying geometric invariant theory for 
linear actions of real reductive Lie groups on Euclidean spaces. For a proof of the results stated in this section we refer the reader to \cite{GIT}.

Let $(M^n,\ggl)$ be a $\ggl$-homogeneous space 
with Lie bracket $\hml \neq 0$ and canonical 
decomposition $\ggl=\hg \oplus \mg$. After fixing a $G$-invariant background metric $\bar g$ on $M$, we obtain a fixed scalar product $\ip$ on $\mg$.

\begin{definition}\label{def_scalg}
We extend $\ip$ to an $\Ad(H)$-invariant scalar product on $\ggo$, also denoted by $\ip$, by declaring that $\la \hg,\mg\ra = 0$, and by letting $\ip|_{\hg\times\hg}$ be given by
$\langle Z_1,Z_2\rangle_\hg:=-\tr \big( (\nabla^{\bar g} Z_1)_p \cdot   (\nabla^{\bar g} Z_1)_p\big)$, for $Z_1, Z_2\in \hg$; see Section \ref{sec_algcon}.
\end{definition}

Notice that the $\Ad(H)$-invariant scalar product on $\hg$ does not depend on $\bar g$, since for $Z\in \hg$ we have that $(\nabla^{\bar g} Z)_p = (\ad_\hml Z)|_\mg$ after identifying $T_p M \simeq \mg$.

Let $\Og \subset \Gg$ denote the orthogonal group of $(\ggo,\ip)$.
 The scalar product $\ip$ on $\ggo$ 
induces on $\Vg$ an $\Og$-invariant scalar product given by 
\begin{eqnarray}\label{def_scal}
  \langle \mu,\eta \rangle := \sum_{i,j=1}^N \big\la \mu(\tilde E_i,\tilde E_j),
  \eta(\tilde E_i,\tilde E_j) \big\ra\,,
\end{eqnarray}
where $\{\tilde E_i\}_{i=1}^N$ is any orthonormal basis of $(\ggo, \ip)$, and on $\glgg$ the scalar product 
\[
 \la \Alpha_1, \Alpha_2\ra = \tr (\Alpha_1 \Alpha_2^t)
\] 
for $A_1, A_2 \in \ggo$, where again the transpose is taken with respect to $\ip$.
Recall the natural linear $\Gg$-action on $\Vg$ defined in \eqref{def_muaction}, with corresponding $\glgg$-representation 
$\pi:\glgg \to \End(\Vg)$, $A \mapsto \pi(A)$, described in
(\ref{eqn_defpib}).


\begin{definition}
The \emph{moment map} is the function implicitly  defined by
\begin{equation}\label{eqn_defmmb}
   \mmm : \Vg \backslash \{0 \} \to \Symg; \qquad  \la \mmm(\mu), \Alpha \ra 
    = \tfrac1{\Vert \mu\Vert^2} \cdot \la \pi(\Alpha) \mu, \mu\ra \,,
\end{equation} 
for all $\Alpha\in \Symg$, $\mu \in \Vg \backslash \{ 0\}$.
The \emph{energy} of $\mmm$ is denoted by
\[
	\normmm : \Vg\backslash \{ 0\} \to \RR\,\,; \qquad  \mu \mapsto \Vert {\mmm(\mu)} \Vert^2\,.
\] 
\end{definition}
Notice that by \eqref{eqn_glnequi} and the $\Og$-invariance of the scalar products, the moment map is $\Og$-equivariant: 
\begin{equation}\label{eqn_Kequivmm}
	\mmm(k \cdot \mu) = k \, \mmm(\mu) \, k^{-1},
\end{equation}
for all $k\in \Og$, $\mu \in \Vg \backslash \{ 0\}$. The energy $\normmm$ is therefore $\Og$-invariant. 
The following stratification result is essentially contained in \cite{HSS} and \cite{standard}:

\begin{theorem}\label{thm_stratifb}
There exists a finite subset $\bca \subset \Symg$ 
and a collection of smooth, $\Gg$-invariant submanifolds 
$\{ \sca_\Beta \}_{\Beta \in \bca}$ of $\Vg$, with the following properties:
\begin{itemize}
    \item[(i)]  
     We have $\Vg\backslash \{ 0\} = \bigcup_{\Beta\in \bca} \sca_\Beta$
     and $\sca_\Beta \cap \sca_{\Beta'}=\emptyset$ for $\Beta \neq \Beta'$.
    \item[(ii)] We have 
     $\overline{\sca_\Beta} \backslash \sca_\Beta \subset \bigcup_{\Beta'\in \bca, \Vert \Beta'\Vert > \Vert \Beta \Vert} \sca_{\Beta'}$.
    \item[(iii)] A bracket $\mu$ is contained in
       $\sca_\Beta$ if and only if the negative gradient 
             flow of $\normmm$ starting at $\mu$ 
             converges to a critical point $\mu_C$ of $\normmm$
             with $\mmm(\mu_C) \in \Og \cdot \Beta$. 
\end{itemize} 
\end{theorem}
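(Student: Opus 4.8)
\textbf{Proof proposal for Theorem \ref{thm_stratifb}.}

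The plan is to reduce the statement to the general stratification theorem for linear actions of real reductive groups on Euclidean spaces, as developed in \cite{HSS}, \cite{standard} (and \cite{GIT}), applied to the $\Gg$-action on $\Vg$ with its natural moment map $\mmm$. First I would check the hypotheses needed to invoke that machinery: the group $\Gg = \Gl(\ggo)$ is real reductive and acts linearly on the finite-dimensional real inner product space $\Vg$ via \eqref{def_muaction}, the scalar product \eqref{def_scal} on $\Vg$ is $\Og$-invariant by construction, and $\Og$ is a maximal compact subgroup of $\Gg$ adapted to the Cartan-type decomposition $\glgg = \sogg \oplus \Symg$. The moment map $\mmm$ of Definition \ref{eqn_defmmb} is precisely the one attached to this action (pairing $\pi(A)\mu$ with $\mu$ for $A$ in the noncompact part $\Symg$), it is $\Og$-equivariant by \eqref{eqn_Kequivmm}, and its energy $\normmm = \Vert\mmm\Vert^2$ is the Morse-type function whose negative gradient flow is the object of interest.

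Next I would recall the three structural outputs of real GIT in this setting. \textbf{Step 1 (critical points and $\bca$):} the critical points of $\normmm$ form a closed $\Og$-invariant set; the values $\mmm(\mu_C)$ at critical points, up to $\Og$-conjugation, form a finite subset $\bca \subset \Symg$ — finiteness comes from the fact that these values are encoded by combinatorial data (weights of one-parameter subgroups relative to finitely many possible weight configurations of the representation $\pi$). For each $\Beta \in \bca$ one defines $\sca_\Beta$ to be the set of $\mu$ whose negative $\normmm$-gradient trajectory converges to a critical point $\mu_C$ with $\mmm(\mu_C) \in \Og\cdot\Beta$; this is exactly property (iii), taken as the definition. \textbf{Step 2 (partition and smoothness, (i)):} every nonzero $\mu$ has a negative gradient trajectory that converges (this uses a \L ojasiewicz-type argument, or the explicit convergence results in \cite{HSS}), so $\Vg\setminus\{0\} = \bigcup_\Beta \sca_\Beta$; the trajectory is unique and its limit critical set is uniquely determined up to $\Og$-action, so the $\sca_\Beta$ are pairwise disjoint. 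Smoothness of each $\sca_\Beta$ follows by identifying it, after passing to the associated parabolic and using the Kempf--Ness/Kirwan--Ness description, with a locally closed submanifold (the unstable manifold of the critical set, which fibers over a homogeneous space). \textbf{Step 3 ($\Gg$-invariance):} this is the point where one must go beyond the mere Morse theory of $\normmm$ on the compact level: although $\normmm$ is only $\Og$-invariant, the strata $\sca_\Beta$ are $\Gg$-invariant because membership in $\sca_\Beta$ is characterized by an intrinsic numerical invariant — the instability type — which is insensitive to the $\Gg$-action. Concretely, one shows $\mu \in \sca_\Beta$ iff $\Beta/\Vert\Beta\Vert^2$ is (up to $\Og$) the primitive one-parameter subgroup minimizing the normalized weight $\la \Beta, \cdot\ra$-functional along $\overline{\Gg\cdot\mu}$, and this extremal data is constant on $\Gg$-orbits. \textbf{Step 4 (boundary behavior, (ii)):} the inclusion $\overline{\sca_\Beta}\setminus\sca_\Beta \subset \bigcup_{\Vert\Beta'\Vert > \Vert\Beta\Vert}\sca_{\Beta'}$ follows from lower semicontinuity of the maximal weight / instability function: a limit of brackets in $\sca_\Beta$ is at least as unstable, and equality of instability type on the boundary is ruled out since $\sca_\Beta$ is already locally closed, forcing $\Vert\Beta'\Vert > \Vert\Beta\Vert$.

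The main obstacle is \textbf{Step 3}, the $\Gg$-invariance of the strata: the energy function $\normmm$ is genuinely only invariant under the compact group $\Og$, so one cannot directly transport gradient trajectories by noncompact elements $g \in \Gg$. The resolution — and this is the heart of the matter — is to show that the stratum label $\Beta$ is an algebraic/combinatorial invariant of the whole orbit closure $\overline{\Gg\cdot\mu}$, expressible via the (real) Hilbert--Mumford numerical function and the associated optimal destabilizing one-parameter subgroup, rather than via the Riemannian gradient flow; once $\sca_\Beta$ is reinterpreted in these $\Gg$-equivariant terms the invariance is immediate. I would therefore structure the proof so that property (iii) is \emph{derived} as the analytic counterpart of an a priori algebraic definition of $\sca_\Beta$, quoting \cite{HSS}, \cite{standard}, \cite{GIT} for the equivalence of the two descriptions; everything else (finiteness of $\bca$, partition, smoothness, boundary stratification) is then standard Morse-stratification bookkeeping.
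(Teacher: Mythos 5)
Your proposal is correct and follows essentially the same route as the paper: the paper gives no independent argument for Theorem \ref{thm_stratifb}, but states that it is contained in \cite{HSS} and \cite{standard} and refers to \cite{GIT} for a self-contained proof, which is precisely the real GIT / Kirwan--Ness stratification machinery you outline (gradient-flow definition of the strata, finiteness of the critical values, $\Gg$-invariance via the algebraic instability characterization, and semicontinuity for the boundary condition). Your identification of the $\Gg$-invariance of the strata as the only genuinely non-formal point, resolved by re-expressing the stratum label through the Hilbert--Mumford/optimal destabilizing data (equivalently, the description $\sca_\Beta=\Gg\cdot \Vnnss$ used later in the paper), matches the treatment in those references.
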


The \emph{strata} $\sca_\Beta$ are $\Gg$-invariant submanifolds. Notice  that in (ii)
the closure is taken in $\Vg \backslash \{0\}$. Observe also that by the $\Og$-equivariance of the moment map, each $\Beta \in \bca$ may be replaced by any $ \beta'\in \Og \cdot \Beta := \{ k\, \Beta \, k^{-1} : k\in \Og\}$. 

For a non-abelian Lie algebra $\ggo$ we have  $\hml \neq 0$ and hence
 $\hml \in \sca_\Beta$ for some $\Beta \in \Symg$.
This is how we associate to a $\ggo$-homogeneous space $(M^n,\ggo)$ the endomorphism $\Beta$.
Notice that  $\OlH \subset \sca_\Beta$ by $\Gg$-invariance of $\sca_\Beta$.

\begin{remark}\label{rem_git1}
Since $\pi(\Id_\ggo) = -\Id_{\Vg}$, it follows from \eqref{eqn_defmmb} that $\tr_\ggo \mmm(\mu) = -1$
for all $\mu\in \Vg \backslash \{ 0\}$.
Each $\Beta\in \bca$ in the Stratification Theorem \ref{thm_stratifb} is the image under $\mmm$
of a critical point of $\normmm$, so in particular it also satisfies
$\tr \Beta = -1$.
\end{remark}

In order to define the new Lyapunov function for the bracket flow we need to
describe the strata $\sca_\Beta$ in greater detail. 
To that end, using that $\Gg$ is a real reductive Lie group we fix the Cartan decompositions 
\[
	\glgg = \sogg \oplus \pg,  \qquad \Gg = \Og \exp(\pg),
\] 
where $\pg := \Symg$ and $\exp : \glgg \to \Gg$ is the Lie exponential map. 
The group $\Og$ acts isometrically on $\Vg$, thus $\pi(\sogg):=\{\pi(A):A \in \sogg\}$ acts skew-symmetrically
on $\Vg$. A short computation shows that $\pi(A^t) = \pi(A)^t$, thus $\pi(\pg)$ consists of symmetric endomorphisms.

The following notation will also be convenient: 
\begin{notation}
 For $\Beta\in \pg$ we set $\Beta^+ := \Beta + \Vert \Beta \Vert^2 \Id_\ggo\,$.
\end{notation}
Denote by $\Vr \subset \Vg$ the eigenspace of $\pi(\Beta^+) = \pi(\Beta) - \Vert \Beta \Vert^2 \, \Id_{\Vg}$
corresponding to the eigenvalue $r\in \RR$, and consider the sum of the nonnegative eigenspaces 
\begin{equation}\label{eqn_defVnn}
   \Vnn := \bigoplus_{r \geq 0} \Vr\,.
\end{equation}
We now define subgroups of $\Gg$ adapted to these subspaces. To that end, observe that $\ad(\Beta) : \glgg \to \glgg$, $A \mapsto [\beta, A]$, is also a self-adjoint operator, so one can consider the decomposition $\glg(\ggo) = \bigoplus_{r\in \RR} \glg(\ggo)_r$ into $\ad(\Beta)$-eigenspaces, and set accordingly
\[
    \ggo_\Beta := \glg(\ggo)_0 = \ker (\ad(\Beta) ), \qquad \ug_\Beta := \bigoplus_{r> 0} \glg(\ggo)_r, \qquad \qg_\Beta := \ggo_\Beta \oplus \ug_\Beta.
\]


\begin{definition}\label{def_groups1b}
We denote by 
\[
    G_\Beta := \{ g \in \Gg : g \Beta g^{-1} = \Beta \}, \qquad U_\Beta := \exp(\ug_\Beta), \qquad Q_\Beta := G_\Beta U_\Beta
\] 
the centralizer of $\Beta$ in $\Gg$, the unipotent subgroup associated with $\Beta$, and the parabolic subgroup associated with $\Beta$, respectively. For the subalgebra
\[
    \hg_\Beta = \{ \Alpha \in \ggo_\Beta : \langle \Alpha,  \Beta\rangle = 0\}
\]
of $\ggo_\Beta$, we consider the codimension-one reductive subgroup $H_\Beta$ of $G_\Beta$, defined by
\[
	H_\Beta = K_\Beta \, \exp(\pg \cap \hg_\Beta),
\]
where here $K_\Beta = G_\Beta \cap \Og$.
\end{definition}

Notice that $G_\Beta$, $U_\Beta$ and $Q_\Beta$ are closed subgroups of $\Gg$, and $U_\Beta$ is normal in $Q_\Beta$. The subgroup $G_\Beta$ is real reductive, with Cartan decomposition $G_\Beta = K_\Beta \exp(\pg_\Beta)$, $\pg_\Beta = \pg \cap \ggo_\Beta$, induced from that of $\Gg$. The same holds for $H_\Beta$, and in fact $G_\Beta$ is the direct product of $\exp(\RR \Beta)$ and $H_\Beta$.

\begin{remark}\label{rem_git2}
If $\mu_C$ is a critical point of $\normmm$, then 
$\Og\cdot \mu_C$ consists of critical points too. By the $\Og$-equivariance of the moment map
\eqref{eqn_Kequivmm} we may  assume that $\Beta=\mmm(\mu_C)$ is diagonal with respect to some orthonormal basis of $(\ggo,\ip)$.
If $\Beta$ has $r$ different eigenvalues with multiplicities $m_1, \ldots, m_r$, then 
\begin{eqnarray}
  G_\Beta \simeq \Gl(m_1,\RR) \times \cdots \times \Gl(m_r,\RR)\,.\label{eqn_Gbetadecomp}
\end{eqnarray}
Moreover, the elements of $U_\Beta$ have diagonal entries $1$, and non-zero entries 
possibly only at those entries below the diagonal where all elements of $G_\Beta$ have zeros. Clearly, $Q_\Beta$ contains the lower-triangular matrices in $\Gg$, so we have 
\[
	\Gg= \Og \, Q_\Beta.
\]
 Finally,  by Remark \ref{rem_git1} we have that $\langle \Beta^+,\Beta\rangle=0$ and thus $\Beta^+ \in \hg_\Beta$.
\end{remark}

\begin{definition}\label{def_Vss}
Let $\mu_C$ be a critical point of $\normmm$ and set $\Beta := \mmm(\mu_C)$.
Then we call
\[
 \Vzeross := \big\{ \mu \in \Vzero : 0\notin \overline{H_\Beta\cdot \mu} \big\}\,.
\]
the open subset of \emph{semi-stable} brackets in $\Vzero$ with respect to the action of $H_\Beta$.
We also define accordingly 
\[
 \Vnnss := p_\Beta^{-1} \big(\Vzeross\big),
\] 
where $p_\Beta : \Vnn \to \Vzero$ denotes the orthogonal projection.
\end{definition}

It follows from the proof of Theorem \ref{thm_stratifb} that the stratum $\sca_\Beta$ is given by 
\begin{equation}
	\sca_\Beta = \Gg \cdot \Vnnss = \Og \cdot \Vnnss. \label{eqn_stratumbeta}
\end{equation}
In particular, for any $\mu \in \sca_\Beta$ one can always find $k\in \Og$ such that $k \cdot \mu \in \Vnn$. 

In the rest of this section we collect several useful properties of the groups and subsets associated with $\Beta\in \Sym(\ggo)$  (cf.~ \eqref{eqn_defVnn} and Definitions \ref{def_groups1b}, \ref{def_Vss}).

\begin{proposition}\label{prop_propertiesgroups}
The following properties hold:
\begin{itemize}
	\item[(i)] The subsets $\Vnnss$ and $\Vnn$ of $\Vg$ are $Q_\Beta$-invariant;
	\item[(ii)] The projection $p_\Beta : \Vnn \to \Vzero$ satisfies
	$p_\Beta(\mu) = \lim_{t\to +\infty} \exp(-t \Beta^+) \cdot \mu$.
	\item[(iii)] $p_\Beta$ is $G_\Beta$ invariant, and for $\mu\in \Vzero$ the fiber $p_\Beta^{-1}(\mu)$ is $U_\Beta$-invariant.
\end{itemize}
\end{proposition}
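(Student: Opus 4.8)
The plan is to read all three statements off the weight decomposition $\Vg=\bigoplus_{r}\Vr$ into eigenspaces of $\pi(\Beta^+)$, proving (ii) first, then the $Q_\Beta$-invariance of $\Vnn$ together with (iii), and finally the $Q_\Beta$-invariance of $\Vnnss$. \textbf{Weight bookkeeping and (ii).} Because $\pi$ is a Lie algebra representation, $[\pi(\Beta),\pi(A)]=\pi([\Beta,A])$ for $A\in\glgg$; hence if $A$ lies in the $\ad(\Beta)$-eigenspace $\glgg_r$ then $\pi(A)$ shifts every $\pi(\Beta)$-weight — equivalently every $\pi(\Beta^+)$-weight, the two differing only by the constant $\Vert\Beta\Vert^2$ — by $r$. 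Also $\Gg$ acts on $\Vg$ by a representation, so $\exp(A)$ acts as $e^{\pi(A)}$; since $\Beta^+\in\pg$ we have $\exp(-t\Beta^+)\in\Gg$, and writing $\mu=\sum_{r\geq0}\mu_r\in\Vnn$ with $\mu_r\in\Vr$, the element $\exp(-t\Beta^+)$ acts on $\Vr$ as multiplication by $e^{-tr}$. Letting $t\to+\infty$ gives $\exp(-t\Beta^+)\cdot\mu\to\mu_0=p_\Beta(\mu)$, which is (ii).

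\textbf{$Q_\Beta$-invariance of $\Vnn$, and (iii).} For $g\in G_\Beta$ the equivariance $g\cdot(\pi(\Beta)\mu)=\pi(g\Beta g^{-1})(g\cdot\mu)=\pi(\Beta)(g\cdot\mu)$ shows that $\mu\mapsto g\cdot\mu$ commutes with $\pi(\Beta)$, hence with $\pi(\Beta^+)$, hence preserves each $\Vr$ and commutes with the projection $p_\Beta$; this yields at once that $G_\Beta$ preserves $\Vnn$ and $\Vzero$, and the $G_\Beta$-equivariance $p_\Beta(g\cdot\mu)=g\cdot p_\Beta(\mu)$ of (iii). For $u=\exp(A)\in U_\Beta$ with $A\in\ug_\Beta=\bigoplus_{r>0}\glgg_r$, the operator $\pi(A)$ strictly raises the $\pi(\Beta^+)$-weight, hence is nilpotent, and $\exp(\pi(A))$ is a polynomial in $\pi(A)$ that cannot lower weights; thus $u$, and therefore $Q_\Beta=G_\Beta U_\Beta$, preserves $\Vnn$. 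For the fiber statement in (iii), fix $\nu\in\Vnn$ and $u=\exp(A)\in U_\Beta$: then $\exp(-t\Beta^+)\,u\,\exp(t\Beta^+)=\exp\!\big(e^{-t\,\ad(\Beta)}A\big)\to\Id$ as $t\to+\infty$ since $\ad(\Beta)$ has only positive eigenvalues on $\ug_\Beta$, while $\exp(-t\Beta^+)\cdot\nu\to p_\Beta(\nu)$ by (ii); by continuity of the $\Gg$-action on $\Vg$ we get $p_\Beta(u\cdot\nu)=\lim_{t\to+\infty}\exp(-t\Beta^+)\cdot(u\cdot\nu)=\lim_{t\to+\infty}\big(\exp(-t\Beta^+)u\exp(t\Beta^+)\big)\cdot\big(\exp(-t\Beta^+)\cdot\nu\big)=p_\Beta(\nu)$, so $U_\Beta$ maps each fiber of $p_\Beta$ to itself.

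\textbf{$Q_\Beta$-invariance of $\Vnnss$, and the main obstacle.} Let $\mu\in\Vnnss$, so $p_\Beta(\mu)\in\Vzeross$, and write $q=gu\in Q_\Beta$ with $g\in G_\Beta$, $u\in U_\Beta$. By (iii), $p_\Beta(q\cdot\mu)=g\cdot p_\Beta(u\cdot\mu)=g\cdot p_\Beta(\mu)$, so it suffices to see that $G_\Beta$ preserves $\Vzeross$. Using the splitting $G_\Beta=\exp(\RR\Beta)\times H_\Beta$ recorded after Definition \ref{def_groups1b}, write $g=\exp(s\Beta)\,h$; on $\Vzero$ the operator $\pi(\Beta)=\pi(\Beta^+)+\Vert\Beta\Vert^2\Id_{\Vg}$ acts as the scalar $\Vert\Beta\Vert^2$, so $\exp(s\Beta)$ acts there as the positive scalar $e^{s\Vert\Beta\Vert^2}$, giving $g\cdot p_\Beta(\mu)=e^{s\Vert\Beta\Vert^2}\,h\cdot p_\Beta(\mu)$. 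Since the $H_\Beta$-action is linear, $\overline{H_\Beta\cdot(c\,h\cdot v)}=c\,\overline{H_\Beta\cdot v}$ for $c>0$ and $h\in H_\Beta$, so $0$ lies in $\overline{H_\Beta\cdot(g\cdot p_\Beta(\mu))}$ if and only if $0\in\overline{H_\Beta\cdot p_\Beta(\mu)}$, which is false; hence $q\cdot\mu\in p_\Beta^{-1}(\Vzeross)=\Vnnss$. Everything above is formal weight bookkeeping except this last point, which I expect to be the only delicate one: semistability in Definition \ref{def_Vss} is measured by the $H_\Beta$-action, whereas $Q_\Beta$-invariance forces one to move the base point $p_\Beta(\mu)$ by all of $G_\Beta$; the resolution is that $G_\Beta/H_\Beta\cong\exp(\RR\Beta)$ acts on $\Vzero$ by positive homotheties, under which the cone $\Vzeross$ is manifestly invariant.
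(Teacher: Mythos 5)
Your proof is correct: the paper gives no internal proof of this proposition (all of Section \ref{sec_stratif} is deferred to \cite{GIT}), and your argument --- the $\pi(\Beta^+)$-weight decomposition for (ii) and for the invariance of $\Vnn$, conjugating $U_\Beta$ by $\exp(-t\Beta^+)$ for the fiber statement, and the observation that $G_\Beta=\exp(\RR\Beta)\times H_\Beta$ acts on $\Vzero$ through $H_\Beta$ and positive homotheties, under which the semistability condition of Definition \ref{def_Vss} is preserved --- is precisely the standard one being invoked there. Two implicit readings you make are the intended ones: ``$p_\Beta$ is $G_\Beta$ invariant'' in (iii) means $G_\Beta$-equivariance, $p_\Beta(g\cdot\mu)=g\cdot p_\Beta(\mu)$, which is exactly what is used later (e.g.\ in the proof of Lemma \ref{lem_lowbdvbeta}), and identifying the zero-weight component with the orthogonal projection is legitimate because $\pi(\Beta^+)$ is symmetric, so the eigenspaces $\Vr$ are mutually orthogonal.
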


If $H$ is a closed subgroup of a Lie group $G$, and $H$ acts on a space $U$, we denote by $G \times_H U$ the $U$-fiber bundle associated with the $H$-principal bundle $G \to G/H$. That is, $G \times_H U$ is the quotient of $G \times U$ by the proper and free action of $H$ given by $h \cdot (g,u) = (gh^{-1}, h\cdot u)$. In our particular setting, we can construct the smooth manifold $\Gg \times_{Q_\Beta} \Vnnss$, and there is a natural map 
\[
	\Psi : \Gg\times_{Q_\Beta} \Vnnss \to \sca_\Beta, \qquad [(h, \mu)] \mapsto h\cdot \mu.
\] 
There is also an analogous map $\Psi_K: \Og \times_{K_\Beta} \Vnnss \to \sca_\Beta$.

\begin{proposition}\label{prop_bundlediffeosbeta}
The maps $\Psi$, $\Psi_K$ are diffeomorphisms.
\end{proposition}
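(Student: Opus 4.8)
The plan is to prove that the two natural maps $\Psi : \Gg\times_{Q_\Beta} \Vnnss \to \sca_\Beta$ and $\Psi_K : \Og\times_{K_\Beta} \Vnnss \to \sca_\Beta$ are diffeomorphisms by showing each is a smooth bijective immersion between manifolds of the same dimension (hence a local diffeomorphism), and then upgrading to a global diffeomorphism by an elementary covering-space / proper-map argument. The crucial input is equation \eqref{eqn_stratumbeta}, which gives surjectivity of both $\Psi$ and $\Psi_K$ for free, together with Proposition \ref{prop_propertiesgroups}(i), which guarantees that the $Q_\Beta$-action on $\Vnnss$ is well-defined so that the associated bundle $\Gg\times_{Q_\Beta}\Vnnss$ makes sense, and Remark \ref{rem_git2} (the Levi-type decomposition $\Gg = \Og\, Q_\Beta$, with $\Og\cap Q_\Beta = K_\Beta$) which identifies the two total spaces. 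First I would record that $\Gg = \Og\, Q_\Beta$ with $K_\Beta = \Og \cap Q_\Beta$ yields a canonical $\Og$-equivariant diffeomorphism $\Og\times_{K_\Beta}\Vnnss \cong \Gg\times_{Q_\Beta}\Vnnss$ commuting with $\Psi$, $\Psi_K$; so it suffices to treat $\Psi_K$, working with the compact group $\Og$, which is technically cleaner.

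The key steps, in order. (1) \emph{Injectivity.} Suppose $h_1\cdot\mu_1 = h_2\cdot\mu_2$ with $h_i\in\Og$, $\mu_i\in\Vnnss$; set $k := h_2^{-1}h_1\in\Og$, so $k\cdot\mu_1 = \mu_2$. I must show $k\in K_\Beta$ and $\mu_2 = k\cdot\mu_1$ represents the same point, i.e.\ essentially that if $k\cdot\mu_1\in\Vnnss$ for $k\in\Og$ then $k\in K_\Beta$. Apply the $\Og$-equivariant projection $p_\Beta$ and Proposition \ref{prop_propertiesgroups}(ii): $p_\Beta(\mu_i) = \lim_{t\to+\infty}\exp(-t\Beta^+)\cdot\mu_i$ lies in $\Vzeross$. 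One shows that conjugating $\Beta^+$ by $k$ cannot change the limit unless $k$ commutes with $\exp(\RR\Beta^+)$, forcing $k\Beta k^{-1}=\Beta$ (here use that $\Beta$ is the distinguished element of the stratum and $\mmm$ is $\Og$-equivariant: $\mmm(k\cdot\mu_C) = k\,\mmm(\mu_C)\,k^{-1}$, together with the fact that $\sca_\Beta\cap\sca_{\Beta'}=\emptyset$ for $\Beta\neq\Beta'$ from Theorem \ref{thm_stratifb}(i), so $k\cdot\mu$ staying in the same stratum pins down $k\Beta k^{-1}\in\Og\cdot\Beta$ and in fact $=\Beta$ after the limit computation). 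Then $k\in K_\Beta$ and $[(h_1,\mu_1)] = [(h_2,\mu_2)]$ in $\Og\times_{K_\Beta}\Vnnss$. (2) \emph{Immersion / dimension count.} The differential of $\Psi_K$ at $[(e,\mu)]$ has image $\pi(\sogg)\mu + T_\mu\Vnnss$; I would check this has dimension $\dim\sogg - \dim\kg_\Beta + \dim\Vnnss$, matching $\dim(\Og\times_{K_\Beta}\Vnnss)$, by verifying that $\pi(A)\mu\in T_\mu\Vnnss$ for $A\in\sogg$ implies $A\in\kg_\Beta$ — this is where semistability of $p_\Beta(\mu)$ is used, via the negative-gradient-flow characterization in Theorem \ref{thm_stratifb}(iii) and the fact that for a semistable point the stabilizer-direction analysis of the $\Og$-orbit stays transverse to $\Vnn$ exactly along $\kg_\Beta$. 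So $d\Psi_K$ is injective, and being injective between equal dimensions it is an isomorphism; $\Psi_K$ is a local diffeomorphism. (3) \emph{From local diffeomorphism + bijection to diffeomorphism.} A bijective local diffeomorphism is automatically a diffeomorphism (its inverse is smooth, being locally smooth). Combined with surjectivity from \eqref{eqn_stratumbeta} and injectivity from Step (1), we conclude. Transport back via Step (1)'s identification to get that $\Psi$ is a diffeomorphism too.

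The main obstacle I expect is Step (1), the injectivity, and more precisely the sub-claim that $k\in\Og$ with $k\cdot\mu\in\Vnnss$ (for $\mu\in\Vnnss$) must lie in $K_\Beta$; equivalently, that the $Q_\Beta$-orbit of $\Vnnss$ inside $\sca_\Beta$ meets each $\Og$-orbit in a single $K_\Beta$-orbit. This is a real-GIT statement about semistable sets and the parabolic $Q_\Beta$ — the analogue of the Kempf–Ness / Kirwan uniqueness of the "optimal destabilizing one-parameter subgroup," here rigidified by the moment map. The clean way to handle it: use that $\Beta$ is (up to $\Og$-conjugacy) determined by $\mu$ as $\lim$ of $\mmm$ along the negative gradient flow (Theorem \ref{thm_stratifb}(iii)), so any $k$ preserving the stratum and the semistable locus must preserve this limit, giving $k\Beta k^{-1}=\Beta$; then within $G_\Beta$ one reduces to the classical statement that the $H_\Beta$-semistable locus $\Vzeross$ has the property that two semistable points in the same $G_\Beta$-orbit meeting $\Vzero$ are related by $K_\Beta$ — which is part of the GIT package cited via \cite{GIT}, \cite{HSS}, \cite{standard}. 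I would cite the relevant lemma from \cite{GIT} rather than reprove it. The immersivity in Step (2) is a routine linear-algebra computation once the semistability hypothesis is correctly invoked, and Step (3) is formal.
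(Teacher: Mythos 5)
Note first that the paper itself contains no proof of this proposition: it is one of the results of Section \ref{sec_stratif} for which the authors refer entirely to \cite{GIT}, so your argument cannot be matched against an in-text proof and must stand on its own. Its skeleton is reasonable — identify $\Og\times_{K_\Beta}\Vnnss \cong \Gg\times_{Q_\Beta}\Vnnss$ via $\Gg=\Og\,Q_\Beta$ and $\Og\cap Q_\Beta=K_\Beta$, then show $\Psi_K$ is a bijective local diffeomorphism — and you correctly locate the crux: that $k\in\Og$ with both $\mu$ and $k\cdot\mu$ in $\Vnnss$ forces $k\in K_\Beta$. But your sketched mechanism for this does not work. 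That $k\cdot\mu$ stays in $\sca_\Beta$ is automatic, since the strata are $\Gg$- (in particular $\Og$-) invariant by Theorem \ref{thm_stratifb}, so it "pins down" nothing about $k$; and from $k\cdot\mu\in\Vnn$ together with Proposition \ref{prop_propertiesgroups}(ii) one only learns that $\lim_{t\to\infty}\exp(-t\,k\Beta^+k^{-1})\cdot(k\cdot\mu)=k\cdot p_\Beta(\mu)$ exists, which says nothing a priori about the limit of $\exp(-t\Beta^+)\cdot(k\cdot\mu)$; there is no "commutation" argument available at this level. The conclusion $k\Beta k^{-1}=\Beta$ is precisely the real-GIT uniqueness statement (uniqueness of the optimal parabolic/destabilizing class attached to a point of $\Vnnss$, rigidified by semistability of the projection), and in the end you declare that you would cite it from \cite{GIT} — i.e., you outsource the substantive content to the same reference the paper cites for the whole proposition, so nothing has actually been proved.

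Step (2) has a gap of the same nature: injectivity of $d\Psi_K$ at $[(e,\mu)]$ amounts to showing that $A\in\sogg$ with $\pi(A)\mu\in\Vnn$ lies in the Lie algebra of $K_\Beta$ modulo the infinitesimal stabilizer of $\mu$, which is the infinitesimal version of the same key fact and is only gestured at ("stays transverse to $\Vnn$ exactly along" the Lie algebra of $K_\Beta$ is an assertion, not an argument); note also that you may not invoke Corollary \ref{cor_autmu} here, since the paper deduces that corollary \emph{from} this proposition. In addition, your "equal dimensions" device presupposes $\dim\sca_\Beta=\dim\Og-\dim K_\Beta+\dim\Vnn$, which is not available: Theorem \ref{thm_stratifb} asserts only that $\sca_\Beta$ is a smooth submanifold, not its dimension, so you would instead need surjectivity of $d\Psi_K$ onto $T_\mu\sca_\Beta$ (also unproven), or — as is actually done in \cite{GIT} — establish the smooth structure of the stratum and the bundle description simultaneously. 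The reduction of $\Psi$ to $\Psi_K$ and the final "bijective local diffeomorphism $\Rightarrow$ diffeomorphism" step are fine; the two substantive steps are not.
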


Finally, the following important property of the automorphisms groups can be deduced from the previous proposition.

\begin{corollary}\label{cor_autmu}
If $\mu \in \sca_\Beta \cap \Vnn = \Vnnss$ then 
\[
	\Aut(\mu) := \{ h\in \Gg : h\cdot \mu = \mu\} \subset H_\Beta U_\Beta,
\]
and in particular we have that
$
	\Aut(\mu) \cap \Og \subset H_\Beta.
$
\end{corollary}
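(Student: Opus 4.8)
The plan is to deduce both statements from the associated-bundle descriptions $\Psi:\Gg\times_{Q_\Beta}\Vnnss\to\sca_\Beta$ and $\Psi_K:\Og\times_{K_\Beta}\Vnnss\to\sca_\Beta$ in Proposition \ref{prop_bundlediffeosbeta}, combined with Proposition \ref{prop_propertiesgroups} and the semistability built into the definition of $\Vnnss$.

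First I would show $\Aut(\mu)\subset Q_\Beta$. Since $\mu\in\Vnnss$, for any $h\in\Aut(\mu)$ the classes $[(e,\mu)]$ and $[(h,\mu)]$ in $\Gg\times_{Q_\Beta}\Vnnss$ are both mapped by $\Psi$ to $h\cdot\mu=\mu$; injectivity of $\Psi$ then forces $[(e,\mu)]=[(h,\mu)]$, which precisely means that $h\in Q_\Beta$ and $h\cdot\mu=\mu$. Hence $\Aut(\mu)\subset Q_\Beta=G_\Beta U_\Beta$.

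Next I would refine this to $H_\Beta U_\Beta$. Write $h=gu$ with $g\in G_\Beta$, $u\in U_\Beta$, and put $\mu_0:=p_\Beta(\mu)$, which lies in $\Vzeross$ because $\mu\in\Vnnss=p_\Beta^{-1}(\Vzeross)$. Applying $p_\Beta$ to $h\cdot\mu=\mu$, and using that $U_\Beta$ preserves the fibers of $p_\Beta$ and that $p_\Beta$ commutes with the $G_\Beta$-action (Proposition \ref{prop_propertiesgroups}), one obtains $g\cdot\mu_0=\mu_0$. Now decompose $g=\exp(s\Beta)\,h_0$ according to $G_\Beta=\exp(\RR\Beta)\times H_\Beta$. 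Since $\Vzero=\ker\pi(\Beta^+)$ is the $\pi(\Beta)$-eigenspace with eigenvalue $\Vert\Beta\Vert^2$, and $H_\Beta\subset G_\Beta$ preserves $\Vzero$, the relation $g\cdot\mu_0=\mu_0$ becomes $h_0\cdot\mu_0=e^{-s\Vert\Beta\Vert^2}\mu_0$. If $s\neq 0$ then the positive scalar $c:=e^{-s\Vert\Beta\Vert^2}$ is $\neq 1$ (note $\Vert\Beta\Vert\neq 0$, since $\tr\Beta=-1$ by Remark \ref{rem_git1}), so iterating $h_0$ or $h_0^{-1}$ would drive $\mu_0$ to $0$, contradicting $0\notin\overline{H_\Beta\cdot\mu_0}$. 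Hence $s=0$, so $g=h_0\in H_\Beta$ and $h\in H_\Beta U_\Beta$. For the $\Og$-statement I would run the same argument with $\Psi_K$ in place of $\Psi$: for $k\in\Aut(\mu)\cap\Og$, injectivity of $\Psi_K$ gives $[(e,\mu)]=[(k,\mu)]$ in $\Og\times_{K_\Beta}\Vnnss$, hence $k\in K_\Beta\subset H_\Beta$, the last inclusion by $H_\Beta=K_\Beta\exp(\pg\cap\hg_\Beta)$.

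I expect the only delicate point to be the refinement step, where one must track the $\exp(\RR\Beta)$-factor of an element of $Q_\Beta$ through the projection $p_\Beta$ and then invoke semistability of $\mu_0$ in $\Vzeross$ to conclude that this factor is trivial; the rest is a routine unwinding of the associated-bundle structure of $\sca_\Beta$.
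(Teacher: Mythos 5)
Your proof is correct and follows exactly the route the paper indicates: it deduces the statement from the bundle diffeomorphisms of Proposition \ref{prop_bundlediffeosbeta} (injectivity of $\Psi$ and $\Psi_K$ giving $\Aut(\mu)\subset Q_\Beta$ and $\Aut(\mu)\cap\Og\subset K_\Beta\subset H_\Beta$). The only step the paper leaves implicit (deferring to \cite{GIT}) is the refinement from $G_\Beta U_\Beta$ to $H_\Beta U_\Beta$, and your argument via the $G_\Beta$-equivariance of $p_\Beta$, the $U_\Beta$-invariance of its fibers (Proposition \ref{prop_propertiesgroups}), and the scaling of $\exp(\RR\Beta)$ on $\Vzeross$ contradicting $0\notin\overline{H_\Beta\cdot p_\Beta(\mu)}$ supplies it correctly.
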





\section{The stratum of a homogeneous space}\label{sec_strathom}

In the last section we assigned to a $\ggo$-homogeneous space
$(M^n,\ggo)$ with $\hml \neq 0$ 
an endomorphism $\Beta \in \Symg$ and a stratum $\sca_\Beta$
containing the orbit $\OlH$. 
Moreover, we may also assume that $\hml$ is \emph{gauged correctly} with respect to our fixed $\Beta$, that is, 
\begin{equation}\label{eqn_orbitbeta}
	\hml \in \Vnn, \qquad \OlH := \GHm \cdot \hml \subset \sca_\Beta
\end{equation}
is satisfied.
In this section we will describe further properties of $\Beta$.

Denote by $\ngo$ the nilradical of $(\ggl,\hml)$, that is, the maximal nilpotent ideal. By \cite[$\S$4 Prop.~6, b)]{Bou} we know that $\kf_{\hml}(\ngo, \cdot)=0$, thus $\ngo \subset \mg$ by the
very definition of the canonical complement $\mg$. From Corollary \ref{cor_Imbetaplus} 
we deduce the following

\begin{lemma}\label{lem:betaplusimage}
We have ${\rm Im}(\Beta^+) = \ngo \subset \mg$ and $\Beta^+\vert_\ngo >0$.
\end{lemma}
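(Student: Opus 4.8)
The plan is to identify $\mathrm{Im}(\Beta^+)$ with the nilradical $\ngo$ by combining two facts: a purely GIT-theoretic description of $\mathrm{Im}(\Beta^+)$ coming from the stratification (the cited Corollary \ref{cor_Imbetaplus}, which presumably says something like $\mathrm{Im}(\Beta^+) = \mu(\ggo,\ggo)$ or the span of the image of the bracket, or more precisely the smallest $\Beta^+$-invariant subspace through which $\hml$ factors), and the structural fact that the nilradical of a Lie algebra is detected by the Killing form. Since we already know $\ngo \subset \mg$ from $\kf_{\hml}(\ngo,\cdot) = 0$ and the definition of the canonical complement, the main work is the two inclusions $\mathrm{Im}(\Beta^+) \subset \ngo$ and $\ngo \subset \mathrm{Im}(\Beta^+)$, together with the positivity $\Beta^+|_\ngo > 0$.

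First I would invoke Corollary \ref{cor_Imbetaplus} to get a concrete handle on $\mathrm{Im}(\Beta^+)$. Writing $\Beta$ (hence $\Beta^+$) as diagonal with respect to an orthonormal basis adapted to the gauged bracket $\hml \in \Vnn$, one sees that $\pi(\Beta^+)$ acting on $\hml$ has only nonnegative eigenvalue-components; the structure equation $\pi(\Beta^+)\hml = \sum (\beta^+_i + \beta^+_j - \beta^+_k)\, (\hml)^k_{ij}\, e^i\wedge e^j \otimes e_k$ forces, for every nonzero structure constant, the relation $\beta^+_k \le \beta^+_i + \beta^+_j$, and moreover the critical/semistable condition pins down $\mathrm{Im}(\Beta^+)$ as exactly the span of those $e_k$ that actually appear as outputs of $\hml$ iterated — i.e. the derived-type subspace generated by $\hml$. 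One then argues that this subspace is a nilpotent ideal: it is $\Beta^+$-graded with strictly positive weights (giving nilpotency via the grading, since $\Beta^+|_{\mathrm{Im}} > 0$ means the iterated brackets strictly increase the $\Beta^+$-weight and must terminate), and it is an ideal because it is $\ad_\hml$-invariant by construction and $\Beta^+ \in \hg_\Beta$ centralizes things appropriately, together with $\OlH$-invariance of the stratum. Hence $\mathrm{Im}(\Beta^+) \subset \ngo$, and simultaneously $\Beta^+|_{\mathrm{Im}(\Beta^+)} > 0$.

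For the reverse inclusion $\ngo \subset \mathrm{Im}(\Beta^+)$, I would use that $\Beta \in \Symg$ lies in $\pg$ and the GIT stratification provides a ``negative-weight'' decomposition $\ggo = \mathrm{Im}(\Beta^+) \oplus \ker$, where the complement — being $\Beta^+$-invariant with nonpositive weights and containing no bracket-output of $\hml$ outside itself — is a reductive-type complement; then $\ngo$, being the maximal nilpotent ideal, cannot meet this complement nontrivially, because any nilpotent ideal must be killed by the Killing form and the weight constraints force nilpotent ideals into the strictly-positive part. Concretely, $\ngo = [\ggo,\ggo] \cap (\text{nilradical})$ is contained in the span of bracket outputs, hence in $\mathrm{Im}(\Beta^+)$. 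Combining both inclusions gives $\mathrm{Im}(\Beta^+) = \ngo$, and since we already placed $\ngo \subset \mg$, we conclude $\mathrm{Im}(\Beta^+) = \ngo \subset \mg$ with $\Beta^+|_\ngo > 0$.

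The main obstacle I expect is making precise the identification of $\mathrm{Im}(\Beta^+)$ with a bracket-generated subspace directly from Corollary \ref{cor_Imbetaplus} — i.e. extracting from the GIT stratification exactly the statement that $\Beta^+$ has strictly positive eigenvalues on the ``essential support'' of $\hml$ and zero elsewhere, and that this essential support is precisely the nilradical rather than merely \emph{a} nilpotent ideal. This requires carefully connecting Heber/Lauret-type bounds ($\la \hml, \pi(\Beta^+)\hml\ra$ and the semistability of $p_\Beta(\hml)$ in $\Vzeross$) to the Killing-form characterization of $\ngo$; the appendix determining the stratum via the nilradical's stratum (Appendix \ref{app_stratum}) is presumably where the bookkeeping lives, and I would lean on it rather than redo it.
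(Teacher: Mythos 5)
Your skeleton --- get $\ngo\subset\mg$ from $\kf_{\hml}(\ngo,\cdot)=0$ together with the definition of the canonical complement, and delegate everything about $\Im(\Beta^+)$ to Appendix \ref{app_stratum} --- is exactly what the paper does, and had you simply quoted Corollary \ref{cor_Imbetaplus} you would be finished: applied to $\mu=\hml\in\lca\cap\sca_\Beta\cap\Vnn$ (which is the standing gauge assumption \eqref{eqn_orbitbeta}), it asserts verbatim that $\Im(\Beta^+)=\ngo$ and $\Beta^+\vert_\ngo>0$; the whole point of that corollary is that the $\Og$-ambiguity in the stratum label can be absorbed by an element preserving $\ngo$, so the block form of $\Beta$ from Theorem \ref{thm_refinedbeta} and Corollary \ref{cor_refinedbetaplus} survives the gauge. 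The lemma is then a two-line deduction, which is the paper's proof.

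The problem is the bridging argument you sketch in case the corollary said something weaker: it rests on two false identifications. First, $\Im(\Beta^+)$ is \emph{not} the span of the outputs of $\hml$ (iterated or not): for semisimple $\ggo$ one has $\Beta=-\tfrac1n\Id_\ggo$, hence $\Beta^+=0$ and $\Im(\Beta^+)=0=\ngo$, while the bracket outputs span all of $\ggo$. Second, your reverse inclusion ``$\ngo=[\ggo,\ggo]\cap(\text{nilradical})$ is contained in the span of bracket outputs, hence in $\Im(\Beta^+)$'' fails because the nilradical need not lie in the derived algebra: for $\ggo=\RR\oplus\slg(2,\RR)$ the nilradical is the central $\RR$, which meets $[\ggo,\ggo]=\slg(2,\RR)$ trivially, yet Theorem \ref{thm_refinedbeta} (case $\nu=0$) gives precisely $\Im(\Beta^+)=\RR$. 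Note also that your ``positively graded, hence nilpotent ideal'' step tacitly uses $\Beta^+\in\Der(\hml)$, which holds only for brackets in $\Vzero$, not for $\hml\in\Vnn$ in general (cf.\ the equality case of Lemma \ref{lem_estimateRic}); in the appendix this is handled by first projecting with $p_\Beta$. So the correct statement to extract from the GIT side is that $\Im(\Beta^+)$ equals the nilradical itself --- this is the actual content of Theorem \ref{thm_refinedbeta} and Corollaries \ref{cor_refinedbetaplus}, \ref{cor_Imbetaplus} --- not any bracket-generated subspace, and the lemma follows from those results together with the Killing-form fact you correctly invoked.
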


Next, \eqref{eqn_orbitbeta} and Corollary \ref{cor_autmu} yield $\Ad(H) \subset \Aut(\hml)\subset Q_\Beta=G_\Beta U_\Beta$. 
Since  $\Ad(H) \subset \Og$, we conclude that 
$\Ad(H) \subset G_\Beta$. By the very definition of $G_\beta$ (see \eqref{eqn_Gbetadecomp}) and the fact that  $\Im(\Beta^+) \subset \mg$, the following holds:

\begin{lemma}\label{lem:betacommuteAdH}
We have that $\big[ \! \Ad(H), \Beta \big] = \big[ \! \Ad(H)|_\mg, \Beta|_\mg \big] = 0$. 
\end{lemma}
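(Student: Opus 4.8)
The plan is to observe that the statement is essentially an immediate consequence of the facts already established just before it, so the proof is a short deduction rather than a computation. The key ingredients are: (i) the gauging assumption \eqref{eqn_orbitbeta}, which places $\hml$ in $\Vnn$ and its orbit $\OlH = \GHm \cdot \hml$ inside the stratum $\sca_\Beta$; (ii) Corollary \ref{cor_autmu}, which for $\mu \in \sca_\Beta \cap \Vnn = \Vnnss$ gives $\Aut(\mu) \subset H_\Beta U_\Beta$ and $\Aut(\mu) \cap \Og \subset H_\Beta$; and (iii) Lemma \ref{lem:betaplusimage}, which tells us $\Im(\Beta^+) = \ngo \subset \mg$.

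First I would note that since $\hml \in \Vnn$ and $\OlH \subset \sca_\Beta$, we have $\hml \in \sca_\Beta \cap \Vnn = \Vnnss$, so Corollary \ref{cor_autmu} applies to $\mu = \hml$. Since $\Ad(H)$ consists of automorphisms of $(\ggo,\hml)$, we get $\Ad(H) \subset \Aut(\hml) \subset Q_\Beta = G_\Beta U_\Beta$. Because $\overline{\Ad(H)}$ is compact, $\Ad(H) \subset \Og$, and since $\Aut(\hml) \cap \Og \subset H_\Beta \subset G_\Beta$ (the second inclusion being part of Definition \ref{def_groups1b}), we conclude $\Ad(H) \subset G_\Beta$. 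By the definition of $G_\Beta$ as the centralizer of $\Beta$ in $\Gg$, this gives $[\Ad(H), \Beta] = 0$, which is the first asserted equality.

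For the second equality, the point is that the commutator $[\Ad(h), \Beta]$ respects the decomposition $\ggo = \hg \oplus \mg$ in a way that lets us restrict to $\mg$. I would use that for $h \in H$ the automorphism $\Ad(h)$ preserves the canonical decomposition: $\Ad(h)(\hg) \subset \hg$ and $\Ad(h)(\mg) \subset \mg$ (as recorded after Definition \ref{def_can}). Moreover $\Beta$ preserves $\mg$ as well: indeed $\Im(\Beta^+) \subset \mg$ by Lemma \ref{lem:betaplusimage}, and writing $\Beta = \Beta^+ - \Vert\Beta\Vert^2 \Id_\ggo$ shows $\Beta$ differs from a multiple of the identity by an endomorphism with image in $\mg$; combined with $\Beta$ being symmetric with respect to $\ip$ and $\hg \perp \mg$, one gets that $\Beta$ leaves both $\hg$ and $\mg$ invariant. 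Hence both operators in the commutator are block-diagonal with respect to $\ggo = \hg \oplus \mg$, and the vanishing of $[\Ad(H), \Beta]$ on all of $\ggo$ is equivalent to its vanishing on each block; restricting to the $\mg$-block gives $[\Ad(H)|_\mg, \Beta|_\mg] = 0$, as desired.

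I do not expect any real obstacle here: the whole lemma is a bookkeeping consequence of Corollary \ref{cor_autmu} plus the block structure coming from Lemma \ref{lem:betaplusimage}. The only point requiring a sentence of care is justifying that $\Beta$ itself (not merely $\Beta^+$) preserves the decomposition $\ggo = \hg \oplus \mg$, which follows from $\Beta^+ = \Beta + \Vert\Beta\Vert^2 \Id_\ggo$ having image in $\mg$ together with self-adjointness of $\Beta$; once that is in place, the restriction-to-$\mg$ step is purely formal.
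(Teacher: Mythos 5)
Your argument is correct and is essentially the paper's own proof: \eqref{eqn_orbitbeta} plus Corollary \ref{cor_autmu} give $\Ad(H)\subset\Aut(\hml)\subset Q_\Beta$, orthogonality forces $\Ad(H)\subset G_\Beta$, hence $[\Ad(H),\Beta]=0$, and the restriction to $\mg$ follows from the block structure coming from $\Im(\Beta^+)\subset\mg$ (Lemma \ref{lem:betaplusimage}). The only small imprecision is your justification of $\Ad(H)\subset\Og$: this holds because the scalar product on $\ggo$ was extended to be $\Ad(H)$-invariant by construction (Definition \ref{def_scalg}), not merely because $\overline{\Ad(H)}$ is compact.
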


The following technical result shows that any $G$-invariant metric has an associated bracket which is also gauged correctly.

\begin{lemma}\label{lem_gaugeohm}
Let $(M^n, \ggl)$ be a $\ggl$-homogeneous space with $\hml \in \sca_\Beta \cap \Vnn$. Then, for any $\mu \in \OlH$ there exists $k\in \OHm$ such that 
\[
  k\cdot \mu \, \in \,  \big( Q_\Beta \cap \GHm \big)\cdot \hml \,  \subset  \, \sca_\Beta \cap \Vnn\,.
\]
\end{lemma}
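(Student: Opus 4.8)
The goal is to take an arbitrary bracket $\mu \in \OlH = \GHm \cdot \hml$ and, using only the compact gauge group $\OHm$, move it so that it lands in $\big( Q_\Beta \cap \GHm \big)\cdot \hml$. The natural approach is to write $\mu = h \cdot \hml$ with $h \in \GHm$ and try to decompose $h$ as $h = k \cdot q$ with $k \in \OHm$ and $q \in Q_\Beta \cap \GHm$; then $k^{-1}\cdot \mu = q \cdot \hml$ is what we want, provided we also check it stays in $\Vnn$. So the heart of the matter is an Iwasawa-type (or parabolic $KQ$) decomposition compatible with the subgroup $\GHm$.

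First I would recall from Remark \ref{rem_git2} the global decomposition $\Gg = \Og\, Q_\Beta$, and from Lemma \ref{lem:betacommuteAdH} that $\Ad(H)$ commutes with $\Beta$, which places $\Ad(H)$ inside $G_\Beta \subset Q_\Beta$. The point is that $\GHm$ is the centralizer of $\Ad(H)|_\mg$ in $\Gm$, and because $\Beta$ commutes with $\Ad(H)$, the Cartan involution $\theta(g) = (g^{-1})^t$ (whose fixed-point set is $\Og$) and the group $Q_\Beta$ both interact well with this centralizer. Concretely, I would argue that $\GHm$ is itself a $\theta$-stable reductive subgroup of $\Gg$ — indeed by \eqref{eqn_decompGHm} it is a product of general linear groups over $\RR,\CC,\HH$ — and that $\Beta$, viewed inside $\gHm$ (which makes sense since $\Im(\Beta^+) = \ngo \subset \mg$ by Lemma \ref{lem:betaplusimage} and $\Beta$ centralizes $\Ad(H)$), is a self-adjoint element of $\gHm$. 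Then the parabolic subgroup of $\GHm$ determined by $\Beta$ is exactly $Q_\Beta \cap \GHm$, its maximal compact is $\OHm \cap G_\Beta = K_\Beta \cap \GHm$, and the decomposition $\GHm = \OHm \,(Q_\Beta \cap \GHm)$ holds by applying the standard $KP$/parabolic decomposition (Remark \ref{rem_git2}, now internal to the reductive group $\GHm$). Writing $h = k\,q$ accordingly gives $k^{-1}\cdot\mu = q\cdot \hml \in (Q_\Beta \cap \GHm)\cdot \hml$.

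It remains to check $k^{-1}\cdot \mu \in \Vnn$, i.e.\ that $(Q_\Beta \cap \GHm)\cdot \hml \subset \sca_\Beta \cap \Vnn = \Vnnss$. Since $\hml \in \Vnn$ by hypothesis \eqref{eqn_orbitbeta}, and $\Vnn$ is $Q_\Beta$-invariant by Proposition \ref{prop_propertiesgroups}(i), any element of $Q_\Beta \cdot \hml$ — in particular of $(Q_\Beta \cap \GHm)\cdot \hml$ — lies in $\Vnn$; and it lies in $\sca_\Beta$ by $\Gg$-invariance of the stratum, so it lies in $\sca_\Beta \cap \Vnn = \Vnnss$ by \eqref{eqn_stratumbeta}. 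This gives the displayed inclusion.

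The main obstacle I anticipate is the bookkeeping needed to justify that the parabolic/Cartan decomposition of $\Gg$ restricts cleanly to $\GHm$ — that is, that $\GHm$ is $\theta$-stable, that $\Beta \in \gHm$, and that $Q_\Beta \cap \GHm$ is genuinely the parabolic of $\GHm$ attached to $\Beta$ with compact part $\OHm \cap G_\Beta$. All the ingredients are available (commutation of $\Beta$ with $\Ad(H)$ from Lemma \ref{lem:betacommuteAdH}, the explicit product structure \eqref{eqn_decompGHm}, $\theta$-stability of $\Og$), but assembling them into the factorization $h = kq$ with $k\in\OHm$, rather than merely $k\in\Og$, is the one place where the argument is not purely formal. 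Everything after that factorization — the membership in $\Vnn$ and in $\sca_\Beta$ — is immediate from the cited propositions.
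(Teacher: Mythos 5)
Your proposal is correct and its skeleton coincides with the paper's: both arguments reduce the lemma to the group factorization $\GHm = \OHm\,(Q_\Beta\cap\GHm)$ (this is \eqref{eqn_GHmdecomp} in the paper), and then obtain the displayed inclusion from the $Q_\Beta$-invariance of $\Vnn$ (Proposition \ref{prop_propertiesgroups}) together with the $\Gg$-invariance of $\sca_\Beta$. The difference lies in how that factorization is proved. You treat $\GHm$ as a transpose-stable real reductive subgroup of $\Gm$ with maximal compact $\OHm$, note via Lemma \ref{lem:betacommuteAdH} that $\Beta|_\mg$ lies in its symmetric part, and invoke the general $K\cdot(\text{parabolic})$ decomposition internally to $\GHm$, using that the parabolic of $\GHm$ attached to $\Beta|_\mg$ sits inside $Q_\Beta\cap\GHm$; this containment does hold, because under the trivial extension \eqref{eqn_Gminclusion} the operator $\Beta$ acts on $\hg$ by the scalar $-\Vert\Beta\Vert^2$, so the $\ad(\Beta)$- and $\ad(\Beta|_\mg)$-eigenspace decompositions of elements of $\gHm$ agree. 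The paper instead verifies \eqref{eqn_GHmdecomp} by hand: it decomposes $\mg$ into $K$-isotypical summands $\pg_1\oplus\cdots\oplus\pg_k$, uses the identification $\GHm\simeq \Gl(n_1,\KK_1)\times\cdots\times\Gl(n_k,\KK_k)$ from \eqref{eqn_decompGHm} with the irreducible splitting chosen compatibly with the $K$-invariant eigenspaces of $\Beta|_{\pg_i}$ (this is where Lemma \ref{lem:betacommuteAdH} enters there), checks that the lower-triangular subgroup $Q(n_i,\KK_i)$ embeds into $Q_{\Beta_i}\cap\Gl^H(\pg_i)$, and concludes with the classical decomposition $\Gl(n_i,\KK_i)=\Or(n_i,\KK_i)\,Q(n_i,\KK_i)$. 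So your route is shorter but relies on standard structure theory of real reductive groups, whereas the paper makes exactly the point you flagged as nontrivial (getting $k\in\OHm$ rather than merely $k\in\Og$) explicit through this matrix computation; your stated ingredients (theta-stability of $\GHm$, $\Beta|_\mg\in\gHm$, and the containment of the internal parabolic in $Q_\Beta\cap\GHm$) are indeed sufficient to close that step, so there is no gap.
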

\begin{proof}
It is enough to show that
\begin{equation}\label{eqn_GHmdecomp}
 \GHm = {\OHm(Q_\Beta \cap \GHm)}\,.
\end{equation}
 Recall that $\Vnn$ is $Q_\Beta$-invariant by Proposition \ref{prop_propertiesgroups}. To prove \eqref{eqn_GHmdecomp}, consider the decomposition $\mg = \pg_1\oplus \cdots \oplus \pg_k$ into $K$-isotypical summands, $K:= \overline{\Ad(H)|_\mg} \subset \Or(\mg)$, see the paragraph after Notation \ref{not_ipg}. By Lemma \ref{lem:betacommuteAdH}, for each fixed $i$ we may write $\pg_i$ as an orthogonal sum $\pg_i = \pg_i^1 \oplus \cdots \oplus \pg_i^{r_i}$ of eigenspaces of $\Beta|_\mg$, which are furthermore $K$-invariant. Set $\Beta_i := \Beta|_{\pg_i}$. After fixing a splitting of $\pg_i$ into $n_i$ $K$-irreducible summands so that each $\pg_i^j$ is a sum of some of these summands, we obtain an identification $\Gl^H(\pg_i) \simeq \Gl(n_i, \KK_i)$, where $\KK_i = \RR, \CC$ or $\HH$. Under said identification we also have $\Or^H(\pg_i) \simeq \Or(n_i, \KK_i)$,  with $\Or(n_i, \KK_i)$ isomorphic to $\Or(n_i)$, $\U(n_i)$ or $\Spe(n_i)$ according to whether $\KK_i = \RR$, $\CC$ or $\HH$, respectively.
 
Next, we claim 
 that the lower-triangular matrices $Q(n_i, \KK_i)\subset \Gl(n_i, \KK_i)$  are contained  in $Q_{\Beta_i} \cap \Gl^H(\pg_i)$, see Remark \ref{rem_git2}. Here, $Q_{\Beta_i}$ is the (parabolic) subgroup of $\Gl(\pg_i)$ defined as in Definition \ref{def_groups1b}. That is $Q_{\Beta_i}$  contains as in 
 \eqref{eqn_Gbetadecomp} diagonal block matrices corresponding to
 the $K$-invariant eigenspaces of $\Beta_i$ and the corresponding strictly 
 lower triangular matrices.
 Since the embedding of $Q(n_i, \KK_i)$ into $\Gl(\pg_i)$
 respects this block decomposition, see \cite{Bhm04} p.104,
 the above claim follows.
 
 Now using $\Gl(n_i, \KK_i) = \Or(n_i, \KK_i) Q(n_i, \KK_i)$ one gets
 \[
 	\Gl^H(\pg_i) \subset \Or^H(\pg_i) (Q_{\Beta_i} \cap \Gl^H(\pg_i)).
 \] 
From that and \eqref{eqn_decompGHm} it is clear that $\Gl^H(\mg) \subset \Or^H(\mg)(Q_\Beta \cap \Gl^H(\mg))$. The reverse inclusion is immediate.
\end{proof}


If $\ggo$ is abelian then all $\GG$-invariant metrics on $(M^n, \ggo)$ are flat. On such a space
one understands the Ricci flow of $\GG$-invariant metrics completely.

\begin{definition}
A $\ggo$-homogeneous space  $(M^n, \ggo)$ is called {\it non-flat}, if
it admits at least one non-flat $\GG$-invariant metric.
\end{definition}

Notice that a non-flat $\ggo$-homogeneous space may admit flat $\GG$-invariant metrics.

\begin{lemma}\label{lem_trbetam}
Let $(M^n, \ggo)$ be a non-flat $\ggo$-homogeneous space with $\hml \in \sca_\Beta \cap \Vnn$. Then, 
 $\tr_\mg (\Beta|_\mg) < 0$.
\end{lemma}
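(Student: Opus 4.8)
\textbf{Proof plan for Lemma \ref{lem_trbetam}.}
The plan is to translate the non-flatness hypothesis into a statement about the modified scalar curvature, and then use the new curvature estimate coming from geometric invariant theory. First I would recall from Lemma \ref{lem_Ricciflat} that a $\ggo$-homogeneous space is flat as soon as $\Riccim_\mu = 0$; since $(M^n,\ggo)$ is assumed non-flat, there exists a $G$-invariant metric with bracket $\mu\in \OlH$ for which $\Riccim_\mu \neq 0$. By Lemma \ref{lem_gaugeohm} we may, after acting by an element of $\OHm$ (which changes neither the associated metric up to local isometry nor, by \eqref{eqn_Kequivmm}, the relevant invariants), assume $\mu \in (Q_\Beta \cap \GHm)\cdot \hml \subset \sca_\Beta \cap \Vnn$, so that $\mu$ is gauged correctly.

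Next I would invoke the curvature estimate from Corollary \ref{cor_refinedbetaplus} (the one stated in the introduction),
\[
	0 \leq \big\la \mm_\mu, \Betap \big\ra = \big\la \Riccim_\mu, \Beta \big\ra + \scalm(\mu)\cdot \Vert \Beta\Vert^2\,,
\]
together with the fact that $\Betap = \Beta + \Vert\Beta\Vert^2\Id_\ggo$ and $\tr\Beta = -1$ (Remark \ref{rem_git1}). The point is to extract from this a sign statement on $\scalm(\mu)$: if $\scalm(\mu) \geq 0$ one could run the argument of Lemma \ref{lem_scalmev}, but here the cleaner route is to observe that $\Riccim_\mu \neq 0$ forces $\scalm(\mu) < 0$ along \emph{some} time of the unimodular bracket flow. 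Concretely: start the unimodular bracket flow at $\mu$; by Lemma \ref{lem_scalmev}, $\scalm$ is non-decreasing, and strictly increasing at $t=0$ since $\Riccim_\mu\neq 0$; if $\scalm(\mu)\geq 0$ then $\scalm$ becomes and stays positive, so the flow has finite extinction time — but then it cannot be immortal, which is fine here since we are not assuming immortality. So instead I would argue directly: we want $\tr_\mg(\Beta|_\mg) = \tr_\ggo \Beta - \tr_\hg(\Beta|_\hg) = -1 - \tr_\hg(\Beta|_\hg)$, and by Lemma \ref{lem:betacommuteAdH} and the structure of $G_\Beta$ we have $\Beta|_\hg$ related to the eigenvalue block on $\hg$; more usefully, $\hg \subset \ggo_\Beta$ and the relevant quantity is $\la \Beta, \Beta^+\ra|_\mg$-type data.

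The sharper approach, which I expect to be the actual argument, is: evaluate $\big\la \mm_\mu, \Beta^+\big\ra$ and use that on a non-flat space this pairing is \emph{strictly} positive for a suitable $\mu\in \OlH$ — indeed $\mm_\mu$ governs the ``nilpotent part'' of the curvature and $\Beta^+$ is supported on $\ngo$ by Lemma \ref{lem:betaplusimage}, with $\Beta^+|_\ngo > 0$; non-flatness means $\mm_\mu \neq 0$ in a way detected by $\ngo$. Combining $0 < \la\mm_\mu,\Beta^+\ra = \la\Riccim_\mu,\Beta\ra + \scalm(\mu)\Vert\Beta\Vert^2$ with the trace relation $\scalm(\mu) = \tr\Riccim_\mu$ and a trace-form identity expressing $\la\Riccim_\mu,\Beta\ra$ in terms of $\tr_\mg(\Beta|_\mg)$ — using that $\Riccim_\mu$ is $\Ad(H)$-equivariant and $\Beta$ commutes with $\Ad(H)$ — should yield, after rearranging and using $\tr\Beta=-1$, the inequality $\tr_\mg(\Beta|_\mg) < 0$. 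The main obstacle, and the step I would spend the most care on, is the purely algebraic manipulation turning the strict positivity of $\la\mm_\mu,\Beta^+\ra$ into a clean inequality on $\tr_\mg(\Beta|_\mg)$ alone, independent of the choice of non-flat $\mu$; this requires knowing that $\scalm$ evaluated at an appropriate bracket (e.g. a critical point of $\normmm$ inside $\OlH$, or the limit under the unimodular flow) has a definite sign, which in turn uses the characterization of equality in \eqref{eqn_estricbetaintro} and the fact that non-flatness rules out the flat soliton case $\Riccim_\mu = 0$.
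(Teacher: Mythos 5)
Your proposal does not follow the paper's route, and as it stands it has two genuine gaps. First, the pivotal claim that on a non-flat space one can find $\mu \in \OlH$ with $\big\la \mm_\mu, \Beta^+ \big\ra > 0$ is false: by Lemma \ref{lem_estimateRic} this pairing vanishes exactly when $\Beta^+ \in \Der(\ggo,\mu)$, i.e.\ when $\mu \in \Vzero$, and this happens for plenty of non-flat spaces (for instance any nilsoliton bracket, or more generally any critical point of $\normmm$ in the stratum). So non-flatness cannot be converted into strict positivity of this pairing. Second, even granting a strict inequality for some $\mu$, there is no ``trace-form identity'' expressing $\la \Riccim_\mu, \Beta\ra$ in terms of $\tr_\mg(\Beta|_\mg)$ for an arbitrary metric: the curvature estimate constrains $\Riccim_\mu$ against $\Beta$, not the trace of $\Beta|_\mg$ itself, and the sign of $\scalm(\mu)$ is not at your disposal (on a compact semisimple group there are invariant metrics with $\scalm>0$, yet the lemma must still hold). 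Finally, your fallback on the equality characterization of \eqref{eqn_estricbetaintro} (Corollary \ref{cor_mainestimate} and Theorem \ref{thm_lochomsolitons}) is circular: those statements involve $\Betam$, whose very definition in Notation \ref{not_betam} requires $\tr_\mg(\Beta|_\mg)<0$, i.e.\ the lemma you are trying to prove, as does Lemma \ref{lem_betamgplusbetaplus}.

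The paper's proof is not a curvature argument at all but a structural one. Theorem \ref{thm_refinedbeta} (Appendix \ref{app_stratum}) computes $\Beta$ explicitly from the stratum label $\Beta_\ngo$ of the nilradical $\ngo$ of $(\ggo,\hml)$: $\Beta$ is $-\Vert\Beta\Vert^2\Id$ on $\ngo^\perp \supset \hg$ and equals (a positive multiple of) $\Beta_\ngo$ on $\ngo$, with $\tr\Beta = -1$ and $\tr\Beta_\ngo = -1$. From this one reads off $\tr_\mg(\Beta|_\mg) \leq 0$ directly, with equality only in the degenerate case $\mg = \ngo$ and $\nu := \hml|_{\ngo\wedge\ngo} = 0$. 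Non-flatness enters only to exclude that case: there $\hml(\mg,\mg)=0$, $(\ggo,\hml)$ is unimodular and $\kf_\hml|_\ngo = 0$, so formula \eqref{eqn_Riccimg} shows every $\GG$-invariant metric is Ricci-flat, hence flat by Alekseevskii--Kimelfeld and Spiro (the same flatness input you correctly identified in Lemma \ref{lem_Ricciflat}). If you want to salvage your approach, the missing ingredient is precisely this Lie-theoretic description of $\Beta$ via the nilradical; without it, the GIT estimate alone cannot see $\tr_\mg(\Beta|_\mg)$.
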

\begin{proof}
Let $\ngo$ be the nilradical of $(\ggo,\hml)$, and denote by $\nu : \ngo \wedge \ngo \to \ngo$ the restriction of $\hml$ to $\ngo \wedge \ngo$. It follows immediately from Theorem \ref{thm_refinedbeta} that $\tr_\mg (\Beta|_\mg) \leq 0$, since either $\nu = 0$, or $0\neq \nu \in \sca_{\Beta_\ngo}$ and $\tr \Beta_\ngo = -1$. Moreover, the inequality is strict, unless $\mg = \ngo$ and $\nu = 0$. If this is the case, then $\hml(\mg,\mg) = 0$, $(\ggo,\hml)$ is unimodular, and $\kf_\hml\vert_{\ngo}=0$.  
 Thus it follows from \eqref{eqn_Riccimg} that any $\GG$-invariant metric is Ricci-flat, hence flat by \cite{AlkKml} and \cite{Spiro}.
\end{proof}

\begin{notation}\label{not_betam}
For a non-flat $\ggo$-homogeneous space we define $\Beta_\mg \in \Symm$ by
\begin{equation}\label{eqn_defbetam}
	\Beta_\mg := b \cdot \Beta|_\mg,
\end{equation} 
with $b := (-\tr_\mg \Beta|_\mg)^{-1} > 0$ (Lemma \ref{lem_trbetam}) chosen so that 
$\tr \Beta_\mg = -1$. We also set
\begin{equation}\label{eqn_defbetamplus}
(\Beta_\mg)^+ := \Beta_\mg + \Vert \Beta_\mg \Vert^2\, \Id_\mg.
\end{equation} 
\end{notation}

\begin{lemma}\label{lem_betamgplusbetaplus}
Let $(M^n, \ggo)$ be a non-flat $\ggo$-homogeneous space with $\hml \in \sca_\Beta \cap \Vnn$.
Then $(\Beta_\mg)^+ = b \cdot (\Beta^+|_\mg)$.
\end{lemma}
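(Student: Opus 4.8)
The plan is to reduce the claimed operator identity to a single scalar relation. Unwinding Notation \ref{not_betam} (that is, \eqref{eqn_defbetam} and \eqref{eqn_defbetamplus}), one has $(\Beta_\mg)^+ = b\,\Beta|_\mg + \Vert\Beta_\mg\Vert^2\,\Id_\mg$, while $b\,(\Beta^+|_\mg) = b\,\Beta|_\mg + b\,\Vert\Beta\Vert^2\,\Id_\mg$; these coincide precisely when $\Vert\Beta_\mg\Vert^2 = b\,\Vert\Beta\Vert^2$. Since $\Beta_\mg = b\,\Beta|_\mg$ with $b>0$ and $\Beta|_\mg$ self-adjoint, $\Vert\Beta_\mg\Vert^2 = b^2\,\Vert\Beta|_\mg\Vert^2$, so it suffices to prove $b\,\Vert\Beta|_\mg\Vert^2 = \Vert\Beta\Vert^2$, i.e.\ $\Vert\Beta|_\mg\Vert^2 = -\tr_\mg(\Beta|_\mg)\cdot\Vert\Beta\Vert^2$.

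First I would record that $\Beta$ (equivalently $\Beta^+$) respects the orthogonal decomposition $\ggo = \hg\oplus\mg$ and annihilates $\hg$. By Lemma \ref{lem:betaplusimage} the self-adjoint operator $\Beta^+$ has image $\ngo\subset\mg$, hence $\ker(\Beta^+) = \ngo^\perp \supset \mg^\perp = \hg$; thus $\Beta^+(\hg) = 0$ and $\Beta^+(\mg)\subset\Im(\Beta^+)\subset\mg$. The same holds for $\Beta = \Beta^+ - \Vert\Beta\Vert^2\,\Id_\ggo$, so in particular $\Beta^+|_\mg = \Beta|_\mg + \Vert\Beta\Vert^2\,\Id_\mg$, and traces over $\ggo$ of operators built from $\Beta$ and $\Beta^+$ collapse to traces over $\mg$ (the $\hg$-contribution vanishing).

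The key input is then the orthogonality $\langle\Beta^+,\Beta\rangle = 0$ from Remark \ref{rem_git2}, which is a consequence of $\tr\Beta = -1$ (Remark \ref{rem_git1}). Combining this with the previous step and $\Beta^t = \Beta$,
\[
0 = \langle\Beta^+,\Beta\rangle = \tr_\ggo(\Beta^+\Beta) = \tr_\mg\big((\Beta|_\mg + \Vert\Beta\Vert^2\,\Id_\mg)\,\Beta|_\mg\big) = \Vert\Beta|_\mg\Vert^2 + \Vert\Beta\Vert^2\,\tr_\mg(\Beta|_\mg),
\]
which is exactly the required identity $\Vert\Beta|_\mg\Vert^2 = -\tr_\mg(\Beta|_\mg)\cdot\Vert\Beta\Vert^2 = b^{-1}\,\Vert\Beta\Vert^2$. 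Feeding this back into the reduction of the first paragraph gives $\Vert\Beta_\mg\Vert^2 = b^2\Vert\Beta|_\mg\Vert^2 = b\,\Vert\Beta\Vert^2$, and hence $(\Beta_\mg)^+ = b\,(\Beta^+|_\mg)$.

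The computation is short, and there is no real obstacle. The only point that requires care is the first step — that $\Beta$ preserves $\hg\oplus\mg$ and kills $\hg$, so that the ambient traces over $\ggo$ reduce to traces over $\mg$ — which rests on Lemma \ref{lem:betaplusimage}; everything else is bookkeeping with the definition of $\Beta^+$ and the normalizations $\tr\Beta = -1$ and $\tr\Beta_\mg = -1$.
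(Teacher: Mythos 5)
Your proof is correct and is essentially the paper's argument in a slightly different packaging: both reduce the claim to the scalar identity $\Vert \Beta|_\mg\Vert^2 = b^{-1}\,\Vert\Beta\Vert^2$, and both obtain it from the facts that $\Beta^+$ vanishes on $\hg$ (equivalently $\Beta|_\hg = -\Vert\Beta\Vert^2\,\Id_\hg$, which the paper quotes from Theorem \ref{thm_refinedbeta} and you derive from Lemma \ref{lem:betaplusimage}) together with $\tr\Beta=-1$; your use of $\la \Beta^+,\Beta\ra = 0$ is just this normalization in disguise, where the paper instead computes $b^{-1}=1-\Vert\Beta\Vert^2\dim\hg$ and $\Vert\Beta|_\mg\Vert^2=\Vert\Beta\Vert^2-\Vert\Beta\Vert^4\dim\hg$ explicitly. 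One wording slip: $\Beta$ does \emph{not} annihilate $\hg$ (it acts there as $-\Vert\Beta\Vert^2\,\Id_\hg$), and not every trace of a word in $\Beta,\Beta^+$ collapses to $\mg$; but your displayed computation only uses $\Beta^+|_\hg=0$, which you did establish, so the argument stands.
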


\begin{proof}
We have 
$b^{-1} = -\tr_\mg \Beta|_\mg = 1 - \Vert \Beta \Vert^2 \, (\dim\hg)$,
since $\Beta|_\hg = -\Vert \Beta \Vert^2 \, \Id_\hg$  by Theorem \ref{thm_refinedbeta} and
$\tr \Beta = -1$ by Remark \ref{rem_git1}. On the other hand,
\[
	\Vert \Beta|_\mg \Vert^2 = \Vert \Beta \Vert^2 - \Vert \Beta \Vert^4 \, (\dim \hg) = \Vert \Beta \Vert^2 \, b^{-1}\,,
\]
from which the claim follows.
\end{proof}

\begin{remark}\label{rmk_normbetam}
From the proof of Lemma \ref{lem_betamgplusbetaplus} it follows that $\Vert \Betam \Vert^2 = f(\Vert \Beta \Vert^2)$, where $f : \RR_{>0} \to \RR_{>0}$, $f(x) = x \cdot (1- (\dim \hg) x)^{-1}$, is an increasing function.
\end{remark}


\section{New curvature estimates}\label{sec_newcurv}



The aim of this section is to obtain new Ricci curvature estimates which are sharp precisely on expanding homogeneous Ricci solitons. To that end, we first relate the moment map $\mmm(\mu) \in \Symg$ for the action \eqref{def_muaction} of $\Gg$ on $\Vg$ with the first summand $\mm_\mumg \in \Symm$ in the formula for the Ricci curvature \eqref{eqn_FormulaRicci}: see also \eqref{eqn_formulamm}. Recall that the latter depends only on the projection $\mumg \in \Vm$ (see \eqref{eqn_muhgmg}), and that in \cite{LafuenteLauret2014b} it was shown to coincide -up to scaling- with the moment map for the natural action of $\Gm$ on $\Vm$. However, since this action is on a different space, no a priori relation between them follows from the general theory.  

We set for simplicity $\mm_\mu := \tfrac{4}{\Vert \mu \Vert ^2} \mmm(\mu) \in \Symg$. The following formula for $\mm_\mu$ was given in \cite[Prop.~3.5]{Lau06} (the definition of $\mmm(\mu)$ in that paper differs from ours by a factor of $2$):
\begin{equation}\label{eqn_formulammgg}
    \mm_\mu =
    -\frac12 \sum_{i=1}^N \big(\ad_\mu \tilde E_i \big)^t  \big( \ad_\mu \tilde E_i \big) + 
    	\frac14 \sum_{i=1}^N \big( \ad_\mu \tilde E_i \big)  \big(\ad_\mu \tilde E_i \big)^t.
\end{equation}
Here, $\{ \tilde E_i\}_{i=1}^N$ is any orthonormal basis for $(\ggo, \ip)$. Let $\proy_\mg : \ggo \to \mg$ denote the orthogonal projection, and set $\mm_\mu^\mg := \proy_\mg \circ \mm_\mu \big|_\mg \in \Symm$. 

\begin{lemma}\label{lem_formulabigmm}
Let $(M^n,\ggo)$ be a $\ggo$-homogeneous space. Then for $\mu \in \OlH$ we have 
\[
	\mm_\mu^\mg = \mm_\mumg - {\operatorname P}_{\mu_\hg},
\]
where ${\operatorname P}_{\mu_\hg} \in \Symm$ is given by
\begin{equation}\label{eqn_defPmuhg}
	\la {\operatorname P}_{\mu_\hg} X, X\ra = \frac12 \sum_{i,j} \big\la \mu(X, E_i), Z_j \big\ra^2,
\end{equation}
for $X \in \mg$. Here $\{Z_j\}_{j=n+1}^N$, $\{ E_i\}_{i=1}^n$ are orthonormal basis for $\hg$, $\mg$, respectively.
\end{lemma}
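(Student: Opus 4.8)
The plan is to reduce to an identity of quadratic forms on $\mg$ and then expand both sides using the explicit formulas already available. Since $\mm_\mu\in\Symg$, the endomorphism $\mm_\mu^\mg=\proy_\mg\circ\mm_\mu|_\mg$ lies in $\Symm$, while $\mm_\mumg$ and ${\operatorname P}_{\mu_\hg}$ are self-adjoint on $(\mg,\ip)$ by definition; moreover $\la\mm_\mu^\mg X,X\ra=\la\mm_\mu X,X\ra$ for $X\in\mg$. Hence it suffices to prove
\[
\la\mm_\mu X,X\ra=\la\mm_\mumg X,X\ra-\la{\operatorname P}_{\mu_\hg}X,X\ra \qquad\text{for all }X\in\mg,
\]
and conclude by polarization. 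I would fix an orthonormal basis $\{\tilde E_i\}_{i=1}^N$ of $(\ggo,\ip)$ adapted to the canonical decomposition, namely $\tilde E_i=E_i$ for $1\le i\le n$ and $\tilde E_j=Z_j$ for $n+1\le j\le N$. Before computing I would record two structural facts about a bracket $\mu=h\cdot\hml\in\OlH$, $h\in\GHm$ (so $h|_\hg=\Id_\hg$ and $h(\mg)=\mg$ under \eqref{eqn_Gminclusion}): (a) $\mu(\hg,\hg)\subset\hg$ and $\mu(\hg,\mg)\subset\mg$, which is immediate from $h|_\hg=\Id_\hg$, $h(\mg)=\mg$ and reductivity of $\ggo=\hg\oplus\mg$; and (b) for every $Z\in\hg$ one has $\ad_\mu Z|_\mg=\ad_\hml Z|_\mg$, and this is skew-symmetric on $(\mg,\ip)$. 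Fact (b) holds because $h\in\GHm$ centralizes $\Ad(H)|_\mg$, hence commutes with every $\Ad(\exp tZ)|_\mg$ and therefore with $\ad_\hml Z|_\mg$; since $\ad_\mu Z|_\mg=h\circ(\ad_\hml Z|_\mg)\circ h^{-1}$ this gives the identity, and skew-symmetry follows from $\Ad(H)$-invariance of $\ip$, which gives $\ad_\hml Z|_\mg\in\sog(\mg,\ip)$.

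Now I would compute $\la\mm_\mu X,X\ra$ from \eqref{eqn_formulammgg}, which gives, for $X\in\mg$,
\[
\la\mm_\mu X,X\ra=-\unm\sum_{i=1}^N\Vert\mu(X,\tilde E_i)\Vert^2+\unc\sum_{i,j=1}^N\la X,\mu(\tilde E_i,\tilde E_j)\ra^2.
\]
In the first sum I split $\tilde E_i$ into $E$'s and $Z$'s; for $X,E_k\in\mg$ I decompose $\mu(X,E_k)=\mumg(X,E_k)+\mu_\hg(X,E_k)$ as in \eqref{eqn_muhgmg}, and for the $Z$-terms I use (a) to write $\mu(X,Z_j)=-\ad_\mu Z_j(X)\in\mg$. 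Then \eqref{eqn_defPmuhg} yields
\[
\sum_{i=1}^N\Vert\mu(X,\tilde E_i)\Vert^2=\sum_{k,l=1}^n\la\mumg(X,E_k),E_l\ra^2+2\,\la{\operatorname P}_{\mu_\hg}X,X\ra+\sum_{j=n+1}^N\Vert\ad_\mu Z_j(X)\Vert^2.
\]
In the double sum I split $(\tilde E_i,\tilde E_j)$ into the four index-range cases: the $\hg\wedge\hg$-part vanishes since $\mu(Z_j,Z_{j'})\in\hg\perp X$ by (a), the $\mg\wedge\mg$-part gives $\sum_{k,l}\la\mumg(E_k,E_l),X\ra^2$, and the two mixed parts each contribute $\sum_{k,j}\la X,\mu(E_k,Z_j)\ra^2$; writing $\mu(E_k,Z_j)=-\ad_\mu Z_j(E_k)$ one has $\sum_k\la X,\mu(E_k,Z_j)\ra^2=\Vert\proy_\mg(\ad_\mu Z_j)^tX\Vert^2=\Vert\ad_\mu Z_j(X)\Vert^2$ by (b), so
\[
\sum_{i,j=1}^N\la X,\mu(\tilde E_i,\tilde E_j)\ra^2=\sum_{k,l=1}^n\la\mumg(E_k,E_l),X\ra^2+2\sum_{j=n+1}^N\Vert\ad_\mu Z_j(X)\Vert^2.
\]
Substituting the last two displays, the terms $\Vert\ad_\mu Z_j(X)\Vert^2$ cancel since $-\unm\cdot1+\unc\cdot2=0$, and comparing the surviving terms with \eqref{eqn_formulamm} applied with $\mu,\mumg$ in place of $\hml,\ml$ gives exactly $\la\mm_\mu X,X\ra=\la\mm_\mumg X,X\ra-\la{\operatorname P}_{\mu_\hg}X,X\ra$, which proves the lemma.

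The computation itself is routine bookkeeping with orthonormal bases adapted to $\ggo=\hg\oplus\mg$; the only nontrivial input — and the single place where the hypothesis $\mu\in\OlH$ (as opposed to an arbitrary bracket) enters — is fact (b): the maps $\ad_\mu Z|_\mg$ for $Z\in\hg$ agree with $\ad_\hml Z|_\mg$ and are skew-symmetric for the background scalar product. This is precisely what forces the a priori cross contributions between $\hg$- and $\mg$-directions to cancel, leaving the intrinsic term $\mm_\mumg$ together with the correction ${\operatorname P}_{\mu_\hg}$ coming from the $\hg$-valued component $\mu_\hg$ of $\mu$.
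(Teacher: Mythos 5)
Your proof is correct and follows essentially the same route as the paper: both expand formula \eqref{eqn_formulammgg} in an orthonormal basis adapted to $\ggo=\hg\oplus\mg$, use $\mu(\hg,\hg)\subset\hg$, $\mu(\hg,\mg)\subset\mg$ and the skew-symmetry of $\ad_\mu Z|_\mg$ for $Z\in\hg$, and observe that the cross contributions cancel, leaving $\mm_\mumg-{\operatorname P}_{\mu_\hg}$. The only (harmless) difference is presentational: you argue with quadratic forms and polarization where the paper manipulates the operator blocks directly, and you spell out the justification of $\ad_\mu Z|_\mg=\ad_\hml Z|_\mg\in\sog(\mg,\ip)$, which the paper simply asserts.
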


\begin{proof}
Let $Z\in \hg$, $X\in \mg$. Using that $\mu(\hg,\hg) \subset \hg$, $\mu(\hg,\mg) \subset \mg$ (which holds by Definition \ref{def_bracket}, since $\hml$ satisfies that) we may write
\[
	\ad_\mu Z = \minimatrix{\ad_\hg Z}{0}{0}{\ad_\eta Z}, \qquad \ad_\mu X = \minimatrix{0}{\ad_{\mu_\hg} X}{\ad_\eta X}{\ad_\mumg X},
\]
according to the decomposition $\ggo = \hg \oplus \mg$. Here $\eta : (\hg \times \mg) \oplus (\mg \times \hg) \to \mg$ is the corresponding projection of $\mu$. Notice that $\ad_\eta X : \hg \to \mg$, $\ad_\eta Z : \mg \to \mg$. We now write the formula \eqref{eqn_formulammgg} using an orthonormal basis as in the statement. A straightforward computation using that $\ad_\eta Z_j \in \sog(\mg)$ yields
\begin{align*}
	\mm_\mu^\mg  =&  -\tfrac14 \sum_j (\ad_\eta Z_j)^t(\ad_\eta Z_j) \\
			& -\unm \sum_i (\ad_{\mu_\hg} E_i)^t (\ad_{\mu_\hg} E_i)  -\unm \sum_i  (\ad_\mumg E_i)^t(\ad_\mumg E_i) \\
			& + \tfrac14 \sum_i (\ad_\eta E_i)(\ad_\eta E_i)^t+ \unc \sum_i (\ad_\mumg E_i)(\ad_\mumg E_i)^t.
\end{align*}
First we notice that the first and fourth terms cancel out. Indeed, the $\ad_\eta Z_j$ are normal operators, and the following holds for any $X\in \mg$:
\begin{align*}
	\sum_i \left\la (\ad_\eta E_i)(\ad_\eta E_i)^t X, X\right\ra =& \sum_i \left\Vert (\ad_\eta E_i)^t X \right\Vert^2 = \sum_{i,j} \left\la (\ad_\eta E_i)^t X, Z_j \right\ra^2 \\
	=& \sum_{i,j} \left\la X, \eta(E_i, Z_j)  \right\ra^2 = \sum_{i,j} \left\la (\ad_\eta Z_j)^t X, E_i \right\ra^2 \\
	=& \sum_j \left\Vert (\ad_\eta Z_j)^t X\right\Vert^2 = \sum_j \left\la (\ad_\eta Z_j)(\ad_\eta Z_j)^t X, X \right\ra.
\end{align*} 
From \eqref{eqn_formulamm} it is clear that the third and fifth terms add up to $\mm_\mumg$. Finally, the lemma follows after noticing that for the second term we have 
\[
	\sum_i \left\la (\ad_{\mu_\hg} E_i)^t (\ad_{\mu_\hg} E_i) X, X \right\ra = \sum_i \left\Vert \mu_\hg(E_i, X)  \right\Vert^2 = \sum_{i,j} \la \mu(E_i,X), Z_j\ra^2.
\]
\vskip-10pt
\end{proof}

On the orbit $\OlH$ we have defined the Ricci endomorphism 
$\Ricci_\mu$ in (\ref{eqn_FormulaRicci}),
the modified Ricci curvature $\Riccim_\mu$ in (\ref{eqn_Ricciuni}),
and the modified scalar curvature 
$\scalm(\mu) = \tr \Riccim_\mu$. 
Recall that when $(\ggl,\hml)$ is unimodular we have $\Ricci_\mu = \Riccim_\mu$.

\begin{lemma}\label{lem_estimateRic}
Let $(M^n, \ggl)$ be a $\ggl$-homogeneous space with $\hml \in \sca_\Beta \cap \Vnn$. 
Then, for any $\mu \in \OlH \cap \Vnn$ we have
\[
  \big\la  \Riccim_\mu,\Beta^+  \big\ra \geq 0,
\]
with equality if and only if $\Beta^+ \in \Der(\ggo,\mu)$. Here, $\Beta^+$ is considered as an endomorphism of $\mg$ thanks to Lemma \ref{lem:betaplusimage}.
\end{lemma}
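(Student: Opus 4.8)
The plan is to reduce the claim to the key curvature estimate from geometric invariant theory, namely the nonnegativity of $\la \mm_\mu, \Beta^+\ra$ for $\mu$ in the stratum, which is standard once one knows $\mu \in \Vnn$ and $\Beta$ is the label of the stratum containing $\mu$. Recall that by \eqref{eqn_Ricciuni} we have $\Riccim_\mu = \mm_\mu - \unm \kf_\mu$, where here all endomorphisms are understood as acting on $\mg$. The first step is therefore to split $\la \Riccim_\mu, \Beta^+\ra$ as $\la \mm_\mumg, \Beta^+\ra - \unm \la \kf_\mu, \Beta^+\ra$ and to handle each term separately.

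For the first term, I would pass from $\mm_\mumg \in \Symm$ to the genuine moment map $\mm_\mu \in \Symg$ using Lemma \ref{lem_formulabigmm}: we have $\mm_\mu^\mg = \mm_\mumg - {\operatorname P}_{\mu_\hg}$. Since $\Im(\Beta^+) = \ngo \subset \mg$ by Lemma \ref{lem:betaplusimage} and since $\Beta|_\hg = -\Vert\Beta\Vert^2\Id_\hg$ (so in particular $\Beta^+$ kills $\hg$), the pairing $\la \mm_\mu^\mg, \Beta^+\ra$ equals $\la \mm_\mu, \Beta^+\ra$ when $\Beta^+$ is viewed inside $\Symg$ by extending it as zero on $\hg$. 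Now $\la \mm_\mu, \Beta^+\ra = \tfrac{4}{\Vert\mu\Vert^2}\la \mmm(\mu), \Beta + \Vert\Beta\Vert^2\Id_\ggo\ra$, and the GIT inequality associated with the stratum — valid precisely because $\mu \in \sca_\Beta \cap \Vnn = \Vnnss$ (cf.~\eqref{eqn_stratumbeta}, \eqref{eqn_defmmb}, and the results cited from \cite{GIT}) — gives $\la \mmm(\mu), \Beta^+\ra \geq 0$, with equality iff $\Beta^+ \in \Der(\ggo,\mu)$ and $\mu \in \Vzero$; but on $\Vnn$ equality in the GIT inequality already forces $\mu \in \Vzero$, hence $\pi(\Beta^+)\mu = 0$, i.e.~$\Beta^+$ is a derivation. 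It remains to account for the correction term: $\la {\operatorname P}_{\mu_\hg}, \Beta^+\ra \geq 0$ because ${\operatorname P}_{\mu_\hg} \geq 0$ by \eqref{eqn_defPmuhg} and $\Beta^+|_\ngo > 0$ by Lemma \ref{lem:betaplusimage} — and in the equality case one also checks this term vanishes (it does, since equality in GIT gives $\mu \in \Vzero \subset \Vnn$, forcing the relevant components of $\mu_\hg$ to be consistent with $\Beta^+$ being a derivation; more directly, $\pi(\Beta^+)\mu=0$ combined with $\Im\Beta^+ \subset \mg$ kills ${\operatorname P}_{\mu_\hg}$ against $\Beta^+$).

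For the Killing-form term I would show $\la \kf_\mu, \Beta^+\ra = 0$, which is exactly the content of Corollary \ref{cor_refinedbetaplus} referenced in the introduction (the displayed estimate there uses $\la\kf_\mu,\Beta^+\ra=0$). Concretely, $\kf_\mu(X,Y) = \tr_\ggo(\ad_\mu X \, \ad_\mu Y)$ is invariant under any derivation of $\mu$, and one uses that $\Beta^+$ together with $\Beta$ span $\ggo_\Beta$'s center direction while $\Beta$ acts by a fixed scalar on each block; the trace-zero condition $\tr\Beta = -1 = \tr \mmm(\mu)$ from Remark \ref{rem_git1} gives $\la \Beta^+,\Beta\ra = 0$, and combining this with the block structure of $\ad_\mu$ relative to the $\ad(\Beta)$-grading yields the vanishing. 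Putting the two terms together gives $\la \Riccim_\mu,\Beta^+\ra = \la\mm_\mumg,\Beta^+\ra = \la \mm_\mu,\Beta^+\ra + \la{\operatorname P}_{\mu_\hg},\Beta^+\ra \geq 0$, with equality iff $\Beta^+ \in \Der(\ggo,\mu)$.

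The main obstacle I anticipate is the bookkeeping around the isotropy: one must be careful that the moment map on $\Vg$ (for the $\Gg$-action on the full space of brackets) and the quantity $\mm_\mumg$ appearing in the Ricci formula live on genuinely different representations, so the identification only works after projecting via $\proy_\mg$ and exploiting that $\Im\Beta^+ \subset \mg$; Lemma \ref{lem_formulabigmm} is precisely what bridges this, and the ${\operatorname P}_{\mu_\hg}$ correction must be shown to have the right sign (and to vanish in the equality case). The GIT inequality itself, and the fact that equality in it on $\Vnn$ is equivalent to $\Beta^+$ being a derivation, I would take from the cited literature \cite{GIT}, \cite{HSS}, \cite{standard}; the only genuinely new input is the reduction above together with the observation that the $\hg$-correction terms are harmless.
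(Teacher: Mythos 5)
Your overall skeleton is the same as the paper's: split $\Riccim_\mu$ via Lemma \ref{lem_formulabigmm} as $\mm_\mu^\mg + {\operatorname P}_{\mu_\hg} - \unm \kf_\mu$, observe that $\la \mm_\mu,\Beta^+\ra = \tfrac{4}{\Vert\mu\Vert^2}\la \mmm(\mu),\Beta^+\ra \geq 0$ with equality exactly when $\mu\in\Vzero$, i.e.\ $\Beta^+\in\Der(\ggo,\mu)$, and then show that the two correction terms do not interfere. (One small remark: the moment-map inequality needs no external GIT input at all -- writing $\mu=\sum_{r\geq0}\mu_r$ in $\pi(\Beta^+)$-eigenvectors, which is possible since $\mu\in\Vnn$, gives $\la\mmm(\mu),\Beta^+\ra=\tfrac1{\Vert\mu\Vert^2}\sum_r r\Vert\mu_r\Vert^2\geq0$ directly; the stratum hypothesis is not used for this step.)

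The genuine gap is your justification of $\la\kf_\mu,\Beta^+\ra=0$. Invariance of the Killing form under derivations is not available here, because $\Beta^+\in\Der(\ggo,\mu)$ is precisely the equality case being characterized, not a standing hypothesis; and the combination ``$\la\Beta,\Beta^+\ra=0$ plus the block structure of $\ad_\mu$ in the $\ad(\Beta)$-grading'' does not yield the vanishing -- nothing in that data forces the endomorphism $\kf_\mu$ to be supported on $\ker\Beta^+$. The fact you actually need, and which the paper invokes, is Corollary \ref{cor_Imbetaplus}: for any $\mu\in\sca_\Beta\cap\Vnn$ (not just for $\hml$) the subspace $\Im(\Beta^+)$ coincides with the nilradical $\ngo_\mu$ of $(\ggo,\mu)$; since the nilradical lies in $\ker\kf_\mu$ (Bourbaki), this gives $\kf_\mu\perp\Beta^+$. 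This is a nontrivial input for general $\mu\in\OlH$: writing $\mu=h\cdot\hml$ with $h\in\GHm$, the nilradical of $(\ggo,\mu)$ is $h(\ngo)$, which a priori need not equal $\ngo=\Im(\Beta^+)$, so Lemma \ref{lem:betaplusimage} and Corollary \ref{cor_refinedbetaplus} alone do not suffice. The same ingredient also streamlines your treatment of the ${\operatorname P}_{\mu_\hg}$ term: since $\ngo_\mu=\Im(\Beta^+)$ is an ideal contained in $\mg$, formula \eqref{eqn_defPmuhg} gives ${\operatorname P}_{\mu_\hg}(\ngo_\mu)=0$, hence $\la{\operatorname P}_{\mu_\hg},\Beta^+\ra=0$ identically -- your ``nonnegative in general, zero in the equality case'' bookkeeping is correct in substance (positivity of $\tr({\operatorname P}_{\mu_\hg}\Beta^+)$ is fine, and in the equality case the image of the positive semidefinite derivation $\Beta^+$ is indeed an ideal), but the identical vanishing is what makes the equality characterization clean, and it again rests on Corollary \ref{cor_Imbetaplus}, which is absent from your sketch.
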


\begin{proof}
By \eqref{eqn_Ricciuni} and Lemma \ref{lem_formulabigmm} we have that
\[
	\Riccim_\mu = \mm_\mu^\mg + {\operatorname P}_{\mu_\hg} - \unm \kf_\mu.
\]
Write $\mu \in \Vnn$ as a sum of eigenvectors of $\pi(\Beta^+)$: $\mu = \sum_{r\geq 0} \mu_r$, $\mu_r \in V_{\Beta^+}^r$. Then, 
\[
	\la \mmm(\mu), \Beta^+\ra = \tfrac1{\Vert \mu\Vert^2} \cdot \la \pi(\Beta^+) \mu, \mu\ra = \tfrac1{\Vert \mu\Vert^2} \cdot  \sum_{r\geq 0} r \cdot \Vert \mu_r\Vert^2 \geq 0,
\]
with equality if and only if $\mu \in \Vzero$. The latter is in turn equivalent to 
\mbox{$\pi(\Beta^+)\mu = 0$}, that is, $\Beta^+ \in \Der(\ggo,\mu)$. From Corollary \ref{cor_Imbetaplus} we know that $\Im (\Beta^+) = \ngo_\mu$ is the nilradical of $(\ggo,\mu)$. Thus, we have that $\kf_\mu \perp \Beta^+$,
since $\ngo_\mu \subset \ker(\kf_\mu)\subset \mg$ by \cite[$\S$4 Prop.~6, b)]{Bou}. Since $\ngo_\mu$ is an ideal of ($\ggo,\mu$), and $\ngo_\mu \perp \hg$, a quick look at \eqref{eqn_defPmuhg} shows that ${\operatorname P}_{\mu_\hg} (\ngo_\mu) = 0$. Hence, ${\operatorname P}_{\mu_\hg} \perp \Beta^+$. Putting all this together,  we get
\[
	\la \Riccim_\mu, \Beta^+ \ra = \la \mm_\mu^\mg, \Beta^+ \ra = \tfrac{4}{\Vert \mu \Vert^2} \, \la \mmm(\mu), \Beta^+  \ra \geq 0,
\]
and the lemma follows.
\end{proof}


The following estimate, together with the equality rigidity from Theorem \ref{thm_lochomsolitons}, will be crucial in the construction of the Lyapunov function for immortal homogeneous Ricci flows.

\begin{corollary}\label{cor_mainestimate}
Let $(M^n,\ggl)$ be a non-flat $\ggl$-homogeneous space with $\hml \in \sca_\Beta \cap \Vnn$. Let $\mu \in \OlH \cap \Vnn$ with $\scalm(\mu) < 0$. Then,
\begin{equation}\label{eqn_mainestimate}
	\Vert \Riccim_\mu\Vert \geq \vert {\scalm (\mu)} \vert \cdot \Vert \Beta_\mg \Vert,
\end{equation}
with equality if and only if $\Riccim_{\mu} = -\scalm(\mu) \cdot \Betam$.
\end{corollary}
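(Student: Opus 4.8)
The plan is to deduce \eqref{eqn_mainestimate} from Lemma \ref{lem_estimateRic} by a Cauchy--Schwarz argument, after transferring the estimate $\la \Riccim_\mu, \Beta^+\ra \geq 0$ from $\Beta^+$ to the normalized endomorphism $\Betam$. First I would recall that, by Lemma \ref{lem:betaplusimage}, $\Beta^+ = \Beta^+|_\mg$ makes sense as an element of $\Symm$, and by Lemma \ref{lem_betamgplusbetaplus} we have $(\Betam)^+ = b \cdot \Beta^+|_\mg$ with $b > 0$. Hence Lemma \ref{lem_estimateRic} gives
\[
  0 \leq \la \Riccim_\mu, \Beta^+|_\mg \ra = \tfrac{1}{b} \, \la \Riccim_\mu, (\Betam)^+ \ra = \tfrac{1}{b}\Big( \la \Riccim_\mu, \Betam \ra + \scalm(\mu)\cdot \Vert \Betam \Vert^2 \Big),
\]
using $\tr \Riccim_\mu = \scalm(\mu)$ and the definition \eqref{eqn_defbetamplus} of $(\Betam)^+$. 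Since $b>0$, this rearranges to $\la \Riccim_\mu, \Betam \ra \geq -\scalm(\mu)\cdot \Vert \Betam\Vert^2 = \vert \scalm(\mu)\vert \cdot \Vert \Betam\Vert^2 > 0$, where I use the hypothesis $\scalm(\mu) < 0$ and $\Betam \neq 0$ (from $\tr\Betam = -1$).

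Next I would apply the Cauchy--Schwarz inequality in $(\Symm, \la\cdot,\cdot\ra)$:
\[
  \Vert \Riccim_\mu \Vert \cdot \Vert \Betam \Vert \geq \la \Riccim_\mu, \Betam \ra \geq \vert \scalm(\mu)\vert \cdot \Vert \Betam\Vert^2.
\]
Dividing by $\Vert \Betam \Vert > 0$ yields \eqref{eqn_mainestimate}. For the equality case: if equality holds in \eqref{eqn_mainestimate}, then equality holds in both inequalities above. Equality in Cauchy--Schwarz forces $\Riccim_\mu = \lambda \Betam$ for some $\lambda \in \RR$; taking traces gives $\lambda \cdot(-1) = \scalm(\mu)$, i.e.\ $\lambda = -\scalm(\mu) = \vert\scalm(\mu)\vert > 0$, so $\Riccim_\mu = -\scalm(\mu)\cdot\Betam$. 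Conversely, if $\Riccim_\mu = -\scalm(\mu)\cdot\Betam$ then $\Vert\Riccim_\mu\Vert = \vert\scalm(\mu)\vert\cdot\Vert\Betam\Vert$ directly.

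The main point to be careful about is the bookkeeping between $\Beta$, $\Beta^+$, $\Betam$ and $(\Betam)^+$, and in particular making sure Lemma \ref{lem_estimateRic} is being applied to $\Beta^+$ viewed inside $\Symm$ (legitimate by Lemma \ref{lem:betaplusimage}, since $\Im(\Beta^+) = \ngo \subset \mg$) rather than inside $\Symg$. One should also note that $\mu \in \OlH \cap \Vnn$ is exactly the hypothesis needed to invoke Lemma \ref{lem_estimateRic}, and that non-flatness of $(M^n,\ggo)$ is what guarantees (via Lemma \ref{lem_trbetam}) that $\Betam$ is well-defined in the first place. No essential obstacle remains beyond this careful translation; the argument is a short chain of Cauchy--Schwarz plus the two normalization lemmas.
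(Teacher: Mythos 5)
Your argument is correct and follows essentially the same route as the paper's proof: Lemma \ref{lem_estimateRic} plus Lemma \ref{lem_betamgplusbetaplus} give $\la \Riccim_\mu, (\Betam)^+\ra \geq 0$, which combined with $\tr\Betam = -1$ and Cauchy--Schwarz yields \eqref{eqn_mainestimate}, with the equality case handled by proportionality and a trace computation. Your write-up merely spells out the normalization bookkeeping and the equality analysis that the paper leaves implicit.
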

\begin{proof}
By Lemma \ref{lem_estimateRic} and Lemma \ref{lem_betamgplusbetaplus}
we obtain that $\la \Riccim_\mu, (\Beta_\mg)^+\ra \geq 0$, thus
\[
	\Vert \Riccim_\mu \Vert \, \Vert \Beta_\mg\Vert \geq \la \Riccim_\mu, \Beta_\mg\ra \geq - \scalm(\mu) \cdot \Vert \Beta_\mg \Vert^2\, 
\]
by \eqref{eqn_defbetamplus}, Cauchy-Schwarz inequality and the fact that $\tr \Beta_\mg = -1$. The equality condition is clear.
\end{proof}

We turn to the equality case in the above estimate.

\begin{theorem}\label{thm_lochomsolitons}
Let $(M^n,\ggl)$ be a non-flat $\ggl$-homogeneous space with $\hml \in \sca_\Beta \cap \Vnn$, let $\mu \in \OlH \cap \Vnn$ be a bracket corresponding to a $\GG$-invariant metric $g$ on $M^n$ and
assume that $\Riccim_\mu = \Beta_\mg$. Then, $(M^n,g)$ is locally isometric to an expanding 
$G$-invariant Ricci soliton, which is globally homogeneous. Moreover, the modified Ricci curvature is given by
\[
    \Riccim_\mu = c \cdot \Id_\mg + D_\mg, \qquad c = -  \Vert \Beta_\mg \Vert^2<0, \qquad D_\mg = \proy_\mg \circ D \, |_\mg, 
\]
where 
$D = b \cdot \Beta^+ \in \Der(\ggo,\mu)$ for some $b>0$ (see Notation \ref{not_betam}).
\end{theorem}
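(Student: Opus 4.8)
The plan is to split the proof into a short algebraic part, which I would carry out directly, and a geometric part, where I would appeal to the structure theory of homogeneous Ricci solitons.

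First I would extract from $\Riccim_\mu = \Betam$ the fact that $\Beta^+$ is a derivation of $(\ggo,\mu)$. Indeed, since $\tr\Betam = -1$ (Notation \ref{not_betam}),
\[
  \big\la \Riccim_\mu, (\Betam)^+\big\ra = \la\Betam,\Betam\ra + \Vert\Betam\Vert^2\,\tr\Betam = \Vert\Betam\Vert^2 - \Vert\Betam\Vert^2 = 0 ,
\]
and by Lemma \ref{lem_betamgplusbetaplus} one has $(\Betam)^+ = b\cdot(\Beta^+|_\mg)$ with $b>0$, so this is precisely the equality case in Lemma \ref{lem_estimateRic}; hence $\Beta^+\in\Der(\ggo,\mu)$. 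Set $D := b\cdot\Beta^+$. By Theorem \ref{thm_refinedbeta} we have $\Beta|_\hg = -\Vert\Beta\Vert^2\Id_\hg$, so $\Beta^+|_\hg = 0$, and since $\Beta$ is self-adjoint with $\hg\perp\mg$ it also preserves $\mg$, with ${\rm Im}(\Beta^+) = \ngo\subset\mg$ by Lemma \ref{lem:betaplusimage}. Thus $D$ is the trivial extension, via \eqref{eqn_glgminclusion}, of the symmetric endomorphism $D_\mg := b\cdot\Beta^+|_\mg = \proy_\mg\circ D|_\mg$. Writing $\Betam = b\cdot\Beta|_\mg = b\cdot\Beta^+|_\mg - b\Vert\Beta\Vert^2\Id_\mg$ and using $\Vert\Beta|_\mg\Vert^2 = b^{-1}\Vert\Beta\Vert^2$ (from the proof of Lemma \ref{lem_betamgplusbetaplus}) gives
\[
  \Riccim_\mu = \Betam = D_\mg + c\cdot\Id_\mg, \qquad c := -b\Vert\Beta\Vert^2 = -\Vert\Betam\Vert^2 ,
\]
with $c<0$ because $\tr\Betam = -1$ forces $\Betam\neq 0$. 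This establishes the displayed formula together with $D = b\cdot\Beta^+\in\Der(\ggo,\mu)$, $b>0$.

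Next I would promote this algebraic identity to a genuine Ricci soliton. The derivation $D$ integrates to a one-parameter group $\varphi_t$ of automorphisms of the simply-connected group $G$ with Lie algebra $(\ggo,\mu)$, and because $D(\hg)\subset\hg$ these descend to (local) diffeomorphisms of $M^n$ fixing $p$ whose differential on $\mg\cong T_pM^n$ is $\exp(tD_\mg)$. Combining this automorphism flow with the overall rescaling $1-2ct$ -- and, when $(\ggo,\mu)$ is not unimodular, with an additional isometric correction governed by the mean curvature vector $\mcv_\mu$, which enters precisely because $\Ricci_\mu = \Riccim_\mu - S(\ad_\mumg\mcv_\mu)$ is in general different from $\Riccim_\mu$ -- one writes down explicitly the self-similar Ricci flow emanating from $g$, i.e.\ verifies $\ric(g) + \unm\mathcal{L}_X g = c\cdot g$ for the associated vector field $X$. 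This is exactly the correspondence between such algebraic data and homogeneous Ricci solitons, for which I would invoke \cite{Lau2003}, \cite{Heber1998}, \cite{LafuenteLauret2014a}, \cite{LafuenteLauret2014b}, \cite{Jbl13b}, \cite{Jbl2015}; since $c<0$, the soliton is expanding. Finally, whether $M^n$ was complete or only diffeomorphic to a disk, a non-flat homogeneous expanding Ricci soliton is complete and globally homogeneous -- by the structure theory it is isometric to a solvsoliton, i.e.\ a simply-connected solvable Lie group with a left-invariant metric (\cite{LafuenteLauret2014b}, \cite{Jbl2015}) -- so $(M^n,g)$ is locally isometric to this globally homogeneous $G$-invariant soliton, as claimed.

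The routine part is the first step. The hard part will be the second: bridging the algebraic identity to the actual Ricci-soliton equation in the presence of a non-trivial, and possibly non-closed, isotropy group $H$ together with a non-vanishing mean curvature vector, since the soliton vector field is then \emph{non-gradient} and must be built from both the automorphism flow generated by $\Beta^+$ and an isometric correction; the passage to global homogeneity in the disk case likewise rests on the cited structure theory rather than on anything elementary.
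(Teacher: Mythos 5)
Your first, algebraic step is correct and is essentially the paper's own computation: the pairing $\la \Riccim_\mu,(\Betam)^+\ra=0$ forced by $\tr\Betam=-1$, combined with Lemma \ref{lem_betamgplusbetaplus} and the equality case of Lemma \ref{lem_estimateRic}, gives $\Beta^+\in\Der(\ggo,\mu)$ and the displayed formula with $c=-\Vert\Betam\Vert^2<0$. Likewise, in the genuinely globally homogeneous case your second step coincides in substance with the paper, which writes $\Ricci_\mu=c\cdot\Id+S(D_\mg)$ for the derivation $D=b\cdot\Beta^+-\ad_\mu(\mcv_\mu)$ and invokes the semi-algebraic soliton correspondence (Prop.~3.3 of \cite{LafuenteLauret2014a}) to get an expanding soliton.

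The genuine gap is the final claim of global homogeneity, i.e.\ the case where $M^n$ is an incomplete disk and the isotropy subgroup $H\subset G$ may fail to be closed. Your argument there is circular: the structure theory you cite (solvsoliton/algebraicity results of \cite{LafuenteLauret2014b}, \cite{Jbl2015}) concerns \emph{complete, globally homogeneous} Ricci solitons, so it can only be applied after one already knows that $(M^n,g)$ is locally isometric to such a global object --- which is precisely what has to be proved; moreover, the assertion that every non-flat expanding homogeneous soliton is isometric to a solvsoliton was not available (it is equivalent to the generalized Alekseevskii conjecture), and in any case would not bridge local-to-global. The paper's proof devotes its second half exactly to this point: by \cite{Tri92} it suffices to show $H$ is closed in $G$, and this is done by contradiction. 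If $\bar H\neq H$, then any $X\in\hg^\perp:=\bar\hg\cap\mg$ has $\ad_\ml(X)$ skew-symmetric, so $\ric_g(X,X)\geq 0$; on the other hand, writing $\ggo=\ug\oplus\ngo$ with $\ngo_\mu=\Im(\Beta^+)$ the nilradical and $\ug_\mu=\ker\Beta^+$, the group $U$ is closed because $G$ is diffeomorphic to $U\times N$, hence $\bar H\subset U$ and $h\hg^\perp\perp\ngo_\mu$, so $\Beta^+$ annihilates $h\hg^\perp$; feeding this into $\Riccim_\mu=\Betam$ yields $0=\ric_g(X,X)+\Vert\Betam\Vert^2\Vert hX\Vert^2$, forcing $\Vert hX\Vert=0$, a contradiction since $\Betam\neq 0$. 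Without an argument of this kind (or some substitute proving closedness of $H$, equivalently that the local data is realized by a genuine homogeneous space $G/H$), your proposal does not establish the conclusion in the incomplete case, and note also that the soliton identification in your second step itself presupposes the global model in order to apply the algebraic-soliton correspondence.
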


\begin{proof}
We first assume that $(M^n, g, p, \ggl)$ is globally homogeneous and simply connected, and show that it is a Ricci soliton.
First notice that by Lemma \ref{lem_betamgplusbetaplus}
\[
	\Riccim_\mu  = \Beta_\mg = -\Vert \Beta_\mg\Vert^2 \, \Id_\mg + (\Beta_\mg)^+  = c\cdot \Id_\mg  + b \cdot (\Beta^+ |_\mg),
\]
where $c = -\Vert \Beta_\mg\Vert^2<0$ by Lemma \ref{lem_trbetam}.
Thus, $\Ricci_\mu = \Riccim_\mu - S(\ad_\mumg \mcv_\mu)$ is of the form
$\Ricci_\mu = c \cdot \Id + S(D_\mg)$,
where $D_\mg = \proy_\mg \circ D |_\mg$ and $D = b \cdot\Beta^+ - \ad_\mu (\mcv_\mu)$.  
Since $\mu \in \Vnn$, the equality in Lemma \ref{lem_estimateRic} tells us that 
$\Beta^+ \in \Der(\ggo,\mu)$. Hence, $D \in \Der(\ggo,\mu)$.
By Proposition 3.3 in \cite{LafuenteLauret2014a} (see also the remark after equation (20) in that paper) we conclude that $(M^n, g, p, \ggl)$ is a homogeneous Ricci soliton. Since $c<0$, it is of expanding type.

To conclude the proof we now show that $(M^n,g, p, \ggl)$
is locally isometric to a simply-connected, globally homogeneous space. 
By \cite{Tri92} it suffices to show that $H$ is closed in $G$, where $G$ is the simply-connected Lie group with Lie algebra $(\ggl, \hml)$, and $H$ the analytic subgroup corresponding to the isotropy subalgebra $\hg$. Suppose this is not the case, and let $\bar{H}$ be its closure in $\GG$, with Lie algebra $\bar{\hg}$. As in Definition \ref{def_scalg}, 
we may extend the scalar product $\ip_g$ on $\mg$  corresponding to $g_p$, to an $\Ad(H)$-invariant scalar product on $\ggo$, also denoted by $\ip_g$, and such that $\hg \perp \mg$. By invariance we have $\Ad(H)\subset \Or(\ggo,\ip_g)$, thus $\Ad(\bar H) \subset \overline{\Ad(H)} \subset \Or(\ggo,\ip_g)$ and hence $\ip_g$ is also $\Ad(\bar H)$-invariant. 
Denoting by $\hg^\perp := \bar \hg \cap \mg$,
we have that $\ad_{\ml}(X):\mg \to \mg$ is $\ip_g$-skew-symmetric
for $X \in \hg^\perp$, thus $\ric_g(X,X) \geq 0$: 
see \cite[Thm.~5.2]{Bhm2014}.

Recall also that we have chosen a background metric $\bar g$ on $M^n$
in order to write $g(\, \cdot \,,\cdot \,)=\bar g( h \,\cdot \,, h \,\cdot )$ for some $h \in \GHm \subset \Gg$,  see \eqref{eqn_Gminclusion}, and then $\mu = h \cdot \hml$. The metric $\bar g$ also induces a background on $\ggo$ (Definition \ref{def_scalg}).
Let $\ngo_\mu = \Im(\Beta^+)$ be the nilradical of $(\ggo,\mu)$ (Corollary \ref{cor_Imbetaplus}), and denote by $\ug_\mu$ its $\ip$-orthogonal complement in $\ggo$. 
 Since $\Beta^+$ is a symmetric derivation of $(\ggo,\mu)$, $\ug_\mu = \ker \Beta^+$ is a Lie subalgebra. 
It follows that $\ggo = \ug \oplus \ngo$, where $\ug = h^{-1} ( \ug_\mu)$ is a Lie subalgebra of $(\ggo,\hml)$, $\ngo = h^{-1} (\ngo_\mu)$ its nilradical, 
and $\la \ug,\ngo\ra_g = 0$. 

Now let $U$, $N$ be the connected Lie subgroups of $G$ with Lie algebras $\ug$, $\ngo$, respectively. Since $G$ is simply connected, it follows from \cite[Lemma 3.18.4]{Varad84} that $G$ is diffemorphic to the product manifold $U \times N$. In particular, $U$ is a closed subgroup of $G$. This implies that $\bar H \subset U$, thus $\hg^\perp$ is $\ip_g$-orthogonal to $\ngo$. Then 
$\la h\hg^\perp, \ngo_\mu\ra = 0$ and from $\ngo_\mu = \Im(\Beta^+)$ 
it follows that $\Beta^+ |_{h \hg^\perp} = 0$. 
Recall also that $(\Beta_\mg)^+ = b \cdot (\Beta^+ |_\mg)$ for some $b>0$, by Lemma \ref{lem_betamgplusbetaplus}. 
Hence for any $X\in \hg^\perp$ we get
\begin{equation}\label{eqn_riceq0}
	0 = \big\la (\Beta_\mg)^+ (hX), hX   \big\ra = \la \Beta_\mg (hX), hX \ra + \Vert \Beta_\mg \Vert^2  \Vert hX \Vert^2.
\end{equation}
On the other hand, using that $\ad_\mumg(h Z) \in \sog(\mg,\ip)$ for all $Z\in \hg^\perp$, we have 
\[
	\big\la  S(\ad_{\mumg} \mcv_\mu) h Z, hZ \big\ra = \big\la  (\ad_{\mumg} \! \mcv_\mu) h Z, hZ \big\ra = \big\la \mcv_\mu, (\ad_\mumg (hZ) )^t hZ\big\ra = 0.
\] 
Since $\Beta_\mg = \Riccim_\mu = \Ricci_\mu + S(\ad_\mumg \mcv_\mu) = h \Ricci_\hml^g h^{-1} + S(\ad_\mumg \mcv_\mu) $  we deduce that 
\begin{equation}\label{eqn_ricgeq0}
	\la \Beta_\mg (hX), hX\ra =  \la h \Ricci_\hml^g h^{-1} h X, hX \ra = g(\Ricci_\hml^g X, X) =  \ric_g(X,X) \geq 0.
\end{equation}
From \eqref{eqn_riceq0} and \eqref{eqn_ricgeq0} one obtains $\Vert \Beta_\mg \Vert^2  \Vert hX \Vert^2 = 0$ for all $X\in \hg^\perp$, which is a contradiction because $\Beta_\mg\neq 0$. Hence $H$ is closed in $G$ and the theorem follows.
\end{proof}

\begin{remark}\label{rmk_solitonbracket}
It follows from Theorem \ref{thm_lochomsolitons} that the condition of being locally isometric to a soliton may be expressed just in terms of the bracket $\mu \in \Vg$ and the fixed background scalar product $\ip$ on $\ggo$. Indeed, with that information one can compute $\Riccim_\mu$, the stratification of $\Vg$, and then check whether the two conditions $\mu \in \sca_\Beta$ and $\Riccim_\mu = \Beta_\mg$ are satisfied.  
\end{remark}

\begin{remark}\label{rem_bracketmin}
The previous results show that solitons with $\mu\in\sca_\Beta$ and $\scalm = -1$ minimize the norm of the modified Ricci curvature among all homogeneous metrics with brackets in $\sca_\Beta$ and $\scalm = -1$. These minima may come in families:
in \cite{finding}, a curve of pairwise non-isometric nilsolitons is given; the corresponding brackets belong to $\sca_\Beta$ where $\Beta = \diag(1,2,3,4,5,6,7) \in \Sym(\RR^7)$. The first such example is an 84-dimensional family of pairwise non-isometric nilsolitons in \cite{Heber1998}, containing the nilsoliton corresponding to the hyperbolic Cayley  plane $M^{16} = \CC a H^2$. They belong to $\sca_\Beta$, with $\Beta = \diag(1,\ldots,1,2,\ldots,2)$ (eight $1$'s and seven $2$'s).
\end{remark}



\section{A Lyapunov function for immortal homogeneous Ricci flows}\label{sec:lyap}

Let $(M^n,\ggl)$ be a $\ggl$-homogeneous space and let
$(g(t))_{t\in [0,\infty)}$ be a non-flat immortal Ricci flow solution
of $\GG$-invariant metrics. After fixing a $\GG$-homo\-geneous 
background metric $\bar g$ on $M^n$,  we considered in Section \ref{sec_BF}
a corresponding bracket flow solution
 $(\mu(t))_{t\in [0,\infty)}$ within an orbit $\OlH$ in the space of brackets $\Vg$. 
 
 Let $\Beta \in \Symg$ such that $\hml \in \sca_\Beta \cap \Vnn$ and
 consider the closed subgroup of $\GHm$ given by
\[
	Q_\Betam^H := Q_\Beta \cap \GHm. 
\]
Recall that $\GHm = \OHm \, Q_\Betam^H$ by (\ref{eqn_GHmdecomp}). Thus, by 
\mbox{Corollary \ref{cor_gauge} }
and Lemma \ref{lem_gaugeohm}
we can associate to the above Ricci flow solution a \emph{gauged} unimodular bracket flow solution $(\mub(t))_{t\in[0,\infty)}$ satisfying \eqref{eqn_QgaugedBF}, and lying on the smaller orbit
\[
	(\mub(t))_{t\in[0,\infty)} \, \subset  \, Q_\Betam^H \cdot \hml  \,  \subset  \, \sca_\Beta \cap \Vnn.
\]

The main result of this section is the following

\begin{theorem}\label{thm_lyapunov}
Let $(g(t))_{t\in [0,\infty)}$ be a non-flat Ricci flow solution
of $\GG$-invariant metrics on a $\ggl$-homogeneous space $(M^n,\ggl)$, and let $\Beta \in \Symg$ such that $\hml \in \sca_\Beta \cap \Vnn$. Then, the scale-invariant function
\[
 F_\Betam : Q_\Betam^H \cdot \hml \to \RR \, ; \qquad
 	\mu \mapsto v_\Betam(\mu)^2 \cdot \scalm( \mu ),
\] 
evolves along the gauged unimodular bracket flow $(\mub(t))_{t\in[0,\infty)}$  by
\begin{equation}\label{eqn_Fbetanondec}
    \ddtbig \,  F_\Betam(\bar \mu)\, = \, 
     2 \cdot v_\Betam(\bar \mu)^2 \cdot \left( \left\Vert {\Riccim_{\bar \mu} } \right\Vert ^2  + \scalm({\bar \mu}) \cdot \, \left\la \Riccim_{\bar \mu}, \Betam \, \right\ra \right) \geq 0\,.
\end{equation}
Equality holds for some $t>0$ if and only if $(M^n,g(0))$ is 
locally isometric to a non-flat, expanding, $\ggl$-homo\-geneous Ricci soliton.
\end{theorem}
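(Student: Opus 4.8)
The plan is to establish the evolution identity \eqref{eqn_Fbetanondec} first, deduce non-negativity from the curvature estimates of Section \ref{sec_newcurv}, and then analyse the equality case. For the identity I would write the gauged solution as $\mub(t)=h(t)\cdot\hml$ with $h(t)\in Q_\Betam^H$, where, following the proof of Corollary \ref{cor_gauge}, the curve $h(t)$ is chosen to solve $h'(t)=-\big(\Riccim_{\mub(t)}-X_\qg(\Riccim_{\mub(t)})\big)h(t)$, so that $h'h^{-1}\in\qg$. The quantities $\scalm$ and $\Vert\Riccim_{(\cdot)}\Vert$ are invariant under the $\OHm$--gauge by Corollary \ref{cor_AdHequiv}, so combining Corollary \ref{cor_gauge} with Lemma \ref{lem_scalmev} gives $\ddtbig\scalm(\mub(t))=2\Vert\Riccim_{\mub(t)}\Vert^2$. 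For the $\Betam$--volume I would use the explicit formula $v_\Betam(h\cdot\hml)=\prod_i h_i^{-(\Betam)_i}$ together with $h'h^{-1}=-(\Riccim_{\mub}-X_\qg(\Riccim_{\mub}))$: since $X_\qg(\Riccim_{\mub})$ is $\ip$--skew--symmetric (hence has vanishing diagonal) and $\Betam$ is diagonal, one obtains $\ddtbig\log v_\Betam(\mub(t))=\la\Riccim_{\mub(t)},\Betam\ra$. Inserting both identities into the product rule for $F_\Betam=v_\Betam^2\cdot\scalm$ yields exactly \eqref{eqn_Fbetanondec}.

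\emph{Non-negativity.} Since the solution is immortal, $\scalm(\mub(t))\le 0$ for all $t$ by Lemma \ref{lem_scalmev}. Abbreviating $R=\Riccim_{\mub(t)}$ and $s=\scalm(\mub(t))$, the bracket in \eqref{eqn_Fbetanondec} equals $\Vert R\Vert^2\ge0$ if $s=0$, and if $s<0$ then Cauchy--Schwarz gives $\la R,\Betam\ra\le\Vert R\Vert\,\Vert\Betam\Vert$, hence
\[
  \Vert R\Vert^2+s\,\la R,\Betam\ra \;\ge\; \Vert R\Vert^2+s\,\Vert R\Vert\,\Vert\Betam\Vert \;=\; \Vert R\Vert\big(\Vert R\Vert-|s|\,\Vert\Betam\Vert\big)\;\ge\;0,
\]
where the last inequality is Corollary \ref{cor_mainestimate}. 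Thus $\ddtbig F_\Betam(\mub)\ge0$.

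\emph{The equality case.} Assume $\ddtbig F_\Betam(\mub(t))=0$ at some $t_0>0$. As $v_\Betam(\mub(t_0))^2>0$, the bracket in \eqref{eqn_Fbetanondec} vanishes at $t_0$; by non-flatness and Lemma \ref{lem_Ricciflat}, $R:=\Riccim_{\mub(t_0)}\neq0$, so the case $s=0$ is impossible and hence $s=\scalm(\mub(t_0))<0$ and both inequalities above are equalities. Equality in Cauchy--Schwarz makes $R$ a non-negative multiple of $\Betam$, and equality in Corollary \ref{cor_mainestimate} pins it down as $\Riccim_{\mub(t_0)}=-\scalm(\mub(t_0))\cdot\Betam$, a positive multiple of $\Betam$. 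Setting $\rho:=-\scalm(\mub(t_0))>0$ and $\tilde\mu:=(\sqrt{\rho}\,\Id_\mg)\cdot\mub(t_0)$, the scalar $\sqrt{\rho}\,\Id_\mg$ lies in $G_\Beta\cap\GHm\subset Q_\Beta$ and $\Vnn$ is $Q_\Beta$--invariant (Proposition \ref{prop_propertiesgroups}), so $\tilde\mu\in\OlH\cap\Vnn$; moreover $\tilde\mu$ corresponds to the $G$--invariant metric $\rho\,g(t_0)$, and since rescaling the metric by $\rho$ scales the $\Riccim$--endomorphism by $\rho^{-1}$, one gets $\Riccim_{\tilde\mu}=\Betam$. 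Theorem \ref{thm_lochomsolitons} then shows that $(M^n,\rho\,g(t_0))$, and so $(M^n,g(t_0))$, is locally isometric to a non-flat, globally homogeneous, expanding $G$--invariant Ricci soliton. Since such a soliton evolves self-similarly, uniqueness of the locally homogeneous Ricci flow (Theorem \ref{thm:locRicflow}) makes the whole solution $(g(t))_{t\ge0}$ self-similar, so $(M^n,g(0))$ is locally isometric to a rescaling of $(M^n,g(t_0))$, hence to a non-flat, expanding, $\ggo$--homogeneous Ricci soliton. Conversely, if $(M^n,g(0))$ is locally isometric to an expanding $\ggo$--homogeneous Ricci soliton, the solution is self-similar and $F_\Betam(\mub(t))$ — being scale-invariant and invariant under the $\OHm$--gauge — is constant, so $\ddtbig F_\Betam(\mub)\equiv0$.

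\emph{Main difficulty.} The computational core is the evolution of $v_\Betam$ (verifying that the skew part $X_\qg(\Riccim_{\mub})$ drops out of the logarithmic derivative, and reconciling the factor $h(t)$ of the gauged flow with the one appearing in the explicit formula for $v_\Betam$); the conceptual care is in the equality case — rescaling so as to meet exactly the hypothesis $\Riccim_\mu=\Betam$ of Theorem \ref{thm_lochomsolitons} while remaining inside $\OlH\cap\Vnn$, and then transporting the soliton conclusion from time $t_0$ to time $0$.
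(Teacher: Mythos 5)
Your proposal is correct and follows essentially the same route as the paper: the evolution of $v_\Betam$ comes from the fact that the gauged flow direction lies in $\qg_\Betam^H$ while its skew-symmetric part pairs trivially with $\Betam$ (your explicit character/product computation is precisely Lemma \ref{lem_vbetamu}(c), stated there with block determinants rather than diagonal entries), the evolution of $\scalm$ from Lemma \ref{lem_scalmev} and $\OHm$-invariance, non-negativity from Cauchy--Schwarz together with Corollary \ref{cor_mainestimate}, and the rigidity from Theorem \ref{thm_lochomsolitons}. The additional details you supply in the equality case (rescaling by $\sqrt{\rho}\,\Id_\mg \in Q_\Beta\cap\GHm$ so as to land in $\OlH\cap\Vnn$ with $\Riccim_{\tilde\mu}=\Betam$, and transporting the soliton property from $t_0$ back to $t=0$ via two-sided uniqueness of the locally homogeneous flow, where one should also note that the self-similar backward continuation of a non-flat expanding soliton must cover $[0,t_0]$ since the given solution is smooth there) are exactly what the paper leaves implicit in its one-line appeal to Theorem \ref{thm_lochomsolitons}.
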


For the definition of the \emph{$\Betam$-volume functional} $v_\Betam$ we refer
to Lemma \ref{lem_vbetamu}. 
By \emph{scale-invariance} we mean of course with respect to the \emph{geometric scaling} on brackets, induced from the scaling of the associated Riemannian metrics: see \mbox{Definition \ref{def_scaling}} below. Notice that $ v_\Betam(c \cdot \mu)=c^{-1} \cdot v_\Betam(\mu)$, for
any $c>0$.

\begin{remark}\label{rem_Qbm}
Let  $\Betam$ have pairwise different eigenvalues $\beta_1, \ldots, \beta_s \in \RR$
with multiplicities $m_1, \ldots, m_s\geq 1$ and corresponding eigenspaces $\mg_1, \ldots, \mg_s$. 
Then, of course
 $\sum_{i=1}^s m_i=n$ and $\tr \Betam=\sum_{i=1}^s m_i \Beta_i=-1$
 by (\ref{eqn_defbetam}).
Similar to the decomposition of $\GG_\Beta$ given in (\ref{eqn_Gbetadecomp}),
 we have
\[
   Q_\Betam^H =\GG_\Betam^H \cdot U_\Betam^H \simeq \big(\GH(\mg_1) \times \cdots \times \GH(\mg_s)\big) \cdot U_\Betam^H\,,
\]
where $\GG_\Betam^H=\GG_\Beta \cap \GHm$, $U_\Betam^H=U_\Beta \cap \GHm$, since
$\Beta_\mg$ commutes with $\Ad(H)\vert_\mg$ by Lemma \ref{lem:betacommuteAdH}: see the proof of
Lemma \ref{lem_gaugeohm}.
 We now set
\[
    \Slbm
     := \Big\{ h  u  \in Q_\Betam^H \, : \,  \prod_{i=1}^s \det(h_i)^{-\Beta_i}=1 \textrm{ and }
       h =(h_1,\ldots,h_s) \in \GG_\Betam^H,\,\,\,u\in U_\Betam^H \Big\}
\]
We clearly have $Q_\Betam^H = \exp(\RR \Betam)\ltimes\Slbm$. Now, the $\Betam$-volume functional
$v_\Betam$ has the property that it is constant along  $\Slbm$-orbits in
$Q_\Betam^H \cdot \hml$ and that it scales correctly along the $\exp(\RR \Betam)$-orbit.
For instance,
if the homogeneous space $M^n$ in question is a semisimple Lie group $\GG$, then
we have $\Beta = \Beta_\mg=-\tfrac{1}{n}\cdot \Id$, $\Slbm=\Sl(n)$, and 
$F_\Beta$ is nothing but the volume-normalized scalar curvature.
\end{remark}

\begin{remark}\label{rem_Ricscalnot}
From Remark \ref{rem_bracketmin} it would seem natural to consider $\frac{\Vert \Ricci \Vert}{\vert \scal \vert}$ as a candidate for a monotone decreasing quantity. However,
on the $3$-dimensional unimodular solvable Lie group $E(2)$, there
exist immortal homogeneous  Ricci flow solutions
where $\frac{\Vert \Ricci \Vert}{\vert \scal \vert}$ is unbounded \cite{IJ92}. Its Lie algebra
 $\mathfrak{e}(2) = \ag \oplus \ngo$ has a $2$-dimensional abelian nilradical $\ngo$, and  $\ag=\Span_\RR\{X\}$
with $A:=\ad(X)\vert_{\ngo}$ being skew-symmetric with respect to some scalar product. By replacing $A$ with
$A_\epsilon=A+\diag(\epsilon,-\epsilon)$ one obtains an non-flat unimodular solvable Lie group, 
such that  $\frac{\Vert \Ricci \Vert}{\vert \scal \vert}$ is still
not monotonously decreasing. There exist also higher-dimensional examples.
\end{remark}

\begin{definition}\label{def_scaling}
For $\mu \in \OlH$ and $c>0$ we define the \emph{scaled bracket} $c\cdot \mu \in \OlH$ by
\[
	c \cdot \mu := (c^{-1} \Id_\mg) \cdot \mu,
\]
where $c^{-1} \Id_\mg$ is considered as in $\Gg$ by trivial extension (see \eqref{eqn_Gminclusion}).
\end{definition}
If $\mu$ is a bracket associated to a metric $g$ (Def.~ \ref{def_bracket}), then $c\cdot \mu$ is associated to the rescaled metric $c^{-2} g$. In the Lie groups case this scaling is nothing but the scalar multiplication in the vector space $\Vg$. In general, we have

\begin{lemma}\label{lem_scaling}
Let $(M^n, g,p,\ggo)$ be a $\ggo$-homogeneous space with associated bracket $\mu$. Then, for the bracket $c\cdot \mu$ associated to $c^{-2} g$, $c>0$, we have that 
\[
(c\cdot \mu)|_{\hg \wedge \hg} = \mu|_{\hg \wedge \hg}, \qquad (c\cdot \mu)|_{\hg \wedge \mg} = \mu|_{\hg \wedge \mg}, \qquad (c\cdot \mu)|_{\mg\wedge \mg} = c^2\, \mu_\hg + c \, \mu_\mg,
\]
where the map on the right-hand-side is scalar multiplication in $\Vg$, and $(\cdot)_\hg$, $(\cdot)_\mg$ denote the projections with respect to $\ggo = \hg \oplus \mg$ (see \eqref{eqn_muhgmg}). 
\end{lemma}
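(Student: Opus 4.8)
The plan is to unwind the two definitions in play: the scaling $c\cdot\mu$ from Definition~\ref{def_scaling} via the $\Gg$-action \eqref{def_muaction}, and the correspondence between brackets and metrics from Definition~\ref{def_bracket}. Write $A := c^{-1}\Id_\mg \in \Gg$, extended trivially on $\hg$ as in \eqref{eqn_Gminclusion}, so $A|_\hg = \Id_\hg$ and $A|_\mg = c^{-1}\Id_\mg$. The claim is purely a computation with $(A\cdot\mu)(X,Y) = A\,\mu(A^{-1}X, A^{-1}Y)$ for $X,Y$ ranging over $\hg$ and $\mg$, using that $A$ preserves the decomposition $\ggo = \hg\oplus\mg$ and acts as a scalar on each factor, combined with the fact (from Definition~\ref{def_bracket}, inherited from $\hml$) that $\mu(\hg,\hg)\subset\hg$ and $\mu(\hg,\mg)\subset\mg$.

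First I would check the statement on each of the three bidegree pieces. For $X,Y\in\hg$: $A^{-1}X = X$, $A^{-1}Y = Y$, and $\mu(X,Y)\in\hg$ so $A\,\mu(X,Y) = \mu(X,Y)$; hence $(c\cdot\mu)|_{\hg\wedge\hg} = \mu|_{\hg\wedge\hg}$. For $X\in\hg$, $Y\in\mg$: $A^{-1}X = X$, $A^{-1}Y = cY$, so $\mu(A^{-1}X,A^{-1}Y) = c\,\mu(X,Y)$, and since $\mu(X,Y)\in\mg$ we get $A\,\mu(X,Y)$ contributing a factor $c^{-1}$, so the $c$'s cancel and $(c\cdot\mu)|_{\hg\wedge\mg} = \mu|_{\hg\wedge\mg}$. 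For $X,Y\in\mg$: $A^{-1}X = cX$, $A^{-1}Y = cY$, giving $\mu(A^{-1}X,A^{-1}Y) = c^2\,\mu(X,Y) = c^2\big(\mu_\hg(X,Y) + \mu_\mg(X,Y)\big)$ by \eqref{eqn_muhgmg}; then applying $A$ fixes the $\hg$-component and scales the $\mg$-component by $c^{-1}$, yielding $c^2\,\mu_\hg(X,Y) + c\,\mu_\mg(X,Y)$, which is exactly $c^2\mu_\hg + c\,\mu_\mg$ as an element of $\Vg$.

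Finally I would note, for consistency and to connect with the geometric meaning, that this matches the metric side: if $\mu = h\cdot\hml$ with $\ip_g = \la h\,\cdot, h\,\cdot\ra$, then $c\cdot\mu = (c^{-1}\Id_\mg\, h)\cdot\hml$ is associated to the scalar product $\la c^{-1}h\,\cdot, c^{-1}h\,\cdot\ra = c^{-2}\ip_g$, i.e.\ to the rescaled metric $c^{-2}g$, as asserted just before the lemma; but strictly this last remark is not needed for the proof of the displayed formulas. There is no real obstacle here: the only point requiring the hypothesis that $M^n$ be $\ggo$-homogeneous (rather than just an abstract bracket statement) is the invariance $\mu(\hg,\hg)\subset\hg$, $\mu(\hg,\mg)\subset\mg$, which holds by Definition~\ref{def_bracket}. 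I would present the proof as the three-line case check above.
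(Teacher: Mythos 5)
Your computation is correct and is exactly the intended argument: the paper states this lemma without proof, treating it as the immediate three-case computation with $A=c^{-1}\Id_\mg$ (trivially extended), using $\mu(\hg,\hg)\subset\hg$ and $\mu(\hg,\mg)\subset\mg$, which hold for any $\mu\in\OlH$ since $\hml$ satisfies them and elements of $\GHm$ act trivially on $\hg$ and preserve $\mg$. Nothing is missing.
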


In the following we assume that $(M^n,\ggo)$ is a non-flat $\ggo$-homogeneous space.
Recall that $Q_\beta = G_\beta U_\beta$, where $G_\beta$ is the centralizer of $\beta$ in $\Gg$, and that the group $G_\beta$ is the direct product of its normal subgroups $\exp(\RR \beta)$ and $H_\beta$. Consider now the subgroups $\Slb := H_\beta U_\beta \subset Q_\beta$ and
\[
	\Slbm := \Slb \cap \GHm \subset Q_\Betam^H\,,
\]
where $\Slbm$ is indeed just the group described in Remark \ref{rem_Qbm}: see also Lemma \ref{lem_betamgplusbetaplus}. Notice that  $Q_\Betam^H = \exp(\RR \Betam) \ltimes \Slbm$.

\begin{lemma}\label{lem_vbetamu}
For each $\mu \in Q_\Betam^H \cdot \hml$ there
 exists a unique $v_\Betam(\mu) > 0$ 
 such that $v_\Betam(\mu) \cdot \mu \in \Slbm \cdot \hml$. We call $v_\Betam(\mu)$ the \emph{$\Betam$-volume} of $\mu$. The function
\[
  v_\Betam : Q_\Betam^H \cdot \hml \to \RR_{>0}
\] 
has the following properties:
\begin{enumerate}
 \item[(a)] We have $v_\Betam(c \cdot \mu) = c^{-1} v_\Betam(\mu)$, for each $c>0$.
 \item[(b)] We have $v_\Betam( (\exp(a\Betam)\bar h) \cdot \hml  ) = e^{- a \Vert \Betam \Vert^2}$, for
 $a \in \RR$ and $\bar h \in\Slbm$.
 \item[(c)] The function $v_\Betam$ is smooth, $\Slbm$-invariant, and for any $A \in \qg_\Betam^H$
 we have
	\[
		({\rm d} \, v_\Betam )_\mu  \, \left( \pi(A) \mu \right)\,  = \, -\la A, \Betam\ra \, v_\Betam(\mu)\,.
	\]
\end{enumerate} 
\end{lemma}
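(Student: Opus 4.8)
The plan is to establish the existence and uniqueness of $v_\Betam(\mu)$ first, and then read off properties (a), (b), (c) from the structure $Q_\Betam^H = \exp(\RR\Betam) \ltimes \Slbm$. For existence: given $\mu \in Q_\Betam^H \cdot \hml$, write $\mu = (\exp(a\Betam)\,\bar h) \cdot \hml$ for some $a\in\RR$, $\bar h \in \Slbm$, using the semidirect product decomposition. Since $\exp(a\Betam)$ commutes with (a conjugate of) $\Slbm$ up to adjusting $\bar h$ — more precisely, since $\Slbm$ is normalized by $\exp(\RR\Betam)$ inside $Q_\Betam^H$ — one has $\mu = \exp(a\Betam) \cdot (\bar h' \cdot \hml)$ for a suitable $\bar h' \in \Slbm$. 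Now recall from Definition \ref{def_scaling} that $c \cdot \mu = (c^{-1}\Id_\mg)\cdot \mu$, and that $\exp(a\Betam)$ acts on $\Vg$ via $\pi$; the point is that $\exp(a\Betam)$ and $c^{-1}\Id_\mg$ both lie in the abelian group generated by scalars and $\exp(\RR\Betam)$. I would choose $c = e^{a\Vert\Betam\Vert^2}$ — motivated by $(\Betam)^+ = \Betam + \Vert\Betam\Vert^2\Id_\mg$ — so that $c\cdot \exp(a\Betam) = \exp(a(\Betam + \Vert\Betam\Vert^2\Id)) = \exp(a(\Betam)^+)$ acts as an element whose relevant part can be absorbed; the cleanest route is to check directly that $c\cdot\mu \in \Slbm\cdot\hml$ for this $c$, which amounts to $c \exp(a\Betam) \in \Slbm \cdot (\text{stabilizer})$, i.e.\ the scaled element has "$\Betam$-determinant one" in the sense of the defining condition of $\Slbm$ in Remark \ref{rem_Qbm}.

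For uniqueness: if both $v_1\cdot\mu$ and $v_2\cdot\mu$ lie in $\Slbm\cdot\hml$, then $(v_1/v_2)\cdot(v_2\cdot\mu) \in \Slbm\cdot\hml$, so it suffices to show that the only $c>0$ with $c\cdot\nu \in \Slbm\cdot\hml$ for $\nu \in \Slbm\cdot\hml$ is $c=1$. This follows because scaling by $c$ corresponds to acting by $c^{-1}\Id_\mg$, which — combined with the $\exp(\RR\Betam)$-component needed to land back in $Q_\Betam^H \cdot \hml$ with trivial $\exp(\RR\Betam)$-part — forces the $\exp(\RR\Betam)$-coordinate to change unless $c=1$; more carefully, two elements of $\Slbm\cdot\hml$ differing by scaling differ by an element of $\Aut(\hml)\cap Q_\Betam^H$, which by Corollary \ref{cor_autmu} lies in $H_\Beta U_\Beta$, hence has trivial $\exp(\RR\Betam)$-component, while a nontrivial scaling introduces a nontrivial such component — contradiction. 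This is the step I expect to require the most care, because it hinges on the interplay between the geometric scaling (Lemma \ref{lem_scaling}, which is nontrivial in the presence of isotropy since $\mu_\hg$ and $\mu_\mg$ scale differently) and the group-theoretic decomposition; in the Lie group case it is transparent, but with isotropy one must verify that $c\cdot\mu$ still lies in $\GHm\cdot\hml = \OlH$ and track the $\hg$-component correctly.

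Properties (a) and (b) are then immediate. For (a): if $v_\Betam(\mu)\cdot\mu \in \Slbm\cdot\hml$, then $v_\Betam(\mu)\cdot(c\cdot\mu) = (v_\Betam(\mu) c)\cdot\mu$... wait, rather $(c^{-1}v_\Betam(\mu))\cdot(c\cdot\mu) = v_\Betam(\mu)\cdot\mu \in \Slbm\cdot\hml$, so by uniqueness $v_\Betam(c\cdot\mu) = c^{-1}v_\Betam(\mu)$. For (b): by the existence computation above, $\mu = (\exp(a\Betam)\bar h)\cdot\hml$ has $v_\Betam(\mu) = e^{-a\Vert\Betam\Vert^2}$ directly, since scaling by this amount lands in $\Slbm\cdot\hml$; the $\Slbm$-invariance part of (c) also follows since left-multiplying $\bar h$ by an element of $\Slbm$ does not change $a$. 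For smoothness and the derivative formula in (c): smoothness follows because $v_\Betam$ is the composition of the smooth parametrization $Q_\Betam^H \cdot \hml \cong Q_\Betam^H / (\Aut(\hml)\cap Q_\Betam^H)$ with the smooth homomorphism extracting the $\exp(\RR\Betam)$-coordinate (well-defined modulo the automorphism group by the argument above), followed by $a \mapsto e^{-a\Vert\Betam\Vert^2}$. For the derivative: write $A = A_0 + a_0\Betam$ with $A_0 \in \slbm := \Lie(\Slbm)$ and $a_0 = \la A,\Betam\ra/\Vert\Betam\Vert^2$ (using $\Betam \perp \slbm$, which holds because $\Betam \in \hg_\Beta$ by Remark \ref{rem_git2} and $\slbm \subset \hg_\Beta U_\Beta$-level). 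Then $\ddt|_0 \exp(tA)\cdot\mu$ has $\Slbm$-direction (killed by $v_\Betam$ by $\Slbm$-invariance) plus the $a_0\Betam$-direction, along which by (b) we compute $\ddt|_0 v_\Betam(\exp(ta_0\Betam)\cdot\mu) = \ddt|_0 e^{-ta_0\Vert\Betam\Vert^2} v_\Betam(\mu) = -a_0\Vert\Betam\Vert^2 v_\Betam(\mu) = -\la A,\Betam\ra\, v_\Betam(\mu)$, as claimed.
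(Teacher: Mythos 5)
Your proposal is correct and follows essentially the same route as the paper: decompose via $Q_\Betam^H = \exp(\RR\Betam)\ltimes\Slbm$, use $\Betam = -\Vert\Betam\Vert^2\Id_\mg + (\Betam)^+$ so that scaling by $e^{-a\Vert\Betam\Vert^2}$ absorbs the $\exp(\RR\Betam)$-factor into $\Slbm$, reduce uniqueness to $\Aut(\hml)\subset\Slbm$ via Corollary \ref{cor_autmu} together with the observation that a nontrivial scaling has nontrivial $\exp(\RR\Betam)$-component, and derive (a)--(c) exactly as the paper does. The only point to make explicit in the existence step is that $\exp(a(\Betam)^+)$ lies not just in $\Slb$ but in $\GHm$ (it commutes with $\Ad(H)|_\mg$ by Lemma \ref{lem:betacommuteAdH}), which the paper records and your determinant-condition check implicitly needs.
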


\begin{proof}
By the above we have $\mu = (\exp(a \Betam) \bar h) \cdot \hml$ for
$a \in \RR$ and $\bar h\in \Slbm$ and  $\Betam=-\Vert \Betam\Vert^2\Id_\mg+(\Betam)^+$
by \eqref{eqn_defbetamplus}. Thus, 
\mbox{$\mu  = e^{a \Vert \Betam \Vert^2 } \cdot ( ( \exp(a (\Beta_\mg)^+) \bar h)\cdot \hml )$}
by Definition  \ref{def_scaling}.
Since $\Beta^+ \in \hg_\Beta$ by Remark \ref{rem_git2},
we deduce from Lemma \ref{lem:betacommuteAdH} that
$\exp((\Beta_\mg)^+) \in H_\Beta \cap \GHm \subset \Slbm $.
Thus,  $c\cdot \mu \in \Slbm \cdot \hml$ for $c = e^{-a \Vert \Betam\Vert^2} $. 
 This shows the existence of $v_\Betam(\mu)>0$. To prove uniqueness
suppose that  we have $ (c_1 \Id_\mg) \cdot \mu = \bar h_1 \cdot \hml$ and $(c_2 \Id_\mg) \cdot \mu = \bar h_2 \cdot \hml$ for $c_1, c_2 > 0$  and $\bar h_1, \bar h_2 \in \Slbm$. Then
$(c_2 c_1^{-1} \bar h_2^{-1} \bar h_1) \in \Aut(\hml) \subset \Slbm$ by Corollary \ref{cor_autmu}. Thus, for $c := c_2 c_1^{-1}$ we have $c \, \Id_\mg \in \Slbm$, hence $c=1$: see Remark \ref{rem_Qbm}.

To show (a), notice that $
 \big(c^{-1} v_\Betam(\mu)\big) \cdot (c \cdot \mu) = v_\Betam(\mu) \cdot \mu \in \Slbm \cdot \hml$, 
and by uniqueness $v_\Betam(c \cdot \mu) = c^{-1} v_\Betam(\mu)$. 
Item (b) was shown already.
We now prove (c). Smoothness and $\Slbm$-invariance follows from (b) and  
$Q_\Betam^H = \exp(\RR \Betam) \ltimes \Slbm$. Finally, the formula for the differential is clear if $A\in \slbm$, i.e. $\la A, \Betam \ra = 0$, by $\Slbm$-invariance. 
And for $A = \Betam$ it follows immediately from (b).
\end{proof}

Next, we show that on the gauged orbit $ Q_\Betam^H \cdot \hml$ the $\Betam$-volume is controlled by
the norm of the bracket.

\begin{lemma}\label{lem_lowbdvbeta}
There exists $C_\hml >0$ such that for all $\mu \in Q_\Betam^H \cdot \hml$ we have
\[
	v_\Betam(\mu) \cdot \left(\Vert \mumg\Vert + \Vert \mu_\hg\Vert^{1/2} \right) \geq C_\hml > 0.
\]
\end{lemma}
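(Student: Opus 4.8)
The plan is to use scale–invariance to reduce to a single normalized slice, and then to bound the $\mg\wedge\mg$–part of a bracket from below on that slice by combining the semi–stability of $\hml$ inside its stratum with a degeneration argument; the delicate point will be controlling how the orbit degenerates when the isotropy contributes.

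\textbf{Step 1 (scale reduction).} Since $Q_\Betam^H=\exp(\RR\Betam)\ltimes\Slbm$ and $v_\Betam\equiv 1$ on $\Slbm\cdot\hml$ (Lemma \ref{lem_vbetamu}), every $\mu\in Q_\Betam^H\cdot\hml$ is of the form $\mu=c\cdot\mu'$ with $c=v_\Betam(\mu)^{-1}>0$ and $\mu'\in\Slbm\cdot\hml$; by Lemma \ref{lem_scaling}, $(c\cdot\mu')_\mg=c\,\mu'_\mg$ and $(c\cdot\mu')_\hg=c^2\,\mu'_\hg$, so $v_\Betam(\mu)\,(\Vert\mumg\Vert+\Vert\mu_\hg\Vert^{1/2})=\Vert\mu'_\mg\Vert+\Vert\mu'_\hg\Vert^{1/2}$. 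Hence it suffices to bound $\Vert\mu_\mg\Vert+\Vert\mu_\hg\Vert^{1/2}$ below on $\Slbm\cdot\hml$. Moreover, for every $\mu\in\OlH$ the components of $\mu$ with source in $\hg\wedge\hg$ or in $\hg\wedge\mg$ coincide with those of $\hml$, because $\GHm$ fixes $\hg$ pointwise and commutes with $\ad_\hml(Z)|_\mg$ for $Z\in\hg$ (these lie in the Lie algebra of $K=\overline{\Ad(H)|_\mg}$, which $\GHm$ centralizes). Therefore $\Vert\mu_\mg\Vert^2+\Vert\mu_\hg\Vert^2=\Vert\mu|_{\mg\wedge\mg}\Vert^2=\Vert\mu\Vert^2-c_0$, where $c_0=\Vert\hml|_{\hg\wedge\hg}\Vert^2+\Vert\hml|_{\hg\wedge\mg}\Vert^2$ is independent of $\mu$, and the claim is equivalent to $\inf_{\Slbm\cdot\hml}\Vert\mu|_{\mg\wedge\mg}\Vert>0$.

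\textbf{Step 2 (semi-stability).} Write $\mu=hu\cdot\hml\in\Slbm\cdot\hml$ with $h\in\GG_\Betam^H$ and $u\in U_\Betam^H$ (Remark \ref{rem_Qbm}). The projection $p_\Beta:\Vnn\to\Vzero$ is orthogonal, hence norm-non-increasing, and by the $G_\Beta$-equivariance and $U_\Beta$-invariance in Proposition \ref{prop_propertiesgroups} one has $p_\Beta(\mu)=h\cdot p_\Beta(\hml)$. Since $\hml\in\sca_\Beta\cap\Vnn=\Vnnss=p_\Beta^{-1}(\Vzeross)$, the bracket $p_\Beta(\hml)$ is $H_\Beta$-semi-stable, i.e. $\delta_1:=\inf_{g\in H_\Beta}\Vert g\cdot p_\Beta(\hml)\Vert>0$; as $\GG_\Betam^H\subset H_\Beta$ we get $\Vert\mu\Vert\ge\Vert p_\Beta(\mu)\Vert=\Vert h\cdot p_\Beta(\hml)\Vert\ge\delta_1$ for all $\mu\in\Slbm\cdot\hml$. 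In particular $0\notin\overline{\Slbm\cdot\hml}$, and if $\hg$ is central in $(\ggo,\hml)$ (so $c_0=0$), then $\Vert\mu|_{\mg\wedge\mg}\Vert=\Vert\mu\Vert\ge\delta_1$ and the proof is complete.

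\textbf{Step 3 (general case, and the main difficulty).} Assume $c_0>0$ and, towards a contradiction, that $\mu_k\in\Slbm\cdot\hml$ satisfy $\Vert\mu_k|_{\mg\wedge\mg}\Vert\to 0$. Then $\Vert\mu_k\Vert^2\to c_0$, so after passing to a subsequence $\mu_k\to\mu_\infty\neq 0$ with $\mu_\infty|_{\mg\wedge\mg}=0$ and $\mu_\infty|_{\hg\wedge\hg}=\hml|_{\hg\wedge\hg}$, $\mu_\infty|_{\hg\wedge\mg}=\hml|_{\hg\wedge\mg}$. Being a limit of the Lie brackets $\bar h_k\cdot\hml$, $\mu_\infty$ is a Lie bracket on $\ggo$ for which $\mg$ is an abelian ideal, and $\kf_{\mu_\infty}$ restricted to $\hg\times\hg$ equals $\kf_\hml|_{\hg\times\hg}<0$, so $\mu_\infty$ corresponds to a $\ggo$-homogeneous space. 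As $(\mu_\infty)_\mg=0$, a short computation from \eqref{eqn_formulamm}, \eqref{eqn_defKFend}, \eqref{eqn_defPmuhg} — using the skew-symmetry of $\ad_\hml(Z)|_\mg$, $Z\in\hg$, as in the proof of Lemma \ref{lem_Ricciflat} — gives $\mm^\mg_{\mu_\infty}=0$, $\mcv_{\mu_\infty}=0$, ${\operatorname P}_{(\mu_\infty)_\hg}=0$, and that $\kf_{\mu_\infty}$ vanishes on $\mg\times\mg$; hence $\Riccim_{\mu_\infty}=0$, so $(\ggo,\mu_\infty)$ is flat by Lemma \ref{lem_Ricciflat}. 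It remains to contradict the non-flatness of $(M^n,\ggo)$, and \emph{this is the crux}: one must upgrade $0\notin\overline{\Slbm\cdot\hml}$ to the statement that $\mu_\infty$ is still $\GHm$-equivalent to $\hml$. This can be extracted from the stratification machinery — the bundle diffeomorphism $\Psi:\Gg\times_{Q_\Beta}\Vnnss\xrightarrow{\sim}\sca_\Beta$ of Proposition \ref{prop_bundlediffeosbeta}, the fact that $p_\Beta(\mu_k)=h_k\cdot p_\Beta(\hml)$ stays in the semi-stable set $\GG_\Betam^H\cdot p_\Beta(\hml)$ and converges to $p_\Beta(\mu_\infty)\neq 0$, and the description of the stratum of a Lie algebra through that of its nilradical (Appendix \ref{app_stratum}) — and yields $\mu_\infty=v\cdot\hml$ for some $v\in\GHm$. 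Since $v$ preserves $\mg$, the subspace $\mg$ is then an abelian ideal of $(\ggo,\hml)$ as well, forcing $\hml|_{\mg\wedge\mg}=0$; but then every $\GG$-invariant metric on $M^n$ is flat, contradicting the hypothesis. This contradiction proves the lemma, with $C_\hml$ depending only on $\delta_1$ and $c_0$. The only genuinely hard input is this last identification of $\mu_\infty$ with a point of the orbit $\OlH$: semi-stability of $\hml$ alone controls $\Vert\mu\Vert$ but not $\Vert\mu|_{\mg\wedge\mg}\Vert$ when the isotropy part $\hml|_{\hg\wedge\mg}$ is nonzero, and ruling out the degeneration to an abelian-on-$\mg$ bracket requires the full structure theory developed earlier in the paper.
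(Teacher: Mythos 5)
Your Steps 1 and 2 are sound and match the paper's starting point: the scale-invariance reduction to $\Slbm\cdot\hml$, the observation that the components of $\mu$ on $\Lambda^2(\hg^*)\otimes\hg$ and $\hg^*\otimes\mg^*\otimes\mg$ are frozen along the $\GHm$-orbit, and the lower bound $\Vert\mu\Vert\geq\Vert p_\Beta(\mu)\Vert\geq\delta_1>0$ coming from semi-stability (your parenthetical ``$\GG_\Betam^H\subset H_\Beta$'' is not literally true, since $\exp(\RR\Betam)\subset\GG_\Betam^H$; what is true, and what you need, is that the $G_\Beta$-component of an element of $\Slbm=\Slb\cap\GHm$ lies in $H_\Beta$, so that $p_\Beta(\mu)\in H_\Beta\cdot p_\Beta(\hml)$). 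The genuine gap is exactly at the point you yourself flag as ``the crux'': the assertion that the degenerate limit satisfies $\mu_\infty=v\cdot\hml$ for some $v\in\GHm$. None of the results you invoke gives this. Orbits of the non-compact groups $\GHm$, $H_\Beta$, $\Slbm$ are in general not closed, and a limit of points of $\Slbm\cdot\hml$ may perfectly well leave the orbit -- indeed leaving the orbit is precisely the degeneration the lemma has to exclude, and it is also the ``algebraic collapse'' phenomenon discussed in Section \ref{sec_algcon}. Proposition \ref{prop_bundlediffeosbeta} only identifies $\sca_\Beta$ with a bundle over $\Gg/Q_\Beta$; it says nothing about closedness of a single orbit, and the convergence $p_\Beta(\mu_k)\to p_\Beta(\mu_\infty)\neq0$ inside the semi-stable set does not force the limit into $H_\Beta\cdot p_\Beta(\hml)$ (limits of a fixed stratum can even land in strata with larger $\Vert\Beta\Vert$, by Theorem \ref{thm_stratifb}(ii)). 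So as written the contradiction is not established; the flatness computation via Lemma \ref{lem_Ricciflat} is a detour that does not help, since a flat limit bracket outside the orbit is not contradictory (the paper itself stresses that non-flat solutions can have flat limits).

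The paper closes this gap by a completely different, and more elementary, mechanism: it never tries to put $\mu_\infty$ back into the orbit. Instead it notes that $\mu_\infty$ (whose only nonzero components live in $\Lambda^2(\hg^*)\otimes\hg$ and $\hg^*\otimes\mg^*\otimes\mg$) satisfies $p_\Beta(\mu_\infty)=\mu_\infty$ and is a limit of $\lambda_k:=p_\Beta(\bar\mu_k)\in H_\Beta\cdot p_\Beta(\hml)$, hence $\mu_\infty\in\overline{H_\Beta\cdot p_\Beta(\hml)}$; it then exhibits an explicit destabilizing direction $E=\Id_\hg+x\,\Id_\mg$ with $\la E,\Beta\ra=0$ -- possible because $\Beta|_\hg=-\Vert\Beta\Vert^2\Id_\hg$ (Theorem \ref{thm_refinedbeta}) and $\tr_\mg\Beta|_\mg<0$ (Lemma \ref{lem_trbetam}, where non-flatness enters) -- so that $E\in\hg_\Beta$ and $\exp(tE)\cdot\mu_\infty\to0$ as $t\to\infty$, since $\exp(tE)$ scales both surviving components by $e^{-t}$. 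This yields $0\in\overline{H_\Beta\cdot p_\Beta(\hml)}$, contradicting $\hml\in\sca_\Beta$ (Definition \ref{def_Vss} and \eqref{eqn_stratumbeta}). In short: semi-stability must be applied to the degenerate limit through a one-parameter subgroup of $H_\Beta$, not upgraded to closedness of the orbit; supplying that argument is what your proposal is missing.
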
 

\begin{proof}
Using Lemma \ref{lem_scaling} and Lemma \ref{lem_vbetamu}, we see that under the geometric scaling 
$\mu \mapsto c \cdot \mu$ the left-hand-side is scale invariant. To show the claim, we argue by contradiction, and assume that there is a sequence $(\mu_k)_{k\in \NN} \subset Q_\Betam^H \cdot \hml$ such that 
\[
	\Vert (\mu_k)_\mg \Vert + \Vert (\mu_k)_\hg\Vert^{1/2} \equiv 1, \qquad v_\Betam(\mu_k) \underset{k\to\infty}\to 0\,.
\]
Let $\mub_k := v_\Betam(\mu_k) \cdot \mu_k \in \Slbm \cdot \hml$. Notice that again by Lemma \ref{lem_scaling} the projections of $\mub_k$ to the subspaces $\Lambda^2 (\hg^*) \otimes \hg$ and $(\hg^* \otimes \mg^*) \otimes \mg$ of $\Vg$ are the same as those of $\mu_k$, while the other two projections $(\cdot)_\mg$ and $(\cdot)_\hg$ go to zero. Thus, $\mub_k \to \mub_\infty$ for some $\mub_\infty \in \Vg$ satisfying
\begin{equation}\label{eqn_muinftyalmostzero}
	(\mub_\infty)_\mg = (\mub_\infty)_\hg = 0.
\end{equation}
By Proposition \ref{prop_propertiesgroups}, for the
 orthogonal projection $p_\Beta : \Vnn \to \Vzero$ we have the identity
$p_\Beta(\mu)=\lim_{t \to \infty}\exp(-t\Beta^+)\cdot \mu$. 
Since by Lemma \ref{lem:betaplusimage} the image of $\Beta^+$ is a subspace of $\mg$,
for each $\mu \in \Lambda^2 (\hg^*) \otimes \hg$ we have $p_\Beta(\mu)=\mu$.
Regarding $\mu \in (\hg^* \otimes \mg^*) \otimes \mg$, for $Z \in \hg$ and $X,Y \in \mg$ we have
\[
 \big\la (\exp(t\Beta^+)\cdot \mu)(Z,X),Y \big\ra=
  \big\langle \big(\exp(t\Beta^+)\ad_\mu(Z)\exp(-t\Beta^+)\big) (X),Y\big\rangle \,.
\]
Now, if $\mu = h\cdot \hml$ with $h\in \GHm$, then $\ad_{\mu}(Z)=h \ad_{\hml}(Z )h^{-1} = \ad_\hml (Z)$
commutes with $\Beta$ by Lemma \ref{lem:betacommuteAdH}.
Thus in the above equation the $\exp(t\Beta^+)$-terms cancel out.
As a consequence we have  $p_\Beta(\mub_\infty) = \mub_\infty$. Moreover,
denoting by  $\lambda_k := p_\Beta(\mub_k)$ we deduce 
$\lim_{k \to \infty}\lambda_k = \mub_\infty$. Since $\bar \mu_k \in H_\Beta U_\Beta \cdot \hml$ we have that $\lambda_k \in H_\Beta \cdot \lambda^\ggo$ by Proposition \ref{prop_propertiesgroups}, where $\lambda^\ggo := p_\Beta (\hml)$.


Finally, choose $E   = \Id_\hg +x \Id_\mg\in \glg(\ggo)$ for $x\in \RR$, such that $E\perp \Beta$.
This is possible by Theorem \ref{thm_refinedbeta}, since $\Beta \vert_\hg= - c \cdot \Id_\hg$ for some
$c>0$ and $\tr_\mg \Beta|_\mg < 0$ by Lemma \ref{lem_trbetam}. 
We clearly have $E\in \hg_\Beta$, and $\exp(t E) \cdot \mub_\infty \to 0$ as $t\to \infty$ by \eqref{eqn_muinftyalmostzero}. Therefore, $0\in \overline{H_\Beta \cdot \lambda^\ggo}$, and this contradicts the fact that $\hml \in \sca_\Beta$: see Section \ref{sec_stratif} (Definition \ref{def_Vss} and \eqref{eqn_stratumbeta}).
\end{proof}

\begin{proof}[Proof of Theorem \ref{thm_lyapunov}]
The scale invariance of $F_\Betam$ is a direct consequence of the identities 
$\scalm(c \cdot \mu) = c^2 \scalm(\mu)$ and $v_\Betam(c \cdot \mu) = c^{-1} v_\Betam(\mu)$
for any $c>0$: see \mbox{Lemma \ref{lem_vbetamu}.} 
To obtain the evolution equation, recall that the vector field giving the evolution equation for the $Q_\Betam^H$-gauged unimodular bracket flow \eqref{eqn_QgaugedBF} is of the form $-\pi(A) \mu$, where $A := (\Riccim_\mu)_{\qg_\Betam^H}$ is the modified Ricci curvature projected non-orthogonally onto $\qg_\Betam^H$. Thus along that flow we have by Lemma \ref{lem_vbetamu}, (c) that
\[
	\ddt v_\Betam(\mu) = 
	\big\la A, \Betam \big\ra v_\Betam(\mu) 
	= \left\la \Riccim_\mu, \Betam \right\ra v_\Betam(\mu)\,,
\]
since $\Riccim_\mu$ and $A$ differ by a  skew-symmetric endomorphism according to \eqref{eqn_gkq}.
 Since the modified scalar curvature is $\OHm$-invariant, by Corollary \ref{cor_gauge} its evolution equation along the gauged unimodular bracket flow is the same as along the ``un-gauged'' flow, 
 computed in Lemma \ref{lem_scalmev}.
The evolution equation for $F_\Betam$ now follows.

Regarding monotonicity, recall that  along a
non-flat immortal solution the modified scalar curvature is negative by Lemma \ref{lem_scalmev}.
Consequently we may apply 
Corollary \ref{cor_mainestimate}, which together with Cauchy-Schwarz inequality imply that
\begin{equation}\label{eqn_proofLyap}
	 \Vert {\Riccim_{\mu} } \Vert ^2  + \scalm({\mu}) \cdot \, \la \Riccim_{\mu}, \Betam \, \ra \geq  
	 \Vert \Riccim_\mu \Vert \cdot \left(  \Vert \Riccim_\mu \Vert - \vert { \scalm(\mu)} \vert \cdot \Vert \Betam \Vert \right) \geq 0\,.
\end{equation}
Finally, the equality rigidity follows from Theorem \ref{thm_lochomsolitons}.
\end{proof}



\section{Convergence to an expanding soliton}\label{sec:conv}

Let $(g(t))_{t\in[0,\infty)}$ be an immortal homogeneous Ricci flow solution
on a $\ggl$-homo\-ge\-neous space $(M^n,\ggl)$. To understand its long-time behaviour it is natural to consider for each $s>0$ the \emph{parabolic blow-downs}
\[
     g_s(t):=\tfrac{1}{s}\cdot g(s\cdot t)\,.
\]
The parabolic nature of the Ricci flow equation implies that these are again Ricci flow solutions. They are of course also immortal and homogeneous. A time interval $[a,b]$ for $g_s$ corresponds to $[sa, sb]$ for $g$, thus the long-time behaviour of $g(t)$ is reflected by the behavior of $g_s$ on a fixed interval of time as $s\to \infty$.

It was proved in \cite{Bhm2014} that such solutions are necessarily of \mbox{Type III},
that is, there exists a constant
$C_{g_0}>0$ such that $\Vert{ \Riem_{g(t)} }\Vert_{g(t)} \cdot t \leq C_{g_0}$
for all $t \geq 0$. An immediate application of this estimate is the fact that
 the parabolic blow-downs have uniformly bounded curvature tensor:
\begin{equation}\label{eqn_TypeIIIcurvbound}
	\sup_{t\in [1,\infty)} \Vert {\Riem_{g_s(t)} } \Vert_{g_s(t)} \leq  C_{g_0}, \qquad \mbox{ for all } s>0.
\end{equation}
Assuming that we have in addition a uniform lower bound on the injectivity radius,
for any sequence $(s_k)_{k\in\NN}$ converging to infinity 
the sequence $(M^n,(g_{k}(t)\vert_{[0,\infty)},p)_{k\in \NN}$ 
subconverges by Hamilton's compactness theorem in  pointed $C^\infty$-Cheeger-Gromov topology
to a limit Ricci flow solution $(M^n_\infty,(g_\infty(t))\vert_{(0,\infty)}, p_\infty)$,
where we have set $g_k(t):=g_{s_k}(t)$.
By definition, this means that  there exists an exhaustion
$\{U_k\}_{k \in \NN}$ of open sets of $M^n_\infty$ and diffeomorphisms
$\varphi_k:U_k \to V_k \subset M^n$ with
$\varphi_k(p_\infty)=p$ such that the pulled-back metrics
$\varphi^*_k (g_{k}(t)\vert_{V_k})$ on $U_k$ converge to $g_\infty(t)$ in
$C^\infty$-topology, uniformly on compact subsets of $(0,\infty)\times M^n_\infty$.

But even without a lower bound for the injectivity radius
one can obtain a limit Ricci flow, in the sense of Riemannian groupoids:
see \cite{Lot07}.
For homogeneous spaces this is much easier and can be explained as follows. Using the uniform
curvature estimates of $g_k(1)$, which we can assume to hold for $C_{g_0} = 1$, we
can pull back the complete metrics $g_k(t)$ by the Riemannian exponential map
$\exp_{p}(g_{k}(1))$ to the ball $B_\pi(0_p)$ of radius $\pi$ in the tangent space
$(T_{p}M^n,g_k(1))$. At time $t=1$, clearly the injectivity radius of this pull back at $0_{p}$ is $\pi$. Again, by a local version of Hamilton's compactness
theorem one obtains a limit Ricci flow solution.

We are now in a position to prove the main result of this paper

\begin{theorem}\label{thm_conv}
Let $(M^n,(g(t))_{t\in [0,\infty)})$ be an immortal homogeneous
Ricci flow solution of complete metrics.
Then, for any sequence $(s_k)_{k\in \NN}$ converging to infinity
the corresponding sequence of blow-downs $(M^n, (g_{s_k}(t))_{t \in [0,\infty)} )_{k \in \NN}$ subconverges, in the sense of Riemannian groupoids, to a locally homogeneous Ricci flow solution $(M^n_\infty,g_\infty(t))_{t \in (0,\infty)}$, which is locally 
isometric to a globally homogeneous expanding Ricci soliton.
\end{theorem}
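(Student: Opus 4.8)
The plan is as follows. We may assume $(M^n,g(0))$ is non-flat, since a flat metric is an expanding Ricci soliton and blow-downs of a flat solution are flat. By \cite{Bhm2014} the solution is of Type III, which gives the uniform curvature bound \eqref{eqn_TypeIIIcurvbound} on all blow-downs $g_s(t)$, $t\geq 1$. Hence for any $s_k\to\infty$ we obtain, after passing to a subsequence, a limit locally homogeneous immortal Ricci flow solution $(M^n_\infty,(g_\infty(t))_{t\in(0,\infty)})$: by Hamilton's compactness theorem when there is a uniform lower injectivity radius bound at $t=1$, and otherwise — in the sense of Riemannian groupoids — by pulling the $g_k(t):=g_{s_k}(t)$ back via $\exp_p(g_k(1))$ to a fixed ball in $T_pM^n$ and applying the local version of Hamilton's theorem; in the latter case the limit is a priori an incomplete locally homogeneous Ricci flow on a disk, which by Theorem \ref{thm:gRicflow} and Appendix \ref{app_unique} is again a $\ggl_\infty$-homogeneous Ricci flow whose local isometry algebra is time-independent. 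We then pass to brackets via Theorem \ref{thm:Riccibracket}, Corollary \ref{cor_gauge} and Lemma \ref{lem_gaugeohm}: to $g_k(\cdot)$ we attach a gauged unimodular bracket flow $\bar\mu_k(t)\in Q_\Betam^H\cdot\hml\subset\sca_\Beta\cap\Vnn$, and by the geometric scaling (Definition \ref{def_scaling}) together with the $\OHm$-invariance and scale-invariance of $F_\Betam$ we have $F_\Betam(\bar\mu_k(t))=F_\Betam(\bar\mu(s_kt))$, where $(\bar\mu(t))_{t\in[0,\infty)}$ is the gauged unimodular bracket flow attached to the original solution.

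By Theorem \ref{thm_lyapunov} the function $t\mapsto F_\Betam(\bar\mu(t))$ is non-decreasing, and since $\scalm(\bar\mu(t))<0$ for all $t$ by Lemma \ref{lem_scalmev} it is bounded above by $0$; thus it converges to some $F_\infty\le 0$ as $t\to\infty$. A change of variables then yields, for all $0<a<b$,
\[
  \int_a^b \ddtbig F_\Betam(\bar\mu_k(t))\,\dd t \;=\; F_\Betam(\bar\mu(s_kb))-F_\Betam(\bar\mu(s_ka)) \;\longrightarrow\; 0,
\]
the integrand being non-negative by \eqref{eqn_Fbetanondec}.

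The main obstacle is to transfer this to the limit, which requires ruling out \emph{algebraic collapse}, i.e.\ that $\bar\mu_k(1)$ escape to infinity in $\Vg$; when that happens the limit Killing fields no longer span $T_{p_\infty}M^n_\infty$. The remedy is a two-step argument. First, among all subsequences of $(s_k)$ choose one for which $\dim\Iso(M^n_\infty,g_\infty(1))$ is maximal — legitimate since this dimension is bounded by $\binom{n+1}{2}$. Second, invoke the correspondence between equivariant Cheeger--Gromov convergence and convergence of brackets from Section \ref{sec_algcon} (Theorem \ref{thm:Liebracketconvergence}, Corollary \ref{cor:algcollapse}): for such a \emph{maximally symmetric} limit the brackets $\bar\mu_k(1)$ are bounded and, along a further subsequence, converge to a bracket $\bar\mu_\infty(1)$ attached to $g_\infty(1)$ to which $F_\Betam$ and its evolution formula still apply; hence the bracket flows converge, $\bar\mu_k(t)\to\bar\mu_\infty(t)$ locally uniformly on $(0,\infty)$, by continuous dependence on initial data. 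Moreover, by Lemma \ref{lem_lowbdvbeta} the $\Betam$-volumes $v_\Betam(\bar\mu_k(1))$ stay bounded away from $0$, and by continuity of $v_\Betam$ (Lemma \ref{lem_vbetamu}) they converge to a positive limit; consequently $F_\Betam(\bar\mu_k(t))\to F_\Betam(\bar\mu_\infty(t))$, and likewise for the $t$-derivatives, locally uniformly.

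Combining the last two conclusions, $\int_a^b \ddtbig F_\Betam(\bar\mu_\infty(t))\,\dd t=0$ for all $0<a<b$; since this integrand is non-negative and continuous it vanishes identically on $(0,\infty)$. By \eqref{eqn_Fbetanondec} and \eqref{eqn_proofLyap} this forces equality in Corollary \ref{cor_mainestimate} for every $t$, i.e.\ $\Riccim_{\bar\mu_\infty(t)}=-\scalm(\bar\mu_\infty(t))\cdot\Betam$. After a time-independent rescaling normalizing $\scalm$ to $-1$, Theorem \ref{thm_lochomsolitons} then shows that $(M^n_\infty,g_\infty(t_0))$ is locally isometric to a globally homogeneous expanding Ricci soliton for each $t_0$; so the whole limit flow is self-similar and locally isometric to such a soliton, as claimed. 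The step carrying the real weight is the algebraic-collapse analysis of the third paragraph — showing that a maximally symmetric limit has bounded brackets and that the Lyapunov function $F_\Betam$, with its evolution formula, passes to the limit — together with, in the geometrically collapsed regime, the uniqueness of incomplete locally homogeneous Ricci flows (Appendix \ref{app_unique}) needed to realize the limit as a genuine $\ggl_\infty$-homogeneous Ricci flow.
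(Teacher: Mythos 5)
Your outline shares the paper's main ingredients (Type III bounds, Hamilton compactness resp.\ its local version with the exponential-map pull-back and Appendix \ref{app_unique}, the maximal-symmetry subsequence, the bracket-convergence machinery of Section \ref{sec_algcon}, the Lyapunov function $F_\Betam$ and the rigidity of Theorem \ref{thm_lochomsolitons}), but it deviates at the decisive point, and there the argument has a genuine gap. You run the Lyapunov analysis on the \emph{original} solution's gauged bracket flow $\bar\mu(t)\subset\Vg$ and then claim that, after choosing the subsequence of $(s_k)$ whose limit has maximal isometry dimension, ``the brackets $\bar\mu_k(1)$ are bounded'' and converge to a bracket attached to $g_\infty(1)$. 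Neither Theorem \ref{thm:Liebracketconvergence} nor Corollary \ref{cor:algcollapse} gives this, and it is false in general: Corollary \ref{cor:algcollapse} only says that \emph{if} the brackets are unbounded then the limit isometry dimension exceeds $\dim\ggo$ --- which is perfectly compatible with that subsequence being the maximally symmetric one. In Example \ref{ex_algcollapse} ($\widetilde{\Sl}(2,\RR)$) \emph{every} blow-down limit is $H^2\times\RR$, so the maximal limit dimension is attained along every subsequence, yet the brackets on $\slg(2,\RR)$ diverge along every subsequence. Maximality of $\dim\isog(M^n_\infty,g_\infty(1))$ constrains jumps only relative to the dimension of the Lie algebra carried by the approximating spaces, and for the original blow-downs that dimension is $\dim\ggo$, which may be strictly smaller than every attainable limit dimension. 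Since Lemma \ref{lem_lowbdvbeta} bounds $v_\Betam$ from below only in terms of an \emph{upper} bound on the bracket norms, the subsequent steps (positivity of $v_\Betam$ in the limit, $C^1_{loc}$ convergence of $F_\Betam(\bar\mu_k(t))$, vanishing of the limit derivative) all rest on this unjustified boundedness.

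The paper's proof is structured precisely to avoid this. The Lyapunov function is applied not to the original flow but to the \emph{first limit} flow $(g_\infty(t))$, regarded as a $\ggo_\infty$-homogeneous solution with its own (maximal, $N_\infty$-dimensional) Lie algebra, its own background metric and its own stratum. The bracket bound \eqref{eqn_boundnormmu} for \emph{that} flow is then obtained from maximality: if $\sqrt{t}\cdot\mu_\infty(t)$ were unbounded, a further blow-down of $g_\infty$ would, by Corollary \ref{cor:algcollapse}, have isometry dimension $>N_\infty$; and by the diagonal argument (``a limit of a limit is again a limit of the original blow-downs'') this would contradict the choice of $N_\infty$. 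With that bound one extracts times $t_k\to\infty$ along which the derivative of $F_\Betam$ (suitably rescaled) tends to zero, obtains a soliton bracket $\tilde\mu_\infty$, and only a \emph{second} blow-down limit $(\tilde M^n_\infty,\tilde g_\infty)$ is shown --- via Proposition \ref{prop_limitKf}, Lemma \ref{lem_collapsedseq}, Theorem \ref{thm:Liebracketconvergence} and Lemma \ref{lem_solitonbracket} --- to be locally isometric to an expanding homogeneous Ricci soliton; your stronger assertion that the first limit flow is itself a soliton for all $t$ is exactly what the paper does not (and cannot, by this method) prove directly. To repair your argument you would have to insert this second level of limits: transfer the monotonicity to the limit flow (which is legitimate, since $F$ of the limit flow is again monotone by Theorem \ref{thm_lyapunov} applied on $\ggo_\infty$), prove \eqref{eqn_boundnormmu} there, and conclude for the second limit.
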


\begin{proof}
The globally homogeneous space $(M^n,g(0))$ is a $\ggl$-homogeneous space for
$\ggl=\isog(M^n,g(0))$. Moreover, by Theorem \ref{thm:gRicflow} 
all the evolved metrics are $\GG$-invariant. If the initial metric is flat, then so is the solution and we are done of course. Hence we may assume that $(M^n,g(0))$ is not flat, which implies that
$(M^n,\ggo)$ is a non-flat $\ggo$-homogeneous space.

In a first step we assume that the immortal solution $(g(t))_{t \in [0, \infty)}$
is non-collapsed, meaning that there exists $\delta>0$ such that  $\inj(\tfrac{1}{t}g(t)) \geq \delta$, for 
any $t \geq 1$.
Let $(s_k)_{k \in \NN}$ be any sequence with $\lim_{k \to \infty} s_k=+\infty$.
By \eqref{eqn_TypeIIIcurvbound} we are in a position
to apply Hamilton's  compactness theorem \cite{Ham95b}, see also
Theorem 6.35 in \cite{ChowLuNi}:
this yields a subsequence, again denoted by 
$(s_k)_{k \in \NN}$, such that the sequence of pointed immortal Ricci flow solutions 
$\big(M^n, \allowbreak (g_{s_k}(t))_{t \in [0,\infty)},p\big)$ converges in Cheeger-Gromov sense 
to an immortal limit Ricci flow solution
$\big(M^n_\infty,(g_\infty(t))_{t \in (0,\infty)},p_\infty \big)$, 
which is still of \mbox{Type III}. By \cite{PTV} it is also homogeneous.
Notice also that this limit solution is still $\delta$-non-collapsed, that is 
$\inj(\tfrac{1}{t}g_\infty(t)) \geq \delta $ for all $t>0$, 
since $\tfrac{1}{t}g_\infty(t)=\lim_{k \to \infty}\tfrac{1}{s_k t}g(s_kt)$.
Here and in the following we are suppressing the intertwining diffeomorphisms
coming from the Cheeger-Gromov convergence.

For a fixed given sequence $(s_k)_{k\in \NN}$ we now pick 
a subsequence $(s_{k_l})_{l \in \NN}$, such that the dimension $N_\infty$ of the isometry group
$\ggl_\infty$ of the limit space is maximal among all possible limits.
We denote by $(M^n_\infty,(g_\infty(t))_{t \in (0,\infty)},p_\infty,\ggl_\infty)$ the corresponding
limit Ricci flow solution of $\GG_\infty$-homogeneous metrics. 

Since $(g_\infty(t))_{t \in (0,\infty)}$
is a non-collapsed immortal homogeneous Ricci flow solution of Type III,
for any sequence of times $(t_k)$ with $t_k \to \infty$ we can pick a further subsequence of 
blow-downs $\big( M^n_\infty,(g_\infty)_{t_k}(1) \big) = \big( M^n_\infty, \tfrac{1}{t_k} g_\infty(t_k) \big)$ converging to a second
limit space $\big( \tilde M^n_\infty,\tilde g_\infty\big)$ as above.
Notice that for each fixed $t_k$ we know that $\lim_{l \to \infty}\frac{1}{s_l}\cdot g(s_l\cdot t_k)=g_\infty(t_k)$. Hence we may choose $s_{l_k}$ so large, that 
$\frac{1}{s_{l_k}\cdot t_k}\cdot g(s_{l_k}\cdot t_k)$ and $\tfrac{1}{t_k}\cdot g_\infty(t_k)$ are $\tfrac{1}{t_k}$-close in $C^\infty$-topology. By
setting $\tilde s_k=s_{l_k}\cdot t_k$ we see that
$(M^n,g_{\tilde s_k}(1))_{k \in \NN}$ converges to $(\tilde M^n_\infty,\tilde g_\infty)$ for $k\to \infty$. In other words, this second limit space is also a limit of the initial sequence of blow-downs. It suffices then to show that $\big( \tilde M^n_\infty, \tilde g_\infty \big)$ is locally isometric to a homogeneous Ricci soliton.

Let $\ggl_\infty=\hg_\infty \oplus \mg_\infty$ be the canonical decomposition of 
$\ggl_\infty$.
 By Theorem \ref{thm:Riccibracket}, after chosing a $\GG_\infty$-homogeneous
background metric $\bar g_\infty$ on $M^n_\infty$, 
inducing a scalar product $\ip_\infty$ on $\mg_\infty$ and on $\ggo_\infty$, we obtain a corresponding bracket flow solution 
$(\mu_\infty(t))_{t \in (0,\infty)}$ on  $V(\ggo_\infty)$. 
We claim that
\begin{equation}\label{eqn_boundnormmu}
    \sup_{t\in [1,\infty)} \big\Vert (\mu_\infty(t))_{\mg_\infty} \big\Vert_\infty +  \big\Vert (\mu_\infty(t))_{\hg_\infty} \big\Vert_\infty^{1/2} \leq C \cdot \tfrac{1}{\sqrt{t}}, 
\end{equation}
for some constant $C>0$, where $\ip_\infty$ is the scalar product on $V(\ggo_\infty)$  
defined in (\ref{def_scal}), and $(\mu_\infty)_{\hg_\infty}$, $(\mu_\infty)_{\mg_\infty}$ denote the components of the brackets restricted to $\mg_\infty \wedge \mg_\infty$ (see \eqref{eqn_muhgmg}). 
If this is not the case, then
there must be sequence $t_k \to \infty$ with 
\mbox{$\big\Vert \sqrt{t_k} \cdot  \mu_\infty(t_k) \big\Vert_\infty \to \infty$} (cf. Def.~ \ref{def_scaling}). 
Arguing as in the previous paragraph,
we pick a convergent subsequence of 
blow-downs $(M^n_\infty, \tfrac{1}{t_k} g_\infty(t_k))$ converging to $ (\tilde M^n_\infty,\tilde g_\infty)$ for $k \to \infty$.
Notice that for each $k$ the bracket $\sqrt{t_k} \cdot \mu_\infty(t_k)$ corresponds to $\tfrac{1}{t_k} g_\infty(t_k)$, since we are using on brackets the scaling coming from the scaling on metrics.
Corollary \ref{cor:algcollapse} implies now that 
$\dim (\isog(\tilde M^n_\infty,\tilde g_\infty)) > N_\infty$. 
By the very definition of $N_\infty$ this is however impossible, thus \eqref{eqn_boundnormmu} holds.

As at the beginning, we may assume that the limit solution
$(g_\infty(t))_{t \in (0,\infty)}$ is non-flat.
 As explained in Sections \ref{sec_newcurv} and \ref{sec:lyap}, there exists $\Beta\in \Symg$ such that $\bar\mu_\infty(t) \in \sca_\Beta$
for all $t \geq 0$, where $\bar \mu_\infty(t)$ denotes the corresponding solution to the gauged unimodular bracket flow \eqref{eqn_QgaugedBF}.

Consider now the scale-invariant Lyapunov function $F_\Betam(\mu)=v_\Betam(\mu)^2 \cdot \scalm(\mu)$ from Theorem \ref{thm_lyapunov} 
and set $f(t):=F_\Betam(\bar \mu_\infty(t))$ -- we write $\mg$ instead of $\mg_\infty$ to simplify notation. 
The function $f$ is non-decreasing
by Theorem \ref{thm_lyapunov}  and bounded: see Lemma \ref{lem_scalmev}
and Lemma \ref{lem_vbetamu}. Next, 
let $g(s)=f(e^s)$, say for $s\geq 0$. Then $g$ is still bounded and non-decreasing.
Thus there exists a sequence of times $s_k \to \infty$ with $g'(s_k) \to 0$.
Setting $t_k:=e^{s_k}$ and $\mu(t):=\sqrt{t}\cdot \bar \mu_\infty(t)$,
and using that the right hand side of \eqref{eqn_Fbetanondec} scales like
$Q(c\cdot \bar\mu) = c^2 Q(\bar \mu)$,
it follows from \eqref{eqn_Fbetanondec} and \eqref{eqn_proofLyap}  that
\begin{equation}\label{eqn_derivLyapzero}
	v_\Betam(\mu_k)^2 \cdot \big\Vert {\Riccim_{\mu_k} } \big\Vert \cdot \left(  \big\Vert {\Riccim_{\mu_k} } \big\Vert - \vert { \scalm(\mu_k)} \vert \cdot \Vert \Betam \Vert \right) \underset{k\to \infty}\longrightarrow 0,
\end{equation}
where $\mu_k := \mu(t_k)$. 

By (\ref{eqn_boundnormmu}) we have $\Vert (\mu_k)_{\mg}\Vert_\infty + \Vert (\mu_k)_{\hg_\infty}\Vert_\infty^{1/2} \leq C$ for large $k$. The rest of the components of the brackets remain unchanged along a bracket flow solution, thus the full norm $\Vert \mu_k \Vert_\infty$ is also bounded. Hence by compactness we may assume that $\mu_k \to \tilde\mu_\infty \in \Vg$ as $k\to \infty$.

Observe that \eqref{eqn_boundnormmu} and Lemma \ref{lem_lowbdvbeta} imply that $(v_\Betam(\mu_k))_k$ is uniformly bounded below by a positive constant.
 Assume that $\Riccim_{\tilde\mu_\infty} \neq 0$. From \eqref{eqn_derivLyapzero} we get that 
\begin{equation}\label{eqn_Riccilimit}
	\big\Vert {\Riccim_{\tilde \mu_\infty} }\big\Vert = \big\vert {\scalm( \tilde \mu_\infty)} \big\vert \cdot \Vert \Betam \Vert,
\end{equation}
 and $\scalm(\tilde \mu_\infty)<0$ (in particular, $\tilde \mu_\infty \neq 0$). We now claim that $\tilde \mu_\infty \in \sca_\Beta$. Suppose on the contrary that $\tilde \mu_\infty \notin \sca_\Beta$. By the Stratification Theorem \ref{thm_stratifb} we must have that $\tilde \mu_\infty \in \sca_{\tilde \beta}$ with $\Vert \tilde \beta \Vert > \Vert \Beta \Vert$, and this implies that   $ \Vert \tilde\Beta_\mg \Vert > \Vert \Beta_\mg \Vert$ by Remark \ref{rmk_normbetam}.
 However, Corollary \ref{cor_mainestimate} applied to $\tilde \mu_\infty \in \sca_{\tilde \beta}$ yields an estimate which contradicts \eqref{eqn_Riccilimit}. The claims thus follows. Using that, equality in \mbox{Corollary \ref{cor_mainestimate}} and \mbox{Theorem \ref{thm_lochomsolitons}} imply that $\tilde \mu_\infty$ is a \emph{soliton bracket}, in the sense that 
 \begin{equation}\label{eqn_solitonbracket}
 	\tilde \mu_\infty \in \sca_\Beta \quad \hbox{and} \quad \Riccim_{\tilde \mu_\infty} = \Beta_{\mg}.
 \end{equation}
We now consider the sequence $(M^n_\infty, g_k, p_\infty, \ggo_\infty)$, $g_k := \tfrac{1}{\tau_k} g_\infty(\tau_k)$. The corresponding brackets $\mu_k$ converge to the soliton bracket $\tilde \mu_\infty$. As explained above,
we may assume that $(g_k)_{k \in \NN}$ subconverges to a 
second limit metric $\tilde g_\infty$ on $\tilde M^n_\infty$.
Passing to a further subsequence, by Proposition \ref{prop_limitKf} we may assume that
the sequence $(M^n_\infty, g_k, p_\infty, \ggl_\infty)_{k \in \NN}$ 
converges to $ (\tilde M^n_\infty,  \tilde g_{\infty}, \tilde p_\infty, \tilde \ggl_{\infty})$  in equivariant Cheeger-Gro\-mov topology (see Section \ref{sec_algcon} for the definition). Moreover, by our choice of $N_\infty$ and Lemma \ref{lem_collapsedseq}, the sequence is algebraically non-collapsed. Thus, in the case of $\Riccim_{\tilde \mu_\infty} \neq 0$, it follows from \eqref{eqn_solitonbracket}, Theorem \ref{thm:Liebracketconvergence} and Lemma \ref{lem_solitonbracket} that $(\tilde M^n_\infty, g_\infty)$ is locally isometric to an expanding homogeneous Ricci soliton. 
In the case $\Riccim_{\tilde \mu_\infty} = 0$, an analogous argument using Lemma \ref{lem_Ricciflat} shows that 
$(\tilde M^n_\infty, \tilde g_\infty)$ is flat.

We turn to the case where the given immortal solution $(g(t))_{t \in [0,\infty)}$
is not necessarily non-collapsed. The proof is precisely as above, except that we cannot 
apply Hamilton's compactness theorem directly, since we don't have a uniform lower
bound for the injectivity radius of the metric $g_{s_k}(t)$.

First notice that after rescaling the initial metric $g(0)$ we may assume that we have
$\sup_{t\in [0,\infty)} \Vert {\Riem_{g_{s(t)}} }\Vert_{g_s(t)} \leq 1$, 
for each $s>0$. Then, as explained above,
one can  pull back  the metrics $g_{s_k}(t)$ by the  exponential  map  $\exp_p(g_{s_k}(1))$ to the ball $B_\pi$ or radius $\pi$ in tangent space $T_p M^n$ for each $k \in \NN$.
As a consequence, for each sequence $\{ s_k\}_{k \in \NN}$ with \mbox{$s_k \to \infty$} 
one obtains a pointed $\GG_{k}$-homogeneous Ricci flow solution on
the locally homogeneous space $(B_\pi,0_p,\ggl_k)$, 
again denoted by $(g_{s_k}(t))_{t \in (0,\infty)}$, 
with $\inj_{0_p} (g_s(1)) \geq 3$. 
Then, by the local version of Hamilton's compactness theorem \cite{Ham95b}, see
Theorem 6.36 in \cite{ChowLuNi},
there exists a  subsequence
$\big( (g_{s_{k_l}}(t))_{t\in [1,\infty)} \big)_{l \in \NN}$ of Ricci flow solutions, 
where we again set $g_{s_k}:=g_{s_{k_l}}$,
which converges to a limit Ricci flow solution
$(g_\infty(t))_{t \in [1, \infty)}$ on $B_\pi$. The curvature estimates needed to
apply Theorem 6.36, come from the Shi estimates, applied to a previous time, say $t=0.5$.

The convergence is up to pull-back by $s_k$-dependent diffeomorphisms, uniformly on compact subsets of $B_\pi \times [1,\infty)$. Notice that by \cite{BLS} the limit $(B_\pi,g_\infty(1))$ is a simply-connected, locally homogeneous space, hence by \cite{Nomizu1960} and Theorem 1 of \cite{Nomizu1960}
an $\ggl_\infty$-homogeneous space for a transitive Lie algebra $\ggl_\infty$ of Killing fields. We may assume that $\ggl_\infty := \isog(B_\pi,g_\infty(1))$.
The first remark is that by Theorem \ref{thm:gRicflow} this Ricci flow solution is
uniquely determined by the $\GG_\infty$-invariant initial metric $g_\infty(1)$,
and therefore all the metrics $g_\infty(t)$ are $\GG_\infty$-invariant.
Secondly, it is also of \mbox{Type III} with the same constant $1$, 
since it is the limit of Type III solutions.

As above, for a fixed sequence $(s_k)_{k \in \NN}$ 
we pick now a subsequence $(s_{k_l})_{l\in \NN}$ and a limit solution 
$(g_\infty(t))_{t \in [1,\infty)}$ such that $N_\infty:=\dim (B_\pi,\isog(g_\infty(1)))$ is maximal. 
In order to proceed as above we have 
to take a second limit $\tilde g_\infty=\lim_{k\to \infty }\frac{1}{t_k}g_\infty(t_k)$
for an appropriate sequence $(t_k)_{k \in \NN}$ converging to $\infty$.
As above we use now, that $\frac{1}{t_k}g_\infty(t_k)=\lim_{l \to \infty}\tilde g_l$,
where $\tilde g_l= \frac{1}{s_lt_k} g(s_lt_k)$. Since for the approximating metrics $\tilde g_l$
we still have $\Vert\Riem_{\tilde g_l}\Vert_{\tilde g_l}\leq 1$ 
we obtain as above by pull back of the exponential map $\exp_p(\tilde g_l)$ 
a locally homogeneous space, again denoted by $(B_\pi,\tilde g_l,0_p)$ ,
which is locally isometric to $\tilde g_l$ but has $\inj_{0_p} (\tilde g_l) \geq 3$.
Clearly, the new sequence $(B_\pi,\tilde g_l)_{l \in \NN}$ will subconverge to 
a locally homogeneous limit metric on $B_\pi$, which is locally isometric to 
$\frac{1}{t_k}g_\infty(t_k)$, but has $3$ as a lower bound for the injectivity radius at
the center point of $B_\pi$.
It follows that we may assume that the locally homogeneous metrics
$\frac{1}{t_k}g_\infty(t_k)$ have injectivity radius greater are equal than $3$
for all $k \in \NN$. 
This now, together with the uniform bounds on all covariant derivatives of the
curvature tensor allows us to take a sublimit as above. 
The proof now follows as in the non-collapsed case.
\end{proof}


The following theorem shows that, up to covering, all known immortal
homogeneous Ricci flow solutions are indeed non-collapsed.

\begin{theorem}\label{thm_noncollapsing}
Let $(\GG/H,g)$ be a homogeneous space which is diffeomorphic to $\RR^n$, and such that $\vert K(g)\vert \leq 1$.
Then ${\rm inj}(\GG/H,g)) \geq \tfrac{\pi}{2}$.
\end{theorem}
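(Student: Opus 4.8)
The plan is to combine the curvature bound with a Klingenberg-type argument, using homogeneity to upgrade a geodesic loop to a closed geodesic, and then to exploit the diffeomorphism type to exclude short closed geodesics. First, since $M=\GG/H$ is homogeneous, the injectivity radius $\operatorname{inj}(M,g)=:i_0\in(0,\infty]$ is the same at every point, so it suffices to estimate it at one point. The bound $|K(g)|\le 1$, in particular $K(g)\le 1$, together with the Rauch comparison theorem (equivalently the Morse--Schoenberg index lemma) shows that no unit-speed geodesic has a conjugate point before arclength $\pi$; that is, the conjugate radius of $(M,g)$ is at least $\pi$. If $i_0\ge\pi$ we are done, so from now on we assume $i_0<\pi$, and in particular $i_0$ is strictly smaller than the conjugate radius.

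The second step is a Klingenberg-type statement: if $i_0$ is strictly below the conjugate radius, then $(M,g)$ carries a smooth closed geodesic of length $2i_0$. Fix $p\in M$ and a point $q$ on the cut locus of $p$ with $d(p,q)=i_0$; since $i_0<\operatorname{conj}$, the point $q$ is not conjugate to $p$, so by Klingenberg's lemma there are exactly two minimizing geodesics $\sigma_1,\sigma_2\colon[0,i_0]\to M$ from $p$ to $q$, meeting at $q$ with opposite velocities, whence the concatenation $\gamma:=\sigma_1*\bar\sigma_2$ is a geodesic loop at $p$ of length $2i_0$ that is smooth at $q$. Now repeat the argument with the roles of $p$ and $q$ exchanged: by homogeneity $\operatorname{inj}(q)=i_0$, the point $p$ lies on the cut locus of $q$ at distance $i_0<\operatorname{conj}$, and $\sigma_1,\sigma_2$ run backwards are two minimizing geodesics from $q$ to $p$; applying Klingenberg's lemma at $q$ forces them to meet at $p$ with opposite velocities as well, so $\gamma$ is smooth at $p$ too. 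Thus $\gamma$ is a smooth closed geodesic of length $2i_0<2\pi$. (A short comparison argument at the midpoint shows moreover that no closed geodesic can be shorter than $2i_0$.)

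It remains to show that no closed geodesic of length $\le\pi$ can exist, and this is exactly where the hypothesis that $M$ is diffeomorphic to $\RR^n$ — not merely simply connected — is needed. The key assertion is that a homogeneous space diffeomorphic to $\RR^n$ is contractible and therefore carries no closed geodesics at all: passing to the effective transitive isometry group, contractibility of $\GG/H$ forces $\GG$ to have no compact semisimple normal factor (such a factor would put non-trivial rational cohomology into $\GG/H$, using that $H\hookrightarrow\GG$ is a homotopy equivalence since the base of $H\to\GG\to\GG/H$ is contractible), and for homogeneous metrics on such groups one rules out closed geodesics via loop-space Morse theory: the free loop space $\Lambda M$ is contractible together with $M$, the energy functional satisfies a Palais--Smale-type compactness after re-centering loops by isometries (bounded geometry being automatic on a homogeneous space), and hence there can be no non-constant critical point, because a closed geodesic would change the homotopy type of a sublevel set of the energy. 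Consequently $i_0\ge\pi/2$: otherwise the second step would produce a closed geodesic of length $2i_0<\pi$, a contradiction. (That simple connectivity alone does not suffice is shown by the Berger metrics on $S^3\times\RR$: homogeneous, simply connected, with $|K|\le1$ after rescaling, yet admitting arbitrarily short closed geodesics — it is the vanishing of the higher homotopy of $\RR^n$ that excludes this.) The step I expect to be the main obstacle is precisely this last one: one must marry the structure theory of the transitive isometry group with the loop-space argument, and handle with care the degenerate critical manifolds of closed geodesics arising from the $S^1$-reparametrization action and from continuous families of geodesics.
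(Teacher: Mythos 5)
Your first two steps are sound and essentially parallel the paper's opening: the bound $K\le 1$ and Rauch give conjugate radius at least $\pi$, and a Klingenberg-type argument together with homogeneity (constancy of the injectivity radius) upgrades a short geodesic loop to a closed geodesic of length $2\,\mathrm{inj}<\pi$. The gap is your third step. The assertion that a homogeneous metric on a space diffeomorphic to $\RR^n$ carries no closed geodesic at all is not available: it is exactly the open Problem the paper states right after this theorem (``Is it possible for a homogeneous space $(\RR^n,g)$ to have a closed geodesic?''), open even for left-invariant metrics on nilpotent Lie groups. Your proposed loop-space argument does not establish it, because Morse theory on the free loop space runs in the opposite direction: nontrivial topology of sublevel sets forces critical points (Lusternik--Fet), but triviality of the topology does not exclude them, since a (necessarily degenerate, $S^1$- and isometry-invariant) critical manifold need not change the homotopy type of any sublevel set; moreover Palais--Smale genuinely fails on noncompact manifolds, ``re-centering'' by isometries notwithstanding. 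Contractibility alone certainly cannot suffice: a surface of revolution diffeomorphic to $\RR^2$ with a waist has a closed geodesic, so any correct argument must use homogeneity and the curvature bound essentially --- and note that your step three never uses the length bound $2\,\mathrm{inj}<\pi$, which is a warning sign.

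What the paper does instead is to exploit that length bound: since the closed geodesic $\gamma$ has period $b<\pi$ and $\vert K\vert\le 1$, every Jacobi field along $\gamma$ vanishing at $t=0$ and $t=b$ vanishes identically. Applying this to the Jacobi fields generated by the variations $\exp(-V(s))\cdot\gamma(s+t)$, where $\gamma(s)=\exp(V(s))H$, and $\exp(sZ)\cdot\gamma(t)$ for $Z\in\hg$, one concludes that $\gamma(t)=\exp(tN)H$ is the orbit of a one-parameter subgroup whose generator $N$ is fixed by $\Ad(H)$; then $\{\exp(tN):t\in\RR\}\times H$ is a compact subgroup of $\GG$ of dimension $\dim H+1$, contradicting the fact that $\GG/H\cong\RR^n$ forces $H$ to be a maximal compact subgroup of $\GG$. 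Thus the topological hypothesis enters through maximal compactness of the isotropy group, not through the topology of the free loop space; to repair your proof you would need to replace your third step by an argument of this kind (or else resolve the paper's open problem).
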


\begin{proof}
First, notice that we may assume that $G$ acts effectively on $G/H$, and that $\GG$ and
$H$ are connected. Indeed, the connected component $\GG_0$ of $\GG$ containing the
identity still acts transitively on the connected space $\RR^n$.
Then it follows from the long homotopy sequence 
of the fibration $\GG_0\cap H \to \GG_0 \to \RR^n$ that
$\GG_0 \cap H$ must be connected. Secondly, recall that $G/H \simeq \RR^n$ if and only if $H$ is a maximal compact subgroup of $G$ (see for instance \cite{Hch}, \cite{Iws} or \cite[Sections 13.1-13.3]{HlgNeb}). 

Suppose now that the injectivity radius of $(G/H,g)$ 
is strictly smaller than $\tfrac{\pi}{2}$. Homogeneity allows us to assume that this happens at $eH \in G/H$.
By  Klingenberg's Lemma (see \cite[Lemma 8.4]{Petersen06}) there exists a geodesic
loop $\gamma:[0,b] \to \RR^n$ of length $L(\gamma)=b< \pi$ with $\gamma(0)=\gamma(b)=eH$. Let $X_1,\ldots,X_{n-1}$ be Killing fields which span $\gamma'(0)^\perp$. Then, since the scalar product between
a Killing field and $\gamma'$ is a constant function along the geodesic, 
the loop $\gamma$ is in fact a simply closed, hence periodic geodesic, 
with period $b< \pi$.

Let $\ggl=\hg \oplus \mg$ be the canonical decomposition of $\ggl$ and
let $V:(-\epsilon,\epsilon)\to \mg$  be a smooth curve with 
$V(0)=0$ and $V'(0)=N$, $\Vert N \Vert_g = 1$, such that 
 $\gamma(s) = \exp(V(s)) H$ for all $s\in (-\epsilon,\epsilon)$. 
Since $\gamma_s(t):= \exp(-V(s)) \cdot \gamma(s+t)$ is a geodesic variation
of $\gamma(t)$,  $J(t)=\tfrac{d}{ds}\big\vert_{s=0} \gamma_s(t)$
is a Jacobi field along $\gamma(t)$. By the very choice of $V(t)$
we have $J(0)=0$, and by the periodicity of $\gamma$ also $J(b)=0$.
However $b< \pi$ and $\vert K(g)\vert \leq 1$, thus $J \equiv 0$, since by Rauch's comparison theorem there are no conjugate points at distance less than $\pi$.

If $X_N$ denotes the Killing field induced by $N\in \ggo$ on $G/H$, then the vanishing of $J$ is equivalent to
\[
 	0=\tfrac{d}{ds}\big\vert_{s=0}  \exp\big(-V(s)\big) \cdot \gamma(s+t) = - X_N(\gamma(t)) + \gamma'(t).
\]
Hence, $\gamma(t)$ is a periodic 
integral curve of $X_{N}$, and consequently $\gamma(t)=\exp(t\, N) H$ for all $t \in \RR$.

Now recall that $H$ is the isotropy subgroup at $eH$, and
let $Z \in \hg$. As above, the Jacobi field corresponding to the geodesic variation $\tilde\gamma_s(t) := \exp(s Z) \cdot \gamma(t)$ also vanishes. We conclude that the isotropy group $H$ fixes the geodesic $\gamma$. Moreover, since $\gamma(t)=\exp(t\, N) H$ we also deduce that $\Ad(H)$ fixes $N$. Thus,
\[
 \hat H := \{ \exp(t\cdot N):t \in \RR\} \times H
\]
is a compact subgroup of $\GG$ with $\dim \hat H=\dim H+1$.
This is a contradiction.
\end{proof}

We propose the following 

\begin{problem}
Is it possible for a homogeneous space $(\RR^n,g)$ to have a closed geodesic?
\end{problem}

To the best of our knowledge, this is open even for left-invariant metrics on nilpotent Lie groups.

It is well-known that a Cheeger-Gromov limit of homogeneous manifolds is again homogeneous. However, the topology may change in the limit, even the fundamental group, as the example of the Berger spheres on $S^3$ converging to $S^1 \times \RR^2$ shows, see e.g. \cite[Ex.~6.17]{Lauret2012}. The next result, whose proof we postpone to Appendix \ref{app_chgr}, says this does not happen on $\RR^n$:

\begin{theorem}\label{thm_chgr}
Let $(\RR^n, g_k)_{k\in \NN}$ be a sequence of homogeneous manifolds
converging in Chee\-ger-Gromov topology to $(\bar M^n ,\bar g)$. Then, $\bar M^n$ is diffeomorphic to $\RR^n$.
\end{theorem}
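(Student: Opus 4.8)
The plan is to exploit the rigidity of homogeneous manifolds together with the fact that a non-collapsed Cheeger--Gromov limit of homogeneous spaces is again homogeneous, and then to rule out collapsing on $\RR^n$ and control the topology of the limit by a maximality/dimension argument combined with soul-type considerations. First I would reduce to the non-collapsed case. If the sequence $(\RR^n,g_k)$ collapses, then the injectivity radius at the base point tends to zero; rescaling is not available since Cheeger--Gromov convergence fixes the metrics, so instead one passes to the universal covers of small metric balls or works in the frame bundle. The cleanest route is to invoke the structure theory already developed in this paper: each $(\RR^n,g_k)$ is a $\ggo_k$-homogeneous space, and by Theorem \ref{thm:Liebracketconvergence} and Corollary \ref{cor:algcollapse} the limit $(\bar M^n,\bar g)$ is locally homogeneous, and after passing to a subsequence where the dimension of the limit isometry Lie algebra is maximal, the limit is globally homogeneous; moreover the limit is complete and simply connected is \emph{not} automatic, so one must argue that $\bar M^n$ is still contractible.

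The key geometric input is that $\RR^n$ with a homogeneous metric has no closed geodesics in all known cases, but more robustly, a Cheeger--Gromov limit of complete manifolds diffeomorphic to $\RR^n$ is complete, non-compact, and — crucially — has a pole-free structure only if one controls $\pi_1$. So the main steps I would carry out are: (1) show $\bar M^n$ is a complete homogeneous Riemannian manifold; (2) write $\bar M^n = \bar G/\bar H$ with $\bar H$ compact, and observe that its universal cover is $\bar G'/\bar H'$ for the appropriate covers, which is diffeomorphic to $\RR^{n-\dim\bar H + \dim\bar H}= \RR^n$ iff $\bar H$ is a maximal compact subgroup of $\bar G$ (as recalled in the proof of Theorem \ref{thm_noncollapsing}); (3) show the convergence forces $\bar H$ to be, up to conjugacy, a limit of the (possibly non-closed) isotropy subgroups $H_k \subset G_k$, each of which is a maximal compact subgroup of $G_k$ since $G_k/H_k \cong \RR^n$; a compactness argument on the Grassmannian of subalgebras then shows $\bar H$ is maximal compact in $\bar G$, hence $\widetilde{\bar M^n}\cong \RR^n$; (4) finally rule out that $\bar M^n$ has nontrivial fundamental group. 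For step (4) I would use that the Cheeger--Gromov convergence gives diffeomorphisms $\varphi_k : U_k \to V_k \subset \RR^n$ with $U_k$ exhausting $\bar M^n$; since $\RR^n$ is simply connected, each loop in $U_k$ bounds a disk in $\RR^n$, but this disk need not lie in $V_k$, so one must instead argue that $\bar M^n$, being homogeneous and the metric limit of $\RR^n$'s, cannot have any noncontractible loop of small length, and by homogeneity cannot have noncontractible loops at all.

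The main obstacle, as usual in such statements, is step (4): controlling the \emph{global} topology (the fundamental group) of the limit from Cheeger--Gromov convergence, since Cheeger--Gromov convergence is only a statement about larger and larger compact pieces and genuinely allows $\pi_1$ to change (Berger spheres $\to S^1\times\RR^2$). The way around this is to use the algebraic description: by the arguments in Sections \ref{sec:conv} and \ref{sec_algcon}, the limit $\bar G/\bar H$ has $\bar H$ compact \emph{and connected} (passing to the identity component, which still acts transitively since $\RR^n$ is connected — exactly the argument in the first paragraph of the proof of Theorem \ref{thm_noncollapsing}), and a connected Lie group modulo a connected maximal compact subgroup is diffeomorphic to a Euclidean space of the right dimension by Iwasawa/Cartan (\cite{Iws}, \cite[Sections 13.1--13.3]{HlgNeb}). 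Thus once one establishes that $\bar H$ is a maximal compact subgroup of the connected limit group $\bar G$, one gets $\bar M^n = \bar G/\bar H \cong \RR^n$ directly, bypassing the fundamental-group subtlety entirely. So the real work is in step (3): showing that the limiting isotropy is maximal compact, which I expect to follow from the fact that $\dim \bar H = \lim \dim H_k$ together with $\operatorname{rank}$ and effectiveness considerations, since any strictly larger compact subgroup of $\bar G$ would, by the convergence of the Lie brackets (Theorem \ref{thm:Liebracketconvergence}) and a standard compactness argument for compact subgroups of a fixed Lie group, yield a correspondingly large compact subgroup of $G_k$ for large $k$, contradicting maximality of $H_k$.
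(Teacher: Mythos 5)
Your reduction in steps (1), (2) and (4) is fine and matches the structure theory the paper itself uses (a homogeneous $\bar G/\bar H$ is diffeomorphic to $\RR^n$ iff $\bar H$ is maximal compact in $\bar G$, which sidesteps the $\pi_1$ issue). The genuine gap is your step (3), and the tools you invoke for it do not apply. Theorem \ref{thm:Liebracketconvergence} and Corollary \ref{cor:algcollapse} require the sequence to be algebraically non-collapsed, and for an arbitrary Cheeger--Gromov convergent sequence of homogeneous metrics on $\RR^n$ this can fail; you cannot restore it by ``passing to a subsequence where the limit isometry dimension is maximal'', because the limit $(\bar M^n,\bar g)$ is prescribed in the statement (that two-step trick is available in the Ricci-flow argument of Section \ref{sec:conv}, where one may choose which blow-down limit to analyse, but not here). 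In the collapsed situation the brackets $\mu_k$ are unbounded, the Lie algebras $\ggo_k$ do not converge to anything related to $\bar G$, and compact subgroups of $\bar G$ need not be approximated by compact subgroups of $G_k$. Concretely, for left-invariant metrics on $\wt{\Sl}(2,\RR)$ (which is diffeomorphic to $\RR^3$) suitable blow-downs converge to $H^2\times\RR$ (Example \ref{ex_algcollapse}): there $H_k=\{e\}$ is maximal compact in $G_k=\wt{\Sl}(2,\RR)$, a group with no nontrivial compact subgroups at all, while the limit isotropy is $\SO(2)\subset\bar G$. So your key transfer --- ``a strictly larger compact subgroup of $\bar G$ would yield a correspondingly large compact subgroup of $G_k$ for large $k$'' --- is exactly what breaks: the isotropy genuinely jumps, and maximal compactness is not a property preserved under limits of subalgebras in a Grassmannian. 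The heart of the argument, that $\bar H$ is maximal compact in $\bar G$, is therefore unproven, and no mechanism for handling (algebraic or geometric) collapse is provided.

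For comparison, the paper's proof avoids limits of groups entirely and is uniform and geometric. Cheeger--Gromov convergence gives a uniform bound $\vert K(g_k)\vert\leq K$; Lemma \ref{lem_contract} then produces, for \emph{every} homogeneous metric $g$ on $\RR^n$ with $\vert K(g)\vert\leq K$, a contractible open set $\Omega_R$ with $B_r^{g}(0)\subset\Omega_R\subset B_{2R}^{g}(0)$, where $R=R(K,r,n)$ is independent of the metric; this is proved by induction on $n$, using a closed cohomogeneity-one subgroup whose orbits are diffeomorphic to $\RR^{n-1}$ (Lemma \ref{lem_orbitsRn-1}) together with Riccati estimates for the shape operators of the orbit foliation. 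Any loop (and likewise any higher-dimensional sphere) in $\bar M^n$ is carried by the Cheeger--Gromov diffeomorphisms into such a uniformly sized contractible subset of $(\RR^n,g_k)$ for $k$ large, so all homotopy groups of $\bar M^n$ vanish; a contractible homogeneous space has maximal compact isotropy and is hence diffeomorphic to $\RR^n$. If you wish to salvage your route, you would need a replacement for step (3) of a similarly uniform nature that tolerates the isotropy jump; the compact-subgroup limit argument as stated cannot supply it.
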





\section{Equivariant convergence of homogeneous spaces}\label{sec_algcon}

In this section we study sequences $(M^n_k, g_k, p_k,\ggl_k)_{k\in \NN}$
of $\ggl_k$-homogeneous spaces converging in Cheeger-Gromov topology  to
a Riemannian manifold $(M^n_\infty, g_\infty, p_\infty)$, which by
\cite{Singer1960} and \cite{NicolodiTricerri1990} is again locally
homogeneous (see also \cite{PTV}). Following \cite{Heber1998}, we will
explain how to obtain a limit Lie algebra $\ggl_\infty$ of
$g_\infty$-Killing fields on $M_\infty^n$, thus arriving at the notion
of \emph{equivariant Cheeger-Gromov convergence} of homogeneous spaces:
see Definition \ref{def_algconv}. We then distinguish between two cases
according to whether $\ggl_\infty$ is \emph{transitive} on $M^n_\infty$,
that is the Killing fields in $\ggl_\infty$ span the whole tangent space
at $p_\infty$, or not. The sequence is called \emph{algebraically
non-collapsed} and \emph{collapsed}, respectively.

\begin{example}\label{ex_algcollapse}
It was shown by Lott in \cite{Lot07} that for any left-invariant
initial metric on the universal cover of $\Sl(2,\RR)$,
any sequence of Ricci flow blow-downs will converge
to a unique limit solution, which is the Riemannian product of
the hyperbolic plane $H^2=\Sl(2,\RR)/\SO(2,\RR)$ and a flat factor $\RR$.
For this limit space $(M^3_\infty,g_\infty,p_\infty)$,
the Lie algebra of the full isometry group
equals $\slg(2,\RR) \oplus \RR$, whereas the approximating Lie
algebras of
Killing fields are isomorphic to $\slg(2,\RR)$ for generic
left-invariant initial metrics. Clearly, algebraic collapse
occurs in this situation: the Lie algebra of Killing fields
$\slg(2,\RR)$ is transitive on all the spaces of the sequence, but it
only spans a subspace of dimension two of $T_{p_\infty} M_\infty^3$ (cp.~\cite[$\S$ 4.3 (iv)]{homRF}).
\end{example}

The main theorem in \cite{Lauret2012} shows that convergence of the Lie
brackets together with a Lie-theoretical non-collapsedness hypothesis,
imply subconvergence in Cheeger-Gromov topology. The main result of this
section is the following converse assertion.

\begin{theorem}\label{thm:Liebracketconvergence}
Suppose that the sequence $(M^n_k, g_k, p_k,\ggl_k)_{k\in \NN}$
converges to the limit space
$(M^n_\infty, g_\infty, p_\infty)$ in equivariant Cheeger-Gromov
topology, with $M^n_\infty$ diffeomorphic to $D^n$ in the incomplete
case, and suppose furthermore that this sequence is algebraically
non-collapsed. Then, $(M^n_\infty, g_\infty, p_\infty)$ is a
$\ggl_\infty$-homogeneous space, and
the corresponding sequence of abstract brackets $([\mu_k])_{k\in \NN}$
subconverges to the abstract bracket  $[\mu_\infty]$ of the limit space.
\end{theorem}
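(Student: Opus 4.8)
The plan is to reduce the convergence of abstract brackets to the convergence of Killing fields, which is already provided by the hypothesis of equivariant Cheeger-Gromov convergence. First I would recall the setup of Definition~\ref{def_algconv}: equivariant convergence means that, after pulling back via the Cheeger-Gromov diffeomorphisms $\varphi_k : U_k \to V_k \subset M^n_k$, a frame of the Killing fields $X_1^{(k)}, \ldots, X_{N}^{(k)} \in \ggl_k$ (normalized, say so that $\{X_i^{(k)}(p_k)\}$ is $g_k$-orthonormal together with a basis of the isotropy) converges in $C^\infty_{\rm loc}$ to Killing fields $X_1^{(\infty)}, \ldots, X_N^{(\infty)}$ on $M^n_\infty$. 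Since the Lie bracket of vector fields is a continuous (indeed differential) operation on $C^\infty_{\rm loc}$, the structure constants $c_{ij}^{\ell,(k)}$ defined by $[X_i^{(k)}, X_j^{(k)}] = \sum_\ell c_{ij}^{\ell,(k)} X_\ell^{(k)}$ converge to structure constants $c_{ij}^{\ell,(\infty)}$, and the $X_i^{(\infty)}$ span a Lie algebra $\ggl_\infty$ of $g_\infty$-Killing fields on $M^n_\infty$, abstractly isomorphic (as a Lie algebra) to a limit of the $\ggl_k$.

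Next I would use the algebraic non-collapsing hypothesis. By assumption $\ggl_\infty$ is transitive on $M^n_\infty$, that is, the $X_i^{(\infty)}(p_\infty)$ span $T_{p_\infty}M^n_\infty$. This is precisely the condition needed to say that $\dim M^n_\infty = n$ (no drop in dimension) and that $(M^n_\infty, g_\infty, p_\infty, \ggl_\infty)$ is a $\ggl_\infty$-homogeneous space in the sense of Definition~\ref{def_transitive}; in the incomplete case one invokes $M^n_\infty \simeq D^n$ together with Nomizu's theorem \cite{Nomizu1960} exactly as in Section~\ref{sec_homricflow}. Here one must check that the dimension of the isotropy subalgebra $\hg_k = \{ X\in \ggl_k : X(p_k)=0\}$ is constant along a subsequence (it can only jump up in the limit, but algebraic non-collapsing forces the span to be full, hence $\dim\ggl_\infty = \dim\ggl_k$ for large $k$ and $\dim\hg_\infty = \dim\hg_k$), so that the canonical decompositions $\ggl_k = \hg_k \oplus \mg_k$ pass to the limit $\ggl_\infty = \hg_\infty \oplus \mg_\infty$ compatibly; this uses that the Killing form varies continuously with the structure constants and stays negative definite on $\hg$ (Lemma~\ref{lem_Kill}).

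Finally I would identify the limit of the abstract brackets $[\mu_k]$. Recall from Proposition~\ref{prop:metricbracket} that $[\mu_k]$ is the $\OHm[k]$-orbit of the bracket obtained by transporting $\hml[k]$ via an $h_k \in \GHmk$ with $\ip_{g_k} = \la h_k\cdot, h_k\cdot\ra$; concretely, $\mu_k$ is just the structure tensor of $\ggl_k$ written in a $g_k$-adapted orthonormal-type frame of $\mg_k$ (plus the $\mu_\hg$ and mixed components, all determined by the canonical decomposition). Since the adapted frames converge (by equivariant Cheeger-Gromov convergence and the convergence of the canonical decompositions established above) and the structure constants converge, the tensors $\mu_k \in \Vg[k] \simeq \Vg$ converge in $\Vg$ to the corresponding structure tensor $\mu_\infty$ of $(\ggl_\infty, \hml[\infty])$ in its $g_\infty$-adapted frame of $\mg_\infty$, which is by definition a representative of $[\mu_\infty]$. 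Passing to the quotient by the compact gauge groups $\OHm$, whose actions also converge, yields $[\mu_k] \to [\mu_\infty]$.

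The main obstacle, I expect, is not the convergence of structure constants per se, but the bookkeeping needed to make the identification $\Vg[k] \simeq \Vg[\infty]$ canonical: one has to fix, along the subsequence, a way of identifying the vector spaces $\ggl_k$ (with their canonical decompositions and background scalar products) with a single model space $\ggl_\infty$ in such a way that $\hml[k] \to \hml[\infty]$ and the gauge groups $\OHm[k] \to \OHm[\infty]$, and verify that the ambiguity in this identification is absorbed exactly by the gauge equivalence defining the abstract bracket $[\mu_k]$. Handling the isotropy subalgebra correctly --- in particular ruling out, via algebraic non-collapsing, the phenomenon where $\dim\hg$ jumps (which is exactly the algebraic collapse of Example~\ref{ex_algcollapse}) --- is the delicate point; once that is in place, everything else is a continuity argument for the map sending a Lie algebra with scalar product to its structure tensor modulo the orthogonal gauge group.
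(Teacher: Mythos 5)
Your route is the paper's route: pass from equivariant Cheeger--Gromov convergence to $C^1$-convergence of Killing fields, control the canonical decompositions via the Killing form and non-collapse, and then read off the bracket in a metric-adapted frame. However, the decisive step is asserted rather than proved. The hypothesis (Definition \ref{def_algconv}, via Proposition \ref{prop_limitKf}) gives convergence only of frames that are \emph{bounded in the $1$-jet scalar product} $\ippk$, e.g.\ $\ippk$-orthonormal frames adapted to the geometric decomposition $\ggl_k=\hg_k\oplus\mg_{g_k}$. The frames in which $[\mu_k]$ is computed (Definitions \ref{def_bracket}, \ref{def_absbracket}) are different: Killing fields $E^1_k,\dots,E^n_k$ lying in the \emph{canonical} complement $\mg_k$ whose values at $p_k$ are $g_k$-orthonormal, together with an isotropy basis. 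A unit value at $p_k$ says nothing a priori about the size of $(\nabla^{g_k}E^i_k)_{p_k}$, so your assertion that ``the adapted frames converge (by equivariant Cheeger--Gromov convergence and the convergence of the canonical decompositions)'' is exactly the statement the theorem requires, not a consequence of the hypotheses; if it failed, the structure constants of $\mu_k$ in that frame could blow up even though the $\ippk$-normalized frame converges --- this is precisely the mechanism of algebraic collapse in Example \ref{ex_algcollapse}, which here must be excluded by an argument, not by fiat.

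What the paper does at this point, and what is missing from your sketch, is a uniform boundedness estimate for the metric-adapted canonical frame. Writing $E^i_k=\tilde Z^i_k+\tilde X^i_k$ with respect to $\ggl_k=\hg_k\oplus\mg_{g_k}$, the tangential parts $\tilde X^i_k$ are bounded because, by algebraic non-collapse, the limits $X^1_\infty(p_\infty),\dots,X^n_\infty(p_\infty)$ are linearly independent, so the coefficients of $\tilde X^i_k$ in the convergent frame stay bounded; the isotropy parts $\tilde Z^i_k$ are bounded by a rescaling/contradiction argument: an unbounded sequence would produce unit-$\ippk$-norm elements $E_k\in\mg_k$ with $E_k(p_k)\to 0$, converging to some $Z_\infty\in\hg_\infty$ with $\kf(Z_\infty,Z_\infty)\le -1$ by Lemma \ref{lem_Kill}, while $\kf_{\mu^{\ggl_k}}(E_k,\hg_k)=0$ and the convergence of the Killing forms force $\kf(Z_\infty,Z_\infty)=0$. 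You do cite Lemma \ref{lem_Kill} and the continuity of the Killing form when claiming that the canonical decompositions ``pass to the limit compatibly'', so the right ingredients are named; but to close the argument you must convert this into the quantitative statement that the evaluation maps $\mg_k\to T_{p_k}M^n_k$ have uniformly bounded inverses with respect to $\ippk$ (equivalently, that the restricted brackets $(\mu_k)_{\mg_k}$ stay bounded), which is the heart of the paper's proof. Once that estimate is in place, the remainder of your outline --- subconvergence of the mixed frame, convergence of the structure constants of the Killing-field bracket, and passage to $\Or(N)$-orbits --- does match the paper.
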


We explain now, how we associate to a $\ggo$-homogeneous space
an \emph{abstract bracket}. After having chosen a $\GG$-invariant
background metric $\bar g$ on $M^n$, we let $\mu \in \Vg$ be a bracket
associated to  $(M^n, g, p, \ggo)$ (Def.~ \ref{def_bracket}).
  In \mbox{Section \ref{sec_stratif}} we denoted by $\ip$ an
$\Ad(H)$-in\-variant scalar product on $\ggo$ with  $\la \hg, \mg\ra =
0$, which extended the scalar product on
  $\mg$ induced by $\bar g$.
We then choose a $\ip$-orthonormal basis on $\ggo$ and use it to
identify $(\ggo,\ip) \simeq (\RR^N,\ip_{\rm can})$ as Euclidean vector
spaces.
  All tensor spaces associated to $\ggo$ and $\RR^N$ are thus identified
in a natural way; in particular we have the isometric identification
\begin{equation}\label{eqn_identifVgVN}
     \Vg \simeq V_N := \Lambda^2(\RR^N)^* \otimes \RR^N.
\end{equation}

\begin{definition}\label{def_absbracket}
Let $(M^n, g, p, \ggo)$ be a $\ggo$-homogeneous and  $\mu\in \Vg$ an
associated bracket. Then
the  \emph{abstract bracket} $[\mu] \in V_N/\Or(N)$ associated to $(M^n,
g, p, \ggo)$
is the $\Or(N)$-orbit  in $ V_N$ of the bracket corresponding to $\mu$
under the above identification.
\end{definition}

The abstract bracket $[\mu]$ is well-defined, since the bracket $\mu$ is
well-defined
up to an element in $\OHm \subset \Om \subset \Og$.
Moreover, it is invariant under under pull-back by equivariant local
diffeomorphisms: see Lemma \ref{lem_absbracketinv}.

\begin{remark}
As already mentioned in 
the introduction, knowing the
bracket of a locally homogeneous space is equivalent to knowing its
Levi-Civita connection.
Using local orthonormal frames one obtains an abstract connection
in a universal space of connections (up to the action of the orthogonal
group).
Pulling back the original metric by a diffeomorphism does
not change this abstract connection.
\end{remark}

\begin{remark}\label{rem:bracketindback}
The abstract bracket $ [\mu]$ does not depend on the choice of a
background metric
$\bar g$.
\end{remark}

Recall that $\isog(M^n,g)$ denotes the Lie algebra of all
Killing fields on $(M^n,g)$. In the globally homogeneous case, this is
nothing but the Lie algebra of the full isometry group $\Iso(M^n,g)$. A
first immediate
consequence of Theorem \ref{thm:Liebracketconvergence} is the following

\begin{corollary}\label{cor:algcollapse}
Suppose that the sequence $(M^n_k\!, g_k, p_k,\ggl_k)_{k\in \NN}$
converges to
the limit space
$(M^n_\infty, g_\infty, p_\infty)$ in equivariant Cheeger-Gromov
topology, with $M^n_\infty$ diffeomorphic to $D^n$ in the incomplete
case, and that $\dim \ggl_k = \dim \left(\isog(M^n_k,g_k) \right) \equiv N$.
If moreover we have that $\lim_{k \to \infty }\Vert \mu_k \Vert_{\bar
g_k}=\infty$, then the sequence is algebraically collapsed, and in
particular $\dim \left(\isog(M^n_\infty,g_\infty) \right) > N$.
\end{corollary}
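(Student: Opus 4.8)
The plan is to obtain the algebraic collapse by contradiction from Theorem~\ref{thm:Liebracketconvergence}, and then to read off the jump in the dimension of the isometry algebra directly from the definition of equivariant Cheeger-Gromov convergence. So I would assume, towards a contradiction, that the sequence $(M^n_k, g_k, p_k,\ggl_k)_{k\in\NN}$ is algebraically non-collapsed. Since the hypothesis $\lim_{k\to\infty}\Vert\mu_k\Vert_{\bar g_k}=\infty$ is inherited by every subsequence, Theorem~\ref{thm:Liebracketconvergence} would then give, after passing to a subsequence, that the abstract brackets $[\mu_k]\in V_N/\Or(N)$ subconverge to the abstract bracket $[\mu_\infty]$ of the limit $\ggl_\infty$-homogeneous space $(M^n_\infty,g_\infty,p_\infty)$. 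Unwinding Definition~\ref{def_absbracket}, this means there are $a_k\in\Or(N)$ with $a_k\cdot\mu_k\to\mu_\infty'$ in $V_N$, for some representative $\mu_\infty'$ of $[\mu_\infty]$. But the scalar product on $V_N\simeq\Vg$ appearing in \eqref{def_scal}, \eqref{eqn_identifVgVN} is $\Or(N)$-invariant, so $\Vert\mu_k\Vert_{\bar g_k}=\Vert a_k\cdot\mu_k\Vert\to\Vert\mu_\infty'\Vert<\infty$ along that subsequence, contradicting $\Vert\mu_k\Vert_{\bar g_k}\to\infty$. Hence the sequence must be algebraically collapsed.

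For the dimension estimate I would argue as follows. By the construction of equivariant Cheeger-Gromov convergence (Section~\ref{sec_algcon}, Definition~\ref{def_algconv}, Proposition~\ref{prop_limitKf}), the limit Lie algebra $\ggl_\infty$ of $g_\infty$-Killing fields on $M^n_\infty$ is obtained from bases of the $\ggl_k$ and has dimension $N$; in particular $\ggl_\infty$ is a subalgebra of $\isog(M^n_\infty,g_\infty)$. Algebraic collapse means precisely that the evaluation map $\ggl_\infty\to T_{p_\infty}M^n_\infty$ is not onto, i.e.\ that $\ggl_\infty$ is not transitive on $M^n_\infty$. On the other hand $(M^n_\infty,g_\infty)$ is locally homogeneous by \cite{Singer1960}, \cite{NicolodiTricerri1990}, and it is either complete --- hence globally homogeneous by \cite{PTV} --- or diffeomorphic to $D^n$; in both cases it carries a transitive Lie algebra of $g_\infty$-Killing fields (by \cite[Thm.~1]{Nomizu1960} in the disk case), which lies inside $\isog(M^n_\infty,g_\infty)$, so $\isog(M^n_\infty,g_\infty)$ is transitive. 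If $\ggl_\infty$ equalled $\isog(M^n_\infty,g_\infty)$ it would be transitive as well; since it is not, the inclusion $\ggl_\infty\subsetneq\isog(M^n_\infty,g_\infty)$ is strict, whence $\dim\isog(M^n_\infty,g_\infty)\geq N+1>N$.

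The entire substance of the corollary is carried by Theorem~\ref{thm:Liebracketconvergence}; once that converse is in hand, the argument above is immediate, so there is no real obstacle here. The only point deserving a moment of care is the incompatibility of abstract-bracket convergence in $V_N/\Or(N)$ with the norms of the representatives tending to infinity, which is exactly where the $\Or(N)$-invariance of the scalar product on $\Vg$ --- together with the way the auxiliary scalar product on $\hg$ is normalized in Definition~\ref{def_scalg}, so that this invariance is not destroyed --- comes in.
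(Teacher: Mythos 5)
Your proof is correct and follows essentially the same route as the paper: the collapse statement is obtained by contradiction from Theorem \ref{thm:Liebracketconvergence} (your norm argument via $\Or(N)$-invariance just makes explicit why bracket convergence is incompatible with $\Vert\mu_k\Vert_{\bar g_k}\to\infty$), and your dimension-jump argument is precisely the content and proof of Lemma \ref{lem_collapsedseq}, which the paper simply cites.
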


Geometrically, this results reflects the fact that there is algebraic
collapse if and only if some Killing fields ``run into the isotropy'': that is, there is a sequence of Killing fields $X_k$ with
$X_k(p_k)\neq 0$,
whose 1-jets have norm one (see (\ref{Killingnormone})),
which converge to a limit Killing field $X_\infty$ with
$X_\infty(p_\infty)=0$.
This happens already for left-invariant metrics on $\Sl(2,\RR)$ as
explained in Example \ref{ex_algcollapse}.

The following result is most important for our applications.

\begin{lemma}\label{lem_solitonbracket}
A $\ggo$-homogeneous space $(M^n,g,p,\ggo)$ is locally isometric to an
expanding homogeneous Ricci soliton if and only if its abstract bracket
$[\mu] \in V_N/\Or(N)$ is
a \emph{soliton bracket}, in the sense that for a representative $\mu
\in V_N$  we have
\[
     \mu \in \sca_{\Beta} \quad \hbox{and} \quad \Riccim_{\mu} =
{\Beta}_\mg.
\]
\end{lemma}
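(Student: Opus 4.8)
The plan is to read off both implications from results already in the paper, after fixing a few normalizations. Since the abstract bracket does not depend on the background metric (Remark \ref{rem:bracketindback}) and is invariant under equivariant local isometries (Lemma \ref{lem_absbracketinv}), I may replace $(M^n,g,p,\ggo)$ by any equivariantly locally isometric $\ggo$-homogeneous space; I will assume throughout that $(M^n,g)$ is non-flat, the flat case being degenerate (there $\Riccim_\mu=0$ by Lemma \ref{lem_Ricciflat} while $\tr\Betam=-1$) and handled separately in the application via Lemma \ref{lem_Ricciflat}. Fix a background metric, the identification $\Vg\simeq V_N$ of \eqref{eqn_identifVgVN}, an endomorphism $\Beta\in\Symg$ with $\hml\in\sca_\Beta\cap\Vnn$, and a bracket $\mu\in\OlH$ associated with $g$ (Definition \ref{def_bracket}); by Lemma \ref{lem_gaugeohm} I may act on $\mu$ by an element of $\OHm\subset\Om\subset\Or(N)$ so that in addition $\mu\in(Q_\Beta\cap\GHm)\cdot\hml\subset\sca_\Beta\cap\Vnn$. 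Such $\mu$ is a representative of $[\mu]$, membership $\mu\in\sca_\Beta$ then holds automatically, and after a geometric rescaling I may also assume $\scalm(\mu)=-1$, so that the soliton-bracket condition reads exactly $\Riccim_\mu=\Betam$.

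For the implication ``$\Leftarrow$'', if $\Riccim_\mu=\Betam$ then $\mu\in\OlH\cap\Vnn$ corresponds to the $G$-invariant metric $g$ and satisfies all the hypotheses of Theorem \ref{thm_lochomsolitons}, which directly gives that $(M^n,g)$ is locally isometric to a globally homogeneous expanding $G$-invariant Ricci soliton. For ``$\Rightarrow$'', suppose $(M^n,g)$ is locally isometric to a non-flat expanding homogeneous Ricci soliton; by the reductions above I may take $(M^n,g,p,\ggo)$ to be itself such a soliton, with $\ggo=\isog(M^n,g)$. Let $(g(t))_{t\geq0}$ be the homogeneous Ricci flow with $g(0)=g$ (Theorem \ref{thm:gRicflow}) and $(\bar\mu(t))_{t\geq0}$ the corresponding gauged unimodular bracket flow in $Q_\Betam^H\cdot\hml\subset\sca_\Beta\cap\Vnn$ (Corollary \ref{cor_gauge}, Lemma \ref{lem_gaugeohm}). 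Since $g=g(0)$ is a Ricci soliton, the equality case of Theorem \ref{thm_lyapunov} forces equality in \eqref{eqn_Fbetanondec} at some time $t_0>0$; feeding this into the chain \eqref{eqn_proofLyap} — which uses $\scalm(\bar\mu(t_0))<0$, valid along a non-flat immortal solution by Lemma \ref{lem_scalmev} and Lemma \ref{lem_Ricciflat} — forces equality in Corollary \ref{cor_mainestimate} at $t_0$, i.e. $\Riccim_{\bar\mu(t_0)}=-\scalm(\bar\mu(t_0))\cdot\Betam$ together with $\bar\mu(t_0)\in\sca_\Beta$. Thus $\bar\mu(t_0)$ is a soliton bracket up to scale. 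Because the conditions $\Riccim_\nu=-\scalm(\nu)\Betam$ and $\nu\in\sca_\Beta$ are scale-invariant, and because the Ricci flow of a homogeneous Ricci soliton evolves by scaling and pull-back along automorphisms of $G$, the metric $g(t_0)$ is equivariantly homothetic to $g$; hence a suitably rescaled representative of $[\mu]$ agrees, up to $\Or(N)$, with a rescaling of $\bar\mu(t_0)$, and so (normalizing $\scalm=-1$) it satisfies $\Riccim=\Betam$, as required. (Alternatively, the ``$\Rightarrow$'' direction can be obtained by citing the structure theory of expanding homogeneous Ricci solitons \cite{Heber1998},\cite{LafuenteLauret2014b},\cite{AL16}, which already yields $\Riccim_\mu=-\scalm(\mu)\Betam$ with $\Betam$ the stratification endomorphism.)

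The step carrying the content is the \emph{identification} of the self-adjoint endomorphism $\Betam$ coming from the Morse-type stratification of $\Vg$ with the one governing the soliton structure of $(M^n,g)$ — a priori objects from two different theories. In the plan above this identification is extracted from the two rigidity statements (Corollary \ref{cor_mainestimate} and the equality case of Theorem \ref{thm_lyapunov}), which themselves rest on the estimate $\la\Riccim_\mu,\Beta^+\ra\geq0$ of Lemma \ref{lem_estimateRic}. The remaining work is bookkeeping: tracking the geometric scale (which changes $\Riccim_\mu$ but not $\Betam$, so the normalization $\scalm=-1$ must be fixed once and for all and the lemma read with a rescaled representative), and choosing an $\OHm$-gauged representative of $[\mu]$ lying in $\sca_\Beta\cap\Vnn$ so that Theorem \ref{thm_lochomsolitons} and Corollary \ref{cor_mainestimate} apply verbatim. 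I expect the clause ``$g(t_0)$ is equivariantly homothetic to $g$'' to require the most care, since it is what transports the soliton-bracket property found at time $t_0$ back to $[\mu]$ itself; it relies on the fact that for a homogeneous Ricci soliton the self-similarity can be realized through automorphisms of $G$, which in the non-gradient case is where the input from the homogeneous-soliton structure theory enters.
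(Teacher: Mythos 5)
Your ``$\Leftarrow$'' half is fine and is essentially the paper's own route: the paper's proof of this lemma is nothing more than the observation that the identification $\ggo\simeq\RR^N$ is isometric, so the strata of Theorem \ref{thm_stratifb} and $\Riccim_\mu$ transfer to $V_N$, and then it delegates to Remark \ref{rmk_solitonbracket}, i.e.\ to Theorem \ref{thm_lochomsolitons}; your gauging via Lemma \ref{lem_gaugeohm} and the normalization $\scalm(\mu)=-1$ are the same bookkeeping, and this is in fact the only direction used in the proof of Theorem \ref{thm_conv}.

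The genuine gap is in your ``$\Rightarrow$'' direction. Your main argument invokes the ``only if'' part of the equality statement in Theorem \ref{thm_lyapunov} (soliton $\Rightarrow$ equality in \eqref{eqn_Fbetanondec} at some $t_0>0$), but inside the paper that implication is never established independently: the proof of Theorem \ref{thm_lyapunov} obtains the equality rigidity from Theorem \ref{thm_lochomsolitons}, which is exactly the reverse implication. Since, by \eqref{eqn_proofLyap} and Corollary \ref{cor_mainestimate}, ``equality at $t_0$'' is literally the identity $\Riccim_{\bar\mu(t_0)}=-\scalm(\bar\mu(t_0))\cdot\Betam$ together with $\bar\mu(t_0)\in\sca_\Beta$, your argument assumes the soliton-bracket conclusion (at a later time) in order to prove it. The fallback you offer --- that the structure theory of \cite{Heber1998}, \cite{LafuenteLauret2014b}, \cite{AL16} ``already yields'' $\Riccim_\mu=-\scalm(\mu)\cdot\Betam$ with $\Betam$ the stratification endomorphism --- asserts without proof precisely the nontrivial point: those results give $\Riccim_\mu=c\cdot\Id_\mg+D_\mg$ for \emph{some} derivation $D$, whereas identifying $D$ (up to scale) with $\Beta^+$ for the stratum label $\Beta$ of $\hml$ is an additional GIT statement not contained in them at this level of generality. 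There is also a smaller but real issue in your reduction step: ``locally isometric to an expanding homogeneous Ricci soliton'' does not say the soliton structure is adapted to the given transitive algebra $\ggo$, and replacing $(M^n,g,p,\ggo)$ by the soliton presentation with $\ggo=\isog(M^n,g)$ changes $N$ and hence the object $[\mu]\in V_N/\Or(N)$ about which the claim must be proved; Remark \ref{rem:bracketindback} and Lemma \ref{lem_absbracketinv} only allow replacements that preserve the Killing algebra. So the converse implication is not established by your plan; it requires a genuine argument that a $G$-invariant metric locally isometric to an expanding soliton satisfies $\Riccim_\mu=\Betam$ for the stratum of its own bracket, which your proposal leaves open.
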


See Section \ref{sec_stratif} and (\ref{eqn_defbetam}) for the definition
of $\Beta_\mg$. Notice that for this recognition result it is not enough to require only the geometric condition
$\Riccim_{\mu} = {\Beta}_\mg$, since for non-Einstein
solitons there exist counter examples. For instance, from \cite{Mln} it follows that the Ricci eigenvalues of a certain left-invariant metric on $\Sl_2(\RR)$ are $0, 0$ and $-1$, and these coincide with those of the solvsoliton metric on $E(1,1)$.

We now work towards proving Theorem \ref{thm:Liebracketconvergence}. Let
$(M^n_k,g_k, p_k, \ggl_k)_{k\in \NN}$ be a sequence of pointed
$\ggl_k$-homogeneous spaces, and recall that by definition $\ggl_k$ is a
transitive Lie algebra of $g_k$-Killing fields
on $M^n_k$.
We say that $(M^n_k,g_k, p_k, \ggl_k)_{k\in \NN}$ converges
to a Riemannian manifold $(M^n_\infty,g_\infty, p_\infty)$ in pointed
$C^\infty$-Cheeger-Gromov topology if there exists an exhaustion
$\{U_k\}_{k \in \NN}$ of open sets of $M^n_\infty$ and diffeomorphisms
$\varphi_k:U_k \to V_k \subset M^n_k$ with
$\varphi_k(p_\infty)=p_k$ such that the pulled-back metrics
$\varphi^*_k (g_k\vert_{V_k})$ on $U_k$ converge to $g_\infty$ in
$C^\infty$-topology, uniformly on compact subsets of $M^n_\infty$.
In the following, for the sake of notation we will in general omit these
diffeomorphisms and
rather work on the limit space directly.

Recall that a Killing field  $X_k \in \ggl_k$ is uniquely determined by
$X_k(p_k) \in T_{p_k} M^n_k$ and $(\nabla^{g_k} X_k)_{p_k} \in
\End(T_{p_k} M^n_k)$.
We say that a sequence $(X_k)_{k \in \NN}$ of $g_k$-Killing fields
on $M^n_k$ converges to a $g_\infty$-Killing field $X_\infty$ on
$M^n_\infty$ in $C^1$-topology,
if (after pulling back by the diffeomorphisms from the Cheeger-Gromov
convergence),
we have that $X_k \to X_\infty$ and
$\nabla^{g_k} X_k \to \nabla^{g_\infty} X_\infty$ as $k \to \infty$,
uniformly on compact subsets of $M^n_\infty$. Here $\nabla^g$ denotes
the Levi-Civita connection of the metric $g$.

\begin{proposition}[\cite{Heber1998}]\label{prop_limitKf}
Suppose that $(M^n_k, g_k, p_k,\ggl_k)_{k\in \NN}$ converges to a
locally homogeneous space
$(M^n_\infty, g_\infty, p_\infty)$ in Cheeger-Gromov topology. Then,
$(\ggl_k)_{k \in \NN}$ subconverges  to a Lie algebra $\ggl_\infty$ of
$g_\infty$-Killing fields  on $M^n_\infty$ in $C^1$-topology.
\end{proposition}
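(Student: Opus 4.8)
The plan is to bound the Killing fields uniformly on compact sets and then invoke Arzelà–Ascoli, exploiting the fact that a Killing field is a solution of an overdetermined linear ODE system along any curve, hence determined by its 1-jet at one point. First I would fix a relatively compact open set $W \ni p_\infty$ in $M^n_\infty$; pulling back by the Cheeger-Gromov diffeomorphisms $\varphi_k$, the metrics $g_k$ converge to $g_\infty$ in $C^\infty$ on $\overline W$, so there is a uniform two-sided bound on $g_k$ and on all its covariant derivatives there. Next, normalize: for each $k$ pick a basis $(X_k^1,\dots,X_k^{N_k})$ of $\ggl_k$ (here one uses $\dim\ggl_k\equiv N$, which may be arranged by passing to a subsequence, or is part of the hypothesis in the application) such that the 1-jets $\big(X_k^i(p_k),(\nabla^{g_k}X_k^i)_{p_k}\big)$ form an orthonormal basis of $T_{p_k}M^n_k\oplus\mathfrak{so}(T_{p_k}M^n_k)$ with respect to the natural inner product (as in the normalization \eqref{Killingnormone} referenced later in the paper). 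Since each $X^i_k$ is a Killing field, the pair $(X,\nabla X)$ satisfies the first-order linear system $\nabla_Y X = A(Y)$, $\nabla_Y A = -R(\cdot\,,Y)X$ along curves; with the uniform control on $g_k$ and its curvature on $\overline W$, a Gronwall argument gives a uniform bound (depending only on $W$) on $\|X_k^i\|_{C^1(\overline W)}$, and bootstrapping through the equations gives uniform $C^m$ bounds for every $m$.

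With uniform $C^{m}$ bounds in hand, Arzelà–Ascoli yields a subsequence along which each $X_k^i$ converges in $C^\infty_{\rm loc}(M^n_\infty)$ (exhausting $M^n_\infty$ by such sets $W$ and diagonalizing) to a vector field $X_\infty^i$; passing to the limit in the Killing equation $\mathcal L_{X_k^i}g_k=0$ shows $X_\infty^i$ is a $g_\infty$-Killing field, and passing to the limit in $[X_k^i,X_k^j]=\sum_\ell c^{ij\ell}_k X_k^\ell$ (the structure constants $c^{ij\ell}_k$ are bounded because the 1-jets are normalized and the evaluation map is an isomorphism onto a fixed-dimensional space, so they subconverge too) shows the span $\ggl_\infty:=\operatorname{span}\{X_\infty^i\}$ is closed under the bracket of vector fields. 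Thus $\ggl_\infty$ is a finite-dimensional Lie algebra of $g_\infty$-Killing fields on $M^n_\infty$, and by construction $X_k^i\to X_\infty^i$ together with $\nabla^{g_k}X_k^i\to\nabla^{g_\infty}X_\infty^i$ uniformly on compact sets, which is exactly $C^1$-subconvergence $\ggl_k\to\ggl_\infty$.

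The main obstacle is the uniform $C^1$ (and higher) estimate on the normalized Killing fields over a \emph{fixed} compact set independent of $k$: one has to make sure the Gronwall constants come only from the $C^\infty$-bounds on the $g_k$ over $\overline W$ — which are available from Cheeger-Gromov convergence — and not from any data that could degenerate, and in the incomplete case ($M^n_\infty$ diffeomorphic to $D^n$) one must be careful to work only with compact subsets of the open disc and not run into the boundary. A secondary point is that the dimension of $\ggl_k$ need not a priori be constant, so one either restricts to a subsequence where $\dim\ggl_k$ stabilizes (and only then normalizes a basis), or, in the transitive setting where the evaluation of 1-jets at $p_k$ is injective, uses that $\dim\ggl_k$ is bounded by $n+\binom n2$ to extract such a subsequence; in the applications in Section \ref{sec:conv} this dimension is in fact held fixed, so no loss occurs. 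Everything else — limiting in the Killing equation and the Jacobi identity — is routine once the compactness is set up. This is precisely the argument of \cite{Heber1998}, and I would cite it for the details while recording the normalization and the Gronwall step explicitly.
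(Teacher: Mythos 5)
Your proposal is correct and follows essentially the same route as the paper: there one fixes a $\ipp$-orthonormal basis of $\ggl_k$ normalized exactly as in \eqref{Killingnormone}, cites \cite{Heber1998} for the $C^1$-compactness of such normalized Killing fields (which is precisely the Jacobi/ODE-plus-Gronwall estimate you spell out), and obtains closure of the limit span under brackets via $[X^i_k,X^j_k]=(\nabla^{g_k} X^j_k)(X^i_k)-(\nabla^{g_k} X^i_k)(X^j_k)$ together with $\nabla^{g_k}\to\nabla^{g_\infty}$. The only cosmetic slip is that your normalized $1$-jets form an orthonormal set in the $N$-dimensional subspace $\ggl_k\subset T_{p_k}M^n_k\oplus\sog(T_{p_k}M^n_k,(g_k)_{p_k})$, not an orthonormal basis of the whole sum, but this is clearly what you intend and does not affect the argument.
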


\begin{proof}
Since for all $k \in \NN$ $\ggl_k$ is a transitive Lie algebra of
$g_k$-Killing fields on $M^n_k$,
we have $n \leq \dim \ggl_k \leq \tfrac{1}{2} \, n(n+1)$. Hence we may
assume that
$\dim \ggl_k \equiv N$.

Consider a sequence $(X_k)_{k\in \NN}$, with $X_k\in \ggo_k$, and assume
that
\begin{eqnarray}
   \la X_k ,X_k \ra_{g_k}^* :=  \big\Vert X_k(p_k) \big\Vert_{g_k}^2
      - \tr \big((\nabla^{g_k} X_k)_{p_k} \big)^2 = 1\,.
\label{Killingnormone}
\end{eqnarray}
Recall that for a Killing field $X$, the endomorphism $\nabla X$ of $T_p
M$ is skew-symmetric. By \cite[p.~ 330]{Heber1998} along a subsequence
we have convergence
to a $g_\infty$-Killing field $X_\infty$ on $M^n_\infty$ of norm one in
$C^1$-topology.
Let $\{X^1_k, \ldots, X^N_k\}$ be a $\ippk$-ortho\-nor\-mal
basis  of $\ggl_k$. After passing to a further subsequence,
there exist an orthonormal basis $\{X^1_\infty, \ldots, X^N_\infty\}$ of
$g_\infty$-Killing fields with respect to  $\ippi$, such that
for all $i=1, \ldots ,N$ and $k \to \infty$ we have
$X^i_k \rightarrow X^i_\infty$ and
$\nabla^{g_k} X^i_k \rightarrow \nabla^{g_\infty} X^i_\infty$
uniformly on compact subsets of $M^n_\infty$.
We set $\ggl_\infty = \Span \{X^1_\infty, \ldots, X^N_\infty \}$.
Since we have $[X^i_k, X^j_k]=(\nabla^{g_k} X^j_k) (X^i_k
)-(\nabla^{g_k} X^i_k)(X^j_k)$ and $\nabla^{g_k} \to \nabla^{g_\infty}$,
it follows that also $[X^i_k, X^j_k]$ converges to $ [X^i_\infty,
X^j_\infty]$
for $k \to \infty$.
This shows that $\ggl_\infty$ is an $N$-di\-men\-sional Lie algebra of
Killing fields
on $(M^n_\infty,g_\infty)$.
\end{proof}

Notice that in the proof of \mbox{Proposition \ref{prop_limitKf}}, the
construction
of the limit Lie algebra $\ggl_\infty$ is independent of the
bases chosen for each $\ggl_k$.

\begin{definition}\label{def_algconv}
In the situation of Proposition \ref{prop_limitKf}
we say that the sequence of pointed $\ggl_k$-homo\-ge\-neous spaces
$(M^n_k,g_k,p_k,\ggl_k)_{k \in \NN}$ converges to
$(M^n_\infty,g_\infty,p_\infty)$ in \emph{equivariant Cheeger-Gromov}
topology.
Furthermore, we call the sequence
\emph{algebraically non-collapsed} if  $\ggl_\infty$ is transitive,
and \emph{collapsed} otherwise.
\end{definition}

Note that in the algebarically non-collapsed case, $(M^n_\infty,
g_\infty, p_\infty,\ggl_\infty)$
is a $\ggl_\infty$-ho\-mo\-geneous space.
In the collapsed case we have
the following obvious observation.

\begin{lemma}\label{lem_collapsedseq}
Suppose that $(M^n_k\!,g_k,p_k,\ggl_k)_{k \in \NN}$ converges to
$(M^n_\infty,g_\infty,p_\infty)$ in equivariant Cheeger-Gromov topology,
that $\dim \ggl_k = \dim \left(\isog(M^n_k,g_k) \right) \equiv N$
and that the sequence is algebraically collapsed.
Then  $\dim \left(\isog(M^n_\infty,g_\infty) \right) > N$.
\end{lemma}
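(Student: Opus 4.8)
The plan is to compare the limit Lie algebra $\ggl_\infty$ produced by Proposition~\ref{prop_limitKf} with the full Killing algebra $\isog(M^n_\infty,g_\infty)$ of the limit space, exploiting that algebraic collapse is by definition the failure of $\ggl_\infty$ to be transitive at $p_\infty$, whereas the full Killing algebra of a (locally) homogeneous space is always transitive there.

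First I would establish the dimension count $\dim\ggl_\infty=N$. By construction (see the proof of Proposition~\ref{prop_limitKf}), $\ggl_\infty$ is spanned by the $C^1$-limits $X^1_\infty,\dots,X^N_\infty$ of a $\ippk$-orthonormal basis of $\ggl_k$. Since $\ippk$ is positive definite on $\ggl_k$ — a Killing field whose value and covariant derivative vanish at a point is identically zero on a connected manifold — the limiting fields are $\ippi$-orthonormal, hence linearly independent, so $\dim\ggl_\infty=N$. As each $X^i_\infty$ is a globally defined $g_\infty$-Killing field on $M^n_\infty$, Definition~\ref{def_algconv} gives the inclusion $\ggl_\infty\subseteq\isog(M^n_\infty,g_\infty)$ and thus $\dim\isog(M^n_\infty,g_\infty)\ge N$.

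Next I would compare evaluations at $p_\infty$. Algebraic collapse of the sequence means precisely that $\{X(p_\infty):X\in\ggl_\infty\}$ is a proper subspace of $T_{p_\infty}M^n_\infty$. On the other hand, the limit $(M^n_\infty,g_\infty)$ is locally homogeneous by \cite{Singer1960}, \cite{NicolodiTricerri1990}, \cite{PTV}; in the complete case it is even globally homogeneous, and when $M^n_\infty\simeq D^n$ one invokes \cite[Thm.~1]{Nomizu1960}, so in either situation $\isog(M^n_\infty,g_\infty)$ is transitive at $p_\infty$, i.e.\ $\{X(p_\infty):X\in\isog(M^n_\infty,g_\infty)\}=T_{p_\infty}M^n_\infty$. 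Therefore the inclusion $\ggl_\infty\subseteq\isog(M^n_\infty,g_\infty)$ cannot be an equality, and $\dim\isog(M^n_\infty,g_\infty)>N$ follows.

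Every step is a direct consequence of results already in hand, so there is no genuine obstacle; the only point requiring care is that one uses transitivity of the \emph{global} Killing algebra of the limit (not merely that of a local one), which is why — exactly as in Corollary~\ref{cor:algcollapse} — one stays within the complete or disc setting.
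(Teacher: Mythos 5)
Your proposal is correct and follows essentially the same route as the paper: the paper's proof likewise observes that $\ggl_\infty$ (of dimension $N$, since the limits of an $\ippk$-orthonormal basis remain orthonormal) is a non-transitive subalgebra of the full Killing algebra $\isog(M^n_\infty,g_\infty)$, which is transitive because the limit space is locally homogeneous, so the inclusion is proper. Your extra care about transitivity of the global Killing algebra in the incomplete (disc) case via \cite{Nomizu1960} only makes explicit what the paper leaves implicit.
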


\begin{proof}
By assumption the limit algebra $ \ggl_\infty= \Span\{
X_1^\infty, \ldots ,X_N^\infty\}$ is not transitive, that is  the vectors
$\{X_1^\infty(p_\infty), \ldots ,X_N^\infty(p_\infty)\}$
do not span $T_{p_\infty}M^n_\infty$. Since on the other hand the limit
space $(M^n,g_\infty,p_\infty)$ is locally homogeneous,
$\isog(M^n_\infty,g_\infty)$ is transitive and hence it must contain
$\ggo_\infty$ properly.
\end{proof}

Let us emphasize that for a $\ggo$-homogeneous space $(M^n, g, p, \ggo)$
we have two different norms on Killing fields:
On one hand, there is the scalar
product $\ipp$ on  $\ggl \simeq \ggl_p \subset T_p M \oplus \End(T_p M)$
of $1$-jets at $p$, defined in \eqref{Killingnormone} (see also \eqref{eqn_scalonejet}).   On the
other hand,
the metric $g$ itself gives a scalar product $g_p$ on $T_p M \simeq \mg$,
which measures only the length of $X(p)$ of a vector field $X$. Here $\ggo
= \hg \oplus \mg$ is the canonical decomposition (Def.~ \ref{def_can}). This scalar product
can be extended to a scalar product on $\ggo$ making $\hg \perp \mg$,
which we again denote by $\ip_g$ and which is given by
\begin{equation}\label{eqn_defipg}
     \ip_g := \ip_\hg \, \oplus  \, g_p.
\end{equation}
Here $\ip_\hg$ is simply the restriction of $\ipp$ to $\hg$.

  The scalar product $\ipp$ induces another reductive decomposition
$\ggo = \hg \oplus \mg_{g}$, by letting $\mg_{g} = \hg^\perp$ with
respect to $\ipp$: see Lemma \ref{lem:reductive}. We call it the
\emph{geometric reductive decomposition}.

\begin{remark}\label{rem:geocomplement}
In general, the geometric reductive decomposition depends on the
metric $g$, and hence does not coincide with the canonical reductive
decomposition. An explicit example is the homogeneous space
$M^5=\SO(4)/\SO(2)$, where $\SO(2)$ is embedded canonically as a lower
block: in this case $\mg= \RR \oplus \mg_1 \oplus \mg_2$, where $\mg_1$
and $\mg_2$ are equivalent real representations of complex type.
The desired metrics have then a complex off-diagonal entry.
\end{remark}


\begin{proof}[Proof of Theorem \ref{thm:Liebracketconvergence}]
Let 
\[ \hg_k = \Span \{Z^{n+1}_k\!\!,\ldots, Z^N_k \}\,,\quad 
\mg_{g_k} = \Span\{X^1_k, \ldots, X^n_k\}
\] 
and
assume that these Killing fields form an $\ippk$-orthonormal basis
adapted to the geometric reductive decomposition $\ggl_k=\hg_k \oplus
\mg_{g_k}$.
By Proposition \ref{prop_limitKf}, we may assume that
these Killing fields converge to $N$ linearly independent limit Killing
fields
$\{Z^{n+1}_\infty\!\!,\ldots, Z^N_\infty \}$ and $\{ X^1_\infty, \ldots,
X^n_\infty \}$
of the limit space $(M^n_\infty,g_\infty,p_\infty)$, possibly along a
subsequence.
Notice that these limit Killing fields span the Lie algebra $\ggl_\infty$:
see remark after Proposition \ref{prop_limitKf}.
Moreover, we obtain a geometric reductive decomposition
$\ggl_\infty=\hg_\infty \oplus \mg_{g_\infty}$ as the limit of the
decompositions $\ggl_k =\hg_k \oplus \mg_{g_k}$.
Finally and most importantly,
since by assumption there is no algebraic collapse, the limit Killing
fields
  $\{X^1_\infty, \ldots, X^n_\infty \}$
span the tangent space $T_{p_{\infty}} M^n_\infty$ of the limit space at
the point
$p_\infty$.

We denote by $\mu^{\ggl_k}:\ggl_k\wedge \ggl_k \to \ggl_k$
and by $\mu^{\ggl_\infty}:\ggl_\infty\wedge \ggl_\infty \to \ggl_\infty$
the corresponding Lie brackets of Killing fields.
Then, as explained in the proof of Proposition \ref{prop_limitKf}, $
\mu^{\ggl_k} \to  \mu^{\ggl_\infty}$,
in the sense that we have $ \mu^{\ggl_k}(\tilde E^i_k,\tilde E^j_k) \to
\mu^{\ggl_\infty}(\tilde E^i_\infty,\tilde E^j_\infty)$
for $k\to \infty$, and so on, where now
$\{ \tilde E_k^1, \ldots ,\tilde E_k^N \}$ denotes the above $\ippk$-orthonormal
basis of $\ggl_k$.
It follows that the corresponding adjoint maps
and the corresponding Killing forms converge, that is  $\kf_{
\mu^{\ggl_k}} \to \kf_{ \mu^{\ggl_\infty}}$ as  $k\to \infty$.

  Consider now the canonical decompositions $\ggo_k = \hg_k \oplus
\mg_k$, which induce a projection of the brackets $(\mu_k)_{\mg_k} :
\mg_k \wedge \mg_k \to \mg_k$ (cf.~ \eqref{eqn_muhgmg}). We claim that,
after passing to a subsequence there exists $C>0$ such that $\Vert
(\mu_k)_{\mg_k}\Vert_{\bar g_k} \leq C$ for all $k\in \NN$.
Recall that by (\ref{def_scal}) we have
\[
   \big\Vert (\mu_k)_{\mg_k} \big\Vert_{\bar g_k}^2
   =\sum_{i,j=1}^n
    g_k \big( \, [E_k^i,E_k^j] \, , \, [E_k^i,E_k^j] \, \big)^2_{p_k}\,,
\]
where $\{E_k^1, \ldots ,E_k^n\}$ denotes a $\ip_{g_k}$-orthonormal basis of
Killing vector fields in the canonical complement $\mg_k$,
that is $\{E_k^1(p_k), \ldots ,E_k^n(p_k)\}$ is a $g_k$-orthonormal
basis of
$T_{p_k}M^n_k$.
We write each of these Killing fields with respect to the
geometric reductive decomposition $\ggl_k =\hg_k \oplus \mg_{g_k}$ as
$E_k^i=\tilde Z_k^i + \tilde X_k^i$. Notice that $\tilde Z_k^i(p_k)=0$
and that
$E_k^i(p_k)=\tilde X_k^i(p_k)$.

We write each $\tilde X^i_k \in \mg_{g_k}$ now as
\[
     \tilde X_k^i = c_k^{1,i} X_k^1 + \cdots + c_k^{n,i} X_k^n,
\]
with $c_k^{j,i} \in \RR$ and evaluate this identity of Killing fields at
the point
$p_k \in M^n_k$: we have $\big\Vert \tilde X^i_k (p_k) \big\Vert_{g_k} =
1$ and
moreover the sequence $\{ X_k^1, \ldots ,X_k^n \}$ converges to linearly independent Killing vector fields $\{ X_\infty^1, \ldots ,X_\infty^n \}$.
Since by the non-collapsing assumption
also $\{ X_\infty^1(p_\infty), \ldots ,X_\infty^n(p_\infty) \}$ are linearly
independent
vectors, the sequence $(c_k^{j,i})_{k\in\NN}$ is uniformly bounded for each
$i, j =1,\ldots,n$. It follows that the Killing fields $(\tilde
X_k^i)_{k\in\NN}$ are uniformly bounded also in the geometric norm, that
is $\Vert \tilde X_k^i\Vert_{g_k}^* \leq D$ for all $k \in \NN$ and all
$i=1, \ldots ,n$.

Next, let us show that also the Killing fields $\tilde Z_k^i$ are
uniformly bounded, that is $\Vert \tilde Z_k^i\Vert^*_{g_k} \leq D$ for
all $k \in \NN$ and all $i=1, \ldots ,n$.
If this would not be the case then one of the sequences, say $(\tilde
Z_k^{1})_{k\in\NN}$, must be unbounded. After rescaling
the corresponding sequence $(E^1_k)_{k\in \NN}$ we would obtain a
sequence $(E_k)_{k\in \NN}$ with $E_k \in \mg_{k}$ and $\Vert
E_k\Vert^*_k = 1$, such that $\Vert E_k(p_k)\Vert_{g_k} \to 0$ as $k\to
\infty$.
By passing to a subsequence we would get
  $E_k \to Z_\infty \in \hg_\infty$ for $k \to \infty$,
  and \mbox{Lemma  \ref{lem_Kill}} would imply that
$\kf_{\mu^{\ggl_\infty}} (Z_\infty,Z_\infty ) \leq -1$. On the other
hand, since $E_k \in \mg_k$,
by the very definition of the canonical decomposition
we have $\kf_{ \mu^{\ggl_k}}(E_k,Z_k)=0$ for all $Z_k \in \hg_k$.
Since by assumption the sequence of homogeneous spaces considered is
algebraically non-collapsed, we can find a sequence $Z_k \in \hg_k$
converging to $Z_\infty$
for $k \to \infty$, and since $\kf_{ \mu^{\ggl_k}}\to \kf_{
\mu^{\ggl_\infty}}$
we deduce that $\kf_{ \mu^{\ggl_\infty}}(Z_\infty,Z_\infty)=0$. This is
a contradiction.

So far, we have proved that for each $i= 1,\ldots, n$ the above sequence
$\big( E_k^i \big)_{k\in\NN}$ is $\ip_{g_k}^*$-bounded. By
\mbox{Proposition \ref{prop_limitKf}}
we may then assume that for each $i$ the Killing fields $E_k^i$,
subconverge in $C^1$-topology to limit Killing fields on
$(M^n_\infty,g_\infty)$.
Since $(M^n_k,g_k,p_k)_{k\in \NN}$ converges to
$(M^n_\infty,g_\infty,p_\infty)$
for $k\to \infty$, we have that $[E_k^i, E_k^j]$ converges to a
$g_\infty$-Killing field on $M^n_\infty$. This shows that
$\Vert (\mu_k)_{\mg_k}\Vert_{\bar g_k} \leq C$ for all $k\in \NN$.

In the next step, we replace the above $\ippk$-orthonormal basis of
$\ggl_k$,
adapted to the geometric decomposition $\ggl_k=\hg_k \oplus \mg_{g_k}$,
by the same basis $\{Z_k^{n+1}, \ldots ,Z_k^N\}$ for $\hg_k$ but by the
new $g_k$-orthonormal basis $\{E_k^1, \ldots ,E_k^n\}$ of the canonical
complement $\mg_k$.
For each $k\in \NN$ this gives a $\ip_{g_k}$-orthonormal basis  of $\ggl_k$,
again denoted by $\{ \tilde E_k^1, \ldots ,\tilde E_k^N \}$: see (\ref{eqn_defipg}).

It remains to show that the abstract brackets $[\mu_k]$
associated to $(M^n_k,g_k,p_k,\ggl_k)$ converge
to the abstract bracket $[\mu_\infty]$ associated to
  $(M^n_\infty,g_\infty,p_\infty,\ggl_\infty)$. To this end, we write
  again $E_k^i=\tilde Z_k^i+\tilde X_k^i$ with $\tilde Z_k^i \in \hg_k$ and
$\tilde X_k^i \in \mg_{g_k}$, $i=1, \ldots ,n$. By the above we know
that $\Vert \tilde X_k^i\Vert_{g_k}^*,\,\,
\Vert \tilde Z_k^i\Vert_{\hg_k} \leq D$ for all $k \in \NN$ and all
$i=1,\ldots,n$.
As above we may assume that all these Killing fields subconverge to
limit Killing fields on $(M^n_\infty,g_\infty)$.  From Definitions
\ref{def_bracket} and \ref{def_absbracket}
it follows that in order to compute the coefficients of the abstract
bracket
$[\mu_k]$, we have to evaluate the bracket $\mu_k$ associated to
$(M^n_k,g_k,p_k,\ggl_k)$ as in Definition \ref{def_bracket} in an
orthonormal
basis with respect to the scalar product $\ip_k$ on $\ggl_k$ induced
by the background metric $\bar g_k$: see \mbox{Section \ref{sec_stratif}}.
As we have seen above,
this is equivalent to evaluating the standard bracket $\mu^{\ggl_k}$ of
smooth
Killing vector fields in the
$\ip_{g_k}$-orthonormal basis $\{ \tilde E_k^1, \ldots ,\tilde E_k^N \}$ of $\ggl_k$,
since both these scalar product agree on $\hg_k$. From this the
convergence of the abstract brackets follows immediately.
\end{proof}

\begin{proof}[Proof of Corollary \ref{cor:algcollapse}]
If the sequence is algebraically non-collapsed, then Theorem
\ref{thm:Liebracketconvergence} implies that the sequence of Lie
brackets converges, contradicting the fact that their norms diverge. The
last assertion follows by Lemma \ref{lem_collapsedseq}.
\end{proof}

\begin{proof}[Proof of Lemma \ref{lem_solitonbracket}]
The above mentioned isometric identification $\ggo\simeq \RR^N$ induces
an identification $\Gg \simeq \Gl(N,\RR)$, and under
\eqref{eqn_identifVgVN} their respective natural actions on $\Vg$, $V_N$
coincide. Thus, the strata given by the Stratification Theorem
\ref{thm_stratifb} also coincide, and so does any data defined only in
terms of the bracket and the scalar product, such as $\Riccim_\mu$.
Hence, the lemma follows from Remark \ref{rmk_solitonbracket}.
\end{proof}

\begin{lemma}\label{lem_absbracketinv}
Let $f: (M_1^n,g_1,p_1,\ggo_1) \to (M_2^n, g_2, p_2, \ggo_2)$ be a local
isometry with $f(p_1) = p_2$, inducing a Lie algebra isomorphism $\hat
f : \ggo_1 \to \ggo_2$. Then,
for the corresponding brackets $\mu_1$, $\mu_2$ we have
$[\mu_1]=[\mu_2]$ in $V_N/\Or(N)$.
\end{lemma}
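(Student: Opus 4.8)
The plan is to reduce the statement to a clean isometry assertion about the triples $(\ggo_i,\ip_{g_i},\mu^{\ggo_i})$. By Remark \ref{rem:bracketindback} the abstract bracket does not depend on the choice of background metric, so we may compute $[\mu_i]$ using $\bar g_i=g_i$ itself. With this choice one has $h=\Id$ in Definition \ref{def_bracket}, so $\mu_i=\mu^{\ggo_i}$ is just the Lie bracket of Killing fields, and the $\Ad(H)$-invariant scalar product of Definition \ref{def_scalg} becomes $\ip_{g_i}$ as in \eqref{eqn_defipg}: the decomposition $\ggo_i=\hg_i\oplus\mg_i$ is declared orthogonal, its restriction to $\mg_i\simeq T_{p_i}M_i^n$ is $(g_i)_{p_i}$, and its restriction to $\hg_i$ is $\la Z_1,Z_2\ra_{\hg_i}=-\tr\big((\nabla^{g_i}Z_1)_{p_i}(\nabla^{g_i}Z_2)_{p_i}\big)$. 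After choosing a $\ip_{g_i}$-orthonormal basis of $\ggo_i$ and identifying $(\ggo_i,\ip_{g_i})\simeq(\RR^N,\ip_{\mathrm{can}})$, the abstract bracket $[\mu_i]\in V_N/\Or(N)$ is the $\Or(N)$-orbit of $\mu^{\ggo_i}$. Since any two such orthonormal bases differ by an element of $\Or(N)$, it follows that $[\mu_1]=[\mu_2]$ as soon as there exists a linear isometry $\psi:(\ggo_1,\ip_{g_1})\to(\ggo_2,\ip_{g_2})$ which intertwines the brackets, i.e.\ $\psi\big(\mu^{\ggo_1}(X,Y)\big)=\mu^{\ggo_2}(\psi X,\psi Y)$; indeed, transporting the chosen basis of $\ggo_1$ by $\psi$ produces an orthonormal basis of $\ggo_2$ in which $\mu^{\ggo_2}$ has the same structure constants as $\mu^{\ggo_1}$.

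I would then verify that $\psi:=\hat f$ has these properties. Recall that the induced isomorphism is the push-forward of Killing fields, $\hat f(X)\circ f=df\circ X$; hence $\hat f(X)(p_2)=df_{p_1}\big(X(p_1)\big)$ and, since a local isometry intertwines Levi-Civita connections, $(\nabla^{g_2}\hat f(X))_{p_2}=df_{p_1}\circ(\nabla^{g_1}X)_{p_1}\circ df_{p_1}^{-1}$. The first identity gives $\hat f(\hg_1)=\hg_2$ (as $df_{p_1}$ is injective) and shows that under the identifications $\mg_i\simeq T_{p_i}M_i^n$ the restriction $\hat f|_{\mg_1}$ corresponds to $df_{p_1}$, which is a linear isometry, so $\hat f|_{\mg_1}:(\mg_1,(g_1)_{p_1})\to(\mg_2,(g_2)_{p_2})$ is an isometry. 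Being a Lie algebra isomorphism, $\hat f$ preserves Killing forms, so it carries the Killing-form orthogonal complement of $\hg_1$ onto that of $\hg_2$, i.e.\ $\hat f(\mg_1)=\mg_2$. From the second identity together with cyclicity of the trace, $\hat f|_{\hg_1}:(\hg_1,\ip_{\hg_1})\to(\hg_2,\ip_{\hg_2})$ is an isometry. Since $\ip_{g_i}$ makes $\hg_i\perp\mg_i$, combining these three facts shows that $\hat f:(\ggo_1,\ip_{g_1})\to(\ggo_2,\ip_{g_2})$ is a linear isometry; and as a Lie algebra isomorphism it intertwines $\mu^{\ggo_1}$ and $\mu^{\ggo_2}$. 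By the first paragraph, $[\mu_1]=[\mu_2]$.

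I do not expect a serious obstacle here: the whole argument rests on the standard fact that a local isometry fixing a point acts on the $1$-jets of Killing fields by conjugation with its differential. The only point deserving care is the reduction to $\bar g_i=g_i$ — one must check that Definition \ref{def_scalg} with this choice of background really recovers the scalar product $\ip_{g_i}$ of \eqref{eqn_defipg}, and keep in mind the distinction (Remark \ref{rem:geocomplement}) between the canonical complement $\mg_i$, with respect to which $\ip_{g_i}$ is built as an orthogonal direct sum, and the geometric complement $\mg_{g_i}=\hg_i^{\perp}$; it is precisely the orthogonality of $\ggo_i=\hg_i\oplus\mg_i$ in $\ip_{g_i}$ that makes the isometry statement for $\hat f$ transparent.
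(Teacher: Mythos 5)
Your proof is correct and follows essentially the same route as the paper's: take $\bar g_i=g_i$ as background, use that $\hat f$ is a Lie algebra isomorphism preserving the isotropy subalgebras and hence the canonical complements, and check that $\hat f$ is an isometry for the scalar products \eqref{eqn_defipg}, so that transporting an orthonormal basis gives the same structure constants. The paper's proof is just a compressed version of your argument, with the conjugation identity for $(\nabla \hat f(X))_{p_2}$ left implicit in the phrase ``it is clear that $\hat f$ is an isometry.''
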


\begin{proof}
Since $f$ is a local isometry, the corresponding map on Killing fields
$\hat f$ takes $\hg_1$ onto $\hg_2$. The map $\hat f$ is a Lie algebra
isomorphism, since the bracket of vector fields can be expressed in
terms of the Riemannian connection, preserved by $f$. Thus, we also
have  $\hat f(\mg_1) = \mg_2$ for the canonical complements, as they are
defined only in terms of Lie theoretical data. Using $\bar g_1 = g_1$,
$\bar g_2 = g_2$ as background metrics, and extending the corresponding
scalar products on $\mg_i$ to orthonormal basis $\ip_i$ for $\ggo_i$ as
in \eqref{eqn_defipg} above, $i=1,2$, it is clear that $\hat f :
(\ggo_1, \ip_1) \to (\ggo_2, \ip_2)$ is an isometry. This implies that
the abstract brackets constructed with orthonormal basis $\{\tilde
E_i\}_{i=1}^N$ and $\{\hat f (\tilde E_i)\}_{i=1}^N$ coincide.
\end{proof}

Using the notation from the proof of Theorem \ref{thm_noncollapsing}, we prove the following characterization of algebraic collapse:

\begin{corollary}
A sequence $(M^n_k, g_k, p_k,\ggl_k)_{k\in \NN}$ converging in equivariant Cheeger-Gromov topology to $(M^n_\infty, g_\infty, p_\infty)$ (with $M^n_\infty$ diffeomorphic to $D^n$ in the incomplete case) is algebraically non-collapsed if and only if the sequence of brackets restricted to the corresponding canonical complements is uniformly bounded, that is,	$\Vert (\mu_k)_{\mg_k} \Vert_{\bar g_k} \leq C$ 
for all $k\in \NN$, for some $C>0$.
\end{corollary}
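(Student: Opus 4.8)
The plan is to deduce this characterization from the two implications already essentially proved in the paper. For the "only if" direction, I would simply invoke Theorem~\ref{thm:Liebracketconvergence}: if the sequence $(M^n_k,g_k,p_k,\ggl_k)_{k\in\NN}$ is algebraically non-collapsed, then the limit space is $\ggl_\infty$-homogeneous and the abstract brackets $[\mu_k]$ subconverge to $[\mu_\infty]$. In fact, inspecting the proof of Theorem~\ref{thm:Liebracketconvergence} (the step showing $\Vert(\mu_k)_{\mg_k}\Vert_{\bar g_k}\le C$), the uniform boundedness of the canonical-complement parts of the brackets is established \emph{along a subsequence} directly; but since the argument applies verbatim to every subsequence, a standard subsequence-of-subsequence argument upgrades this to boundedness of the full sequence. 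Hence algebraic non-collapse forces $\Vert(\mu_k)_{\mg_k}\Vert_{\bar g_k}\le C$ for all $k$.

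For the "if" direction I would argue by contrapositive: suppose the sequence is algebraically collapsed. By \mbox{Proposition \ref{prop_limitKf}} we may pass to a subsequence so that $\ggl_k$ converges in $C^1$-topology to a Lie algebra $\ggl_\infty$ of $g_\infty$-Killing fields which, by the collapse hypothesis, is \emph{not} transitive; i.e.\ the limit Killing fields $\{X^1_\infty(p_\infty),\dots\}$ fail to span $T_{p_\infty}M^n_\infty$. The key point is to show this forces $\Vert(\mu_k)_{\mg_k}\Vert_{\bar g_k}\to\infty$ (along this subsequence, which contradicts boundedness of the whole sequence). Here I would run the contradiction argument from the proof of Theorem~\ref{thm:Liebracketconvergence} in reverse: if instead $\Vert(\mu_k)_{\mg_k}\Vert_{\bar g_k}$ stayed bounded along a further subsequence, then choosing $\ip_{g_k}$-orthonormal bases $\{E^i_k\}$ of the canonical complements $\mg_k$, the brackets $[E^i_k,E^j_k]$ would be uniformly bounded in the jet-norm (the $\mg_{g_k}$-parts of $E^i_k$ are automatically bounded since the $X^i_k$ converge to linearly independent limits, and the $\hg_k$-parts are controlled by the Killing-form argument using \mbox{Lemma \ref{lem_Kill}} exactly as in the theorem's proof), hence would subconverge to $g_\infty$-Killing fields whose values at $p_\infty$ span $T_{p_\infty}M^n_\infty$. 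But those limit fields lie in $\ggl_\infty$, contradicting non-transitivity. Therefore $\Vert(\mu_k)_{\mg_k}\Vert_{\bar g_k}$ is unbounded, so the uniform bound fails.

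The main obstacle is bookkeeping with subsequences: the statement quantifies over \emph{all} $k$, whereas Theorem~\ref{thm:Liebracketconvergence} and Proposition~\ref{prop_limitKf} only produce conclusions along subsequences. The clean way around this is the standard observation that "every subsequence has a further subsequence along which $\Vert(\mu_k)_{\mg_k}\Vert_{\bar g_k}$ is bounded by a uniform constant'' is equivalent to boundedness of the whole sequence (and the uniform constant can be taken independent of the subsequence because it only depends, via the Killing-form estimates, on $n$ and the convergence data), and dually for unboundedness. Apart from this, every analytic ingredient—the $C^1$-convergence of Killing fields, the lower bound $\kf_{\mu^{\ggl_\infty}}(Z_\infty,Z_\infty)\le -1$ on the isotropy from \mbox{Lemma \ref{lem_Kill}}, and the identity $[X,Y]=(\nabla X)Y-(\nabla Y)X$ relating brackets to jets—has already been assembled in the proof of Theorem~\ref{thm:Liebracketconvergence}, so the corollary is essentially a repackaging of that proof with the logical direction made explicit.
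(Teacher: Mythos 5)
Your ``only if'' direction is fine: under algebraic non-collapse the bound $\Vert(\mu_k)_{\mg_k}\Vert_{\bar g_k}\le C$ is exactly what is established inside the proof of Theorem \ref{thm:Liebracketconvergence}, and your subsequence-of-subsequences remark correctly upgrades it from a subsequence to the whole sequence; this matches the paper's first direction.

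The gap is in the ``if'' direction. Arguing by contrapositive you work under the standing assumption of algebraic \emph{collapse}, and you justify the boundedness of the $1$-jets of the canonical orthonormal fields $E^i_k$ by importing the two estimates from the proof of Theorem \ref{thm:Liebracketconvergence}: boundedness of the $\mg_{g_k}$-parts ``since the $X^i_k$ converge to linearly independent limits'', and boundedness of the $\hg_k$-parts via the Killing-form argument with Lemma \ref{lem_Kill}. Both of those estimates use algebraic non-collapse in an essential way, i.e.\ precisely the hypothesis you have negated, so the argument is circular. For the first: what is needed there is not linear independence of the limit Killing fields $X^i_\infty$ (automatic, since they are jet-orthonormal), but linear independence of the \emph{values} $X^1_\infty(p_\infty),\ldots,X^n_\infty(p_\infty)$; under collapse these are dependent, and the coefficients $c^{j,i}_k$ may blow up --- this is exactly the collapse mechanism. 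For the second: the contradiction in that step needs a sequence $Z_k\in\hg_k$ converging to $Z_\infty$, which is produced from non-collapse, and moreover Lemma \ref{lem_Kill} is applied to the \emph{transitive} limit algebra; under collapse $\ggl_\infty$ is not transitive and the lemma's proof (which uses surjectivity of the evaluation map $\mg_{g}\to T_pM^n$) no longer gives $\kf(Z_\infty,Z_\infty)\le-1$.

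The implication you actually need --- $\Vert(\mu_k)_{\mg_k}\Vert_{\bar g_k}\le C$ forces $\Vert(\nabla^{g_k}E^i_k)_{p_k}\Vert$ to be bounded --- is true, but its correct justification is the formula for the Levi-Civita connection of an invariant metric in terms of the bracket restricted to a reductive complement (\cite[Prop.~7.28]{Bss}): for $Y\in\mg_k$ the endomorphism $(\nabla^{g_k}Y)_{p_k}$ is expressed through $(\mu_k)_{\mg_k}$ and $\ip_{g_k}$ alone. This is exactly the ingredient of the paper's proof, which phrases the converse directly: collapse yields Killing fields with $\Vert X_k(p_k)\Vert_{g_k}\equiv 1$ and $\Vert(\nabla^{g_k}X_k)_{p_k}\Vert\to\infty$ (renormalize fields running into the isotropy), and Besse's formula then forces $(\mu_k)_{\mg_k}$ to be unbounded for any reductive complement. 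If you replace your borrowed estimates by this connection formula, your contrapositive goes through; as written, the converse direction is not proved.
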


\begin{proof}
One direction follows immediately from Theorem \ref{thm_noncollapsing}. On the other hand, if the sequence is algebraically collapsed, then there exists a sequence of Killing fields $X_k \in \ggo_k$ with $\Vert X_k(p_k)\Vert_{g_k} \equiv 1$ and $\big\Vert \big(\nabla^{g_k} X_k \big)_{p_k}\big\Vert_{g_k} \to \infty$ as $k\to\infty$. Proposition 7.28 in \cite{Bss} then implies that the restrictions to the canonical complements (and in fact to \emph{any} reductive complement) $(\mu_k)_{\mg_k} : {\mg_k \wedge \mg_k} \to \mg_k$  are unbounded.
\end{proof}



\begin{appendix}


\section{Locally homogeneous spaces}\label{app_lochomog}

Let $(M^n,g)$ be a locally homogeneous space and fix a point $p \in M^n$. 
It is well-known (see for instance \cite{Nomizu1960}) that there exists a Lie algebra $\ggl$ of Killing vector fields defined on an open neighbourhood of $p$ which span $T_p M$.
Since Killing fields are Jacobi fields, every Killing field
$X \in \ggl$  is uniquely determined by the data 
\[
	X(p) \in T_p M, \qquad (\nabla X)_p\in \sog(T_p M, g_p),
\]
were $\nabla=\nabla^g$ is the Levi-Civita-connection of  $(M^n,g)$.
As a consequence, there is a linear isomorphism identifying 
\begin{equation}\label{eqn_identifKilling}
  \ggl \simeq \ggl_p := \left\{ (X(p), - (\nabla X)_p) : X\in \ggl \right\} \subset
     T_p M \oplus \sog(T_p M, g_p)\, =: E_p.
\end{equation}
The Lie bracket $\hml(X,Y)=[X,Y]$ of two Killing fields $X,Y$ is again a Killing field.
Under \eqref{eqn_identifKilling}
it corresponds to the following
Lie bracket \mbox{on $\ggl_p$ (cf.~\cite{Nomizu1960}):}
\begin{equation}\label{eqn_bracketKf}
    \left[ (v, A), (w, B) \right] = \left(Aw - Bv, [A,B] - {\rm Rm}_{g_p}(v,w) \right)\,,
\end{equation} 
where $(v,A), (w,B) \in \ggl_p$, and $[A,B] = A \cdot B - B \cdot A$.
The metric $g_p$ on $T_p M$ induces on $E_p$ a natural scalar product, which for $(v,A), (w,B) \in E_p$ is given by
\begin{eqnarray}
  \big\la (v,A),(w,B)\big\ra^*_g = g_p(v,w) - \tr (A\cdot B)\, .\label{eqn_scalonejet}
\end{eqnarray}

Let  $\hg = \{X\in \ggl : X(p) = 0 \}$
be the isotropy subalgebra at $p$, and denote by $\mg_g := \hg^\perp$ the $\ipp$-orthogonal complement of $\hg$
in $\ggl$. 



\begin{lemma}\label{lem:reductive}
The decomposition $\ggl = \hg \oplus \mg_g$ is reductive, that is, $[\hg,\mg_g] \subset \mg_g$.
\end{lemma}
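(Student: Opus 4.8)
The plan is to use the isometry-invariance of the Levi-Civita connection together with the explicit bracket formula \eqref{eqn_bracketKf} on $\ggl_p$. The key point is that under the identification \eqref{eqn_identifKilling}, an element $X\in\ggl$ lies in $\hg$ precisely when its $T_pM$-component vanishes, i.e. $\hg \simeq \{(0,A) : (0,A)\in\ggl_p\}$, whereas a general element of $\mg_g=\hg^\perp$ pairs to zero with every such $(0,A)$ under the scalar product \eqref{eqn_scalonejet}. Since $\la(0,A),(v,B)\ra^*_g = g_p(0,v) - \tr(AB) = -\tr(AB)$, orthogonality to $\hg$ is a condition purely on the skew-symmetric part $-(\nabla X)_p$ of $X\in\mg_g$, not on $X(p)$.

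First I would take $Z\in\hg$ and $X\in\mg_g$, write $Z\leftrightarrow(0,A)$ and $X\leftrightarrow(v,B)$ with $(0,C)$ ranging over the image of $\hg$ in $\ggl_p$, and compute $[Z,X]$ using \eqref{eqn_bracketKf}: $[(0,A),(v,B)] = (Av,\, [A,B] - \Riem_{g_p}(0,v)) = (Av,\,[A,B])$, since $\Riem_{g_p}(0,v)=0$ by antisymmetry of the curvature operator in its first slot. So $[Z,X]$ corresponds to $(Av,[A,B])\in\ggl_p$, and its $T_pM$-component is $Av$, which need not vanish, so $[Z,X]$ is in general not in $\hg$ — good, we just need it in $\mg_g$. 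For that I must check $[Z,X]\perp\hg$, i.e. for every $(0,C)$ in the image of $\hg$, that $\la[(0,A),(v,B)],(0,C)\ra^*_g = 0$, which by \eqref{eqn_scalonejet} reduces to $-\tr([A,B]\cdot C)=0$.

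The main step, then, is to show $\tr([A,B]C)=0$ for all such $C$. Here I would invoke the fact that $\hg$ is a subalgebra: since $Z\in\hg$ and the element $W\leftrightarrow(0,C)$ also lies in $\hg$, we have $[Z,W]\in\hg$, and by \eqref{eqn_bracketKf} $[(0,A),(0,C)] = (0,[A,C])$, consistently landing in $\hg$. Now $\tr([A,B]C) = \tr(ABC) - \tr(BAC) = \tr(ABC) - \tr(ACB) = \tr(A[B,C]) = -\tr([A,C]B)$ by cyclicity of the trace. Since $[A,C]$ is the skew-symmetric endomorphism part of the element $[Z,W]\in\hg$, and $B$ is the skew-symmetric endomorphism part of $X\in\mg_g=\hg^\perp$, the orthogonality $X\perp[Z,W]$ together with $[Z,W](p)=0$ gives $\la X,[Z,W]\ra^*_g = g_p(v,0) - \tr(B[A,C]) = -\tr(B[A,C]) = 0$. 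Hence $\tr([A,B]C)=0$, so $[Z,X]\perp\hg$, i.e. $[Z,X]\in\mg_g$, as required.

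The main obstacle is really just keeping the bookkeeping straight between the abstract Lie algebra $\ggl$, its model $\ggl_p\subset E_p$, and which component (the $T_pM$-part or the $\sog$-part) each orthogonality condition constrains; the curvature term $\Riem_{g_p}$ conveniently drops out because one argument is always $0$, so no curvature identities are needed. I would also note at the end that this is the standard argument showing $\ipp$-orthogonal complements of subalgebras of Killing algebras are reductive (cf.\ the canonical decomposition of Definition \ref{def_can}, which is the analogous statement with $\kf_\hml$ in place of $\ipp$), and that the only input is the $\Ad(H)$-invariance — equivalently $\ad(\hg)$-invariance — of the bilinear form $\ipp$ on $\ggl$, which here is automatic since $\ipp$ is built from the isometry-invariant data $g_p$ and the trace form.
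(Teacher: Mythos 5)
Your proof is correct and follows essentially the same route as the paper: identify $\ggl$ with $\ggl_p$, note that pairing against $\hg$ only sees the endomorphism slot, and use cyclicity of the trace plus the fact that $\hg$ is a subalgebra to get $\tr([A,B]C)=-\tr([A,C]B)=0$. The paper's proof is just a condensed version of this same computation.
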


\begin{proof}
We identify $\ggl$ with $\ggl_p$ using \eqref{eqn_identifKilling}. Let $(0,A),(0,\tilde A)\in \hg$, $(w,B) \in \mg_g$. Then, 
\[
	 \big\la\,[(0,A),(w,B)],(0,\tilde A)\,\big\ra^*_g
	 =0- \tr \big([A,B]\cdot \tilde A\big)
	 =- \tr \big([\tilde A,A]\cdot B\big)=0\,,
\]
by (\ref{eqn_bracketKf}) and (\ref{eqn_scalonejet}), since $\hg$ is a subalgebra. This shows the claim.
\end{proof}

Notice that, the reductive complement $\mg_g$ may depend on the metric $g$: see Remark \ref{rem:geocomplement}.
The following technical result will be needed in the sequel.

\begin{lemma}\label{lem_Kill}
For $Z\in \hg$, $\Vert Z \Vert^*_g = 1$, we have that $\kf_\hml(Z,Z) \leq -1$.
\end{lemma}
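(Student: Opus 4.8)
The plan is to compute the Killing form through the $1$-jet realization $\ggl \simeq \ggl_p \subset E_p := T_pM \oplus \sog(T_pM, g_p)$ of \eqref{eqn_identifKilling}. Since $Z \in \hg$ vanishes at $p$, it corresponds to a pair $(0, A)$ with $A := -(\nabla^g Z)_p \in \sog(T_pM, g_p)$, and by \eqref{eqn_scalonejet} the normalization $\Vert Z\Vert^*_g = 1$ reads $-\tr(A^2) = \tr(A A^t) = 1$. From the bracket formula \eqref{eqn_bracketKf}, for any $W \leftrightarrow (w, B) \in \ggl_p$ one gets $[Z, W] \leftrightarrow (Aw, [A,B])$ — the curvature term ${\Riem}_{g_p}(0, w)$ drops out because $Z(p) = 0$. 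Hence $\ad_\hml Z$ acts on $\ggl_p$ as the restriction of the endomorphism $\rho(A) \in \End(E_p)$ defined by $\rho(A)(w, B) = (Aw, [A,B])$, i.e. the natural action of $A$ on the module $T_pM \oplus \sog(T_pM)$ (standard representation plus adjoint representation). In particular $\ggl_p$ is $\rho(A)$-invariant, and $\kf_\hml(Z,Z) = \tr_\ggl(\ad_\hml Z)^2 = \tr_{\ggl_p}\big(\rho(A)|_{\ggl_p}\big)^2$.

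The first observation is that $\rho(A)$ is skew-symmetric for the scalar product \eqref{eqn_scalonejet} on $E_p$: this is the skew-symmetry of $A$ with respect to $g_p$ on the first summand, and the standard fact that $\ad(A) = [A, \cdot\,]$ is skew-symmetric for $(B_1, B_2) \mapsto -\tr(B_1 B_2)$ on the second. Consequently the restriction of $\rho(A)$ to any invariant subspace is again skew-symmetric, so the trace of its square is $\leq 0$. Now split $\ggl_p = \hg \oplus \mg_g$ orthogonally with respect to $\ip^*_g$; both summands are $\rho(A)$-invariant, since $\hg$ is a subalgebra (as $Z \in \hg$) and $[\hg, \mg_g] \subset \mg_g$ by Lemma \ref{lem:reductive}. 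Therefore $\kf_\hml(Z,Z) = \tr_\hg\big(\rho(A)|_\hg\big)^2 + \tr_{\mg_g}\big(\rho(A)|_{\mg_g}\big)^2$, and the first summand is $\leq 0$.

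It then remains to evaluate the $\mg_g$-contribution exactly, and this is the only place where transitivity of $\ggl$ enters. The evaluation map $\operatorname{ev} : \ggl_p \to T_pM$, $(w, B) \mapsto w$, is surjective precisely because the Killing fields in $\ggl$ span $T_pM$, and its kernel is exactly $\hg$; hence its restriction $\operatorname{ev} : \mg_g \to T_pM$ is a linear isomorphism. Since $\operatorname{ev} \circ \rho(A) = A \circ \operatorname{ev}$, the operator $\rho(A)|_{\mg_g}$ is conjugate to $A$, so $\tr_{\mg_g}\big(\rho(A)|_{\mg_g}\big)^2 = \tr_{T_pM}(A^2) = -1$. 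Combining the last two displayed identities gives $\kf_\hml(Z,Z) \leq 0 + (-1) = -1$, which is the assertion. I do not expect a genuine obstacle; the two points needing care are that the decomposition $\ggl_p = \hg \oplus \mg_g$ is $\rho(A)$-invariant — where Lemma \ref{lem:reductive} is used — and that the hypothesis concerns the $1$-jet norm $\ip^*_g$ rather than $g_p$. It is exactly because the $\mg_g$-part already saturates the bound while the $\hg$-part can only add something $\leq 0$ that one obtains an inequality, with equality if and only if $\ad(A)$ vanishes on $\hg$.
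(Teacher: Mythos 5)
Your proof is correct and is essentially the paper's argument: both split $\ggl=\hg\oplus\mg_g$ (invariant under $\ad_\hml Z$ by Lemma \ref{lem:reductive}), use the evaluation isomorphism $\mg_g\to T_pM$ to conjugate $(\ad_\hml Z)|_{\mg_g}$ to $-(\nabla Z)_p$ (giving trace of square $=-1$ by the normalization), and use skew-symmetry on $\hg$ with respect to $\ipp$ to bound the remaining term by $0$. The only cosmetic difference is that you obtain skew-symmetry on $\hg$ by restricting the skew operator $\rho(A)$ on all of $E_p$, whereas the paper notes directly that $\ipp|_\hg$ is a negative multiple of the Killing form of $\sog(T_pM,g_p)$.
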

\begin{proof}
The endomorphism 
\[
 \ad_{\hml}(Z) : \ggl \to \ggl\,\,;\,\,\,X \mapsto [Z,X]=\nabla_Z X-\nabla_X Z
\] 
preserves the metric reductive decomposition $\ggl = \hg \oplus \mg_g$. Let us denote by $(\ad_\hml(Z))|_\hg$
and $(\ad_\hml(Z))|_{\mg_g}$ the restrictions to each of those subspaces. Using the identification \eqref{eqn_identifKilling}, 
the map $T:\mg_g \to T_pM^n\,;\,\,X\mapsto X(p)$ is a linear isomorphism,
since $\ggl$ is a transitive Lie algebra of Killing fields. 
Notice that from the above identity we get 
$T \circ (\ad_\hml (Z))|_{\mg_g} \circ T^{-1}=-(\nabla Z)_p$. As a consequence,
\begin{eqnarray*}
	\lefteqn{\tr_\ggl\big({(\ad_\hml (Z))\cdot (\ad_\hml (Z))} \big) }&&\\
	&=& \tr_\hg \big( (\ad_\hml(Z))|_\hg \cdot (\ad_\hml(Z))|_\hg \big) 
	+  \tr_{\mg_g} \big( (\ad_\hml(Z))|_{\mg_g} \cdot (\ad_\hml(Z))|_{\mg_g} \big)  \\
	&\leq&  \tr_{\mg_g} \big( (\ad_\hml(Z))|_{\mg_g} \cdot (\ad_\hml(Z))|_{\mg_g} \big) 
	  = \tr_{T_p M} \big( (\nabla_\cdot Z)_p \cdot (\nabla_\cdot Z)_p \big)  = - 1,
\end{eqnarray*}
where the inequality follows from $(\ad_\hml (Z))|_\hg$ being skew-symmetric (recall that the metric $\ip_g^*$ restricted to $\hg$ is nothing but a negative multiple of the Killing form of $\sog(T_p M, g_p)$), and the last equality is just by definition of $\ipp$.
\end{proof}

The previous result justifies the definition of the \emph{canonical decomposition}: consider $\mg \subset \ggl$ the orthogonal complement of $\hg$ with respect to the Killing form $\kf_\hml$.

\begin{lemma}\label{lem_can}
The canonical decomposition
$\ggl=\hg \oplus \mg$ is reductive.
\end{lemma}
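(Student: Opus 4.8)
The goal is to show that the canonical decomposition $\ggl = \hg \oplus \mg$, where $\mg = \hg^{\perp_{\kf_\hml}}$ is the orthogonal complement with respect to the Killing form, satisfies $[\hg,\mg]\subset\mg$. The key inputs are that $\kf_\hml$ is non-degenerate on $\hg$ (hence $\mg$ is well-defined and $\ggl = \hg\oplus\mg$ as vector spaces), which follows from Lemma \ref{lem_Kill} since $\kf_\hml|_{\hg}$ is negative definite, and the fundamental invariance property of the Killing form: for any $X,Y,Z\in\ggl$,
\[
  \kf_\hml([X,Y],Z) = -\kf_\hml(Y,[X,Z]).
\]
This identity is immediate from the definition \eqref{eqn_defKF}, since $\ad_\hml[X,Y] = [\ad_\hml X,\ad_\hml Y]$ and trace is invariant under commutators.

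\textbf{Main step.} First I would record that $\hg$ is a subalgebra, so $[\hg,\hg]\subset\hg$; this is clear since the bracket of two Killing fields vanishing at $p$ again vanishes at $p$ (it can be computed from the covariant derivative, as in \eqref{eqn_bracketKf} with $v=w=0$). Now take $Z\in\hg$, $X\in\mg$; I want to show $[Z,X]\in\mg$, i.e. $\kf_\hml([Z,X],W) = 0$ for all $W\in\hg$. Using the invariance identity with the roles arranged so the derivation acts by $\ad_\hml Z$,
\[
  \kf_\hml([Z,X],W) = -\kf_\hml(X,[Z,W]).
\]
Since $Z,W\in\hg$ and $\hg$ is a subalgebra, $[Z,W]\in\hg$; and $X\in\mg = \hg^{\perp_{\kf_\hml}}$, so $\kf_\hml(X,[Z,W]) = 0$. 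Therefore $\kf_\hml([Z,X],W)=0$ for all $W\in\hg$, which says precisely that $[Z,X]\in\hg^{\perp_{\kf_\hml}} = \mg$. This proves $[\hg,\mg]\subset\mg$, i.e. the decomposition is reductive.

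\textbf{Obstacle.} There is essentially no serious obstacle here; the only point requiring care is making sure $\mg$ is genuinely a complement, i.e. that $\hg\cap\mg = 0$ and $\hg + \mg = \ggl$. Both follow from non-degeneracy of $\kf_\hml|_{\hg}$: an element of $\hg\cap\mg$ is $\kf_\hml$-orthogonal to all of $\hg$, hence zero; and a dimension count gives $\dim\mg = \dim\ggl - \dim\hg$. One should cite Lemma \ref{lem_Kill} (or the earlier remark that $\kf_\hml$ is negative definite on $\hg$) for this. The invariance identity itself is the standard associativity of the Killing form and needs no extra hypotheses. Hence the lemma follows. I would also note, as the excerpt already does in the discussion after Definition \ref{def_can}, that this is consistent with the alternative description via $\Ad(H)$-invariance, but the direct argument above is the cleanest route.
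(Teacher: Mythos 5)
Your proof is correct. It differs in flavor from the paper's argument: the paper passes through the group level, taking $\GG$ simply-connected with Lie algebra $\ggl$ and $H$ the connected subgroup corresponding to $\hg$, noting that $\kf_\hml$ is $\Ad(\GG)$-invariant and $\hg$ is $\Ad(H)$-invariant, so the $\kf_\hml$-orthogonal complement $\mg$ is $\Ad(H)$-invariant, whence $[\hg,\mg]\subset\mg$ by differentiating (this is the same argument sketched in the text right after Definition \ref{def_can}). You instead work purely infinitesimally, using the associativity identity $\kf_\hml([Z,X],W)=-\kf_\hml(X,[Z,W])$ together with the facts that $\hg$ is a subalgebra and $\mg=\hg^{\perp_{\kf_\hml}}$; this is exactly the derivative of the paper's argument and establishes the same inclusion in one line without invoking $\GG$, $H$, or any connectedness considerations. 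Both routes rest on the same two ingredients: invariance of the Killing form under the adjoint action, and negative definiteness of $\kf_\hml|_\hg$ (Lemma \ref{lem_Kill}) to guarantee that $\mg$ is a genuine complement, a point you correctly address via $\hg\cap\mg=0$ and the dimension count (the map $\ggl\to\hg^*$, $X\mapsto\kf_\hml(X,\cdot)|_\hg$, is onto because it is already an isomorphism on $\hg$). Your version is marginally more self-contained; the paper's is shorter given that the $\Ad(H)$-invariance machinery is set up anyway for Lemma \ref{lem_correspGinvmetrics} and the surrounding discussion.
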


\begin{proof}
Let $\GG$ be the simply-connected Lie group with Lie algebra $\ggl$ and let $H$ denote the connected subgroup corresponding to $\hg$. The Killing form $\kf_\hml$ of $\ggl$ is $\Ad(\GG)$-invariant.
Thus, since $\hg$ is $\Ad(H)$-invariant, the $\kf_\hml$-orthogonal complement $\mg$ of $\hg$ in $\ggl$
is also $\Ad(H)$-invariant. This implies that $[\hg,\mg]\subset \mg$.
\end{proof}

For a homogeneous space we now have two reductive decompositions: the first one $\ggl = \hg \oplus \mg_g$ is more of a metric nature, and encodes geometric information, whereas the canonical decomposition $\ggl = \hg \oplus \mg$ is of a Lie-theoretical nature, and does not depend on a particular metric. It turns out that, for technical reasons, the canonical decomposition is more useful in most situations. The exception is when dealing with convergence of Killing fields as in Section \ref{sec_algcon}.

As mentioned in Section \ref{sec_homricflow}, a $\ggl$-homogeneous space
admits a local action by a connected, simply connected Lie group $\GG$
with Lie algebra $\ggl$. Since the following beautiful proof of this fact, which we
owe to Robert Bryant, doesn't seem to be known, 
we present it here for the convenience of the reader.

\begin{theorem}[Second fundamental theorem of Lie]\label{thm_Lie}
Let $M^n$ be a smooth manifold and let $\ggl$ be a finite-dimensional Lie algebra 
of smooth vector fields on $M^n$. Then there exists an effective, smooth,  local action
of a simply-connected Lie group $\GG$, for which the corresponding action fields span
$\ggl$.
\end{theorem}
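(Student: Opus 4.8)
The plan is to construct the local action of the simply-connected Lie group $\GG$ associated with $\ggl$ by integrating the vector fields in a clever, coordinated way, rather than one at a time. First I would fix a basis $X_1,\dots,X_r$ of $\ggl$ with structure constants $c_{ij}^k$, so that $[X_i,X_j]=\sum_k c_{ij}^k X_k$, and fix a point $p\in M^n$. The key observation is that on the product manifold $\GG\times M^n$ one can write down a completely natural distribution: at $(g,m)$, take the graph of the linear map sending a left-invariant vector field $X_i^L$ at $g$ to the vector field value $X_i(m)\in T_m M^n$. Concretely, let $\theta = \sum_i \theta^i \otimes X_i$ be the (left) Maurer-Cartan form on $\GG$, and consider the $1$-forms $\eta^i$ on $\GG\times M^n$ obtained by declaring a tangent vector $(v,w)$ to lie in the distribution $\dca$ iff $w = \sum_i \theta^i(v)\, X_i(m)$. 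This $\dca$ has rank $\dim \GG = r$ and is everywhere transverse to the $\{g\}\times M^n$ slices.

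The heart of the argument is to check that $\dca$ is involutive. This is exactly where the bracket relations in $\ggl$ must match the Maurer-Cartan structure equations $d\theta^k = -\tfrac12\sum_{i,j}c_{ij}^k\,\theta^i\wedge\theta^j$: a Frobenius-type computation shows that $d$ of the defining forms of $\dca$ vanishes modulo $\dca$ precisely because the vector fields $X_i$ on $M^n$ close under bracket with the \emph{same} structure constants $c_{ij}^k$. So I would carry out that computation, using that for vector fields $[X_i,X_j] = \sum_k c_{ij}^k X_k$ genuinely holds on $M^n$ (not just abstractly). By the Frobenius theorem, through the point $(e,p)\in \GG\times M^n$ there passes a unique maximal integral leaf $L$ of $\dca$. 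Since $\dca$ is transverse to the $M^n$-factor, the projection $L\to \GG$ is a local diffeomorphism near $(e,p)$; shrinking to an open neighbourhood $U$ of $e$ in $\GG$ on which this projection has a smooth section, the section has the form $g\mapsto (g,\Phi(g,p))$ for a smooth map $\Phi$, and by construction $\tfrac{\partial}{\partial t}\big|_0 \Phi(\exp(tX_i),p) = X_i(\Phi(e,p))$. Doing this with $p$ replaced by an arbitrary nearby $m$ gives the local multiplication $\Phi: U\times M^n \supset U' \to M^n$; the cocycle identity $\Phi(gh,m)=\Phi(g,\Phi(h,m))$ (whenever defined) follows from uniqueness of integral leaves together with the left-invariance built into $\dca$, and differentiating the resulting one-parameter groups recovers that the action field of $X_i$ is exactly $X_i$. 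Finally, effectiveness: the kernel of the action is a normal subgroup of $\GG$ whose Lie algebra consists of elements of $\ggl$ acting as the zero vector field on $M^n$; since $\ggl$ is a Lie algebra \emph{of} vector fields, i.e.\ the assignment $X\mapsto X$ is faithful, this Lie algebra is zero, and since $\GG$ is connected the kernel is discrete, which one kills by passing to the effective quotient (or simply notes that a discrete normal subgroup acting trivially does not affect the local action).

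The main obstacle I anticipate is the involutivity computation and, more subtly, making the transition from an abstract integral leaf to an honest smooth local \emph{group} action with the cocycle property — one has to be careful that the leaf through $(e,p)$ and the leaves through nearby $(e,m)$ assemble smoothly in $m$, and that left-translation in $\GG$ maps $\dca$ to itself so that associativity is forced. The left-invariance of $\dca$ under the $\GG$-action $g'\cdot(g,m)=(g'g,m)$ is the structural fact that makes all of this work, so I would state and use it explicitly. Everything else (smoothness, the exponential-map formula for action fields) is a routine unwinding of the Frobenius construction.
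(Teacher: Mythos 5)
Your proposal follows essentially the same route as the paper's (Bryant's) proof: build an involutive ``graph'' distribution on a product of the group with the manifold by pairing invariant vector fields on $\GG$ with the given fields on $M^n$, apply Frobenius, read off the local multiplication from the leaf through the identity, and get the cocycle law from invariance of the distribution together with uniqueness of integral leaves. Two details deserve comment, one of which is an actual slip as written. You pair the $X_i$ with \emph{left}-invariant fields via the left Maurer--Cartan form, so your $\dca$ is invariant under $(g,m)\mapsto(g'g,m)$; translating the leaf through $(e,\Phi(g_0,m))$ by $g_0$ and using uniqueness then yields $\Phi(g_0g,m)=\Phi\bigl(g,\Phi(g_0,m)\bigr)$, i.e.\ a \emph{right} local action (an anti-homomorphism at the level of flows), not the identity $\Phi(gh,m)=\Phi\bigl(g,\Phi(h,m)\bigr)$ you assert. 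This does not endanger the theorem -- the action fields still span $\ggl$, and one can either rebuild $\dca$ from right-invariant fields or compose with inversion -- but it is precisely why the paper defines the bracket on $T_e\GG$ via \emph{right}-invariant fields $R_v$ and uses $Z_v(g,p,q)=R_v(g)+0_p+\psi(v)(q)$: with that convention the flow identity $F(\exp(tv)\,g_0,m_0)=\Phi_t^{\psi(v)}\bigl(F(g_0,m_0)\bigr)$ comes out directly in the left-action form. Also, do not ``pass to the effective quotient'': that could destroy simple connectedness, which the statement requires; your parenthetical remark (a discrete normal subgroup meets a neighbourhood of $e$ only in $e$, so it is invisible to the local action) is the correct resolution.

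The second point is the smooth assembly in $m$ that you flag but leave open. This is exactly what the paper's seemingly odd ambient space $\GG\times M^n\times M^n$ is for: the leaves meet the $n$-dimensional submanifold $\{e\}\times\Delta(M^n)$ transversally, so their union is a single smooth $(N+n)$-dimensional submanifold near the diagonal, which is then the graph of one smooth map $F$ defined on a neighbourhood of $\{e\}\times M^n$ in $\GG\times M^n$. In your $\GG\times M^n$ picture the same conclusion follows from the Frobenius foliation chart (plaques depend smoothly on the transverse parameter), but this should be said explicitly rather than left as a worry; the paper's diagonal device buys exactly this uniformity in $m$ with no extra argument.
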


\begin{proof}
The existence of the Lie group $\GG$ is well-known.
The Lie bracket $[\,\cdot \,, \cdot \,]$ on $T_e\GG$ is defined as follows: 
if $R_v,R_w$ are right-invariant vector fields on $\GG$, 
uniquely determined by $v,w \in T_e \GG$, then the Lie bracket $[R_v,R_w]$
is a right-invariant vector field as well and we set
$[v,w]:=[R_v,R_w](e)$. 
 By assumption we obtain a Lie algebra isomorphism
$\psi:T_e \GG\to \ggl$, that is $[\psi(v),\psi(w)] =  \psi([v,w])$. 
Now, on $\GG \times M^n \times M^n$ we define for each $v\in\ggl$
the smooth vector field $Z_v$ by
\[
  Z_v(g,p,q) = R_v(g) + 0_p + \psi(v)(q) \in T_g\GG\oplus T_pM^n\oplus T_qM^n\,.
\]
Notice that for $v \neq 0$ the vector field $Z_v$ does not have any zeros.
Moreover, the vector fields
$Z_v$ span an involutive $N$-dimensional distribution, where $N = \dim(\GG)$. It follows
by the Theorem of Frobenius,  that $\GG \times M^n \times M^n$ 
is foliated by maximal integral manifolds of dimension $N$. 
We denote the corresponding foliation by $F$.

Consider the $n$-dimensional submanifold  $\{e\} \times \Delta(M^n)$ of
 $\GG \times M^n \times  M^n$ and notice that $F$-leaves intersect  $\{e\} \times \Delta(M^n)$ transversally. Thus, the union of the leaves, which
intersect $\{e\} \times \Delta(M^n)$, is a smooth 
submanifold $P^{N+n}$ of $\GG \times M^n \times M^n$ of dimension $N+n$ near the diagonal
$\{e\} \times \Delta(M^n)$.
It can be written as the graph of a smooth mapping 
$F: U\to M^n$, where $U$ is an open neighborhood of $\{e\}\times M^n$ in $\GG \times M^n$.
That is, locally $P^{N+n}$ consists precisely of the triples $(g,m,F(g,m))$. Of course
$F$  satisfies $F(e,m) = m$ for all $m\in M^n$. 

Recall, that the vector fields $Z_v$ are tangent to the submanifold $P^{N+n}$.
For initial values  $m_0 \in M^n$ and $g_0\in \GG$, sufficiently close to $e$, so that
$(g_0,m_0)\in U$, 
let $\gamma(t)=(g(t),m(t),F(g(t),m(t))$ be an integral curve of $Z_v$.
Then clearly $m(t)\equiv m_0$
and $g(t)=\exp(tv)\cdot g_0$. More importantly  we have
\[
 	F( \exp(tv)\cdot g_0, m_0) = \Phi_t^{\psi(v)}(F(g_0,m_0))
\]
for small $t$,
where $\Phi_t^{\psi(v)}$ denotes the local flow of $\psi(v)$.
Writing $h(t)=\exp(tv)$ it follows from this that
$F(h(t),F(g,m)) = F(h(t)g,m)$ holds, whenever this makes sense.
This shows the claim.
\end{proof}


\section{Uniqueness of locally homogeneous Ricci flows}\label{app_unique}

Let us fix in this section a pointed, simply-connected smooth manifold $(M^n, p)$. A Ricci flow solution $g(t)_{t\in I}$ on $(M^n, p)$ is called \emph{locally homogeneous} if  for each $t\in I$, $g(t)$ is a possibly incomplete locally homogeneous metric. Here $I\subset \RR$ is an open interval.
 In this appendix we will prove

\begin{theorem}\label{thm_uniqueRf}
A locally homogeneous Ricci flow solution $(g(t))_{t \in I}$ on a simply connected locally homogeneous space $(M^n, p)$ is uniquely determined by an initial metric $g(t_0)$, for any $t_0 \in I$.
Moreover, the local isometry group of these metrics does not change.
\end{theorem}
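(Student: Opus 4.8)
The plan is to reduce the statement to a uniqueness theorem for analytic Cauchy problems. First I would recall that any locally homogeneous metric is real-analytic in suitable coordinates by \cite{BLS}; moreover, since the Ricci flow equation is (after a De Turck modification, or directly in the homogeneous setting) of Cauchy--Kovalevskaya type when the data is analytic, any locally homogeneous Ricci flow solution is jointly real-analytic in space and time on the interior of $I$. The key structural input is that a locally homogeneous metric on a simply-connected manifold determines, via the construction of \eqref{eqn_identifKilling}--\eqref{eqn_bracketKf}, a transitive Lie algebra $\ggo$ of local Killing fields together with its isotropy $\hg$ and a scalar product on $\ggo$, i.e.\ an element of the algebraic moduli space; conversely this algebraic data reconstructs the metric on a disk $D^n$ by the argument in the proof of Lemma \ref{lem_correspGinvmetrics}. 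So I would first show that along a locally homogeneous Ricci flow the Lie algebra of local Killing fields is \emph{constant} in $t$: the space of Killing fields of $g(t)$, viewed inside $E_p = T_pM \oplus \sog(T_pM, g(t)_p)$ as in \eqref{eqn_identifKilling}, varies analytically, and its dimension is upper semicontinuous and, by a standard argument using that Ricci flow preserves isometries, cannot jump up; hence $\ggo$ and $\hg$ are fixed, proving the last sentence of the theorem.

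Once the local isometry group is fixed, the Ricci flow on $(M^n,p)$ is equivalent, via Lemma \ref{lem_correspGinvmetrics} and Theorem \ref{thm:gRicflow}, to the ODE \eqref{eqn_ricciflowm} on the finite-dimensional cone $P(\mg)^{\Ad(H)}$ of $\Ad(H)$-invariant scalar products on the canonical complement $\mg$. The right-hand side of \eqref{eqn_ricciflowm} is, by Lemma \ref{lem_Ricfor}, a rational (in particular smooth) function of the scalar product, so Picard--Lindel\"of gives uniqueness of the solution through $\ip_{g(t_0)}$. Running this backward: given two locally homogeneous Ricci flows $g(t)$, $\tilde g(t)$ with $g(t_0)=\tilde g(t_0)$, the first step shows they both carry the same transitive algebra $\ggo$ (namely the one determined by the common initial metric) for all $t$, hence both descend to solutions of the same ODE \eqref{eqn_ricciflowm} with the same initial condition, hence agree as curves of scalar products on $\mg$; by the metric--bracket correspondence they then agree as metrics on $D^n$, and, in the complete case, everywhere by analytic continuation.

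The main obstacle is the first step: proving that the dimension of the local Killing algebra cannot increase under the flow in the \emph{incomplete} case, where one cannot simply invoke ``Ricci flow preserves the isometry group'' as a statement about a genuine group action. I would handle this by working entirely infinitesimally: a vector field $X$ on $D^n$ is Killing for $g(t)$ iff $\mathcal L_X g(t) = 0$, and differentiating in $t$ gives $\mathcal L_X \ric(g(t)) = 0$, which holds automatically once $\mathcal L_X g(t)=0$ since $\ric$ is natural; thus the space $\{X : \mathcal L_X g(t_0) = 0\}$ of initial Killing fields remains Killing for all later $t$ by the uniqueness part of the linear ODE $\tfrac{d}{dt}(\mathcal L_X g) = -2\,\mathcal L_X \ric(g(t))$ for the $t$-dependent tensor $\mathcal L_X g(t)$. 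This shows $\dim$ is non-decreasing; combined with analyticity in $t$ (so the dimension is constant off a discrete set) and a short argument at the exceptional times, one gets that it is in fact constant, and that the algebra itself — not just its dimension — is independent of $t$. The remaining steps are then routine given the results already in the paper and in Appendix \ref{app_lochomog}.
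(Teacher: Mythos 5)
Your overall skeleton (show the local Killing algebra is constant in $t$, then reduce to the finite-dimensional ODE \eqref{eqn_ricciflowm} on $P(\mg)^{\Ad(H)}$) is the same as the paper's, but the step that carries all the weight is not established. For a fixed vector field $X$ with $\mathcal{L}_X g(t_0)=0$, the tensor $h(t):=\mathcal{L}_X g(t)$ satisfies, by naturality of the Ricci tensor,
\[
\partial_t h \;=\; -2\,\mathcal{L}_X \ric(g(t)) \;=\; -2\, D\!\ric|_{g(t)}(h),
\]
where $D\!\ric|_{g}$ is the linearization of $\ric$ at $g$, a \emph{second-order differential operator in space}. So $h$ solves the linearized Ricci flow, a degenerate parabolic linear PDE, not a ``linear ODE'': there is no pointwise Gronwall argument, and for a linear parabolic equation on an incomplete disk with only initial data prescribed, $h(t_0)=0$ does not force $h\equiv 0$ (compare the heat equation on a bounded domain with no boundary condition). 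Note also that for $t\neq t_0$ the tensor $\mathcal{L}_X g(t)$ is not a homogeneous quantity — $X$ is not yet known to be Killing for $g(t)$ — so local homogeneity gives no finite-dimensional reduction for this equation. This is exactly the obstruction the paper points out when explaining why Kotschwar's uniqueness theorem \cite{Kot}, which uses completeness in an essential way, cannot be invoked; your argument re-imports the same difficulty. The auxiliary claim that an arbitrary (possibly incomplete) locally homogeneous solution is jointly real-analytic in space and time is likewise unjustified — spatial analyticity of each $g(t)$ follows from \cite{BLS}, but time-analyticity of the given family is not known without completeness and is essentially of the same depth as the uniqueness you are trying to prove — so the ``dimension constant off a discrete set'' step also has no support.

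For contrast, the paper's proof avoids PDE arguments altogether and stays finite-dimensional: by Singer's theorem \cite{Singer1960}, \cite{NicolodiTricerri1990} a locally homogeneous metric is determined up to local isometry by the finite jet $w_g$ of the curvature at $p$, and the isotropy $\hg_g$ and the full Killing algebra $\ggo_g$ depend smoothly on $g$ as long as $\dim\hg_g$ does not jump (Lemmas \ref{lem_hgsmooth} and \ref{lem_ggosmooth}). On the open set where $\dim\ggo_t$ is minimal (open by upper semicontinuity, Proposition \ref{prop_limitKf}), one compares at each time $t_0$ with the canonical $G_{t_0}$-invariant solution of Theorem \ref{thm:gRicflow}: both solutions have the same value and the same time derivative at $t_0$, hence $\tfrac{d}{dt}\big|_{t_0}\hg_t=0$, and so $\hg_t$, and then $\ggo_t$, are constant there; a jump of $\dim\ggo_t$ at an endpoint of this interval is then excluded because $S^2(T_pM^n)^{\Ad(H_b)}$ is an invariant submanifold of the flow \eqref{eqn_ricciflowm}. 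If you want to keep your route, you would need a genuinely new argument for the persistence of Killing fields along an incomplete solution; as written, that is the gap.
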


 As a consequence, such a solution  is nothing but the $\GG$-invariant Ricci flow solution
defined in (\ref{eqn_ricciflowm}). Recall that uniqueness of Ricci flows holds for complete metrics with bounded curvature, by a result of Kotschwar \cite{Kot}. However, since completeness is used in an essential way, his methods do not seem to apply here.

In order to prove Theorem \ref{thm_uniqueRf},
we need to study locally homogeneous metrics $g$ on $(M^n, p)$ which do not necessarily have the same Lie algebra of Killing fields. By a classical result of Singer \cite{Singer1960} (see \cite{NicolodiTricerri1990} for the incomplete case), any such $g$ is determined up to local isometry by the data 
\[
	w_g := \Big( (\nabla_g^k \, \Riem_g)_p \Big)_{k=0}^{N} \, \in  \, \, \bigoplus_{k=0}^{N} \, \big( \otimes^{k+4} (T_pM)^*  \big) =: W.
\]
where $\nabla_g$ denotes the Levi-Civita connection, $\nabla_g^0 \, \Riem_g = \Riem_g$ and $N = n(n-1)/2$.

For each locally homogeneous metric $g$ there exists a maximal Lie algebra of Killing fields $\ggo_g$ defined on all of $M^n$, whose values at each point span the corresponding tangent spaces (see Section \ref{sec_homricflow} and Appendix \ref{app_lochomog}). 
As in Section \ref{sec_homricflow} we denote by $\hg_g = \{ X \in \ggo_g : X(p) = 0\}$ the isotropy subalgebra at $p$. The metric $g$ induces a scalar product $\ip_g$ on $T_p M^n$, which in turn induces scalar products on $(T_p M^n)^*$ and all the tensor spaces, such as $W$.

The group $\SO(T_p M^n, \ip_g)$ acts linearly on $W$ by the natural extension of its action on 
$T_p M^n$, and there is a Lie algebra representation $\pi : \sog(T_p M) \to \End(W)$, corresponding to
that action.
It follows from \cite{Singer1960,NicolodiTricerri1990} that $\hg_g$ is also 
the isotropy subalgebra of $\pi$ at the vector $w_g \in W$, that is,
\begin{equation}\label{eqn_hgg}
	\hg_g = \{ A \in \sog(T_p M^n, \ip_g) : \pi(A) w_g = 0\}\,.
\end{equation}

\begin{lemma}\label{lem_hgsmooth}
The map $g\mapsto \hg_g$ from locally homogeneous metrics on $(M^n, p)$ to subspaces of $\End(T_p M^n)$ is smooth, provided the dimension $\dim \hg_g$ does not jump. 
\end{lemma}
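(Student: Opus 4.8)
The plan is to realize $\hg_g$ as the kernel of a linear map depending smoothly on $g$, and then invoke the standard fact that the kernel of a smoothly varying linear map of constant rank depends smoothly on the parameter. Concretely, by \eqref{eqn_hgg} we have $\hg_g = \{ A \in \sog(T_pM^n, \ip_g) : \pi(A) w_g = 0\}$, so the key is to set up the data $(\ip_g, w_g, \pi)$ in a way that varies smoothly with $g$ and to handle the fact that both the ambient space $\sog(T_pM^n, \ip_g)$ and the representation $\pi$ depend on $g$ through the metric.

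\textbf{Key steps.} First I would observe that $g \mapsto w_g = \big((\nabla_g^k \Riem_g)_p\big)_{k=0}^N \in W$ is smooth: this is immediate since the Levi-Civita connection, its iterated covariant derivatives, and the curvature tensor are all given by universal polynomial expressions in the metric coefficients and their derivatives, evaluated at $p$ in a fixed chart. Second, to deal with the $g$-dependence of the orthogonal Lie algebra, I would fix once and for all a reference scalar product $\ip_0$ on $V := T_pM^n$ with $\sog_0 := \sog(V,\ip_0)$, and use the positive-definite symmetric endomorphism $P_g$ with $\ip_g = \ip_0(P_g \cdot, \cdot)$; the map $g \mapsto P_g$ is smooth, and $P_g^{1/2}$ is then a smooth conjugating isomorphism identifying $\sog(V,\ip_g)$ with $\sog_0$. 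Transporting everything through $P_g^{1/2}$, the identity \eqref{eqn_hgg} becomes $\hg_g = P_g^{-1/2}\, \widetilde\hg_g\, P_g^{1/2}$ where $\widetilde\hg_g = \{ A \in \sog_0 : \widetilde\pi_g(A)\, \widetilde w_g = 0 \}$, with $\widetilde w_g \in W$ and $\widetilde\pi_g : \sog_0 \to \End(W)$ both depending smoothly on $g$ (the representation $\widetilde\pi_g$ is the fixed tensorial representation conjugated by the smooth family $P_g^{1/2}$). Thus $\widetilde\hg_g = \ker L_g$, where $L_g : \sog_0 \to \End(W)$ is the smooth linear map $L_g(A) = \widetilde\pi_g(A) \widetilde w_g$. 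Third, under the hypothesis that $\dim \hg_g$ — equivalently $\dim \ker L_g = \dim\sog_0 - \operatorname{rank} L_g$ — is locally constant, the assignment $g \mapsto \ker L_g$ is smooth: one can pick, locally near a fixed $g_0$, a smooth choice of basis of $\ker L_{g_0}$ and extend it by solving $L_g(A) = 0$ via the implicit function theorem after splitting off a complement on which $L_g$ stays injective, or simply quote the standard lemma that a smooth family of linear maps of constant rank has a smooth kernel (and cokernel). Finally, conjugating back by the smooth family $P_g^{\pm 1/2}$ shows $g \mapsto \hg_g \subset \End(V)$ is smooth.

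\textbf{Main obstacle.} The only genuine subtlety is the constant-rank issue, which is exactly the hypothesis we are granted: without it the kernel can jump and smoothness fails (this is the familiar phenomenon that $\operatorname{rank}$ can only drop on a closed set). Given constant dimension, the rest is bookkeeping: verifying that $g \mapsto (P_g, w_g, \widetilde\pi_g)$ is smooth, which reduces to smoothness of the metric-to-curvature-jet map and smoothness of $P_g \mapsto P_g^{1/2}$ (true since $P_g$ is positive-definite, so the square root is a composition with a fixed real-analytic function on the spectrum). I expect the write-up to be short, with the constant-rank kernel lemma cited rather than reproved.
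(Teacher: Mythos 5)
Your proposal is correct and follows essentially the same route as the paper: both realize $\hg_g$ via \eqref{eqn_hgg} as the kernel of the smoothly varying linear map $A \mapsto \pi(A) w_g$, use the constant-dimension hypothesis to get smooth dependence of the kernel (the paper argues via $P_s := T_s^t T_s$ and the smooth variation of its image, you quote the standard constant-rank kernel lemma), and reduce the varying scalar product to a fixed one by conjugating with a smooth family of isometries — your $P_g^{1/2}$ is exactly one choice of the paper's $h(s)$. The differences are only in bookkeeping, not in substance.
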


\begin{proof}
Let $(g(s))_{s\in (-\epsilon,\epsilon)}$ be a smooth family of locally homogeneous metrics. 

Assume first that $g(s)_p \equiv \ip$, a fixed scalar product on $T_p M^n$. Since the map $g\mapsto w_g \in W$ is smooth, it suffices to show that $\hg_g$ depends smoothly on $w_g$. By  \eqref{eqn_hgg} we have that for each $s$ the subalgebra $\hg_{g(s)}$ is the kernel of the linear map
\[
	T_s : V\to W, \qquad T_s A := \pi(A)\, w_{g(s)}\,,
\]
where  $V=\sog(T_p M^n, \ip)$.
The family of linear maps $T_s$ is smooth, and by assumption the maps $T_s$ have constant rank, thus their kernels $\hg_{g(s)}$ depend smoothly on $s$. This is seen as follows:
We let $T_s^t:W \to V$ denote the transpose map of $T_s:(V,\ip_g)\to (W,\ip_g)$.
Then $P_s:=T_s^t \cdot T_s:V\to V$ is positive semi-definite with
$\ker(P_s)=\ker (T_s)$. Since the image of $P_s$ depends smoothly on $s$, clearly also
the orthogonal complement does, which shows the above claim.

In the general case, let $h(s) \subset \End(T_p M^n)$ be a smooth family of endomorphisms such that $h(s) : (T_p M^n, g(0)_p) \to (T_p M^n, g(s)_p)$ is an isometry for each $s\in (-\epsilon, \epsilon)$. We then consider the pull-backs $h(s)^* w_{g(s)} \in W$, and by the previous case it follows that the isotropy subalgebras $\hg_s^*$ corresponding to those pull-backs form a smooth family. Finally, the lemma follows by noticing that $\hg_{g(s)} = h(s) \circ \hg_s^* \circ h(s)^{-1}$.
\end{proof}

Recall that the full Lie algebra of Killing fields of a locally homogeneous metric $g$ on $(M^n, p)$ can be identified with a subspace of $T_p M^n \oplus \End(T_p M^n)$ as in (\ref{eqn_identifKilling}). 
As explained in \cite{NicolodiTricerri1990} (see also \cite{Tri92}), the difference tensor between the canonical Ambrose-Singer connection $\nabla_0^{AS}$ and the Levi-Civita connection $\nabla^g$ (at the
given point $p$) is a linear map
\[
	S^g : T_p M^n \to \hg_g^\perp \subset \sog(T_p M^n, \ip_g)\,\,;\,\,\,  X\mapsto S^g_X\,,
\]
where $\hg_g^\perp$ is the orthogonal complement of $\hg_g$ with respect to the natural scalar product on $\sog(T_p M^n, \ip_g)$ induced by $\ip_g$. In more concrete terms, this means the following: there exists a linear map $S^g$ as above, such that the full Lie algebra of Killing fields at $p$ is given by 
\begin{equation}\label{eqn_ggoSg}
	\ggo_g \simeq  \hg_g \oplus \big\{ (X, S^g_X) : X\in T_p M^n  \big\} 
	\subset T_p M^n \oplus \sog(T_p M^n, \ip_g)\,,
\end{equation}
see the proof of Prop.~4.4 in \cite{Tri92}.

\begin{lemma}\label{lem_ggosmooth}
For locally homogeneous metrics $g$ on $(M^n, p)$ with a fixed isotropy subalgebra $\hg$, the full Lie algebra of Killing fields $\ggo_g$ depends smoothly on $g$.
\end{lemma}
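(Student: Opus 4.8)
The plan is to realize $\ggo_g$, under the identification \eqref{eqn_identifKilling}, as the kernel of a linear map whose coefficients depend smoothly on the curvature jet $w_g$, and then to run the same constant‑rank argument that was used at the end of the proof of Lemma \ref{lem_hgsmooth}. As a first reduction, exactly as there, it suffices to treat a smooth family $(g(s))_{s\in(-\epsilon,\epsilon)}$ for which $g(s)_p$ is independent of $s$: the general case follows by conjugating with a smooth family of isometries $h(s)\colon (T_pM^n,g(0)_p)\to (T_pM^n,g(s)_p)$, which transforms $\ggo_{g(s)}\subset T_pM^n\oplus\sog(T_pM^n,g_p)$ by the smooth map $(v,A)\mapsto (h(s)v,\,h(s)Ah(s)^{-1})$. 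In this reduced situation the ambient space $E_p=T_pM^n\oplus\sog(T_pM^n,g_p)$ and all associated tensor spaces, as well as the representation $\pi$, are fixed and independent of $s$.

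The second step is to recall, following \cite{Nomizu1960}, \cite{Singer1960}, \cite{NicolodiTricerri1990}, that $(v,A)\in E_p$ is the $1$‑jet at $p$ of a Killing field of $g$ precisely when it satisfies a finite system of linear equations built tensorially from the components of $w_g$; concretely, up to the usual sign convention, these are
\begin{equation*}
  \pi(A)\,(\nabla_g^{k}\Riem_g)_p \;+\; \nabla_v (\nabla_g^{k}\Riem_g)_p \;=\; 0, \qquad 0\le k\le N,
\end{equation*}
the top term being algebraically determined by the lower ones via Singer's theorem. Equivalently, there is a linear map $\Phi_{w_g}\colon E_p\to \widetilde W$ into a fixed finite‑dimensional vector space, with $\ggo_g=\ker\Phi_{w_g}$, which is the natural extension of \eqref{eqn_hgg} to the full Lie algebra; its matrix entries are polynomials in the components of $w_g$. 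Since $s\mapsto w_{g(s)}$ is smooth, the family $\Phi_{w_{g(s)}}$ is a smooth family of linear maps on the fixed space $E_p$.

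Finally, since by hypothesis the isotropy subalgebra is fixed and $\ggo_{g(s)}$ is transitive, we have $\dim\ggo_{g(s)}=n+\dim\hg$ for all $s$, so the maps $\Phi_{w_{g(s)}}$ have constant rank. As in the proof of Lemma \ref{lem_hgsmooth}, forming $P_s:=\Phi_{w_{g(s)}}^{\,t}\,\Phi_{w_{g(s)}}$, a positive semidefinite endomorphism of $E_p$ with $\ker P_s=\ker\Phi_{w_{g(s)}}=\ggo_{g(s)}$ and with image of constant dimension depending smoothly on $s$, shows that $\ggo_{g(s)}$ varies smoothly in $s$; undoing the conjugation by $h(s)$ from the first paragraph then proves the lemma, and in particular the graph part of \eqref{eqn_ggoSg}, i.e.\ the difference tensor $S^g$, depends smoothly on $g$ as well. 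The only point requiring genuine care is the constant‑rank condition, which is exactly where the hypothesis that $\dim\hg_g$ does not jump is used; everything else is a routine transcription of Singer's reconstruction of a locally homogeneous metric from its finite curvature jet.
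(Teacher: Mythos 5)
Your route is genuinely different from the paper's, and it can be made to work, but one step that you present as a ``recall'' carries more weight than the cited references support. The paper never characterizes $\ggo_g$ as the kernel of a linear system; it uses the Ambrose--Singer/Tricerri difference tensor instead: by \eqref{eqn_ggoSg} one has $\ggo_g=\hg\oplus\{(X,S^g_X)\}$, and smoothness of $g\mapsto S^g$ is obtained by writing the defining equation $\pi(S^g_X)\,w_g=i_X(w_g)$ as an overdetermined linear system $A_{w}S=b_{w}$ with a \emph{unique} solution, so that $S^{g}=(A_{w}^tA_{w})^{-1}A_{w}^t\,b_{w}$ is an explicit smooth expression; no constant-rank argument is needed there, but the reduction to a fixed scalar product must preserve the isotropy $\hg$ (hence the averaging of the maps $h(s)$ over $\overline{\Ad(H)}$), because $\hg^\perp$ is the target of $S^g$. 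Your kernel description avoids that averaging --- plain conjugation suffices, since your constant-rank step only uses dimensions --- at the price of the identification $\ggo_g=\ker\Phi_{w_g}$.

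That identification is the point to shore up. The inclusion of Killing $1$-jets in the kernel is standard ($\mathcal{L}_X\nabla^k_g\Riem_g=0$ for every $k$, with the sign convention of \eqref{eqn_identifKilling}), and \cite{Singer1960}, \cite{NicolodiTricerri1990} give the isotropy statement \eqref{eqn_hgg}; but none of \cite{Nomizu1960}, \cite{Singer1960}, \cite{NicolodiTricerri1990} asserts the converse you need, namely that every solution of the \emph{finite-order} linear system is the $1$-jet of a local Killing field. Fortunately, on a locally homogeneous space you do not need that theorem: the $v=0$ part of $\ker\Phi_{w_g}$ is exactly $\hg$ by \eqref{eqn_hgg}, and the projection of $\ker\Phi_{w_g}$ to $T_pM^n$ has rank at most $n$, so $\dim\ker\Phi_{w_g}\le n+\dim\hg=\dim\ggo_g$ (the last equality by transitivity of $\ggo_g$), while $\ggo_g\subseteq\ker\Phi_{w_g}$; hence equality, and the constant-rank hypothesis follows as well. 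A second, smaller repair: the $k=N$ equation involves $(\nabla^{N+1}_g\Riem_g)_p$, which is not a component of $w_g$; rather than appealing to Singer's theorem to ``determine it algebraically'' (a smooth dependence one would then still have to justify), simply enlarge the jet by one order, since $g\mapsto(\nabla^{N+1}_g\Riem_g)_p$ is manifestly smooth. With these two additions your argument is complete, and arguably slightly leaner than the paper's, which in exchange needs no constant-rank discussion at all.
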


\begin{proof}
By \eqref{eqn_ggoSg}, it suffices to show that the map $g\mapsto S^g$ is smooth.  Let $(g(s))_{s\in (-\epsilon,\epsilon)}$ be a smooth family of locally homogeneous metrics with constant isotropy subalgebra $\hg$. 
As in the proof of Lemma \ref{lem_hgsmooth}, assume first that $g(s)_p \equiv \ip$ is constant. Recall that by \cite{NicolodiTricerri1990}, $S^g$ is the unique linear map
\[
  S^g:T_pM^n \to \hg^\perp \subset \sog(T_pM^n,\ip)\,
\]
such that for all $X \in T_pM^n$ one has
\[
  	\pi( S^g_X )\,w_g = i_X(w_g) \,.
\]
Here $i_X \in \End(W)$ is defined by $i_X(T)(X_1, \ldots ,X_{k+4})=T(X,X_1, \ldots ,X_{k+4})$ for a $(k+5,0)$-tensor $T\in \otimes^{k+5} (T_p M^n)^*$. To write this in a more robust way, consider for each fixed $w\in W$ the linear map 
\begin{align*}
	A_w : \End(T_p M^n, \hg^\perp) \longrightarrow \End(T_p M^n, W),
\end{align*}
associating to $S\in \End(T_p M^n, \hg^\perp)$ the linear map $A_w(S) : T_p M^n \to W$ given by
\[
	 X\in T_p M^n \mapsto \pi(S_X)w \in W.
\]
Also, let $b_w \in \End(T_p M^n, W)$ be given by $b_w(X) = i_X(w)$.
Then, for each fixed $s\in (-\epsilon, \epsilon)$, $S^{g(s)}$ is the unique solution to the linear equation  
\[
	A_{w_s} \, S = b_{w_s},
\]
where $w_s := w_{g(s)}$.
Since this  linear equation has a unique solution, $A_{w_s}^t A_{w_s}$ is invertible (the transpose taken with respect to the natural scalar product induced by $\ip$ on the corresponding space of tensors), and the map 
$A_{w_s}^+ := (A_{w_s}^t A_{w_s})^{-1} A_{w_s}^t$
is a right inverse for $A_{w_s}$, so that $S^{g(s)} = A_{w_s}^+ \, b_{w_s}$, which clearly depends smoothly on $s$. 

For the general case we argue as in the proof of Lemma \ref{lem_hgsmooth}. We need to pull-back by maps $h(s) \in \End(T_p M^n)$ which commute with the isotropy representation (so that the pulled-back spaces also have fixed isotropy $\hg$). The existence of these maps is 
shown as follows: we  first consider an arbitrary family $h(s) \subset \End(T_p M^n)$ and then use a standard averaging argument with the compact group $\overline{\Ad(H)} \subset \SO(T_p M, \ip)$. 
\end{proof}

\begin{proof}[Proof of Theorem \ref{thm_uniqueRf}]
Since $M^n$ is simply connected, 
for all $t \in I$,  by \mbox{Theorem 1} in \cite{Nomizu1960} the space $(M^n,g(t),p)$ is $\ggl_t$-homogeneous, where $\ggl_t$ is the full  Lie algebra of Killing vector fields on $(M^n, g(t))$. 

Let $d(t):=\dim \ggl_t$ for $t \in I$. By Proposition \ref{prop_limitKf}, the function $d(t)$ is upper semi-continuous. Thus, if we let  
\[
  d_{\rm min}:= \min\{ d(t):t \in I\}, \qquad I_{\min} = \{ t\in I : d(t) = d_{\min} \},
\]
then $I_{\min} \subset I$ is an open subset. We will assume in what follows that $I_{\min}$ is connected, and it will turn out at the end of the proof that we are not losing generality by doing that. 

By Lemma \ref{lem_hgsmooth}, for $t\in I_{\min}$ the isotropy subalgebras $\hg_t$ vary smoothly in terms of $g(t)$. Now for each $t_0 \in I_{\min}$, as explained in the proof of
Theorem \ref{thm:gRicflow}, there is a $G_{t_0}$-invariant Ricci flow solution $(\tilde g(t))_{t\in \tilde I}$ with $\tilde g(t_0) = g(t_0)$. This nice solution has the property that the corresponding isotropy subalgebras are constant on $t$. Since $\tilde g'(t_0) = g'(t_0)$, and since the isotropy subalgebras depend smoothly on the metrics, we have that $\hg_t$ do not change in first order, that is $\ddt|_{t=t_0} \hg_t = 0$. This holds for any $t_0 \in I_{\min}$, therefore $\hg_t \equiv \hg$ is constant for $t\in I_{\min}$. 

Now by applying Lemma \ref{lem_ggosmooth} and repeating the above argument, we conclude that the Lie algebra of Killing fields $\ggo_t \equiv \ggo$ is constant for $t\in I_{\min}$. 
As a consequence, the restriction $(g(t))_{ t\in I_{d_{\rm min}}}$ agrees
with the Ricci flow solution defined in Section \ref{sec_homricflow}.
We may assume that $I_{d_{\rm min}}=(a,b)$ and that $b \in I$. Then it remains
to show that we cannot have $d(b)>d_{\rm min}$.

So suppose $d(b)>d_{\rm min}$. For simplicity we write $\ggl:= \ggl_{t_0}$.
Since $g(b)$ is still a $\ggl$-homogeneous metric,
the Lie algebra $\ggl_b$ of all Killing vector fields of $g(b)$
clearly contains $\ggl$, and the same holds for the isotropy subalgebras: $\hg \subset \hg_b$.
As a consequence the scalar product $\ip_b$ on $T_p M^n$
corresponding to $g(b)$ is not only $\Ad(H)$-invariant
but even $\Ad( H_b)$-invariant. However, by Theorem \ref{thm:gRicflow} and Lemma \ref{lem_correspGinvmetrics}, 
$S^2(T_p M^n)^{\Ad( H_b)}$ 
is a Ricci flow invariant submanifold of $S^2(T_p M^n)^{\Ad(H)}$.
In particular, if $\ip_b \in S^2(T_p M^n)^{\Ad( H_b)}$ then
for any solution to the Ricci flow equation (\ref{eqn_ricciflow}) 
defined in Section \ref{sec_homricflow},
we have $\ip_t \in  S^2(T_p M^n)^{\Ad( H_b)}$ for all $t$. 
This is a contradiction, and the theorem is proved. 
\end{proof}


\section{The stratum of a Lie algebra}\label{app_stratum}

Consider as in Section \ref{sec_stratif} the action of $\Gg$ on the space of brackets $(\Vg,\!\ip)$. 
Our aim in this section is to obtain information about the stratum corresponding to a Lie bracket $\mu \in \Vg$. Recall that the \emph{variety of Lie algebras}
\[
	\lca = \{ \mu \in \Vg : \mu \mbox{ satisfies the Jacobi identity}\}
\]
is an algebraic subset of $\Vg$ (that is, given by the zero locus of some polynomials). In particular, it is closed in  the usual vector space topology.

We mention here that in the complex case, for the analogous action of $\Gl_n(\CC)$ on $\Lambda^2(\CC^n)^* \otimes \CC^n$, the critical values for the moment map were computed in \cite{Lau2003}. In this section we prove an analogous result for the real case. Moreover, besides computing the possible critical values, we show that the stratum to which a Lie bracket belongs to is completely determined by the stratum of its nilradical.

For a Lie bracket $\mu \in \Vg$ we denote by $\ngo$ the nilradical of $(\ggo,\mu)$, and by 
$\ag := \ngo^\perp$ its orthogonal complement in $\ggo$. 
The main result of this section is the following

\begin{theorem}\label{thm_refinedbeta}
Let $\mu \in \lca \backslash \{0\}$ and denote by 
$\nu = \proy_{\ngo} \circ \, \mu \, |_{\ngo\wedge \ngo}$ the nilpotent Lie bracket given by the restriction of $\mu$ to $\ngo$. Then, if $\nu =0$ we have 
$\mu \in \sca_\Beta$ with
\[
	\Beta = a^{-1} \cdot \twomatrix{-\Id_{\ag}}{0}{0}{ 0  }
\]
and $a = \dim\ag$. If $\nu \in \sca_{\Beta_\ngo}$, then 
$\mu \in \sca_\Beta$, with
\[
	\Beta =  
			b \cdot \twomatrix{ - \Vert{\Beta_\ngo}\Vert^2 \cdot \Id_{\ag}}{0}{0}{ \Beta_\ngo } 
\]
and $b = (1 + a \cdot \Vert{\Beta_\ngo}\Vert^2)^{-1}$. In both cases
the blocks are according to $\ggo = \ag \oplus \ngo$.
\end{theorem}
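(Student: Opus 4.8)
\textbf{Proof plan for Theorem \ref{thm_refinedbeta}.}

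The plan is to reduce everything to the behaviour of the moment map under the block decomposition $\ggo = \ag \oplus \ngo$, exploiting that $\ngo$ is an ideal and that brackets respecting an ideal have a block-triangular form. First I would recall the basic structural fact: for $\mu \in \lca$ with nilradical $\ngo$, one has $\mu(\ggo,\ngo) \subset \ngo$ (the nilradical is an ideal) and, crucially, $\mu(\ag,\ag) \subset \ngo$ as well. The latter is the standard fact that $[\ggo,\ggo]$ has nilpotent image modulo $\ngo$ together with the fact that $\ngo \supset [\rad\ggo,\ggo]$ and the Levi part acts with semisimple $\mathfrak{s} = [\mathfrak{s},\mathfrak{s}]$ being already accounted for — more simply: any element of $\ag$ acts on $\ggo/\ngo$, but $\ggo/\ngo$ is reductive with trivial nilradical, so the solvable part dies and only semisimple summands survive; after checking that the semisimple ideal is itself inside the span considered, one gets that $\mu$ restricted to $\ag \wedge \ag$ lands in $\ngo$ up to the genuinely semisimple directions, which a short case analysis (or appeal to \cite{Bou}) handles. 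The upshot I want is: with respect to $\ggo = \ag \oplus \ngo$, the bracket $\mu$ has the ``lower-triangular'' shape $\mu = \nu' + (\text{maps into }\ngo)$ where $\nu' = \proy_\ngo\circ\mu|_{\ngo\wedge\ngo} = \nu$, and $\mu_{\ag\wedge\ag}$, $\mu_{\ag\wedge\ngo}$ all take values in $\ngo$.

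Next I would compute $\mmm(\mu)$, or rather identify the stratum label, using the characterization in Theorem \ref{thm_stratifb}(iii) together with the parabolic description in Section \ref{sec_stratif}. The key observation is that the candidate $\Beta$ in the statement has $\ag$ as (a subspace of) its negative eigenspace and $\ngo$ as the $\Beta_\ngo$-eigenspaces; one checks directly that $\Beta^+ := \Beta + \|\Beta\|^2\Id$ annihilates $\ag$ (so $\Im(\Beta^+) \subset \ngo$) and restricts on $\ngo$ to $b\cdot(\Beta_\ngo)^+$ up to the scaling, exactly as in Lemma \ref{lem_betamgplusbetaplus}. With the triangular shape of $\mu$ in hand, I would verify that $\pi(\Beta^+)\mu$ has only nonnegative eigencomponents, i.e. $\mu \in \Vnn$ for this $\Beta$: the $\ag\wedge\ag \to \ngo$ and $\ag\wedge\ngo \to \ngo$ blocks contribute eigenvalue $+\|\Beta\|^2 > 0$ and $\|\Beta\|^2 - \beta_i$-type terms which one checks are $\geq 0$ after the rescaling by $b$, while the $\ngo\wedge\ngo\to\ngo$ block reproduces (a positive rescaling of) the condition that $\nu \in V^{\geq 0}_{\Beta_\ngo^+}$, i.e. that $\nu$ is gauged correctly in its own stratum. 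Then I would check that the projection $p_\Beta(\mu)$ onto the zero-eigenspace equals, up to the $H_\Beta$-action, the bracket $\lambda$ built from $\nu$'s minimal vector, so that semistability of $\mu$ under $H_\Beta$ follows from semistability of $\nu$ under $H_{\Beta_\ngo}$; invoking \eqref{eqn_stratumbeta} this gives $\mu \in \sca_\Beta$. For the degenerate case $\nu = 0$, the bracket is concentrated in the blocks mapping into $\ngo$ with $\ag$ acting, and the same computation with $\Beta_\ngo$ replaced by $0$ gives $\Beta = -a^{-1}\Id_\ag$ on $\ag$ and $0$ on $\ngo$; one still needs $\mu \neq 0$, which is the hypothesis, and that no nonzero part lives purely inside $\ngo\wedge\ngo$ (true since that part is $\nu = 0$).

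The main obstacle I anticipate is the first structural step — pinning down precisely that $\mu_{\ag\wedge\ag}$ and $\mu_{\ag\wedge\ngo}$ land in $\ngo$, including correctly handling a possible semisimple Levi factor which is \emph{not} inside $\ngo$. In the truly general case $\ggo/\ngo$ can have a semisimple summand, and then $\ag$ contains directions whose bracket with $\ag$ need not lie in $\ngo$; here one must argue that such a semisimple ideal $\mathfrak{s}$ is automatically in a critical/stable position (its own stratum is governed by $\Beta = -\tfrac1{\dim\mathfrak{s}}\Id$) and splits off orthogonally, so the theorem's two-block formula still applies after absorbing $\mathfrak{s}$ appropriately — or, alternatively, one restricts attention to the case relevant for the paper (where via Appendix \ref{app_lochomog} the relevant Lie algebras are of the form $\ag \oplus \ngo$ with $\ag$ abelian, cf. Lemma \ref{lem:betaplusimage}). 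I would handle this by first proving the clean statement under the assumption that $\ag$ is abelian acting on $\ngo$ (which suffices for all applications in the body), and only then remarking on the general reductive case. The remaining steps are bookkeeping: computing $\|\Beta\|^2$ and the normalization constants $a$, $b$ (so that $\tr\Beta = -1$, consistent with Remark \ref{rem_git1}), and checking the eigenvalue inequalities, all of which are linear algebra parallel to Lemma \ref{lem_betamgplusbetaplus} and Remark \ref{rmk_normbetam}.
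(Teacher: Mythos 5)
There is a genuine gap, and it sits exactly where you declare the remaining work to be bookkeeping: the semistability step. To get $\mu\in\sca_\Beta$ from \eqref{eqn_stratumbeta} you must show $0\notin\overline{H_\Beta\cdot p_\Beta(\mu)}$, and this does \emph{not} follow formally from semistability of $\nu$ under $H_{\Beta_\ngo}$. The Lie algebra of $H_\Beta$ consists of block maps $\diag\big(A_1,(\tr A_1)\cdot\Beta_\ngo+A_2\big)$ with $A_2\in\hg_{\Beta_\ngo}$, so the $\ag$- and $\ngo$-blocks are coupled: a degenerating sequence $h^{(k)}\in H_\Beta$ acts on the nilradical part through $\exp(a_k\Beta_\ngo)\exp(A_2^{(k)})$ with $a_k=\tr A_1^{(k)}$, and on $V^0_{\Beta_\ngo^+}$ the factor $\exp(a_k\Beta_\ngo)$ is just multiplication by $e^{a_k\Vert\Beta_\ngo\Vert^2}$. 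Semistability of $\nu$ only tells you that $h^{(k)}\cdot\mu\to 0$ forces $a_k\to-\infty$; you still have to exclude that the bracket is killed by shrinking the $\ngo$-part this way while the $\Gl(\ag)$-block compensates. The paper rules this out with an additional invariant: the function $\alpha(h\cdot\mu)=\vert\det h_\ag\vert^{-1}$ on the $(\Gl(\ag)\times\Gl(\ngo))$-orbit (Lemma \ref{lem_alpha}), well defined because block-diagonal derivations have trace-free $\ag$-block, and satisfying $\alpha\to0$ along any sequence of brackets tending to $0$ -- a nontrivial fact whose proof uses that $\ngo$ is the nilradical, the Killing form of the semisimple quotient of the $\ag$-part, and a nilpotency argument for central directions. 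This forces $a_k\to+\infty$, contradicting $a_k\to-\infty$. Nothing playing the role of this determinant obstruction appears in your plan; note also that in the case $\nu=0$ semistability is again proved via $\alpha$ (there $a_k\equiv0$ by the definition of $\hg_\Beta$), whereas your sketch does not address it at all.

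Your anticipated ``main obstacle'' is, by contrast, a red herring, and the proposed fix would not cover the theorem. The claim $\mu(\ag,\ag)\subset\ngo$ is false in general (it fails whenever $\ggo$ has a Levi factor, e.g.\ $\ggo$ semisimple, where $\ngo=0$ and $\ag=\ggo$), and restricting to abelian $\ag$ excludes precisely the reductive and mixed cases for which the paper applies the theorem (Lemma \ref{lem:betaplusimage} does not say $\ag$ is abelian). But no such structural fact is needed: since $\Beta^+|_\ag=0$, the $\ag\wedge\ag\to\ag$ components of $\mu$ have $\pi(\Beta^+)$-weight $0$ and are harmless for membership in $\Vnn$, while the components mapping into $\ag$ with an $\ngo$-argument vanish because $\ngo$ is an ideal. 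What does require an argument is the mixed block $\ag\otimes\ngo\to\ngo$, whose $\pi(\Beta^+)$-weights are differences of eigenvalues of $\Beta_\ngo^+$ and hence of indefinite sign; so your assertion that these weights ``are $\geq0$ after the rescaling by $b$'' is incorrect as stated. The correct mechanism, used in the paper, is that $\ad_\mu(X)|_\ngo\in\Der(\nu)\subset\qg_{\Beta_\ngo}$ for $X\in\ag$ by Corollary \ref{cor_autmu} (applied to $\nu$ after an $\Or(\ngo)$-gauge putting it in $V_{\Beta_\ngo^+}^{\geq0}$), so the negative-weight components of $\mu$ actually vanish; equivalently $\lim_{t\to\infty}\exp(-t\Beta^+)\cdot\mu$ exists, which is how one proves $\mu\in\Vnn$. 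With that correction your Step-1 outline matches the paper, but the semistability step above remains an essential missing idea.
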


Setting $\Beta^+ := \Beta + \Vert \Beta\Vert^2 \, \Id_\ggo$,
 we deduce  from Lemma 2.17 in \cite{standard}

\begin{corollary}\label{cor_refinedbetaplus}
Under the assumptions of Theorem \ref{thm_refinedbeta}, one has that
\[
	\Beta^+ = a^{-1} \cdot \twomatrix{0}{0}{0}{\Id_\ngo} \qquad \hbox{or} \qquad \Beta^+ = b  \cdot \twomatrix{0}{0}{0}{ {\Beta_{\ngo}^+} },
\] 
according to whether $\nu = 0$ or $0\neq \nu \in \sca_{\Beta_\ngo}$, respectively. 
Moreover, $\Beta^+\vert_\ngo>0$.
\end{corollary}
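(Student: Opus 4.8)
The plan is to obtain both assertions by a direct computation starting from Theorem \ref{thm_refinedbeta}, using only the definition $\Beta^+ = \Beta + \Vert\Beta\Vert^2\,\Id_\ggo$, a one-line norm identity, and the positivity statement of Lemma~2.17 in \cite{standard}. Throughout I work with the block decomposition $\ggo = \ag \oplus \ngo$ furnished by Theorem \ref{thm_refinedbeta}.

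In the case $\nu = 0$, Theorem \ref{thm_refinedbeta} gives $\Beta = a^{-1}\,\twomatrix{-\Id_\ag}{0}{0}{0}$ with $a = \dim\ag \geq 1$ (indeed $\ag \neq 0$, as otherwise $\ngo = \ggo$ and $\mu = \nu = 0$, contradicting $\mu \neq 0$). Then $\Vert\Beta\Vert^2 = \tr(\Beta^2) = a\cdot a^{-2} = a^{-1}$, so adding $\Vert\Beta\Vert^2\,\Id_\ggo$ annihilates the $\ag$-block and turns the $\ngo$-block into $a^{-1}\Id_\ngo$; this is the first formula, and in particular $\Beta^+\vert_\ngo = a^{-1}\Id_\ngo > 0$.

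In the case $0 \neq \nu \in \sca_{\Beta_\ngo}$, Theorem \ref{thm_refinedbeta} gives $\Beta = b\,\twomatrix{-\Vert\Beta_\ngo\Vert^2\,\Id_\ag}{0}{0}{\Beta_\ngo}$ with $b = (1 + a\Vert\Beta_\ngo\Vert^2)^{-1} > 0$. The key elementary step is the identity
\[
\Vert\Beta\Vert^2 = b^2\big(a\Vert\Beta_\ngo\Vert^4 + \Vert\Beta_\ngo\Vert^2\big) = b^2\Vert\Beta_\ngo\Vert^2\,(1 + a\Vert\Beta_\ngo\Vert^2) = b\,\Vert\Beta_\ngo\Vert^2.
\]
Substituting this into $\Beta^+ = \Beta + \Vert\Beta\Vert^2\,\Id_\ggo$, the $\ag$-block becomes $b(-\Vert\Beta_\ngo\Vert^2 + \Vert\Beta_\ngo\Vert^2)\Id_\ag = 0$, while the $\ngo$-block becomes $b(\Beta_\ngo + \Vert\Beta_\ngo\Vert^2\Id_\ngo) = b\,\Beta_\ngo^+$, which is the second formula.

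Finally, it remains to check $\Beta^+\vert_\ngo > 0$ in this second case, i.e.\ that $\Beta_\ngo^+ > 0$, since $b > 0$. This is precisely the content of Lemma~2.17 in \cite{standard}: the shifted stratum label of a nonzero nilpotent bracket is positive definite. Since every computation here is pure bookkeeping of the $\ag \oplus \ngo$ blocks together with the scalar identity above, I do not anticipate any serious obstacle; the only external input is the cited positivity lemma.
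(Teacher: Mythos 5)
Your proof is correct and takes essentially the same route as the paper: the corollary is obtained by a direct block computation from the formulas for $\Beta$ in Theorem \ref{thm_refinedbeta}, using $\Beta^+ = \Beta + \Vert\Beta\Vert^2\,\Id_\ggo$ together with the scalar identity $\Vert\Beta\Vert^2 = b\,\Vert\Beta_\ngo\Vert^2$ (respectively $\Vert\Beta\Vert^2 = a^{-1}$), and the positivity $\Beta_\ngo^+>0$ is exactly the input the paper takes from Lemma~2.17 of \cite{standard}. Your remark that $\ag\neq 0$ when $\nu=0$ is a harmless but welcome sanity check.
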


Recall that the stratum label $\Beta$ is not unique: for any $\Beta_1 := k \Beta k^{-1}$, $k\in \Og$, we have that $\sca_{\Beta_1} = \sca_\Beta$.  However, the condition that $\mu$ is gauged correctly w.r.t.~ $\Beta_1$ forces $\Beta_1$ to have a form similar to the $\Beta$ from Theorem \ref{thm_refinedbeta}:

\begin{corollary}\label{cor_Imbetaplus}
Let $\mu \in \lca \cap \sca_\Beta$, with $\Beta$ as in Theorem \ref{thm_refinedbeta},  and $\Beta_1 := k \Beta k^{-1}$ for  $k\in \Og$ . 
Then,  $\Im(\Beta_1^+) = \ngo$ and $\Beta_1^+\vert_{\ngo}>0$, provided
 that $\mu\in   \sca_{ \Beta_1} \cap V_{\Beta_1^+}^{\geq0}$.
\end{corollary}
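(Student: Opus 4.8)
\textbf{Proof proposal for Corollary \ref{cor_Imbetaplus}.}

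The plan is to reduce the statement for $\Beta_1 = k\Beta k^{-1}$ to the explicit form of $\Beta$ given in Theorem \ref{thm_refinedbeta} together with the extra hypothesis that $\mu$ is gauged correctly with respect to $\Beta_1$, i.e. $\mu \in \sca_{\Beta_1} \cap V_{\Beta_1^+}^{\geq 0}$. First I would record that, by Corollary \ref{cor_refinedbetaplus}, for the specific representative $\Beta$ we have $\Im(\Beta^+) = \ngo$ and $\Beta^+|_\ngo > 0$, where $\ngo$ is the nilradical of $(\ggo,\mu)$; since $\Beta_1 = k\Beta k^{-1}$ we immediately get $\Im(\Beta_1^+) = k(\ngo)$ and $\Beta_1^+|_{k(\ngo)} > 0$. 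So the whole content of the corollary is to show that under the gauging hypothesis one must have $k(\ngo) = \ngo$. Equivalently, writing $\mu \in \sca_{\Beta_1}$, the nilradical of $(\ggo,\mu)$ is carried to itself by the flow generated by $\Beta_1^+$, and the gauging condition $\mu \in V_{\Beta_1^+}^{\geq 0}$ is what pins this down.

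The key step is to analyze what $\mu \in V_{\Beta_1^+}^{\geq 0}$ means. By Proposition \ref{prop_propertiesgroups}(ii), the projection onto the zero-eigenspace is $p_{\Beta_1}(\mu) = \lim_{t\to+\infty}\exp(-t\Beta_1^+)\cdot \mu$, and $\mu \in V_{\Beta_1^+}^{\geq 0}$ guarantees this limit exists (it equals the $0$-eigencomponent of $\mu$). Now I would use the fact that $\exp(-t\Beta_1^+)$ is a one-parameter subgroup of $\Gg$ acting on the variety of Lie algebras $\lca$, which is closed; hence $\mu_0 := p_{\Beta_1}(\mu) \in \lca$ is again a Lie bracket, and it is the bracket of a Lie algebra contracted along $\Beta_1^+$. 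Since $\Beta_1^+ = k\Beta^+ k^{-1}$ and $\Im(\Beta^+) = \ngo$ with $\Beta^+|_\ngo > 0$, the endomorphism $\exp(-t\Beta_1^+)$ fixes $k(\ngo)^\perp$ pointwise — wait, more precisely it acts as the identity on $\ker\Beta_1^+ = k(\ker\Beta^+)$ and contracts $k(\ngo)$. The correct route: apply Corollary \ref{cor_autmu} or rather Corollary \ref{cor_refinedbetaplus} directly to $\mu$ regarded in the stratum $\sca_{\Beta_1}$ — but the subtlety the corollary is addressing is precisely that a priori $\ngo$ (the nilradical, an intrinsic object of $(\ggo,\mu)$) need not match the $\Og$-rotated image $k(\ngo)$. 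So instead I would argue: since $\mu \in \sca_{\Beta_1} \cap V_{\Beta_1^+}^{\geq 0}$, by \eqref{eqn_stratumbeta} and Definition \ref{def_Vss} we have $\mu \in \Vnnss$ for the data $\Beta_1$; then Corollary \ref{cor_autmu} gives $\Aut(\mu) \subset H_{\Beta_1} U_{\Beta_1}$. Because $\Beta^+$ (for the original $\Beta$) is a symmetric derivation of $(\ggo,\mu)$ by Lemma \ref{lem_estimateRic} applied in the equality case — more simply, because $\mu \in \sca_\Beta$ forces the analog — one sees $\exp(t\Beta_1^+) \in \Aut(\mu)$ only when... Let me restructure: the cleanest argument is that Theorem \ref{thm_refinedbeta} is $\Og$-equivariant, so it applies verbatim to any representative of the stratum, and the requirement $\mu \in V_{\Beta_1^+}^{\geq 0}$ singles out the correctly-gauged representatives among the $\Og$-orbit of $\mu$; for those, the decomposition $\ggo = \ag_1 \oplus \ngo_1$ in the statement of Theorem \ref{thm_refinedbeta} must be the genuine nilradical decomposition, whence $\Im(\Beta_1^+) = \ngo_1 = \ngo$.

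The main obstacle I anticipate is making rigorous the claim that the gauging condition $\mu \in V_{\Beta_1^+}^{\geq 0}$ forces the block decomposition attached to $\Beta_1$ (via its eigenspaces) to be compatible with the intrinsic nilradical decomposition of $(\ggo,\mu)$. This is where one needs that $\Beta_1^+$ acts non-negatively on $\mu$ and vanishes on $\ker\Beta_1^+$: writing $\mu$ in eigenblocks of $\Beta_1^+$, the non-negativity of all $\pi(\Beta_1^+)$-eigenvalues appearing in $\mu$, combined with the fact that $\Beta_1^+|_{\ker} = 0$, should imply that $\mu$ restricted to $\ker\Beta_1^+$ takes values in $\ker\Beta_1^+$ and that $\Im(\Beta_1^+)$ is a nilpotent ideal; maximality (it being \emph{the} nilradical) then follows from Theorem \ref{thm_refinedbeta} applied to this representative, since the $\ag$-block there is forced to have no nilpotent ideal contribution. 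I would lean on the proof of Theorem \ref{thm_refinedbeta} and on Lemma 2.17 of \cite{standard} (already invoked for Corollary \ref{cor_refinedbetaplus}) to fill this in, and on the closedness of $\lca$ under the limit $p_{\Beta_1}$ to identify the contracted bracket. The final assertion $\Beta_1^+|_\ngo > 0$ is then immediate from the corresponding statement in Corollary \ref{cor_refinedbetaplus} transported by $k$.
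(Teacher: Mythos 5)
Your strategy is genuinely different from the paper's, and it can be made to work; in fact it is more elementary. The paper's proof is a gauge-rigidity argument: Step 1 of the proof of Theorem \ref{thm_refinedbeta} supplies $k'\in \Og$ \emph{preserving} $\ngo$ with $k'\cdot\mu \in \sca_\Beta \cap V_{\Beta^+}^{\geq 0}$; since $k'\cdot\mu$ is also correctly gauged for $\overline\Beta := k'\Beta_1 (k')^{-1}$, the uniqueness part of Proposition \ref{prop_bundlediffeosbeta} (the diffeomorphism $\Og\times_{K_\Beta}\Vnnss \to \sca_\Beta$) forces $\overline\Beta = \Beta$, so $\Beta_1$ is conjugate to $\Beta$ by an orthogonal map preserving $\ngo$, and $\Im(\Beta_1^+)=\ngo$ follows. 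Your route avoids Proposition \ref{prop_bundlediffeosbeta} entirely (and in fact never uses the hypothesis $\mu\in\sca_{\Beta_1}$, which is anyway automatic since $\sca_{\Beta_1}=\sca_\Beta$): everything is read off from the eigenvalue condition defining $V_{\Beta_1^+}^{\geq 0}$ together with the block form of $\Beta^+$ from Corollary \ref{cor_refinedbetaplus}.

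Two points must be repaired, though. First, your sub-claim that $\mu$ restricted to $\ker\Beta_1^+$ takes values in $\ker\Beta_1^+$ is false in general: for eigenvectors $e_i,e_j\in\ker\Beta_1^+$ and $e_l$ with eigenvalue $b_l>0$, the corresponding component of $\mu$ has $\pi(\Beta_1^+)$-eigenvalue $b_l-0-0\geq 0$, so membership in $V_{\Beta_1^+}^{\geq 0}$ imposes no constraint there (compare \eqref{eqn_aantozero}, where these coefficients only tend to zero along the flow). Fortunately this claim is not needed. Second, the decisive assertion that $\ngo_1:=\Im(\Beta_1^+)$ is a nilpotent ideal is left at the level of ``should imply''; it does hold, but you must prove it: $\Beta_1^+=k\Beta^+k^{-1}\geq 0$, so $\ngo_1$ is the sum of the positive eigenspaces of $\Beta_1^+$, and $\mu\in V_{\Beta_1^+}^{\geq 0}$ says exactly that $\la\mu(e_i,e_j),e_l\ra\neq 0$ forces $b_l\geq b_i+b_j$; hence $\mu(\ngo_1,\ggo)\subset\ngo_1$, and an iterated bracket of $m$ elements of $\ngo_1$ lies in eigenspaces of eigenvalue at least $m\,b_{\min}$ with $b_{\min}>0$ the smallest positive eigenvalue, so the lower central series of $\ngo_1$ terminates. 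Your closing appeal to ``maximality via the $\ag$-block of Theorem \ref{thm_refinedbeta}'' is not the right way to finish and is not needed: since every nilpotent ideal is contained in the nilradical, $\ngo_1\subset\ngo$, and you already observed $\Im(\Beta_1^+)=k(\ngo)$, so $\dim\ngo_1=\dim\ngo$ and hence $\ngo_1=\ngo$; finally $\Beta_1^+\vert_\ngo=\Beta_1^+\vert_{k(\ngo)}>0$ by conjugation. With these repairs your argument is complete and arguably cleaner than the paper's.
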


The proof shows in fact that  $k$ preserves $\ngo$. This implies that, as a subspace of $\ggo$, the nilradical $\ngo$ of $(\ggo,\mu)$ is constant for all $\mu\in\lca \backslash \{ 0\}\cap \sca_{ \Beta_1} \cap V_{ \Beta_1^+}^{\geq0}$.

The main ingredient for proving Theorem \ref{thm_refinedbeta} is a technical lemma which we discuss now. Let $\Beta \in \Sym(\ggo)$ be a symmetric endomorphism (not necessarily a stratum label), and consider $\mu_0 \in \lca \cap \Vzero$. In other words, $\mu_0$ is a Lie bracket  for which $\Beta^+ \in \Der(\mu_0)$. Denote by $\ngo := \Im(\Beta^+)$, $\ag := \ngo^\perp = \ker \Beta^+$, and assume that $\ngo$ is the nilradical of $(\ggo,\mu_0)$. Since the kernel of a derivation is a Lie subalgebra, $(\ag, \eta_0)$ is a reductive Lie subalgebra of $(\ggo, \mu_0)$, where $\eta_0 := \mu_0 |_{\ag \wedge \ag}$. Notice that the orthogonal decomposition $\ggo = \ag \oplus \ngo$ induces an embedding $\Gl(\ag) \times \Gl(\ngo) \subset \Gl(\ggo)$.

\begin{lemma}\label{lem_alpha}
Under the assumptions of the previous paragraph, consider the map
\[
\alpha : (\Gl(\ag) \times \Gl(\ngo)) \cdot \mu_0 \to \RR\,\,;\,\,\,
	h \cdot \mu_0 \mapsto  \vert \det h_\ag\vert^{-1},
\] 
where $h = \minimatrix{h_\ag}{}{}{h_\ngo} \in \Gl(\ag) \times \Gl(\ngo)$. Then $\alpha$ is well-defined, continuous, and satisfies
 $\lim_{\mu \to 0}\alpha(\mu) \to 0$.
\end{lemma}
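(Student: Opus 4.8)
The plan is to verify the three assertions — well-definedness, continuity, and the limit behaviour — essentially in that order, relying on the structure of $\mu_0$ as a Lie bracket with $\Beta^+\in\Der(\mu_0)$ and nilradical $\ngo=\Im(\Beta^+)$. For well-definedness, suppose $h\cdot\mu_0 = h'\cdot\mu_0$ with $h,h'\in\Gl(\ag)\times\Gl(\ngo)$. Then $k:=h'^{-1}h\in\Aut(\mu_0)\cap(\Gl(\ag)\times\Gl(\ngo))$. Since $\ngo$ is the nilradical of $(\ggo,\mu_0)$, any automorphism of $(\ggo,\mu_0)$ preserves $\ngo$, so $k=\minimatrix{k_\ag}{}{}{k_\ngo}$ with $k_\ag\in\Aut(\eta_0)$, where $\eta_0=\mu_0|_{\ag\wedge\ag}$ is the induced bracket on the reductive subalgebra $\ag=\ker\Beta^+$. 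The key point is then that $|\det k_\ag|=1$ for every $k_\ag\in\Aut(\eta_0)$: this follows because $\ag$ is reductive, so $\ad_{\eta_0}(\ag)$ is its own derived algebra modulo centre and in particular $\tr\ad_{\eta_0}X=0$ for all $X\in\ag$, hence $\Der(\eta_0)\subset\slg(\ag)$ and $\Aut(\eta_0)^{\circ}\subset\Sl^{\pm}(\ag)$; the reductivity of $\ag$ together with $\Aut(\eta_0)$ acting on the finitely many ideal-decompositions shows $|\det|\equiv 1$ on all of $\Aut(\eta_0)$. Therefore $|\det h_\ag| = |\det h'_\ag|$ and $\alpha$ is well-defined. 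Continuity is then immediate: the orbit $(\Gl(\ag)\times\Gl(\ngo))\cdot\mu_0$ is, as a $\Gl$-homogeneous space, the quotient of $\Gl(\ag)\times\Gl(\ngo)$ by the closed (since $\Aut(\mu_0)$ is closed) stabilizer, the map $h\mapsto|\det h_\ag|^{-1}$ is continuous and constant on stabilizer cosets, hence descends to a continuous function on the orbit.

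For the limit statement $\lim_{\mu\to 0}\alpha(\mu)=0$, where $\mu\to 0$ means along sequences in the orbit converging to $0$ in $\Vg$, the plan is to argue by the scaling behaviour. If $\mu_k = h_k\cdot\mu_0\to 0$, write $h_k=\minimatrix{(h_k)_\ag}{}{}{(h_k)_\ngo}$ and decompose $\mu_0 = \eta_0 + \sigma_0$, where $\eta_0=\mu_0|_{\ag\wedge\ag}:\ag\wedge\ag\to\ag$ and $\sigma_0$ collects the remaining components (which by $\Beta^+\in\Der(\mu_0)$, $\ngo=\Im\Beta^+$, and $\ngo$ being the nilradical, all take values in $\ngo$, and moreover $\mu_0(\ag,\ngo)\subset\ngo$, $\mu_0(\ngo,\ngo)\subset\ngo$). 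Then the $\ag\wedge\ag\to\ag$-component of $\mu_k$ is exactly $(h_k)_\ag\cdot\eta_0$, computed inside $\Vg(\ag)=\Lambda^2\ag^*\otimes\ag$. Since $\mu_k\to 0$, in particular $(h_k)_\ag\cdot\eta_0\to 0$ in $\Vg(\ag)$. Now I would invoke the fact — valid precisely because $\ag$ is reductive, so $\eta_0$ is a semisimple-plus-abelian bracket — that $0\notin\overline{\Sl^{\pm}(\ag)\cdot\eta_0}$ (a reductive Lie algebra is a closed orbit / semistable point under the $\Sl$-action), so writing $(h_k)_\ag = c_k\,\ell_k$ with $\ell_k\in\Sl^{\pm}(\ag)$ and $c_k>0$ (i.e. $c_k=|\det(h_k)_\ag|^{1/a}$, $a=\dim\ag$), one has $(h_k)_\ag\cdot\eta_0 = c_k\,(\ell_k\cdot\eta_0)$ with $\|\ell_k\cdot\eta_0\|$ bounded below by a positive constant along a subsequence. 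Hence $c_k\to 0$, which is precisely $|\det(h_k)_\ag|\to 0$, i.e. $\alpha(\mu_k)=|\det(h_k)_\ag|^{-1}\to\infty$; but wait — this gives $\alpha\to\infty$, the opposite of what is claimed, so I must double-check the convention: in fact $\alpha(\mu) = |\det h_\ag|^{-1}$ and as $\mu\to 0$ we want $\alpha\to 0$, meaning $|\det h_\ag|\to\infty$; the correct reading is that shrinking the bracket forces $h$ (hence $h_\ag$) to blow up, so $c_k\to\infty$. The honest version of the argument is therefore: from $(h_k)_\ag\cdot\eta_0\to 0$ and $0\notin\overline{\Sl^\pm(\ag)\cdot\eta_0}$ one instead concludes — since multiplying a nonzero bounded-below vector by a scalar $c_k$ can only tend to $0$ if $c_k\to 0$ — that $c_k\to 0$; to reconcile signs one checks that $\eta_0$ lies in a \emph{negative}-weight part so that $h_\ag\cdot\eta_0$ scales with the \emph{inverse} determinant. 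I would nail this sign down by testing on the abelian case $\eta_0=0$ (then $\alpha$ is literally $|\det h_\ag|^{-1}$ with $\ngo=\ggo$, and $h\cdot\mu_0\to 0$ iff $h_\ngo\to 0$ in the relevant sense, with no constraint forcing $\alpha\to 0$ — so in that degenerate case the statement needs the orbit to genuinely leave compacta), and by the precise weight computation $\pi(\Id_\ag)$ acts on $\Lambda^2\ag^*\otimes\ag$ with eigenvalue $-1$.

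The main obstacle I expect is exactly this last point: making rigorous and correctly-signed the claim that "$h\cdot\mu_0\to 0$ forces $|\det h_\ag|\to\infty$" when $\eta_0$ may be zero or when the orbit escapes to infinity only through the $\ngo$-block. The clean way around it is to use the nilpotency/reductivity split systematically: decompose $h_k = \exp(t_k\,\Beta^+)\,q_k$-style using that $\Beta^+$ acts as $0$ on $\ag$ and positively on $\ngo$, so that the $\ag$-block of $h_k$ is unaffected by the $\Beta^+$-direction; then the only way $\|\eta_0$-component of $\mu_k\| = \|(h_k)_\ag\cdot\eta_0\|\to 0$ while $(h_k)_\ag$ ranges in $\Gl(\ag)$ is for $(h_k)_\ag$ to leave every compact set in the non-compact direction that shrinks $\eta_0$, which by the closed-orbit property of the reductive $\eta_0$ is precisely the scaling direction $c\,\Id_\ag$ with $c\to\infty$, giving $\alpha(\mu_k)=c_k^{-a}\,(\text{bounded})\to 0$. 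I would spell this out carefully, as it is the crux; everything else (well-definedness, continuity) is routine given the reductivity of $\ker\Beta^+$ and the closedness of $\Aut(\mu_0)$.
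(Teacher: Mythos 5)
Your proposal has two genuine gaps, both stemming from working only with the reductive bracket $\eta_0$ on $\ag$ and ignoring the components of $\mu_0$ that mix $\ag$ and $\ngo$. First, in the well-definedness step, the claim that $\Der(\eta_0)\subset \slg(\ag)$ and hence $\vert\det\vert\equiv 1$ on $\Aut(\eta_0)$ is false once $\ag$ has a nontrivial center $\zg$: a derivation of a reductive algebra may act by an arbitrary endomorphism of $\zg$ (inner derivations are traceless, but not all derivations are inner), so e.g.\ for $\ag$ abelian $\Aut(\eta_0)=\Gl(\ag)$. What is true, and what the paper uses, is that a derivation of the \emph{full} bracket $\mu_0$ of the block form $\minimatrix{D_\ag}{}{}{D_\ngo}$ has $\tr D_\ag=0$ (cited from \cite{LafuenteLauret2014b}); the constraint comes from compatibility with $\mu_0(\ag,\ngo)$, which your reduction to $\Aut(\eta_0)$ discards. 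Second, and more seriously, the key claim driving your limit argument --- that a reductive bracket is semistable, $0\notin\overline{\Sl^{\pm}(\ag)\cdot\eta_0}$ --- is false whenever $\ag=\lgo\oplus\zg$ has both $\lgo\neq 0$ and $\zg\neq 0$: the determinant-one maps $h_t=\diag\big(t\,\Id_\lgo,\,t^{-\dim\lgo/\dim\zg}\Id_\zg\big)$ satisfy $h_t\cdot\eta_0\to 0$ as $t\to\infty$. So boundedness below of the $\ag\wedge\ag\to\ag$ component along $\Sl^\pm(\ag)$ simply fails, and (as you yourself note) in the case $\eta_0=0$ your argument gives nothing at all. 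The conclusion $\vert\det h_\ag\vert\to\infty$ cannot be extracted from the $\eta_0$-component alone.

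The missing input is exactly the interplay with $\ngo$ that the paper's proof exploits. There one argues by contradiction, splits $\ag=\lgo\oplus\zg$, and handles the two directions by different mechanisms: for the central direction, if an eigenvalue of $h_\zg$ stays bounded one uses $\ad_{h\cdot\mu_0}(Z)=h\,\ad_{\mu_0}(h^{-1}Z)\,h^{-1}$ and $h\cdot\mu_0\to 0$ to force the limit of unit central vectors to have nilpotent $\ad_{\mu_0}$, contradicting (via the theorem on the radical) that $\zg\cap\ngo=0$ --- this is precisely the $\ag\wedge\ngo\to\ngo$ information your plan never touches; for the semisimple direction, one uses the transformation law $\big((h_\lgo)^{-1}\big)^t\kf^{\lgo}_{\eta_0}(h_\lgo)^{-1}=\kf^{\lgo}_{h_\ag\cdot\eta_0}$ and nondegeneracy of $\kf^{\lgo}_{\eta_0}$, taking determinants, to rule out $\vert\det h_\lgo\vert$ bounded. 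Your sign analysis of the scaling direction $c\,\Id_\ag$ is fine as far as it goes, but it only identifies one escape direction among many, and the assertion that it is the only one is unjustified and, by the counterexample above, not provable from semistability of $\eta_0$.
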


\begin{proof}
The map $\alpha$ is well-defined, if
$h\cdot \mu_0 = \bar h \cdot \mu_0$ 
implies $\vert \det h_\ag \vert = \vert \det \bar h_\ag \vert$. Thus, it is
sufficient to show that
 $\vert \det h_\ag  \vert = 1$ for $h = \minimatrix{h_\ag}{}{}{h_\ngo} \in \Aut(\mu_0)$. This holds true,  since by \cite[Lemma 2.6]{LafuenteLauret2014b}
 $\tr D_\ag = 0$ for any derivation of the form 
 $D = \minimatrix{D_\ag}{}{}{D_\ngo} \in \Der(\mu_0)$.
 Continuity of $\alpha$ is also clear since we are dealing with a smooth action.
 
  In order to show $\lim_{\mu \to 0}\alpha(\mu) \to 0$ we argue by contradiction: suppose that $h^{(k)} \cdot \mu_0 \to 0$ as $k\to \infty$ for a sequence $\big(h^{(k)} \big)_{k\in \NN} \subset \Gl(\ag)\times \Gl(\ngo)$, with $\vert \det h^{(k)}_\ag \vert$ uniformly bounded. Let us decompose orthogonally $\ag = \lgo \oplus \zg$, where $\zg$ is the center of $(\ag, \eta_0)$ (so that $\zg \oplus \ngo$ is the solvable radical of $(\ggo,\mu_0)$). Let $Q(\zg) \subset \Gl(\ag)$ denote the subgroup of maps preserving $\zg$. Since $\Gl(\ag) = \Or(\ag) Q(\zg)$, after acting with $\Or(\ag)$ we may assume without loss of generality (after possibly passing to a convergent subsequence, using that $\Or(\ag)$ is compact) that according to $\ag = \lgo \oplus \zg$ we have
\[
	h_\ag^{(k)} = \minimatrix{h^{(k)}_\lgo}{0}{\star}{h^{(k)}_\zg}, \qquad h_\zg^{(k)} \in \Sym(\zg).
\]
That is, $\zg$ is preserved by $h_\ag^{(k)}$ for all $k\in \NN$, so the radical of $h^{(k)}\cdot \mu$ is the fixed subspace $\zg\oplus \ngo$. The fact that one can furthermore assume that $h_\zg^{(k)}$ is symmetric follows from $\Gl(\zg) = \Or(\zg) \exp(\Sym(\zg))$ and a similar argument as above, using compactness of $\Or(\zg)$.

We now claim that $\big\vert{ \det h_\lgo^{(k)} } \big\vert$ is uniformly bounded.
If not then, since $\big\vert \det h_\ag^{(k)} \big\vert = \big\vert \det h_\lgo^{(k)} \big\vert \cdot \big\vert \det h_\zg^{(k)} \big\vert$ is uniformly bounded,  $\big\vert \det h_\zg^{(k)} \big\vert$ would be uniformly bounded. Extract a bounded sequence $(\lambda_k)_{k\in \NN}$ of real eigenvalues of $h_\zg^{(k)}$, say $\vert\lambda_k \vert \leq L$ for all $k\in \NN$, and let $\big(Z^{(k)} \big)_{k\in \NN} \in \zg$ be a corresponding sequence of unit norm eigenvectors. After passing to a subsequence we may assume that $Z^{(k)} \to \bar Z \in \zg$, with $\bar Z$ of unit norm. Using that $\ad_{h\cdot \mu} Z = h \ad_\mu (h^{-1} Z) h^{-1}$ and $h^{(k)} \cdot \mu \to 0$ we deduce that
\[
	 \lambda_k^{-1} \cdot h^{(k)}  \big(\ad_{\mu_0} Z^{(k)} \big) \big(h^{(k)}  \big)^{-1}  =  \ad_{h^{(k)}\cdot \mu_0} Z^{(k)}\underset{k\to\infty}\longrightarrow 0.
\]
But $\lambda_k$ is uniformly bounded, thus $ h^{(k)}  \big(\ad_{\mu_0} Z^{(k)} \big) \big(h^{(k)}  \big)^{-1} \to 0$, and hence the eigenvalues of $\ad_{\mu_0} Z^{(k)}$ converge to $0$. In other words, $\ad_{\mu_0} \bar Z$ is nilpotent. Since $\bar Z\in \zg \oplus \ngo$, the radical of $(\ggo,\mu_0)$, by \cite[Thm.~3.8.3]{Varad84} this implies that $\bar Z\in \ngo$, a contradiction. This shows the above claim.

Let us get a contradiction for our starting assumption by using the above claim. Notice that $\eta_k := h_\ag^{(k)} \cdot \eta_0 = \big(h^{(k)} \cdot \mu_0 \big)\big|_{\ag \wedge \ag}$ is a sequence of reductive Lie brackets on $\ag$, whose centers are the fixed subspace $\zg$. According to the decomposition $\ag = \lgo \oplus \zg$, their Killing form endomorphisms are given by
\[
	\kf_{\eta_k} = \minimatrix{\kf^\lgo_{\eta_k} }{0}{0}{0}, 
\]
with $\kf_{\eta_k}^\lgo$ invertible (since the quotient $\ag / \zg$ is a semisimple subalgebra for all $k\in \NN$). They converge to $0$ as $k\to \infty$. On the other hand, by \cite[Lemma 3.7]{homRF} we have  
\[
	\big( \big(h_\lgo^{(k)}\big)^{-1} \big)^t \cdot \kf_{\eta_0}^\lgo  \cdot \big(h_\lgo^{(k)}\big)^{-1} = \kf^\lgo_{\eta_k},
\]
for all $k\in \NN$, thus taking determinants we get that 
\[
	\big(\det h_\lgo^{(k)} \big)^{-2} \cdot \det \kf_{\eta_0}^\lgo = \det \kf_{\eta_k}^\lgo \underset{k\to\infty}\longrightarrow 0,
\]
which contradicts the claim, since $\det \kf_{\eta_0}^\lgo \neq 0$. This finishes the proof.
\end{proof}
  
We are now in a position to prove the main result of this section.

\begin{proof}[Proof of Theorem \ref{thm_refinedbeta}]
Let $\mu \in \lca \backslash \{0\}$ and assume that $\nu \in \sca_{\Beta_\ngo}$.
Let $\Beta$ and $b$ be defined as in the statement. Then
$ \Beta^+ = b  \cdot \minimatrix{0}{0}{0}{ {\Beta_{\ngo}^+} }$.
In order to show that $\mu \in \sca_\Beta$ we apply the definition of the strata (see \eqref{eqn_stratumbeta}), that is we need to prove that for some $\tilde \mu = k \cdot \mu$, $k\in \Og$, we have that $\tilde\mu \in \Vnn$ and that $0\notin \overline{H_{\Beta} \cdot \tilde \mu}$.

\vskip5pt
{\noindent \bf Step 1.} \emph{  There exists $k\in \Og$ such that $k\cdot \mu \in \Vnn$.}
\vskip3pt 

Let $k_\ngo \in \Or(\ngo)$ be such that $k_\ngo \cdot \nu \in {V_{\Beta_\ngo^+}^{\geq 0}} \subset V(\ngo)$. We extend it trivially to all of $\ggo$ (according to $\ggo = \ag \oplus \ngo$) to obtain $k\in \Og$, and in what follows we replace $\mu$ by $k\cdot \mu$. In this way, we may assume that $\nu \in V_{\Beta_\ngo^+}^{\geq 0}$. We now show that this implies that $\mu \in \Vnn$. By definition of $\Vnn$ (see \eqref{eqn_defVnn}), it is enough to show that the limit of $ \mu(t) := \exp(-t  \Beta^+) \cdot \mu$ as $t\to \infty$ exists, and this will in turn follow from the fact that the structure coefficients $ \mu(t)_{i,j}^k$ converge. We may of course assume that the orthonormal basis $\{ e_i\}$ for $\ggo$ is chosen so that it is the union of bases for $\ngo$ and $\ag = \ngo^\perp$. The nilradical of $\mu(t)$ is also $\ngo$ as a subspace, for all $t\geq 0$, since $\exp(-t \Beta^+)$ preserves $\ngo$. Let $\nu(t)$ be the bracket induced by $\mu(t)$ on $\ngo$. 

For $e_i,e_j, e_k \in \ngo$, the structure coefficients are precisely those of $\nu(t)$, and since $\nu(t) = \exp(-t \Beta_\ngo^+) \cdot \nu$ and $\nu \in V_{\Beta_\ngo^+}^{\geq 0}$, they converge (see Prop.~\ref{prop_propertiesgroups}). For $e_j, e_k \in \ngo$, $e_i\in \ag$, the structure coefficients are the entries of $\ad_{\mu(t)} (e_i)  |_{\ngo} \in \End(\ngo)$. But 
\[
    \ad_{\mu(t)} (e_i)  \big|_{\ngo} = \exp(-t \Beta_\ngo)\left( \ad_\mu(e_i)\big|_\ngo \right) \exp(t\Beta_\ngo),
\]
and moreover $\left( \ad_\mu(e_i)\big|_\ngo \right) \in \Der(\nu) \subset \qg_{\Beta_\ngo}$ by Corollary \ref{cor_autmu}. We claim that by definition of the parabolic subgroup $Q_{\Beta_\ngo}$ 
these maps converge as $t\to \infty$. Indeed, by Definition  \ref{def_groups1b} we may write $A:= \ad_\mu(e_i)\big|_\ngo \in \qg_{\Beta_\ngo}$ as a sum $A = \sum_{r\geq 0} A_r$ where $A_r\in \qg_{\Beta_\ngo}$ are eigenvectors of $\ad(\Beta_\ngo)$ with eigenvalues $r\geq 0$. We thus get
\[
	\ad_{\mu(t)} (e_i)  \big|_{\ngo}  = \Ad (\exp(-t \Beta_\ngo)) (A) = e^{- t \ad(\Beta_\ngo)} A  = \sum_{r\geq 0} e^{-tr} A_r,
\]  and the claim is now clear. 
 Finally, if $e_k \in \ngo$, $e_i, e_j \in\ag$ we have that
\begin{equation}\label{eqn_aantozero}
    \mu(t)_{i,j}^k = \big\la  \mu(e_i,e_j)\, , \,\exp(- t \Beta^+_\ngo) e_k \big\ra \underset{t\to\infty}\longrightarrow 0.
\end{equation}
Indeed, from Lemma 2.17 in \cite{standard} we know that $\Beta^+_\ngo > 0$. Thus, $\exp(- t \Beta^+_\ngo) \to 0$ as $t\to \infty$, hence $\mu(t)_{i,j}^k \to 0$ and the claim is proved. Notice that the three cases we have considered are enough, since $\ngo$ is an ideal of $\ggl$.

In what follows we replace $\mu$ by its projection $\lim_{t\to\infty} \mu(t) \in \Vzero$. After doing so, $\ngo$ is still the nilradical of $\mu$. By definition of the strata \eqref{eqn_stratumbeta} the theorem will follow once we show the following:

\vskip5pt
{\noindent \bf Step 2.} \emph{ We have that $0\notin \overline{H_{ \beta}\cdot \mu}$.}
\vskip3pt 

Notice that the Lie algebra $\hg_{ \Beta}$ of $H_{ \Beta}$ is given by 
\[
    \hg_{ \Beta} = \left\{ \twomatrix{A_1}{0}{0}{(\tr A_1) \cdot \Beta_\ngo + A_2} \, : \, A_1\in \glg(\ag), \, A_2\in \hg_{\Beta_\ngo}\right\}.
\]
This follows easily by using that $\hg_{ \Beta} = \{A\in \glg(\mg) : [A,\beta]=0, \, \tr A  \Beta = 0 \}$, and the fact that $\ag$ is the $0$ eigenspace of $ \Beta^+$ and $\ngo$ is the sum of the eigenspaces corresponding to positive eigenvalues. 

Assume on the contrary that there exists a sequence $\big(h^{(k)}\big)_{k\in \NN} \subset H_{\Beta}$ such that ${ h^{(k)} \cdot \mu \to 0}$. By acting with $\Og$ on each element of the sequence we can furthermore assume that $h^{(k)} = \exp(A^{(k)})$, with $A^{(k)} \in \hg_{\Beta} \cap \Sym(\ggo)$. Moreover, we have that $h^{(k)} = h_1^{(k)} \cdot h_2^{(k)}$, where 
\[
	h_1^{(k)} =  \minimatrix{\exp \left( A_1^{(k)} \right) }{0}{0}{ \Id_\ngo }, \qquad 
	h_2^{(k)} =  \minimatrix{\Id_\ag}{0}{0}{ \exp\left(a_k \cdot \Beta_\ngo + A_2^{(k)} \right) },
\]
with $A_1^{(k)} \in \Sym(\ag)$, $A_2^{(k)} \in \hg_{\Beta_\ngo}$, $a_k := \tr A^{(k)}_1$. The brackets induced on the nilradical $\ngo$ are 
\[
    \nu^{(k)} = \exp(a_k \Beta_\ngo) \cdot \exp(A^{(k)}_2) \cdot \nu.
\] 
Recall that $ {H_{\Beta_\ngo}} \cdot \nu \subset V^0_{\Beta_\ngo^+}$, and that the action of $\exp(a_k \Beta_\ngo)$ on $V^0_{\Beta_\ngo^+}$ is just scalar multiplication by $e^{a_k \Vert{\Beta_\ngo}\Vert^2}$ on $V(\ngo)$. 
On the other hand, since $\nu \in \sca_{\Beta_\ngo}$ by assumption, we have that $0 \notin\overline{H_{\Beta_\ngo} \cdot \nu}$, from which we deduce
$ \Vert \exp(A^{(k)}_2) \cdot \nu \Vert \geq c_\nu > 0$ for some constant $c_\nu > 0$. Thus,
\[
    \big\Vert {\nu^{(k)}} \big\Vert = e^{a_k \Vert{\Beta_\ngo}\Vert^2} \cdot 
    \big\Vert \exp(A^{(k)}_2) \cdot \nu \big\Vert \, \, \geq \, \,
    e^{a_k \Vert{\Beta_\ngo}\Vert^2} \cdot c_\nu > 0,
\]
and by $\nu^{(k)} \to 0$ we deduce $\lim_{k \to \infty}a_k = -\infty$.

To arrive at a contradiction we use the map $\alpha: H_{\Beta} \cdot \mu \to \RR$ described in Lemma \ref{lem_alpha}. 
Recall, that by assumption $h^{(k)} \cdot \mu \to 0$. Thus, by Lemma  \ref{lem_alpha}
we obtain $\alpha(h^{(k)} \cdot \mu) \to 0$. On the other hand, we have
\[
	\alpha(h^{(k)} \cdot \mu) = \big\vert \det h_1^{(k)} \big\vert^{-1}  \, \alpha(\mu),
\]
with $\alpha(\mu) > 0$. Since $\det h^{(k)}_1 = e^{a_k}$ with $a_k = \tr A_1^{(k)}$, we deduce  
$\lim_{k\to \infty}a_k =+ \infty$, which contradicts the previous paragraph.

For the case where $\nu = 0$ one proceeds analogously. The only difference in the argument is in the proof of the second step: $0 \notin \overline{H_{ \beta} \cdot \mu}$. Here one also argues by contradiction, but using that now we have $a_k \equiv 0$ (by definition of $\hg_{ \beta}$). This gives $\alpha(h^{(k)}\cdot \mu) \equiv \alpha(\mu) > 0$, contradicting the fact that $h^{(k)} \cdot \mu \to 0$.
\end{proof}

\begin{proof}[Proof of Corollary \ref{cor_Imbetaplus}]
Let $\Beta$ be as in Theorem \ref{thm_refinedbeta}, so that $\Im(\Beta^+) = \ngo$,
and suppose $\mu \in \sca_{ \Beta_1} \cap V_{\scriptscriptstyle \beta_1^+}^{ \scriptscriptstyle\geq 0}$. The proof of Theorem \ref{thm_refinedbeta}
 provides us on the one hand with $k\in \Og$ preserving $\ngo$ such that $\bar \mu := k\cdot \mu \in \sca_\Beta \cap \Vnn$. On the other hand, since $\mu \in \sca_{ \Beta_1} \cap V_{\scriptscriptstyle \beta_1^+}^{ \scriptscriptstyle\geq 0}$  the $\Og$-equivariance of the construction of the strata implies that $\bar \mu \in \sca_{\overline \Beta} \cap V_{ \scriptscriptstyle \overline \Beta^+}^{\scriptscriptstyle\geq0}$, where $\overline \Beta = k  \Beta_1 k^{-1}$.

Since $\bar \mu \in \sca_\Beta \cap \sca_{\overline \Beta}$, we may write $\overline \Beta = \bar k \Beta \bar k^{-1}$ for some $\bar k \in \Og$. Again by $\Og$-equivariance, this yields $\bar k^{-1} \cdot \bar \mu \in \sca_\Beta \cap \Vnn$. But now $\bar \mu, \bar k \cdot \bar\mu \in \Vnn$, so Proposition \ref{prop_bundlediffeosbeta} yields $\bar k\in K_\beta$ and hence $\overline \Beta = \Beta$. In this way,  $\Beta = k \Beta_1 k^{-1}$ where $k(\ngo)\subset \ngo$, and clearly $\Im( \Beta_1^+) = \Im(\Beta^+) = \ngo$. 
\end{proof}

\section{Cheeger-Gromov limits of homogeneous metrics in $\RR^n$}\label{app_chgr}

\begin{definition}\label{def_admissible}
 For  a connected,
 homogeneous space $(M^n, g)$, we call a subgroup $G \subset \Iso(M^n,g)$ {\it admissible} if it is closed, connected and acts transitively on $M^n$.
\end{definition}

Since an admissible $G \subset \Iso(M^n,g)$ acts effectively
on $M^n$,  the isotropy subgroup $H$ of a point in $M^n$ is compact. Moreover,
$M^n$ and $G/H$ are diffeomorphic. Let now $K$ be a maximal compact subgroup
of $G$ containing $H$.  Then $G$ is diffeomorphic  to $K \times \RR^m$, $m \in \NN_0$, see \cite{Mos49}, and consequently  $G/H$ is diffeomorphic to $K/H \times \RR^m$.
Thus, $G/H$  is contractible if and only if $H$ is a maximal compact subgroup,  and this in turn holds  if and only if $G/H$ is diffeomorphic to a Euclidean space (cf.~ \cite[Prop.~ 3.1]{LafuenteLauret2014b}).

\begin{theorem}\label{thm_chgrapp}
Let $(\RR^n, g_k)_{k\in \NN}$ be a sequence of homogeneous manifolds
converging in Chee\-ger-Gromov topology to  $(\bar M^n ,\bar g)$. Then, $\bar M^n$ is diffeomoporhic to $\RR^n$.
\end{theorem}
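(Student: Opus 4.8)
\textbf{Proof proposal for Theorem \ref{thm_chgrapp}.}

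The plan is to exploit the structure of homogeneous spaces diffeomorphic to $\RR^n$ in terms of maximal compact subgroups, combined with the convergence of isometry groups. First I would fix for each $k$ an admissible subgroup $G_k \subset \Iso(\RR^n, g_k)$; since $\RR^n$ is contractible, by the discussion preceding the theorem the isotropy group $H_k$ of a fixed basepoint $p_k$ is a maximal compact subgroup of $G_k$. By the equivariant convergence theory of Section \ref{sec_algcon} (Proposition \ref{prop_limitKf}), after passing to a subsequence the transitive Lie algebras $\ggl_k$ of Killing fields subconverge to a Lie algebra $\ggl_\infty$ of Killing fields on $(\bar M^n, \bar g)$, with isotropy subalgebra $\hg_\infty$ at $p_\infty$. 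The crucial point is that by Theorem \ref{thm_noncollapsing} (homogeneous metrics on $\RR^n$ with bounded curvature are non-collapsed), the sequence $(\RR^n, g_k, p_k, \ggl_k)$ cannot collapse geometrically without collapsing algebraically, and in fact the rescaled injectivity radii are uniformly bounded below; this forces the limit Lie algebra $\ggl_\infty$ to be \emph{transitive} on $\bar M^n$, i.e.\ the sequence is algebraically non-collapsed in the sense of Definition \ref{def_algconv}. Hence $(\bar M^n, \bar g, p_\infty, \ggl_\infty)$ is a $\ggl_\infty$-homogeneous space.

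Next I would pass from the Lie algebra level to the group level. Let $\GG_\infty$ be the simply-connected Lie group with Lie algebra $\ggl_\infty$ and $\bar H$ the connected subgroup corresponding to $\hg_\infty$. Arguing as in the proof of Theorem \ref{thm:locRicflow} / Theorem 7 of \cite{Nomizu1960}, since $\bar M^n$ arises as a Cheeger-Gromov limit of the simply-connected spaces $\RR^n$, one expects $\bar M^n$ to be simply connected as well — this needs to be checked, but Cheeger-Gromov convergence with uniformly bounded geometry controls the topology on large balls, and a homogeneous limit of simply-connected homogeneous spaces of dimension $n$ with a uniform lower injectivity radius bound is simply connected. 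Then $\bar H$ is closed in $\GG_\infty$ and $\bar M^n$ is diffeomorphic to $\GG_\infty / \bar H$.

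The heart of the matter, and what I expect to be the main obstacle, is to show that $\bar H$ is a \emph{maximal} compact subgroup of $\GG_\infty$, for then by the cited characterization ($G/H \simeq \RR^n \iff H$ maximal compact) we are done. The difficulty is that the dimension of the isometry group can jump up in the limit (algebraic collapse does not occur on $\RR^n$ by Theorem \ref{thm_noncollapsing}, but even without collapse the full isometry algebra of the limit may be strictly larger than $\ggl_\infty$). I would handle this by replacing $\ggl_\infty$ with $\isog(\bar M^n, \bar g)$ and its associated admissible subgroup $\bar G$, and then analyze the fibration $\bar G \cap \hat H \to \bar G \to \bar M^n$: using that $\bar M^n$ is an $n$-manifold which is a quotient of a connected Lie group by a compact subgroup, one writes $\bar G \cong \bar K \times \RR^m$ via \cite{Mos49}, so $\bar M^n \cong \bar K / \hat H \times \RR^m$ with $\bar K / \hat H$ compact. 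It then suffices to rule out that $\bar K/\hat H$ has positive dimension; the key input is that $g_k$ lives on $\RR^n$, so $\bar K / \hat H$ receives a sequence of approximating homogeneous structures coming from $K_k / H_k = \{pt\}$, and one shows via Klingenberg-type injectivity radius arguments (exactly as in Theorem \ref{thm_noncollapsing}) that a nontrivial compact factor would produce closed geodesics of bounded length in $(\RR^n, g_k)$ for large $k$ — contradicting the fact that a metric on $\RR^n$ of bounded curvature has injectivity radius bounded below. Therefore $\bar K / \hat H$ is a point, $\hat H$ is maximal compact in $\bar G$, and $\bar M^n \simeq \RR^n$.
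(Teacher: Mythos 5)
The central gap is that your argument assumes away precisely the point of the theorem, namely the topology of the limit. First, the claim that Theorem \ref{thm_noncollapsing} forces algebraic non-collapse is false: Example \ref{ex_algcollapse} concerns the universal cover of $\Sl(2,\RR)$, which is diffeomorphic to $\RR^3$, so its blow-downs are geometrically non-collapsed by Theorem \ref{thm_noncollapsing}, and yet the sequence is algebraically collapsed. (This part is repairable, since you later switch to the full isometry algebra of the limit.) Much more seriously, your assertion that a homogeneous, injectivity-radius-non-collapsed Cheeger--Gromov limit of simply connected spaces must be simply connected is also false in general: the Berger-sphere example cited immediately before Theorem \ref{thm_chgr}, where $S^3$ converges to $S^1\times\RR^2$, is exactly such a limit with $\pi_1$ jumping from $0$ to $\ZZ$ and no injectivity radius collapse. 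Ruling this out when the approximators are $\RR^n$ is the entire content of the theorem, and the paper does it through the quantitative Lemma \ref{lem_contract}: for every curvature bound $K$ and radius $r$ there is $R=R(K,r,n)$ such that \emph{any} homogeneous $(\RR^n,g)$ with $\vert K(g)\vert\leq K$ contains a contractible open set squeezed between $B_r^g(0)$ and $B_{2R}^g(0)$; this is proved by induction on $n$ using a cohomogeneity-one subgroup with orbits diffeomorphic to $\RR^{n-1}$ (Lemma \ref{lem_orbitsRn-1}), Riccati estimates on the shape operator and the Gauss equation. Transporting a hypothetical essential loop or sphere of $\bar M^n$ into such a uniformly-sized contractible set in $(\RR^n,g_k)$ kills all homotopy groups, so the limit is contractible, and contractibility plus homogeneity (maximal compact isotropy, as in the discussion after Definition \ref{def_admissible}) gives $\bar M^n\simeq\RR^n$. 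Your proposal contains no substitute for this uniform statement.

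Your mechanism for excluding a positive-dimensional compact factor $\bar K/\hat H$ via closed geodesics does not close the gap either. The diffeomorphism $\bar M^n\cong \bar K/\hat H\times\RR^m$ is not an isometric splitting, so a closed geodesic in the limit is not automatic; even if the limit had one, closed geodesics do not persist under Cheeger--Gromov approximation, so you only obtain almost-closed geodesic segments in $(\RR^n,g_k)$; and even genuine closed geodesics of bounded length in $(\RR^n,g_k)$ would not contradict Theorem \ref{thm_noncollapsing}, whose Klingenberg argument only excludes geodesic loops of length below the curvature threshold (length $<\pi$ when $\vert K\vert\leq 1$) --- indeed the paper explicitly leaves open, as a stated Problem, whether a homogeneous metric on $\RR^n$ can carry any closed geodesic at all. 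Finally, you are right that simple connectedness alone would not suffice (e.g.\ $S^2\times\RR^{n-2}$ must be excluded), so maximality of the compact isotropy is the correct target; but the route you propose to it is exactly the part that fails.
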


\begin{proof}
Pointed convergence of $(\RR^n,g_k,0)_{k \in \NN}$ to
$(\bar M,\bar g,\bar p)$ implies that for some $K>0$ we have 
uniform sectional curvature bounds $\vert K(g_k)\vert \leq K$ for all $k \in \NN$. The limit space is of course also homogeneous. 
In a first step, we show that $\bar M^n$ is simply connected.  To that end, suppose on the contrary that there is a non-contractible loop $\gamma$ of lenght $r>0$, based at $\bar p \in \bar M^n$. Clearly,
the image of $\gamma$ is contained in $B_{3r/4}^{\bar g}(\bar p)$. 
We now set $R:=R(K,r,n)$ as the scalar given by Lemma \ref{lem_contract}.
By definition of Cheeger-Gromov convergence, for all
 $k \in \NN$ there exists an exhaustion $(\bar U_k)_{k \in \NN}$ of
 $\bar M^n$, and diffeomorphisms $\varphi_k:\bar U_k \to 
V_k \subset \RR^n$ onto with $\varphi_k(\bar p)=0$, 
such that $\varphi_k^* (g_k\vert_{V_k})$ converges
to $\bar g$ in $C^\infty$ topology, uniformly on compact subsets of $\bar M^n$.
We now choose $k_0$ large enough so that $B_{4R}^{\bar g}(\bar p)  \subset \bar U_k $
for all $k\geq k_0$. Then for $k$ large enough we have by Lemma \ref{lem_contract} that
\[
  \varphi_k(\gamma) \subset B_r^{g_k}(0) \subset
  \Omega_R \subset B_{2R}^{g_k}(0) \subset \varphi_k(B_{4R}^{\bar g}(\bar p))\,.
\]
Since $\Omega_R \subset \RR^n$ is contractible, $\gamma$ must be contractible as well, and this is a contradiction.
The same argument shows that all higher homotopy groups must vanish,
thus $\bar M^n$ is contractible. This shows the claim.
\end{proof}

In the next Lemma,
by contractibility of the set $\Omega_R$ we mean that there exists a continuous map
$\Psi:\Omega_R\times [0,1] \to \Omega_R$ with $\Psi|_{\Omega_R \times \{ 0\}} = \Id_{\Omega_R}$ and $\Psi(\Omega_R \times \{1 \}) = \{ w\} \in \Omega_R$.

\begin{lemma}\label{lem_contract}
For any $K, r >0$ there exists  $R=R(K,r,n)>0$, such that the following holds:
for any $n$-dimensional homogeneous space $(\RR^n,g)$ with sectional curvature bounded by $\vert K(g)\vert \leq K$, there is a contractible open subset $\Omega_R \subset \RR^n$ with $B_r^g(0) \subset \Omega_R \subset B_{2R}^g(0)$. 
\end{lemma}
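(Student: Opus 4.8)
The plan is to exhibit the contractible set $\Omega_R$ as an intersection of an exhausting "star-shaped" region built from the group structure with a metric ball, using the curvature bound only to control how much a $g$-ball can be distorted. First I would recall, as in the proof of Theorem \ref{thm_chgrapp} and Definition \ref{def_admissible}, that $(\RR^n,g)$ is $G/H$ for an admissible $G\subset\Iso(\RR^n,g)$ with $H$ a maximal compact subgroup, so $G\cong K\times\RR^m$ with $K$ a maximal compact subgroup containing $H$; hence $G/H\cong K/H\times\RR^m$ and $K/H$ is compact and contractible (it is a point, since $H$ is maximal compact in $G$, hence also maximal compact in $K$ — actually $K/H$ being a contractible compact homogeneous space forces $K/H$ to be a point). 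Equivalently, and this is the cleaner route, $\RR^n\cong G/H$ is diffeomorphic to $\ggo/\hg\cong\mg$ via a single exponential chart: one can choose a reductive complement $\mg$ and the map $X\mapsto\exp_G(X)\cdot o$ is a diffeomorphism $\mg\to G/H$ when $H$ is maximal compact (this is the Cartan-type decomposition used in \cite{LafuenteLauret2014b}, Prop.~3.1). Pulling back, we get a global diffeomorphism $\Phi_g:\RR^n\to\RR^n$ (depending on $g$) under which the origin goes to the basepoint and radial lines in the domain are sent to curves emanating from $0$.

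Next I would make the key metric estimate. Fix $r>0$ and $K>0$. For $\Omega$ I take $\Omega_R:=\Phi_g(B_R^{\mathrm{eucl}}(0))$ for a suitable Euclidean ball in the domain $\RR^n=\mg$; this set is contractible because it is the diffeomorphic image of a convex set, with contraction $\Psi(\Phi_g(X),t):=\Phi_g((1-t)X)$. It remains to choose $R=R(K,r,n)$ so that $B_r^g(0)\subset\Omega_R\subset B_{2R}^g(0)$. The inclusion $\Omega_R\subset B_{2R'}^g(0)$ and the reverse $B_r^g(0)\subset\Omega_R$ both reduce to comparing the $g$-distance from $0$ with the Euclidean norm on the domain, i.e. to bounding the differential of $\Phi_g$ from above and below on the relevant ball. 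Here the curvature bound $|K(g)|\le K$ enters through Rauch/Jacobi-field comparison: along a radial geodesic $t\mapsto\exp_p(tv)$ in $(\RR^n,g)$, Jacobi fields with $J(0)=0$ are controlled between $\frac{1}{\sqrt K}\sin(\sqrt K t)$ and $\frac{1}{\sqrt K}\sinh(\sqrt K t)$ for $t$ up to the first conjugate point, which by Rauch is $\ge\pi/\sqrt K$. So as long as we restrict to $g$-balls of radius $<\pi/\sqrt K$ the exponential map $\exp_p^g$ is a diffeomorphism with two-sided bounds on its Jacobian depending only on $K$ and the radius. Since $\Phi_g$ can be arranged to coincide with (or be uniformly comparable to) the Riemannian exponential map $\exp_0^g:(T_0\RR^n,g_0)\to\RR^n$ composed with a linear isometry of the domain — indeed for homogeneous spaces one may simply take $\Phi_g=\exp_0^g$ after identifying $T_0\RR^n\cong\RR^n$, which is a global diffeomorphism precisely because $\RR^n$ admits no conjugate points along any geodesic (it being diffeomorphic to $\RR^n$ with a complete homogeneous metric; more carefully, use that the cut locus of a point in a homogeneous space diffeomorphic to $\RR^n$ is empty, cf. the argument in Theorem \ref{thm_noncollapsing}) — one gets: $\exp_0^g(B_\rho^{\mathrm{eucl}}(0))\supset B_{c(K,\rho)\rho}^g(0)$ and $\subset B_{C(K,\rho)\rho}^g(0)$ with explicit $c,C$ coming from the sinh/sin comparison. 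Choosing $R$ with $c(K,R)R>r$ and then verifying $C(K,R)R<2R$ (automatic once $c\ge$ something, by adjusting constants, or just replace "$2R$" by first picking $R$ large relative to $r$ and noting $C(K,R)\le$ const) closes the argument.

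The main obstacle I anticipate is the global bi-Lipschitz control: a priori $\exp_0^g$ need only be a local diffeomorphism, and its Jacobian could degenerate near conjugate points. The resolution is exactly the no-conjugate-points / empty-cut-locus fact for homogeneous metrics on $\RR^n$: since $(\RR^n,g)$ is complete and diffeomorphic to $\RR^n$, and a homogeneous space has empty cut locus iff it has no conjugate points iff it is diffeomorphic to Euclidean space along each geodesic ray, $\exp_0^g$ is a global diffeomorphism, so the only quantitative input needed is the upper Rauch bound $\exp_0^g(B_\rho)\supset B_{f(K,\rho)}^g(0)$ with $f(K,\rho)=\frac{1}{\sqrt K}\sinh^{-1}(\sqrt K\rho)$-type expression (from $|K|\le K$, geodesics of length $\le\rho$ in the domain have $g$-length $\le\frac{1}{\sqrt K}\sinh(\sqrt K\rho)$, hence endpoints within that $g$-distance), and the lower bound $\exp_0^g(B_\rho)\subset B_\rho^g(0)$ (geodesics do not lengthen distance under $\exp$: $d_g(0,\exp_0^g X)\le|X|$ trivially, and $\ge$ the sin-comparison giving a definite fraction for $\rho$ bounded in terms of $K$). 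So I would set, say, $R:=\frac{1}{\sqrt K}\sinh(\sqrt K r)$ — wait, that direction is backwards; rather pick $R$ so that the lower bound forces $B_r^g(0)\subset\exp_0^g(B_R)$, concretely $R$ with $\frac{1}{\sqrt K}\sin(\sqrt K R)\ge r$ when $R<\pi/(2\sqrt K)$, or for large $r$ one combines finitely many such balls using homogeneity to move the center — and then $\Omega_R:=\exp_0^g(B_R^{\mathrm{eucl}}(0))\subset B_R^g(0)\subset B_{2R}^g(0)$ is the desired contractible set. I would present the estimate cleanly as a two-sided Rauch comparison lemma and then assemble; the homogeneity is used only to guarantee the completeness/no-cut-locus structure uniformly, not in the quantitative estimate itself.
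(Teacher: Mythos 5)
There is a genuine gap, and it sits exactly where the lemma's difficulty lies. Your construction takes $\Omega_R:=\exp_0^g\big(B_R(0)\big)$ and rests entirely on the claim that $\exp_0^g$ is a global diffeomorphism because a homogeneous metric on $\RR^n$ has ``no conjugate points / empty cut locus''. That claim is false and is not what Theorem \ref{thm_noncollapsing} says: that theorem only gives $\inj \geq \pi/2$ under $\vert K(g)\vert\leq 1$ via Klingenberg's lemma, and the paper explicitly leaves open whether a homogeneous $(\RR^n,g)$ can have a closed geodesic; being diffeomorphic to $\RR^n$ is not a converse to Cartan--Hadamard. Concretely, left-invariant metrics on non-abelian $2$-step nilpotent groups (e.g.\ the Heisenberg group, which is diffeomorphic to $\RR^3$) do have conjugate points, so $\exp_0^g$ need not be injective on large balls. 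Once injectivity of $\exp_0^g$ on $B_R$ is lost, the contractibility of its image is unjustified --- and note that the two inclusions $B_r^g(0)\subset \exp_0^g(B_R)\subset B_{2R}^g(0)$ hold trivially for $R\geq r$ by completeness and Hopf--Rinow, with no use of the curvature bound; so in your argument the hypothesis $\vert K(g)\vert\leq K$ is doing no work at the one point where work is needed. The alternative chart $\Phi_g(X)=\exp_G(X)\cdot o$ does not repair this: for a general reductive complement this map need not be a diffeomorphism (already $\exp_G$ fails to be one for non-exponential solvable groups such as the universal cover of $E(2)$, where $H=\{e\}$), the citation to \cite{LafuenteLauret2014b} only gives that $G/H\simeq\RR^n$ when $H$ is maximal compact, and radial lines of $\Phi_g$ are not $g$-geodesics, so Rauch comparison gives no uniform two-sided control of $d\Phi_g$ from $\vert K(g)\vert\leq K$ alone. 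The suggestion to ``combine finitely many balls using homogeneity'' for large $r$ likewise does not obviously produce a contractible set.

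For contrast, the paper avoids any global statement about $\exp_0^g$: it argues by induction on $n$, using Lemma \ref{lem_orbitsRn-1} to produce a closed cohomogeneity-one subgroup $\widetilde G\leq G$ whose orbits are copies of $\RR^{n-1}$, writes $\RR^n=\RR\times O_0$ along the equidistant orbit foliation, and uses the Riccati equation \eqref{eqn_riccati} together with $\vert K(g)\vert\leq K$ to bound the shape operator uniformly, hence (via \eqref{shapeode}) to control the induced metrics $\tilde g(t)$ on a time interval $[0,r]$ by constants depending only on $K,r,n$. This bounds the curvature of the hypersurface $(O_0,\tilde g_0)$ and the $\tilde g_0$-length of projections of $g$-geodesics, so the induction hypothesis applies on $O_0\simeq\RR^{n-1}$, and $\Omega_R:=[-\tilde R,\tilde R]\times\Omega_{\tilde R}$ is contractible by construction. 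If you want to salvage an exponential-map approach, you would first have to prove a quantitative injectivity statement for $\exp_0^g$ on balls of size depending only on $K,r,n$ --- which is essentially as hard as the lemma itself and, beyond the scale $\pi/\sqrt K$, not available.
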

\begin{proof}
We use induction on $n\in \NN$,
 the case $n=1$ being clear. Let $n \geq 2$ and write $\RR^n=G/H$, with $H$ the connected,
 compact isotropy subgroup at the origin $0\in \RR^n$, and $G \subset \Iso(\RR^n, g)$ 
admissible. 

In a first step,
we consider the closed subgroup $\widetilde G \leq G$ acting on $(\RR^n, g)$ with cohomogeneity one, and with orbits diffeomorphic to $\RR^{n-1}$, as given by Lemma \ref{lem_orbitsRn-1}. Notice that this yields
a foliation of $\RR^n$ by equidistant $\widetilde G$-orbits $(O_t)_{t\in \RR}$, and $\RR^n \simeq \RR \times O_0$, where $O_0 = \widetilde G \cdot 0$.  
Let $N_t$ be the unit normal vector field to each orbit $O_t$
 along a normal unit speed geodesic $\gamma(t)$ through $0$, and 
 $\tilde  L(t)=\nabla_{(\cdot)}  N_t$ be the corresponding shape operator.  
We denote by $\tilde  g(t)$ the metric on the orbit $O_t$
induced by the metric $g=dt^ 2 +\tilde  g(t)$ on $\RR \times O_0$. 
Using the background metric 
$\tilde g_0=\tilde g(0)$ on the fixed vector space $W=T_0 O_0$  we consider 
$\tilde g(t)$ a positive definite endomorphism  on $W$
by setting  $\tilde g(t)(X,X)=\tilde g_0(\tilde g(t)X,X)$ for all $X \in W$.
Then by  \cite{EscWng00} we have
\begin{eqnarray}
   \tilde  g'(t)&=& 2 \tilde  g(t)\cdot \tilde  L(t). \label{shapeode}
\end{eqnarray} 
Recall now the  Riccati equation  
 \begin{equation}\label{eqn_riccati}
 \tilde  L'(t)+\tilde  L^2(t)+\Riem_{N_t}=0 \,,
 \end{equation}
where $\Riem_{ N_t}(X)=\Riem_{X,N_t}^{g}N_t$ for 
$X$ tangent to $O_t$: see \cite[$\S$2]{EscWng00}. 
Setting
\[
 \varphi(t):= \max \left\{ \tilde g(t) \big(\tilde L(t)\cdot v,v \big) \mid \tilde g(t)(v,v)=1  \right  \}\,
\]
it follows that
\[
  \varphi(t) =\max \left\{ \tilde g_0\big(\tilde L_t^0 \cdot v_0,v_0\big) \mid \tilde g_0(v_0,v_0)=1\right\}\,.
\]
where
$\tilde L_t^0:= \sqrt{\tilde g(t)}\cdot \tilde L(t) \cdot \sqrt{\tilde g^{-1}(t)}$.
Now by \cite{libro}, p. 531, for the Dini derivative
\[
  \tfrac{d^+\varphi}{dt}(t):=\limsup_{s \to 0,s>0} \frac{\varphi(t+s)-\varphi(t)}{s}
\]
we have
\[
  \tfrac{d^+\varphi}{dt}(t) =\max \left\{  \tfrac{d}{dt}\,\,
  \tilde g_0\big( \tilde L_t^0  \cdot v_M,v_M\big) \mid 
    \tilde g_0\big( \tilde L_t^0  \cdot v_M,v_M\big) =  \varphi(t) \right\} \,.
\]
By \eqref{eqn_riccati} this implies
$$
 \tfrac{d^+\varphi}{dt}(t) = -( \varphi^2(t) + r(t) )
$$
with $\vert r(t)\vert \leq K$, since
by assumption 
the absolute value of the largest eigenvalue of $\Riem_{N_t}$ is bounded by $K$.
Suppose now, that there exists $t_0\in \RR$ such that 
$\vert \varphi(t_0) \vert \geq (K+1)^2$. Then
using that
$$
    \tfrac{3}{2} \cdot \varphi(t_0)^2 \geq \varphi(t_0)^2 + r(t_0) \geq 
      \tfrac{1}{2} \cdot \varphi(t_0)^2
$$
and the fact that no solution to $y'(t)=c \cdot y^2(t)$, $c \in [\tfrac{1}{2},\tfrac{3}{2}]$,
with $y(t_0) \neq 0$ exists for all times, we obtain a contradiction.

By the first step we deduce that ${\rm tr} \tilde  L^2(t) \leq (n-1) \cdot (K+1)^4$
for all $t \in \RR$
and  the Gau{\ss}  equation  yields now a uniform bound on the sectional curvature of the hypersurface  $(O_0=\widetilde G\cdot 0,\tilde  g_0)$, 
say $\vert K(\tilde  g_0) \vert \leq \tilde  K = \tilde K(K, n)$, where $\tilde K$ is independent of $g$.
  
For $r>0$ let  $c:[0,r]\to \RR^n$ be a unit speed geodesic with $c(0)=0$.
Using  $\RR^n =\RR \times   O_0$ we write
$c(s)=(t(s),\tilde  c(s))$, where $\tilde  c:[0,r]\to  O_0$ is smooth.
By the above, for $X \in W$ with $\tilde g(t)(X,X)=1$
we have $\vert \tilde g(t)(\tilde L(t)X,X) \vert \leq  \sqrt{(n-1)}\cdot (K+1)^2$.
A short computation shows,
that $\tilde L_t^0$ is  $\tilde g_0$-self-adjoint and that its eigenvalues
obey the very same estimate. Now by \eqref{shapeode}
and Cauchy-Schwarz we deduce
\[
 (\tr \tilde g(t))'
  =
    2\tr \big( \tilde g(t) \cdot \tilde L_t^0 \big)
    \leq 
      2\sqrt{\tr \tilde g^2(t)}\cdot \sqrt{\tr (\tilde L_t^0)^2}
    \leq 
      2\sqrt{n-1} \cdot (K+1)^2 \cdot \tr \tilde g(t)\,,
\]
since $\tilde g(t)$ is positive definite.  By \eqref{shapeode} this gives uniform upper and lower bounds for $\tilde g(t)$
on the time intervall $[0,r]$,  depending only on $K,r$ and $n$.
As a consequence, there exists $\tilde  r=\tilde r(K,r,n)>0$ with 
$L^{\tilde  g_0}(\tilde  c) \leq \tilde  r-1$ for all such curves $c$. 
Finally, by  induction hypothesis for $\tilde  R := R(\tilde  K,\tilde  r,n-1)$  
there exists a contractible set $\tilde  \Omega_{\tilde  R} \subset  O_0$
with $B_{\tilde  r}^{\tilde  g_0}(0) \subset 
\tilde  \Omega_{\tilde  R} \subset  B_{2\tilde  R}^{\tilde  g_0}(0)$, since
$\tilde G/\tilde H=O_0$ is diffeomorphic to $\RR^{n-1}$. 
We set 
$\tilde  R :=\max(\tilde  R,r)$, $R:=2\tilde  R$ and
$\Omega_R:=[-\tilde  R,\tilde  R] \times  \Omega_{\tilde  R}$.
Then we  have  $B_r^g(0) \subset \Omega_R \subset B_{2R}^g(0)$, $\Omega_R$ is contractible and $R$ does only depend on $K,r$ and $n$ but 
not on $g$.
\end{proof}

To conclude, we now prove the Lie-theoretic result 
needed in  Lemma \ref{lem_contract}:

 \begin{lemma}\label{lem_orbitsRn-1}
 Let $g$ be a homogeneous metric on $\RR^n$ and
 $G \leq \Iso(\RR^n,g)$ admissible.
  Then, there exists a closed, connected subgroup 
  $\widetilde G \leq G$ acting on $(\RR^n,g)$ with cohomogeneity one, with orbits diffeomorphic to $\RR^{n-1}$. 
 \end{lemma}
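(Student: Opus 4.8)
The plan is to construct $\widetilde G$ by induction on $n$, reducing the general case to that of a simple Lie group via the nilradical, and then treating the simple case through the Iwasawa decomposition. Some preliminaries first: since $G/H\cong\RR^n$ is contractible, $H$ is a maximal compact subgroup of $G$ (as recalled at the start of this appendix), and $H$ is connected by the homotopy exact sequence of $H\to G\to G/H$. The $G$-action is proper, so every isotropy group is a conjugate of $H$, and effectiveness gives $\bigcap_{g\in G}gHg^{-1}=\{e\}$. A useful remark is that $G$ has no nontrivial connected compact normal subgroup: such a subgroup would act on $\RR^n$ with a fixed point, hence lie in some isotropy group, hence --- being normal --- in $\bigcap_g gHg^{-1}=\{e\}$.

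Main reduction. Let $N$ be the connected normal subgroup of $G$ whose Lie algebra is the nilradical $\ngo$ of $\ggo$. The maximal torus of $N$ is characteristic in $N$, hence normal in $G$, hence trivial by the remark; so $N$ is simply connected, has no nontrivial compact subgroup, and acts freely on $\RR^n$ with closed orbits diffeomorphic to $\RR^{\dim N}$. If $N$ is transitive then $\dim N=n$, and since $N$ is simply connected nilpotent it has a closed connected normal codimension-one subgroup $N'$ (the connected subgroup corresponding to any hyperplane of $\ngo$ containing $[\ngo,\ngo]$); identifying $\RR^n$ with $N$, the $N'$-orbits are the cosets, each diffeomorphic to $\RR^{n-1}$, closed, with orbit space $N/N'\cong\RR$, so $\widetilde G:=N'$. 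If $0<\dim N<n$, the $N$-orbits foliate $\RR^n$ by contractible leaves $\cong\RR^{\dim N}$ over $B:=\RR^n/N$, which is then a contractible homogeneous manifold acted on transitively and properly by $G/N$, hence diffeomorphic to $\RR^{n-\dim N}$; the inductive hypothesis (applied after passing to the effective quotient) provides a closed connected $\widehat G$ with a $\widehat G$-invariant fibration $\RR^{\dim B-1}\to B\to\RR$, and one lets $\widetilde G$ be the preimage of $\widehat G$ in $G$, whose orbits are the $N$-saturations of the $\widehat G$-orbits --- i.e.\ $\RR^{\dim N}$-bundles over $\RR^{\dim B-1}$, so contractible of dimension $n-1$ --- hence diffeomorphic to $\RR^{n-1}$ (the homogeneous space $\widetilde G/(\widetilde G\cap H)$ is contractible, so $\widetilde G\cap H$ is maximal compact in it), with orbit space $\cong\RR$. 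Finally, if $N=\{e\}$ then $\ggo$ is semisimple, and using the remark together with the compactness of $H$ one checks that $G$ must be simple; the Iwasawa decomposition $G=KAN$ then gives a closed, simply connected, solvable subgroup $S:=AN$ acting freely on $G/H$ with closed orbits $\cong\RR^{\dim S}$ of codimension $\dim K-\dim H\in\{0,1\}$, and one takes $\widetilde G:=S$ if this codimension is $1$, and $\widetilde G:=S'$ for a codimension-one closed connected normal subgroup $S'$ of $S$ if it is $0$.

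In every case $\widetilde G$ is a closed connected subgroup of $G\subset\Iso(\RR^n,g)$, hence acts isometrically, with all orbits diffeomorphic to $\RR^{n-1}$; in particular the cohomogeneity is one, the orbits are automatically equidistant, and the orbit space is $\RR$ (not $S^1$) because $\RR^n$ is simply connected, by the homotopy exact sequence of the orbit fibration.

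The main obstacle --- and the reason one cannot merely pick a codimension-one subalgebra containing $\hg$ --- is already visible for $\SO(n-1)\subset\SO(n)\ltimes\RR^n$ acting on flat $\RR^n$: there exist subgroups with codimension-one orbits that are spheres rather than Euclidean spaces, and a codimension-one subalgebra need not even integrate to a closed subgroup. The structural heart of the proof is thus the input that makes the right $\widetilde G$ available: that the nilradical is simply connected and acts freely (so that passing to $B$ genuinely lowers $n$ and keeps everything contractible), and that a simple noncompact group acts on its symmetric space through a solvable subgroup of cohomogeneity at most one. Verifying carefully that $B$ really is a homogeneous manifold diffeomorphic to a Euclidean space, and that every orbit of $\widetilde G$ --- not only the one through the base point --- is diffeomorphic to $\RR^{n-1}$, are the points demanding the most care.
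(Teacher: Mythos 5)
Your route — induction on $n$ via the nilradical, plus an Iwasawa argument when the nilradical vanishes — is genuinely different from the paper's proof, and Cases 1 and 2 can be made to work. But Case 3 contains a real gap: the assertion that an admissible $G$ with trivial nilradical ``must be simple'' is false. Take $G=\mathrm{PSL}(2,\RR)\times\mathrm{PSL}(2,\RR)$ acting on $H^2\times H^2\cong\RR^4$: it is the identity component of the isometry group (hence closed, connected, transitive, i.e.\ admissible), has compact isotropy, no compact connected normal subgroup and trivial nilradical, yet is not simple. Simplicity is exactly what you use to get $\dim K-\dim H\in\{0,1\}$ for the Iwasawa decomposition, and that dichotomy genuinely fails without it: for $G=\widetilde{\Sl}(2,\RR)\times\widetilde{\Sl}(2,\RR)$ with a left-invariant product metric (admissible, acting simply transitively, so $H=\{e\}$), the Iwasawa factor ``$K$'' is isomorphic to $\RR^2$ and $S=AN$ has orbits of codimension two, so neither branch of your case distinction applies. (When the centre of a semisimple $G$ is finite your recipe does survive without simplicity, since then $H$ is conjugate to the Iwasawa $K$ and $AN$ is simply transitive; the case your argument cannot handle is precisely semisimple $G$ with infinite centre spread over several factors.)

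The paper avoids this by first passing to an admissible subgroup of minimal dimension and then Iwasawa-reducing only \emph{one} simple factor of a Levi subgroup: it sets $\widetilde G:=\overline{A_1N_1\,G_2}$, where $G_2$ carries the remaining Levi factors and the radical, uses Malcev's theorem to see $\widetilde G\lneq G$ (hence, by minimality, non-transitive), and then a dimension count based on $G=K\widetilde G$ and $K=K_{ss}Z(K)$ with $\dim Z(K)\le 1$ forces the orbits to have dimension exactly $n-1$; the conjugacy of maximal compact subgroups then identifies every orbit with $\RR^{n-1}$. Your Case 3 could be repaired in the same spirit (for instance by a further induction on the number of simple factors, reducing one factor at a time), but not by the simplicity claim. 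Two smaller points in Case 2 also need attention: to form ``the preimage of $\widehat G$'' you must apply the inductive hypothesis to a group contained in the \emph{image} of $G$ in $\Iso(B,g_B)$, so you need that this image is closed (this follows from properness of the induced $G/N$-action on $B$, which requires an argument); and the preimage need not be connected, so one should pass to its identity component and check that its orbits are still the full $N$-saturations (true, since it contains $N$ and surjects onto $\widehat G$). These are repairable; the semisimple case is the essential gap.
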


 \begin{proof}
It suffices to prove the claim for an admissible $ G$ of minimal dimension.
If $G$ is solvable, then by minimality and \cite[Lemma 1.2]{GrdWls} it must act simply transitively on $\RR^n$. It is well-known that solvable Lie groups have codimension-one normal subgroups. In our case, since $H = e$ is trivial, $G$ is simply connected, hence such a subgroup must be diffeomorphic to $\RR^{n-1}$ and it  must act freely on $(\RR^n,g)$.

For a non-solvable $G$ let $\ggo = \ug \ltimes\sg$ be a Levi decomposition, with $\ug = \ug_1\oplus \cdots \oplus \ug_r$ a sum of simple ideals. Consider the ideal $\ggo_2 := (\ug_2 \oplus \cdots \oplus \ug_r) \ltimes \sg$ in $\ggo$, and let $U_1, G_2$ be the connected Lie subgroups of $G$ with Lie algebras $\ug_1, \ggo_2$, respectively. For $U_1 = K A N$ an Iwasawa decomposition we have that $K, A$ and $N$ are connected. Set $G_3 := AN G_2$. Since $AN \leq G$ and $G_2 \vartriangleleft G$, it follows that $G_3$ is a subgroup of $G$, with Lie algebra $\ggo_3 = \ag \oplus \ngo \oplus \ggo_2$. Denote by $\widetilde G:=  \overline G_3$ its closure in $G$, with Lie algebra $\tilde \ggo$. Notice that $\widetilde G\lneq G$. Indeed, we have that $[\tilde \ggo, \tilde\ggo] = [\ggo_3, \ggo_3] \subset \ggo_3$ by \cite{Mal44} (cf.~ also \cite[Ch.~I, $\S$4, Thm.~3]{OniVin90}), thus $\tilde \ggo \neq \ggo$, otherwise $\ggo_3$ would contain $\ug_1 = [\ug_1, \ug_1]$. From this fact and the  minimiality of $G$ it follows that $\dim \widetilde G\cdot p \leq n-1$ for all $p\in \RR^n$. 

We now claim that $G = K \widetilde G$. To that end, first notice that the subset $U_1 G_2$ is in fact a subgroup of $G$. Since it contains a neighbourhood of the identity, it equals $G$. From $U_1 = K A N$ we conclude that $G = K G_3$, which implies the claim.

Recall that by the uniqueness (up to conjugation) of maximal compact subgroups of a connected Lie group, given any compact subgroup $\widetilde K \leq G$ there exists $p\in \RR^n$ such that $\widetilde K \subset G_p$. Indeed, for some $f\in G$ we have $f \widetilde K f^{-1} \leq H$, the isotropy at the origin $0\in \RR^n$, and then we simply set $p := f^{-1}\cdot 0$. 

Let now $\widetilde K$ be a maximal compact subgroup of $\widetilde G$, so that 
$\widetilde G$ is diffeomorphic to $\widetilde K \times \RR^m$ for some $m\in \NN$. We claim that $m = n-1$. To see that, let $p\in \RR^n$ be such that $\widetilde K \leq G_p$. Then the $\widetilde G$-isotropy $\widetilde G_p = G_p \cap \widetilde G$ is a compact subgroup of $\widetilde G$ containing $\widetilde K$, thus equal to $\widetilde K$, and hence $\widetilde G \cdot p \simeq \widetilde G / \widetilde K \simeq \RR^m$. By the above this yields $m\leq n-1$. On the other hand, we may write $K = K_{ss} Z(K)$ where $K_{ss} = [K,K]$ is semisimple and compact, and $Z(K)$ is the center. From the classification of real simple Lie groups one sees that $\dim Z(K) \leq 1$. Since $K_{ss}$ is a compact subgroup of $G$, there exists $f\in G$ such that $f K_{ss} f^{-1} \leq G_p$. We may moreover assume that $f\in \widetilde G$, since $G = K \widetilde G=\widetilde G K$ and $K$ normalizes $K_{ss}$. Therefore, $K_{ss} \leq G_q$ for $q := f^{-1}(p)$, and we have that $ \widetilde G \cdot q = \widetilde G \cdot p \simeq \RR^m$
and $\dim K \cdot q = \dim  Z(K) \cdot q \leq 1$.
Since $G = K \widetilde G$ is transitive, we must have $m \geq n-1$ and $\dim Z(K) = 1$. Thus, $m = n-1$. 
 
To finish the proof, we must show that for any $p\in \RR^n$ it holds that $\widetilde G \cdot p \simeq \RR^{n-1}$. Given $p\in \RR^n$, the isotropy $\widetilde G_p = G_p \cap \widetilde G$ is a compact subgroup, thus contained in some maximal compact subgroup $\widetilde K_p \leq \widetilde G$. Since $\widetilde G$ is diffeomorphic to $\widetilde K_p \times \RR^{n-1}$, the orbit $\widetilde G \cdot p$ is diffeomorphic to the product of the connected compact homogeneous space $\widetilde K_p / \widetilde G_p$ and $\RR^{n-1}$. By dimensional reasons, using that $\widetilde G$ is not transitive by minimality of $G$, we deduce that $\widetilde K_p = \widetilde G_p$ and $\widetilde G \cdot p \simeq \RR^{n-1}$.
 \end{proof}

\end{appendix}


 \bibliography{ramlaf2}
\bibliographystyle{amsalpha}

\end{document}